\numberwithin{equation}{section}
\newtheorem{Theorem}{Theorem}[section]
\newtheorem*{Theorem*}{Theorem}
\newtheorem{Corollary}[Theorem]{Corollary}
\newtheorem{Lemma}[Theorem]{Lemma}
\newtheorem{Proposition}[Theorem]{Proposition}
{ \theoremstyle{definition}
	\newtheorem{Definition}[Theorem]{Definition}
	
	\newtheorem{Example}[Theorem]{Example}
	\newtheorem{Remark}[Theorem]{Remark} }
\newcommand{\ad}{\mathrm{ad}}
\newcommand{\id}{\mathrm{id}}
\newcommand{\End}{\mathrm{End}}
\newcommand{\Hom}{\mathrm{Hom}}
\begin{document}
\allowdisplaybreaks

\newcommand{\arXivNumber}{2207.00390}

\renewcommand{\PaperNumber}{018}

\FirstPageHeading

\ShortArticleName{Differential Antisymmetric Infinitesimal Bialgebras}

\ArticleName{Differential Antisymmetric Infinitesimal Bialgebras,\\ Coherent Derivations and Poisson Bialgebras}

\Author{Yuanchang LIN~$^{\rm a}$, Xuguang LIU~$^{\rm b}$ and Chengming BAI~$^{\rm a}$}

\AuthorNameForHeading{Y.~Lin, X.~Liu and C.~Bai}

\Address{$^{\rm a)}$~Chern Institute of Mathematics \& LPMC, Nankai University, Tianjin 300071, P.R.~China}
\EmailD{\href{mailto:linyuanchang@mail.nankai.edu.cn}{linyuanchang@mail.nankai.edu.cn}, \href{mailto:baicm@nankai.edu.cn}{baicm@nankai.edu.cn}}

\Address{$^{\rm b)}$~Department of Mathematics, University of California, Santa Cruz, CA 95064, USA}
\EmailD{\href{mailto:xliu270@ucsc.edu}{xliu270@ucsc.edu}}

\ArticleDates{Received September 28, 2022, in final form March 16, 2023; Published online April 04, 2023}

\Abstract{We establish a bialgebra theory for differential algebras, called differential antisymmetric infinitesimal (ASI) bialgebras by generalizing the study of ASI bialgebras to the context of differential algebras, in which the derivations play an important role. They are characterized by double constructions of differential Frobenius algebras as well as matched pairs of differential algebras. Antisymmetric solutions of an analogue of associative Yang--Baxter equation in differential algebras provide differential ASI bialgebras, whereas in turn the notions of $\mathcal{O}$-operators of differential algebras and differential dendriform algebras are also introduced to produce the former. On the other hand, the notion of a coherent derivation on an ASI bialgebra is introduced as an equivalent structure of a differential ASI bialgebra. They include derivations on ASI bialgebras and the set of coherent derivations on an ASI bialgebra composes a Lie algebra which is the Lie algebra of the Lie group consisting of coherent automorphisms on this ASI bialgebra. Finally, we apply the study of differential ASI bialgebras to Poisson bialgebras, extending the construction of Poisson algebras from commutative differential algebras with two commuting derivations to the context of bialgebras, which is consistent with the well constructed theory of Poisson bialgebras. In particular, we construct Poisson bialgebras from differential Zinbiel algebras.}

\Keywords{differential algebra; antisymmetric infinitesimal bialgebra; associative Yang--Bax\-ter equation; $\mathcal{O}$-operator; dendriform algebra; Poisson bialgebra}

\Classification{16T10; 16T25; 16W99; 17A30; 17B62; 57R56; 81R60}

\section{Introduction}

The aim of this paper is to develop a bialgebra theory for differential algebras and get some applications.
The notion of differential antisymmetric infinitesimal bialgebras is introduced,
giving a new kind of derivations, called coherent derivations, on antisymmetric infinitesimal bialgebras.
As an application, we generalize the typical construction of Poisson algebras from commutative differential algebras to the context of bialgebras.

\subsection{Differential algebras}

The notion of a differential algebra was introduced in \cite{Ritt1950Differential},
which could be regarded as an associative algebra with finitely many commuting derivations.
Such structures sprang from the classical study of algebraic differential equations with meromorphic functions as coefficients~\cite{kolchin1973differential},
and the abstraction of differential operators led to the development of differential algebras,
which in turn have influenced other areas such as Diophantine geometry, computer algebra and model theory~\cite{aschenbrenner2017asymptotic, buium1975differential, pillay1995model}.

Differential algebras also have applications in mathematical physics such as Yang--Mills theory.
A $K$-cycle over the tensor product algebra is used in
\cite{connes1992metric} to derive the full standard model, which
turns out to be called Connes--Lott model, and the differential
algebras become the basic mathematical structure in the
construction. More details about differential algebras used in
such an approach can be found in \cite{connes1994non, iochum1996yang, kalau1996hamilton, kalau1995differential, matthes1996structure}.

The study of differential algebras themselves is also plentiful. For example, the
operad of differential algebras with one derivation was studied in~\cite{loday2010operad} and the free structure of differential
algebras was studied in~\cite{guo2008differential}.
On the other hand, some algebra structures have been found having tight connections with differential algebras.
As a typical example, there is a Poisson algebra obtained from a commutative differential algebra with two commuting derivations.
More examples include the Lie algebras obtained from commutative differential algebras with one derivation~\cite{Xu}, the 3-Lie algebras~\cite{nambu1995generalized} and more general the $n$-Lie algebras~\cite{filippov1985n} obtained from commutative differential algebras.
Furthermore, every Novikov algebra is illustrated to be embedded into a commutative differential algebra with one derivation~\cite{bokut2017grobner}.\looseness=1

\subsection{Differential antisymmetric infinitesimal bialgebras}\label{Sec:1.2}

A bialgebra structure consists of an algebra structure and a coalgebra structure coupled by certain compatibility conditions.
Such structures play important roles in many areas and have connections with other structures arising from mathematics and physics.
For example, Lie bialgebras for Lie algebras are the algebra structures of Poisson--Lie groups, and play an important role in the study of quantized universal enveloping algebras \cite{chari1995guide,drinfegammad1983hamiltonian}.
For associative algebras, there are two bialgebra structures with different compatibility conditions.
They are associative bialgebras in Hopf algebras with the comultiplication being a~homomorphism and antisymmetric infinitesimal (ASI) bialgebras with the comultiplication being a derivation.
For the former, Hopf algebras have their origin in algebraic topology, serve as universal enveloping algebras of Lie algebras and provide a basic algebraic framework for the study of quantum groups~\cite{abe2004hopf}.
The latter is the associative analog of the Lie bialgebras, which can be characterized as double constructions of Frobenius algebras, widely applied in 2d topological field and string theory~\cite{AguiarOn, BaiDouble, joni1979coalgebras, kockfrobenius, lauda2008open}.
Note that the structure of ASI bialgebras first appeared in~\cite{zhelyabian1997jordan} where they were called ``associative D-bialgebras'' (i.e., in the sense of Drinfeld), and later in~\cite{AguiarOn} where they were called ``balanced infinitesimal bialgebras'' in the sense of the opposite algebras.\looseness=1

In this paper, we establish a bialgebra theory for differential
algebras, called differential antisymmetric infinitesimal (ASI)
bialgebras, by extending the study of ASI bialgebras in~\cite{BaiDouble} to the context of differential algebras. The
derivations in a differential algebra play an important role in
such generalizations by introducing an admissibility condition
between the linear operators and the differential algebra, which
gives a reasonable bimodule of the differential algebra on
the dual space. Such an admissibility leads to the compatibility
among the multiplication, the comultiplication and the linear
operators, so that the theory of ASI bialgebras can be extended to
differential ASI bialgebras.

Explicitly, differential ASI bialgebras are characterized
equivalently by matched pairs of differential algebras and double
constructions of differential Frobenius algebras, as the
generalizations of matched pairs of algebras and double
constructions of Frobenius algebras respectively to the context of differential
algebras. The coboundary cases lead to the introduction of the
notion of admissible associative Yang--Baxter equation (AYBE) whose
antisymmetric solutions are used to construct differential ASI
bialgebras. The notions of $\mathcal{O}$-operators of differential
algebras and differential dendriform algebras are introduced to
construct antisymmetric solutions of admissible AYBE in
differential algebras and hence give rise to differential ASI bialgebras.
We summarize these results in the following diagram:
$$
 \xymatrix@C=0.75cm{
 \txt{differential \\ dendriform algebras} \ar@{->}[d]<0.7ex>^-{\S \ref{subsec:dendriform}} & &
 \txt{double constructions \\ of differential \\ Frobenius algebras}\ar@{<->}[d]_-{\S \ref{subsec:dasi}}\\
 \txt{$\mathcal{O}$-operators of \\ differential algebras} \ar@{->}[u]<0.7ex>^-{\S \ref{subsec:dendriform}} \ar@{->}[r]<0.5ex>^-{\S \ref{subsec:o_operator}} &
 \txt{anytisymmetric \\ solutions of \\ admissible AYBE} \ar@{->}[l]<0.5ex>^-{\S \ref{subsec:o_operator}} \ar@{->}[r]^-{\S \ref{subsec:cobdasi}} &
 \txt{differential \\ ASI bialgebras} & \txt{matched pairs \\ of differential \\ algebras} \ar@{<->}[l]_-{\S \ref{subsec:dasi}}
 }
$$

\subsection{Coherent derivations on ASI bialgebras}

We have already regarded differential ASI bialgebras as a bialgebra theory for differential algebras.
On the other hand, equivalently, we also regard differential ASI bialgebras
as defining a~kind of ``derivations'' on ASI bialgebras, namely, coherent derivations.
It is an attempt to extend the viewpoint that differential algebras are viewed as ``derivatization'' of algebras by equipping derivations to the context of bialgebras, that is, differential ASI bialgebras may be viewed as ``derivatization'' of ASI bialgebras.
Hence we have the following commutative diagram.
The horizontal arrows are equipping the given algebra structures with suitable derivations and the vertical arrows are equipping the given algebra structures with suitable bialgebra structures.
In particular, the below horizontal arrow is equipping ASI bialgebras with coherent derivations:
$$
 \xymatrix@C=3cm{
 \txt{algebras} \ar[d]_-{bialgebraization} \ar[r]^-{derivatization} &
 \txt{differential algebras} \ar[d]^-{bialgebraization} \\
 \txt{ASI bialgebras} \ar[r]^-{derivatization} & \txt{differential ASI bialgebras}
 }
$$

The notion of a coherent derivation is interesting on its own
right. It is known that the set of derivations on an algebra $(A,
\cdot)$ over the real number field $\mathbb{R}$ forms a Lie algebra, the set of
automorphisms on $(A, \cdot)$ forms a Lie group and the former is
the Lie algebra of the latter. Such features and properties are
still available for ASI bialgebras, in which coherent derivations
play the role as derivations do. Explicitly, the set of coherent
derivations on an ASI bialgebra forms a Lie algebra. Furthermore, we
introduce the notion of a coherent automorphism on an ASI bialgebra
inspired by \cite{BGS} and show that the set of coherent automorphisms
forms a Lie group whose Lie algebra is exactly the Lie algebra
formed by the set of coherent derivations.

On the other hand, motivated by the notion of derivations on Lie bialgebras~\cite{chari1995guide},
we introduce the notion of derivations on ASI bialgebras as an alternative approach.
We show that a~derivation on an ASI bialgebra corresponds to a special coherent derivation and hence derivations on ASI bialgebras can be regarded as special coherent derivations.

\subsection[Poisson bialgebras via commutative and cocommutative differential ASI bialgebras]{Poisson bialgebras via commutative\\ and cocommutative differential ASI bialgebras}

As aforementioned, there are typical examples of Poisson algebras
from commutative differential algebras with two commuting
derivations. It is natural to consider extending such a
relationship to the context of bialgebras. On the other hand,
there is a bialgebra theory for Poisson algebras, namely, Poisson
bialgebras, established in~\cite{ni2013poisson}. Note that there
is a noncommutative version of Poisson bialgebras given in
\cite{liu2020noncommutative}. Therefore we apply the theory of
differential ASI bialgebras to the study of Poisson bialgebras and
thus Poisson bialgebras can be constructed from commutative and
cocommutative differential ASI bialgebras.

Such an approach is consistent with the well constructed theory of
Poisson bialgebras. We establish the explicit relationships
between commutative and cocommutative differential ASI bialgebras
and Poisson bialgebras, as well as the equivalent interpretation
in terms of the corresponding double constructions (Manin triples)
and matched pairs. For the coboundary cases, the relationships
between the involved structures on commutative differential
algebras and Poisson algebras, such as admissible AYBE and Poisson
Yang--Baxter equation (PYBE), $\mathcal{O}$-operators of the two
algebras, and differential Zinbiel (commutative dendriform)
algebras and pre-Poisson algebras, are also given respectively. In particular,
an antisymmetric solution of admissible AYBE in a commutative
differential algebra is naturally a solution of PYBE in the
induced Poisson algebra under certain conditions and thus
differential Zinbiel algebras can be employed to construct Poisson
bialgebras. These notions and structures are illustrated by the
following diagram. The up two layers are notions and structures in
commutative and cocommutative differential ASI bialgebras which
have been illustrated in the diagram in Section~\ref{Sec:1.2} and
the below two layers are the corresponding notions and structures
in Poisson bialgebras given in \cite{ni2013poisson} (also see~\cite{liu2020noncommutative}):
$$
 \xymatrix@C=0.3cm@M=1pt@L=0pt{
 & & & \txt{double constructions of \\ commutative differential \\ Frobenius algebras} \ar@{<->}[d] \ar@{->} `[rr] `[ddd] [ddd] & & \\
 \txt{differential \\ Zinbiel \\ algebras} \ar@{->}[r]<1ex> \ar@{->}[d] &
 \txt{$\mathcal{O}$-operators of \\ commutative \\ differential \\ algebras} \ar@{->}[l]<1ex> \ar@{->}[r]<1ex> \ar@{->}[d] &
 \txt{antisymmetric \\ solutions of \\ admissible \\ AYBE} \ar@{->}[l]<1ex> \ar@{->}[r] \ar@{->}[d] &
 \txt{differential \\ commutative \\ and cocommutative \\ ASI bialgebras} \ar@{->}[d] &
 \txt{matched pairs \\ of commutative \\ differential \\ algebras} \ar@{<->}[l] \ar@{->}[d] & \\
 \txt{pre-Poisson \\ algebras} \ar@{->}[r]<1ex> &
 \txt{$\mathcal{O}$-operators of \\ Poisson algebras} \ar@{->}[l]<1ex> \ar@{->}[r]<1ex> &
 \txt{antisymmetric \\ solutions of \\ PYBE} \ar@{->}[l]<1ex> \ar@{->}[r] &
 \txt{Poisson \\ bialgebras} &
 \txt{matched pairs of \\ Poisson algebras} \ar@{<->}[l] & \\
 & & & \txt{Manin triples \\ of Poisson algebras} \ar@{<->}[u] & &
 }
$$

\subsection{Layout of the paper}

The paper is organized as follows.

In Section~\ref{sec:rep}, we introduce some basic notions about differential algebras and their bimodules.
We introduce the notion of a bimodule of a differential algebra,
while the notion of an admissible quadruple of a differential algebra is introduced to get a bimodule on the dual space.
Moreover, the notion of a matched pair of differential algebras
is introduced to interpret the differential algebra whose underlying vector space is a linear direct sum of two differential subalgebras.

In Section~\ref{sec:dasi}, after the introduction of the notions
of a double construction of differential Frobenius algebra and a
differential ASI bialgebra, we give their equivalence in terms of
matched pairs of differential algebras. Inspired by the notion of
a differential ASI bialgebra, we then introduce the notion of a
coherent derivation on an ASI bialgebra as an equivalent structure
of a differential ASI bialgebra. Furthermore, the set of coherent
derivations on an ASI bialgebra turns out to be a Lie algebra,
which is exactly the Lie algebra of the Lie group consisting of
coherent automorphisms on this ASI bialgebra.

In Section~\ref{sec:cob}, we study the coboundary differential ASI bialgebras.
We first introduce the notion of $\Psi$-admissible AYBE in a differential algebra,
whose antisymmetric solutions are used to construct differential ASI bialgebras.
Then we introduce the notion of $\mathcal{O}$-operators of differential algebras to construct antisymmetric solutions of $\Psi$-admissible AYBE in semi-direct product differential algebras.
Finally, we introduce the notion of a differential dendriform
algebra, which gives an $\mathcal{O}$-operator of the
associated differential algebra. Thus both $\mathcal O$-operators
of differential algebras and differential dendriform algebras give rise to
differential ASI bialgebras.

In Section~\ref{sec:poisson}, proceeding from the typical
construction of Poisson algebras from commutative algebras with a
pair of commuting derivations, we construct Poisson bialgebras
introduced in~\cite{ni2013poisson} from commutative and
cocommutative differential ASI bialgebras.
The explicit relationships between them, as well as the equivalent
interpretation in terms of the corresponding double constructions
(Manin triples) and matched pairs, are established. We also
exhibit the relationships between the involved structures on
commutative differential algebras and Poisson algebras for the
coboundary cases, such as admissible AYBE and PYBE,
$\mathcal{O}$-operators of the two algebras, and differential
Zinbiel algebras and pre-Poisson algebras.
Finally, a construction of Poisson bialgebras from differential Zinbiel algebras is given.

Throughout this paper, all vector spaces and algebras are of finite dimension over a~field~$\mathbb{F}$ of characteristic~$0$, although many results still hold in the infinite-dimensional cases.
Note that all the results might be extended to the graded locally finite-dimensional case, i.e., when all homogeneous components are finite-dimensional: the notion of graded dual offers a duality which is as satisfactory as the algebraic duality in the finite-dimensional case.
The term ``algebra'' always stands for an associative algebra not necessarily having a unit unless otherwise stated.

\section{Differential algebras and their bimodules}\label{sec:rep}

We introduce the notion of a bimodule of a differential algebra.
The notion of an admissible quadruple of a differential algebra is introduced to get a reasonable bimodule on the dual space.
We also introduce the notion of a matched pair of differential algebras to interpret the differential algebra whose underlying vector space is a linear direct sum of two differential subalgebras.

\subsection{Bimodules and admissible quadruples of differential algebras}

\begin{Definition}\label{def:derivation}
 Let $(A, \cdot)$ be an algebra.
 A linear map $\partial\colon A \to A$ is called a \textit{derivation}
 if the Leibniz rule is satisfied, i.e.,
 \begin{equation}
 \partial(a \cdot b) = \partial(a) \cdot b + a \cdot \partial(b), \qquad
 \forall a, b \in A. \label{eq:derivation}
 \end{equation}
\end{Definition}

\begin{Definition}[\cite{Ritt1950Differential}]\label{def:dalg}
 A \textit{differential algebra} is a triple $(A, \cdot, \Phi)$,
 consisting of an algebra $(A, \cdot)$ and a finite set of commuting derivations $\Phi = \{\partial_k\colon A \to A\}^m_{k=1}$.
 A differential algebra $(A, \cdot, \Phi)$ is called \textit{commutative} if $(A, \cdot)$ is commutative.
\end{Definition}

\begin{Definition} Let $(A, \cdot)$ be an algebra, $V$ be a vector space and $l, r\colon A \to \End (V)$ be two linear maps.
 Then $(V, l, r)$ is called an \textit{$A$-bimodule} if the following conditions are satisfied:
 \begin{gather*}
 l(a)l(b)v = l(a \cdot b) v, \qquad\!\!\!
 r(b)r(a)v = r(a \cdot b) v, \qquad\!\!\!
 r(b)l(a)v = l(a)r(b) v, \qquad\!\!\!
 \forall a, b \in A, \, v \in V.\!
 \end{gather*}
\end{Definition}

In particular, for an algebra $(A,\cdot)$, define two linear maps $L_{A}, R_{A}\colon A \to \End(A)$ by $L_A(a)b = a \cdot b = R_A(b)a$, for all $a,b\in A$ respectively.
Then the triple $(A, L_{A}, R_{A})$ is an $A$-bimodule.

Let $(A, \cdot)$ be an algebra and $l, r\colon A \to \End (V)$ be
linear maps. Define a bilinear multiplication still denoted by
$\cdot$ on $A \oplus V$ by
\begin{equation}
 (a + u) \cdot (b + v) := a \cdot b + ( l(a) v + r(b)u ), \qquad
 \forall a, b \in A, u, v \in V. \label{eq:semiprod}
\end{equation}
Then $(V, l, r)$ is an $A$-bimodule if and only
if $(A \oplus V,\cdot)$ is an algebra, which is denoted by $(A
\ltimes_{l,r} V, \cdot)$ and called the \textit{semi-direct
product algebra} by $(A, \cdot)$ and $(V, l, r)$.

\begin{Definition}\label{def:repd}
 A \textit{bimodule of a differential algebra} $(A, \cdot, \Phi\!=\!\{\partial_k\}_{k=1}^m)$ is a quadruple $(V, l, r, \Omega)$, where $(V, l, r)$ is an $A$-bimodule and $\Omega = \{\alpha_k\colon V \to V\}_{k=1}^m$ is a set of commuting linear maps
 such that for all $k=1, \dots, m$,
 \begin{gather}
 \alpha_k( l(a)v ) = l( \partial_k(a) ) v + l(a) \alpha_k(v), \label{eq:repd1} \\
 \alpha_k( r(a)v ) = r( \partial_k(a) ) v + r(a) \alpha_k(v), \qquad
 \forall a \in A, v \in V. \label{eq:repd2}
 \end{gather}
 Two bimodules $(V_1, l_1, r_1, \{\alpha_{1k}\}_{k=1}^m)$ and $(V_2, l_2, r_2, \{\alpha_{2k}\}_{k=1}^m)$ of $(A, \cdot, \Phi)$ are called \textit{equivalent}
 if there exists a linear isomorphism $\varphi\colon V_1 \to V_2$
 such that for all $k=1, \dots, m$,
 \begin{gather*}
 \varphi( l_1(a)v ) = l_2(a) \varphi(v), \qquad
 \varphi( r_1(a)v ) = r_2(a) \varphi(v), \\
 \varphi( \alpha_{1k}(v) ) = \alpha_{2k}( \varphi(v) ), \qquad
 \forall a \in A, \ v \in V.
 \end{gather*}
\end{Definition}

\begin{Example}
 Let $(A, \cdot, \Phi)$ be a differential algebra.
 Then $(A, L_A, R_A, \Phi)$ is a bimodule of $(A, \cdot, \Phi)$.
\end{Example}

For vector spaces $V_1$ and $V_2$,
and linear maps $\phi_1\colon V_1 \to V_1$ and $\phi_2: V_2 \to V_2$,
we abbreviate $\phi_1 + \phi_2$ for the linear map
\begin{equation*}
 \phi_{V_1 \oplus V_2}\colon \ V_1 \oplus V_2 \to V_1 \oplus V_2, \qquad
 v_1 + v_2 \mapsto \phi_1(v_1) + \phi_2(v_2), \qquad
 \forall v_1 \in V_1, \ v_2 \in V_2.
\end{equation*}
Moreover, let $\Pi_1 = \{\alpha_k\colon V_1 \to V_1\}_{k=1}^m$ and $\Pi_2 = \{\beta_k\colon V_2 \to V_2\}_{k=1}^m$ be two sets of commuting linear maps,
then obviously $\{\alpha_k + \beta_k\}_{k=1}^m$ is still a set of commuting linear maps, which is denoted by $\Pi_1 + \Pi_2$.

\begin{Proposition}\label{pro:semidirect}
 Let $(A, \cdot_A, \Phi = \{\partial_k\}_{k=1}^m)$ be a differential algebra.
 Let $l, r\colon A \to \End (V)$ be linear maps
 and $\Omega = \{\alpha_k\colon V \to V\}_{k=1}^m$ be a set of commuting linear maps.
 Then $(V, l, r, \Omega)$ is a bimodule of $(A, \cdot_A, \Phi)$ if and only if $(A \oplus V, \cdot, \Phi + \Omega)$ is a differential algebra, where the multiplication $\cdot$ on $A \oplus V$ is defined by equation~\eqref{eq:semiprod}.
\end{Proposition}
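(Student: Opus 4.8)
The plan is to split the biconditional according to the two pieces of data carried by $(V,l,r,\Omega)$ — the underlying bimodule $(V,l,r)$ and the operators $\Omega = \{\alpha_k\}_{k=1}^m$ — and to lean on the equivalence recorded just before Definition~\ref{def:repd}, namely that $(V,l,r)$ is an $A$-bimodule if and only if $(A \oplus V, \cdot)$, with multiplication given by~\eqref{eq:semiprod}, is an associative algebra. Since a differential algebra is by definition an associative algebra together with a finite set of commuting derivations, once this associative-algebra part is disposed of it remains only to prove the following, under the standing assumption that $(V,l,r)$ is an $A$-bimodule: the family $\Phi + \Omega = \{\partial_k + \alpha_k\}_{k=1}^m$ consists of commuting derivations of $(A \oplus V, \cdot)$ exactly when $\Omega$ is a commuting family satisfying~\eqref{eq:repd1} and~\eqref{eq:repd2}.

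For the commutativity I would simply observe that $\partial_k + \alpha_k$ sends $a + v$ to $\partial_k(a) + \alpha_k(v)$, so that $(\partial_j + \alpha_j)(\partial_k + \alpha_k)$ acts as $\partial_j \partial_k$ on $A$ and as $\alpha_j \alpha_k$ on $V$. As the $\partial_k$ already commute (because $A$ is a differential algebra), the family $\{\partial_k + \alpha_k\}$ commutes if and only if the $\alpha_k$ do; the forward direction is read off by restricting to $V$, that is, by setting $a = 0$.

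The heart of the matter is the Leibniz rule. I would apply $\partial_k + \alpha_k$ to a general product $(a + u) \cdot (b + v)$ expanded via~\eqref{eq:semiprod}, and compare with $(\partial_k + \alpha_k)(a+u) \cdot (b+v) + (a+u) \cdot (\partial_k + \alpha_k)(b+v)$, again expanded through~\eqref{eq:semiprod}. The $A$-valued terms reproduce exactly the identity $\partial_k(a \cdot b) = \partial_k(a) \cdot b + a \cdot \partial_k(b)$, which holds automatically, while the $V$-valued terms collapse to the single requirement
\begin{equation*}
\alpha_k\bigl(l(a)v\bigr) + \alpha_k\bigl(r(b)u\bigr) = l\bigl(\partial_k(a)\bigr)v + l(a)\alpha_k(v) + r(b)\alpha_k(u) + r\bigl(\partial_k(b)\bigr)u .
\end{equation*}
The decisive step is then the decoupling: since this must hold for all $u, v \in V$ independently, setting $u = 0$ isolates~\eqref{eq:repd1} and setting $v = 0$ isolates~\eqref{eq:repd2}, whereas conversely adding~\eqref{eq:repd1} and~\eqref{eq:repd2} recovers the displayed identity. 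Thus $\partial_k + \alpha_k$ is a derivation precisely when~\eqref{eq:repd1} and~\eqref{eq:repd2} hold for that index $k$, which closes the equivalence. I expect no genuine obstacle here, as everything reduces to a direct expansion; the only point needing care is this clean separation of the $V$-component into the two admissibility conditions, which rests on the independence of the module elements $u$ and $v$.
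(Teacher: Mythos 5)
Your proof is correct and takes essentially the same approach as the paper: the paper's own proof of Proposition~\ref{pro:semidirect} merely remarks that it ``can be proved directly according to Definitions~\ref{def:dalg} and~\ref{def:repd}'' (or obtained as the special case $B=V$ with zero multiplication of Theorem~\ref{thm:matda}), and your argument carries out precisely that direct verification. Your decoupling of the $V$-component by setting $u=0$ and $v=0$ is the same component-isolation device the paper uses inside the proof of Theorem~\ref{thm:matda} (where it sets $a'=b=0$ and $a=b'=0$), so there is nothing to add beyond noting that your commutativity discussion is already subsumed by the standing hypothesis that $\Omega$ commutes.
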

\begin{proof}
 It can be proved directly according to Definitions~\ref{def:dalg} and \ref{def:repd} or as a special case of the matched pair of differential algebras in Theorem~\ref{thm:matda}, where $B = V$ is equipped with the zero multiplication.
\end{proof}

If $(V, l, r, \Omega)$ is a bimodule of a differential algebra $(A, \cdot_A, \Phi)$, then the resulting differential algebra above is denoted by $(A \ltimes_{l,r} V, \cdot, \Phi + \Omega)$ and called the \textit{semi-direct product differential algebra} by $(A, \cdot_A, \Phi)$ and $(V, l, r, \Omega)$.

Denote the standard pairing between the dual space $V^*$ and $V$ by
\begin{equation*}
 \langle\ ,\ \rangle\colon \ V^* \times V \to \mathbb{F}, \qquad
 \langle v^*, v \rangle :=v^*(v), \qquad
 \forall v \in V,\ v^* \in V^*.
\end{equation*}
Let $V$, $W$ be two vector spaces.
For a linear map $\varphi\colon V \to W$,
the transpose map $\varphi^*\colon W^* \to V^*$ is defined by
\begin{equation*}
 \langle \varphi^*(w^*), v \rangle := \langle w^*, \varphi(v) \rangle, \qquad
 \forall v \in V, \ w^* \in W^*.
\end{equation*}
Obviously we have $(\varphi\psi)^*=\psi^*\varphi^*$, where $V$, $W$, $U$ are vector spaces, $\varphi\colon V \to W$ and $\psi\colon U \to V$ are linear maps.
Furthermore, when $\Pi = \{\alpha_k\}$ is a finite set of linear maps,
set $\Pi^* := \{\alpha_k^*\colon \alpha_k \in \Pi\}$ to be the linear dual of $\Pi$.

Let $A$ be an algebra and $V$ be a vector space.
For a linear map $\mu\colon A \to \End (V)$, the linear map $\mu^*\colon A \to \End(V^*)$ is defined by
\begin{equation*}
 \langle \mu^*(a)v^*, v \rangle := \langle v^*, \mu(a)v \rangle, \qquad
 \forall a \in A,\ v \in V, \ v^* \in V^*,
\end{equation*}
that is, $\mu^*(a) = \mu(a)^*$ for all $a \in A$.

Note that for an algebra $(A,\cdot)$ and an $A$-bimodule $(V, l, r)$, $(V^*, r^*, l^*)$ is again an $A$-bimodule.

\begin{Lemma}\label{lem:dualrep}
 Let $(A, \cdot, \Phi = \{\partial_k\}_{k=1}^m)$ be a differential algebra.
 Let $(V, l, r)$ be an $A$-bimodule
 and $\Pi = \{\beta_k\colon V \to V\}_{k=1}^m$ be a set of commuting linear maps.
 Then the quadruple $(V^*, r^*, l^*, \Pi^*)$ is a bimodule of the differential algebra $(A, \cdot, \Phi)$ if and only if for all $k=1, \dots,
 m$, the following equations hold:
 \begin{gather}
 r(a) \beta_k(v) = r(\partial_k(a)) v + \beta_k(r(a)v), \label{eq:admrep1} \\
 l(a) \beta_k(v) = l(\partial_k(a)) v + \beta_k(l(a)v), \qquad
 \forall a \in A, \ v \in V. \label{eq:admrep2}
 \end{gather}
\end{Lemma}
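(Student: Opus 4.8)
The plan is to reduce the whole statement to dualizing the two compatibility conditions of Definition~\ref{def:repd}, since the underlying $A$-bimodule structure requires no new work. First I would record that $(V^*, r^*, l^*)$ is already known to be an $A$-bimodule whenever $(V, l, r)$ is, and that $\Pi^* = \{\beta_k^*\}$ is again a commuting family: transposing $\beta_j\beta_k = \beta_k\beta_j$ via $(\psi\varphi)^* = \varphi^*\psi^*$ gives $\beta_k^*\beta_j^* = \beta_j^*\beta_k^*$. Hence, by Definition~\ref{def:repd}, the quadruple $(V^*, r^*, l^*, \Pi^*)$ is a bimodule of $(A,\cdot,\Phi)$ exactly when conditions \eqref{eq:repd1} and \eqref{eq:repd2} hold with the left action $r^*$, the right action $l^*$ and the maps $\beta_k^*$, namely
\begin{gather*}
 \beta_k^*\bigl(r^*(a)v^*\bigr) = r^*(\partial_k(a))v^* + r^*(a)\beta_k^*(v^*), \\
 \beta_k^*\bigl(l^*(a)v^*\bigr) = l^*(\partial_k(a))v^* + l^*(a)\beta_k^*(v^*), \qquad \forall a\in A,\ v^*\in V^*.
\end{gather*}

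The key step is then to pair each of these two identities against an arbitrary $v\in V$ and unwind the transposes using $\langle \mu^*(a)v^*, v\rangle = \langle v^*, \mu(a)v\rangle$ and $\langle \beta_k^*(v^*), v\rangle = \langle v^*, \beta_k(v)\rangle$. For the first identity this rewrites the left-hand side as $\langle v^*, r(a)\beta_k(v)\rangle$ and the right-hand side as $\langle v^*, r(\partial_k(a))v + \beta_k(r(a)v)\rangle$; for the second, the left side becomes $\langle v^*, l(a)\beta_k(v)\rangle$ and the right side $\langle v^*, l(\partial_k(a))v + \beta_k(l(a)v)\rangle$. Since the pairing $\langle\ ,\ \rangle$ is nondegenerate (all spaces being finite-dimensional), the validity of each dual identity for all $v^*\in V^*$ is equivalent to the equality of the corresponding vectors in $V$ for all $v$, that is, to \eqref{eq:admrep1} and \eqref{eq:admrep2} respectively. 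This delivers both implications simultaneously.

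There is no genuine obstacle here; the argument is a dualization exercise and the only point demanding care is the bookkeeping of the actions. Because passing to $V^*$ interchanges the left and right actions, the $l$-type condition \eqref{eq:repd1} on the dual bimodule is governed by $r^*$ and produces \eqref{eq:admrep1} (an identity in $r$), while the $r$-type condition \eqref{eq:repd2} is governed by $l^*$ and produces \eqref{eq:admrep2} (an identity in $l$). Keeping this swap straight is precisely what explains why the roles of $r$ and $l$ in \eqref{eq:admrep1}--\eqref{eq:admrep2} appear exchanged relative to the dual equations, and it is the one place where a sign of carelessness would corrupt the statement.
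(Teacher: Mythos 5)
Your proposal is correct and is essentially the paper's own argument: both reduce the statement to dualizing the two conditions of Definition~\ref{def:repd} for the quadruple $(V^*, r^*, l^*, \Pi^*)$, with the left/right swap handled exactly as you describe. The only cosmetic difference is that the paper phrases the key step as an operator identity, namely $\beta_k^* r(a)^* - r(a)^* \beta_k^* = (r(a)\beta_k - \beta_k r(a))^*$ together with injectivity of the transpose, whereas you pair against arbitrary $v \in V$, $v^* \in V^*$ and invoke nondegeneracy of the pairing --- the same fact expressed element-wise.
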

\begin{proof}
	Let $a \in A$. Then we have
	\begin{equation*}
		\beta_k^* r(a)^* - r(a)^* \beta_k^* = (r(a) \beta_k - \beta_k r(a))^*.
	\end{equation*}
	Hence $\beta_k^* r(a)^* - r(a)^* \beta_k^* = r(\partial_k(a))^*$ if and only if $r(a) \beta_k - \beta_k r(a) = r(\partial_k(a))$.
	Therefore equation~\eqref{eq:repd1} holds in which $\alpha_k$ is replaced by $\beta_k^*$ and $l$ is replaced by $r^*$, if and only if equation~\eqref{eq:admrep1} holds.
	Similarly, equation~\eqref{eq:repd2} holds in which $\alpha_k$ is replaced by $\beta_k^*$ and $r$ is replaced by $l^*$, if and only if equation~\eqref{eq:admrep2} holds.
	Thus the conclusion follows.
\end{proof}

We introduce a notion to conceptualize this basic property.
\begin{Definition}\label{def:admissible}
 Let $(A, \cdot, \Phi = \{\partial_k\}_{k=1}^m)$ be a differential algebra.
 Let $(V, l, r)$ be an $A$-bimodule
 and $\Pi = \{\beta_k\colon V \to V\}_{k=1}^m$ be a set of commuting linear maps.
 Then we say \textit{$(V,l,r,\Pi)$ is an admissible quadruple of the differential algebra $(A, \cdot, \Phi)$} or
$\Pi$ is \textit{admissible to $(A, \cdot, \Phi)$ on $(V, l, r)$}
 if $(V^*, r^*, l^*, \Pi^*)$ is a bimodule of $(A, \cdot, \Phi)$ or equivalently equations~\eqref{eq:admrep1}--\eqref{eq:admrep2} hold.
 When $(V, l, r)$ is taken to be $(A, L_A, R_A)$, we say $\Pi$ is \textit{admissible to $(A, \cdot, \Phi)$} or $(A, \cdot, \Phi)$ is \textit{$\Pi$-admissible}.
\end{Definition}

Immediately we have the following conclusion by Lemma~\ref{lem:dualrep}.

\begin{Corollary}\label{cor:qadm}
 Let $(A, \cdot, \Phi = \{\partial_k\}_{k=1}^m)$ be a differential algebra
 and $\Psi = \{\eth_k\colon A \to A \}_{k=1}^m$ be a set of commuting linear maps.
 Then
 $\Psi$ is admissible to $(A, \cdot, \Phi)$ if and only if for all $k=1, \dots, m$,
 the following equations hold:
 \begin{gather}
 \eth_k(a) \cdot b = a \cdot \partial_k(b) + \eth_k(a \cdot b), \label{eq:qadm1} \\
 a \cdot \eth_k(b) = \partial_k(a) \cdot b + \eth_k(a \cdot b), \qquad
 \forall a,b \in A. \label{eq:qadm2}
 \end{gather}
\end{Corollary}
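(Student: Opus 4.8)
The plan is to obtain this statement as a direct specialization of Lemma~\ref{lem:dualrep}. By Definition~\ref{def:admissible}, saying that $\Psi$ is admissible to $(A, \cdot, \Phi)$ means precisely that $(A, L_A, R_A, \Psi)$ is an admissible quadruple, i.e., that $(A^*, R_A^*, L_A^*, \Psi^*)$ is a bimodule of $(A, \cdot, \Phi)$. So I would simply invoke Lemma~\ref{lem:dualrep} with the $A$-bimodule $(V, l, r)$ taken to be the regular bimodule $(A, L_A, R_A)$ and the commuting family $\Pi = \{\beta_k\}_{k=1}^m$ taken to be $\Psi = \{\eth_k\}_{k=1}^m$. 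The lemma then asserts that this admissibility is equivalent to equations~\eqref{eq:admrep1}--\eqref{eq:admrep2} holding with $l = L_A$, $r = R_A$, and each $\beta_k = \eth_k$.

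The remaining step is purely to rewrite those two equations using the defining identities $L_A(a)b = a \cdot b$ and $R_A(a)b = b \cdot a$. Substituting into equation~\eqref{eq:admrep2} (the $l$-equation) immediately yields $a \cdot \eth_k(b) = \partial_k(a) \cdot b + \eth_k(a \cdot b)$, which is exactly~\eqref{eq:qadm2}. Substituting into equation~\eqref{eq:admrep1} (the $r$-equation) yields $\eth_k(v) \cdot a = v \cdot \partial_k(a) + \eth_k(v \cdot a)$ for all $a, v \in A$, and relabeling the two free variables then produces~\eqref{eq:qadm1}.

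The only point that requires any care — and the closest thing to an obstacle — is the order reversal coming from right multiplication, since $R_A(a)$ acts on the right: equation~\eqref{eq:admrep1} is stated with the module element $v$ being acted on by $a$, so after the substitution $R_A(a)v = v \cdot a$ one obtains an identity in which $a$ sits on the right of the product, and the symmetric form~\eqref{eq:qadm1} emerges only after swapping the roles of the two arguments. Once this bookkeeping is handled, the equivalence is immediate, and no computation beyond unwinding the definitions of $L_A$ and $R_A$ is needed.
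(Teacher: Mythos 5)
Your proof is correct and takes essentially the same route as the paper, which deduces Corollary~\ref{cor:qadm} ``immediately'' from Lemma~\ref{lem:dualrep} via precisely the specialization $(V,l,r,\Pi)=(A,L_A,R_A,\Psi)$ that you spell out. The only detail the paper leaves implicit is the relabeling of variables forced by $R_A(a)v = v\cdot a$ when passing from equation~\eqref{eq:admrep1} to equation~\eqref{eq:qadm1}, and you handle that correctly.
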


\begin{Example}\label{ex:admissible}
 Let $(V, l, r, \{\alpha_k\}_{k=1}^m)$ be a bimodule of a differential algebra $(A, \cdot, \Phi = \{\partial_k\}_{k=1}^m)$.
 Let $(\theta_1, \dots, \theta_m)$ $\in \mathbb{F}^m$ be given.
 Then by a straightforward checking, we have the following results.
 \begin{enumerate}\itemsep=0pt
 \item Suppose that $\theta_k = \pm 1$ ($1 \leq k \leq m$).
 Then $\{\theta_k \alpha_k\}_{k=1}^m$ is admissible to $(A, \cdot, \Phi)$ on $(V, l, r)$ if and only if
 \begin{gather*}
 (1 + \theta_k)r(\partial_k(a)) v = 0, \qquad
 (1 + \theta_k)l(\partial_k(a)) v = 0, \qquad
 \forall a \in A, \ v \in V, \ 1 \leq k \leq m.
 \end{gather*}
 \item $\{-\alpha_k + \theta_k \id_V\}_{k=1}^m$ is admissible to $(A, \cdot, \Phi)$ on $(V, l, r)$.
 In particular, $\{-\partial_k + \theta_k \id_A \}_{k=1}^m$ is admissible to $(A, \cdot, \Phi)$.
 \item Suppose that $\alpha_k$ is invertible and $\theta_k \neq 0$ for all $k=1, \dots, m$.
 Then $\big\{\theta_k \alpha_k^{-1}\big\}_{k=1}^m$ is admissible to $(A, \cdot, \Phi)$ on $(V, l, r)$ if and only if
 \begin{equation*}
 \theta_k r(\partial_k(a)) \alpha_k^{-1}(v) = \alpha_k(r(\partial_k(a))v), \qquad
 \theta_k l(\partial_k(a)) \alpha_k^{-1}(v) = \alpha_k(l(\partial_k(a))v),
 \end{equation*}
 	for all $a \in A$, $v \in V$, $1 \leq k \leq m$.
 \end{enumerate}
 Hence for all $(\theta_1, \dots, \theta_m) \in \mathbb{F}^m$,
 $(V^*, r^*, l^*, \{-\alpha_k^* + \theta_k \id_{V^*} \}_{k=1}^m)$ is a bimodule of $(A, \cdot, \Phi)$.
 Thus $(A \ltimes_{r^*, l^*} V^*, \cdot, \Phi + \{-\alpha_k^* + \theta_k \id_{V^*}\}_{k=1}^m \})$ is a differential algebra.
\end{Example}

\subsection{Matched pairs of differential algebras}\label{subsec:matched_pair}
We recall the concept of a matched pair of algebras.

\begin{Definition} A \textit{matched pair of algebras} consists of algebras $(A, \cdot_A)$ and $(B, \cdot_B)$,
 together with linear maps $l_A, r_A\colon A \to \End(B)$ and $l_B, r_B: B \to \End(A)$
 such that $(A \oplus B, \star)$ is an algebra, where $\star$ is defined by
 \begin{gather}
 (a + b) \star (a^\prime + b^\prime) := (a \cdot_A a^\prime + r_B(b^\prime)a + l_B(b)a^\prime) + (b \cdot_B b^\prime + l_A(a)b^\prime + r_A(a^\prime)b), \label{eq:multsum}
 \end{gather}
for all $a, a^\prime \in A$, $b, b^\prime \in B$. The matched pair of algebras is denoted by $((A, \cdot_A), (B, \cdot_B), l_A, r_A,\allowbreak l_B, r_B)$ and the resulting algebra $(A \oplus B, \star)$ is denoted by $\big(A \bowtie^{l_B,r_B}_{l_A,r_A} B, \star\big)$ or simply $(A \bowtie B, \star)$.
\end{Definition}

Note that such a notion of a matched pair of algebras is equivalent to the one given in~\cite[Definition~2.1.5]{BaiDouble}.
Moreover, for a matched pair of algebras $((A, \cdot_A), (B, \cdot_B), l_A, r_A, l_B, r_B)$, $(A, l_B, r_B)$ is a $B$-bimodule and $(B, l_A, r_A)$ is an $A$-bimodule.

\begin{Definition}
 Let $(A, \cdot_A, \Phi_A)$ and $(B, \cdot_B, \Phi_B)$ be two differential algebras.
 Suppose that $l_A, r_A\colon A \to \End(B)$ and $l_B, r_B\colon B \to \End(A)$ are linear maps.
 If the following conditions are satisfied:
 \begin{enumerate}\itemsep=0pt
 \item[(1)] $(A, l_B, r_B, \Phi_A)$ is a bimodule of $(B, \cdot_B, \Phi_B)$;
 \item[(2)] $(B, l_A, r_A, \Phi_B)$ is a bimodule of $(A, \cdot_A, \Phi_A)$;
 \item[(3)] $((A, \cdot_A), (B, \cdot_B), l_A, r_A, l_B, r_B)$ is a matched pair of algebras,
 \end{enumerate}
 then $((A, \cdot_A, \Phi_A), (B, \cdot_B, \Phi_B), l_A, r_A, l_B, r_B)$ is called a \textit{matched pair of differential algebras}.
\end{Definition}

\begin{Theorem}\label{thm:matda}
 Let $(A, \cdot_A, \Phi_A)$ and $(B, \cdot_B, \Phi_B)$ be two differential algebras.
 Suppose that $((A, \cdot_A), (B, \cdot_B)$, $l_A$, $r_A$, $l_B, r_B)$ is a matched pair of algebras and
 $(A \bowtie B, \star)$ is the algebra defined by equation~\eqref{eq:multsum}.
 Then $(A \bowtie B, \star, \Phi_A + \Phi_B)$ is a differential algebra if and only if
 $((A, \cdot_A, \Phi_A), (B, \cdot_B, \Phi_B), l_A, r_A$, $l_B$, $r_B)$ is a matched pair of differential algebras.
 Further, any differential algebra whose underlying vector space is the linear direct sum of two differential subalgebras is obtained from a matched pair of these two differential subalgebras.
\end{Theorem}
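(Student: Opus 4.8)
The plan is to treat the two assertions separately and to reduce the equivalence to an unpacking of the Leibniz rule. Write $\Phi_A = \{\partial_k^A\}_{k=1}^m$ and $\Phi_B = \{\partial_k^B\}_{k=1}^m$. Since the hypothesis already supplies a matched pair of algebras, $(A \bowtie B, \star)$ is an algebra by construction, so the only point at issue is whether $\Phi_A + \Phi_B = \{\partial_k^A + \partial_k^B\}_{k=1}^m$ is a commuting family of derivations of $(A \bowtie B, \star)$. Commutativity is immediate, since $\partial_k^A + \partial_k^B$ acts componentwise and each of $\Phi_A$, $\Phi_B$ is already commuting; hence everything comes down to the Leibniz identity for each $\partial_k^A + \partial_k^B$ on a product $(a+b)\star(a'+b')$.

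First I would expand this identity using \eqref{eq:multsum} and project onto the $A$- and $B$-components. In the $A$-component the contribution of the pure term $a \cdot_A a'$ cancels against the right-hand side because $\partial_k^A$ is a derivation of $(A,\cdot_A)$, leaving only the mixed terms built from $l_B$ and $r_B$. By linearity of $l_B, r_B$ one may set $b=0$ and then $b'=0$ to decouple these, obtaining the two relations
\begin{gather*}
 \partial_k^A(r_B(b')a) = r_B(\partial_k^B(b'))a + r_B(b')\partial_k^A(a), \\
 \partial_k^A(l_B(b)a') = l_B(\partial_k^B(b))a' + l_B(b)\partial_k^A(a').
\end{gather*}
These are precisely \eqref{eq:repd2} and \eqref{eq:repd1} for the quadruple $(A, l_B, r_B, \Phi_A)$ regarded over $(B, \cdot_B, \Phi_B)$; that is, the $A$-component Leibniz rule holds for all $k$ if and only if condition~(1) in the definition of a matched pair of differential algebras holds. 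The symmetric computation on the $B$-component, with the roles of the two algebras interchanged, shows that the $B$-component Leibniz rule is equivalent to condition~(2). As condition~(3) is exactly the standing matched-pair-of-algebras hypothesis, the three conditions together give the asserted equivalence.

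For the second assertion I would start from a differential algebra $(D, \cdot, \Phi)$ whose underlying space is $A \oplus B$ with $A$ and $B$ differential subalgebras. Define $l_B, r_B\colon B \to \End(A)$ by taking $l_B(b)a$ and $r_B(b)a$ to be the $A$-components of $b \cdot a$ and $a \cdot b$ respectively, and symmetrically define $l_A, r_A\colon A \to \End(B)$ using the $B$-components. The standard theory of matched pairs of algebras (equivalent, by the remark following its definition, to \cite[Definition~2.1.5]{BaiDouble}) then makes $((A,\cdot_A),(B,\cdot_B),l_A,r_A,l_B,r_B)$ a matched pair of algebras with $(D,\cdot) = (A \bowtie B, \star)$. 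Because $A$ and $B$ are differential subalgebras, every derivation in $\Phi$ preserves both summands and hence restricts to families $\Phi_A$ on $A$ and $\Phi_B$ on $B$, so $\Phi = \Phi_A + \Phi_B$ and $(D,\cdot,\Phi)=(A\bowtie B,\star,\Phi_A+\Phi_B)$ is a differential algebra by hypothesis. Applying the equivalence just established identifies it as arising from a matched pair of differential algebras.

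I expect the only genuine content to lie in the second paragraph: checking that the mixed terms separate exactly into \eqref{eq:repd1}--\eqref{eq:repd2} with no surviving cross-terms between $l_B$ and $r_B$. This is where care is needed, though it is mechanical once the pure derivation terms cancel. The second assertion then costs essentially no extra work, reducing to the first through the identification $\Phi = \Phi_A + \Phi_B$.
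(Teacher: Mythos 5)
Your proposal matches the paper's proof essentially step for step: both reduce the equivalence to the Leibniz rule for each $\partial_{A,k}+\partial_{B,k}$ on $(a+b)\star(a'+b')$, with the converse obtained by specializing components to zero to isolate exactly the bimodule identities \eqref{eq:repd1}--\eqref{eq:repd2} for $(A,l_B,r_B,\Phi_A)$ and $(B,l_A,r_A,\Phi_B)$. Your spelled-out treatment of the second assertion (restricting the derivations to the two differential subalgebras so that $\Phi=\Phi_A+\Phi_B$) is just the routine argument the paper dismisses as straightforward, so there is no substantive difference.
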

\begin{proof}
 We only need to prove the first part since the second part follows straightforwardly. Set $\Phi_A = \{\partial_{A,k}\}_{k=1}^m$ and $\Phi_B = \{\partial_{B,k}\}_{k=1}^m$.

 ($\Longleftarrow$)
 Suppose that $((A, \cdot_A, \Phi_A), (B, \cdot_B, \Phi_B), l_A, r_A, l_B, r_B)$ is a matched pair of differential algebras.
 Then for all $a, a^\prime \in A$, $b, b^\prime \in B$ and $k=1,\dots, m$, we have
 \begin{gather*}
(\partial_{A,k} + \partial_{B,k})(a + b) \star (a^\prime + b^\prime) + (a + b) \star (\partial_{A,k} + \partial_{B,k})(a^\prime + b^\prime) \\
 \qquad{} = (\partial_{A,k}(a) \cdot_A a^\prime + r_B(b^\prime)\partial_{A,k}(a) + l_B(\partial_{B,k}(b))a^\prime) \\
 \qquad \quad{}+ (\partial_{B,k}(b) \cdot_B b^\prime + l_A(\partial_{A,k}(a))b^\prime + r_A(a^\prime)\partial_{B,k}(b)) \\
 \qquad \quad{} + (a \cdot_A \partial_{A,k}(a^\prime) + r_B(\partial_{B,k}(b^\prime))a + l_B(b)\partial_{A,k}(a^\prime)) \\
 \qquad \quad{} + (b \cdot_B \partial_{B,k}(b^\prime) + l_A(a)\partial_{B,k}(b^\prime) + r_A(\partial_{A,k}(a^\prime))b) \\
\quad \overset{\eqref{eq:repd1}, \, \eqref{eq:repd2}}{=} \partial_{A,k}(a \cdot_A a^\prime) + \partial_{A,k}(r_B(b^\prime)a) + \partial_{A,k}(l_B(b)a^\prime) \\
 \qquad\quad{} + \partial_{B,k}(b \cdot_B b^\prime) + \partial_{B,k}(l_A(a)b^\prime) + \partial_{B,k}(r_A(a^\prime)b) \\
 \qquad {} = (\partial_{A,k}+\partial_{B,k})((a + b) \star (a^\prime + b^\prime)).
\end{gather*}
 Hence $\Phi_A + \Phi_B$ is a set of commuting derivations on the algebra $(A \bowtie B, \star)$.

 ($\Longrightarrow$)
 Suppose that $\Phi_A + \Phi_B$ is a set of commuting derivations on $(A \bowtie B, \star)$.
 Then
 \begin{gather*}
\begin{split}
& (\partial_{A, k}+\partial_{B,k})((a + b) \star (a^\prime + b^\prime))\\
& \qquad{} = (\partial_{A, k} + \partial_{B,k})(a + b) \star (a^\prime + b^\prime) + (a+b)\star (\partial_{A, k}+\partial_{B,k})(a^\prime + b^\prime),
 \end{split}
 \end{gather*}
 for all $a, a^\prime \in A$, $b, b^\prime \in B$ and $k=1, \dots, m$.
 Taking $a^\prime = b = 0$ and $a = b^\prime = 0$ in the above equation respectively, we obtain
 \begin{gather*}
\partial_{A,k}(r_B(b^\prime)a) = r_B(b^\prime)\partial_{A,k}(a) + r_B(\partial_{B,k}(b^\prime))a, \\
 \partial_{B,k}(l_A(a)b^\prime) = l_A(\partial_{A,k}(a))b^\prime + l_A(a)\partial_{B,k}(b^\prime), \\
 \partial_{A,k}(l_B(b)a^\prime) = l_B(\partial_{B,k}(b))a^\prime + l_B(b)\partial_{A,k}(a^\prime), \\
 \partial_{B,k}(r_A(a^\prime)b) = r_A(a^\prime)\partial_{B,k}(b) + r_A(\partial_{A,k}(a^\prime))b.
 \end{gather*}
 Thus $(A, l_B, r_B, \Phi_A)$ is a bimodule of $(B, \cdot_B, \Phi_B)$ and $(B, l_A, r_A, \Phi_B)$ is a bimodule of $(A, \cdot_A, \Phi_A)$.
 Hence $((A, \cdot_A, \Phi_A), (B, \cdot_B, \Phi_B), l_A, r_A, l_B, r_B)$ is a matched pair of differential algebras.
\end{proof}

\section[Differential ASI bialgebras and coherent derivations on ASI bialgebras]{Differential ASI bialgebras and coherent derivations\\ on ASI bialgebras}\label{sec:dasi}

We introduce the notions of a double construction of differential
Frobenius algebra and a differential antisymmetric infinitesimal
(ASI) bialgebra, and give their equivalence in terms of matched
pairs of differential algebras. The notion of a coherent
derivation on an ASI bialgebra is introduced as an equivalent
structure of a differential ASI bialgebra. The set of coherent
derivations on an ASI bialgebra forms a Lie algebra, which is the
Lie algebra of the Lie group consisting of coherent automorphisms on this ASI bialgebra.

\subsection{Double constructions of differential Frobenius algebras}\label{subsec:double}
We recall the concept of a double construction of Frobenius algebra \cite{BaiDouble}.

\begin{Definition}\label{def:invariant_bilinear_form}
 A bilinear form $\mathfrak{B}(\ ,\ )$ on an algebra $(A, \cdot)$ is called \textit{invariant} if
 \begin{equation*}
 \mathfrak{B}(a \cdot b, c) = \mathfrak{B}(a, b \cdot c), \qquad
 \forall a, b, c \in A.
 \end{equation*}
 A \textit{Frobenius algebra} $(A, \cdot, \mathfrak{B})$ is an algebra $(A, \cdot)$ with a nondegenerate invariant bilinear form $\mathfrak{B}(\ , \ )$.
 A Frobenius algebra $(A, \cdot, \mathfrak{B})$ is called \textit{symmetric} if $\mathfrak{B}(\ , \ )$ is symmetric.
\end{Definition}

Let $(A, \cdot)$ be an algebra.
Suppose that there is an algebra structure $\circ$ on its dual space $A^*$ and an algebra structure $\star$ on the direct sum $A \oplus A^*$ of the underlying vector spaces $A$ and $A^*$, which contains both $(A, \cdot)$ and $(A^*, \circ)$ as subalgebras.
Define a bilinear form on $A \oplus A^*$ by
\begin{equation}
 \mathfrak{B}_d(a + a^*, b + b^*) := \langle a, b^* \rangle + \langle a^*, b \rangle , \qquad
 \forall a, b \in A, \ a^*, b^* \in A^*. \label{eq:bilform}
\end{equation}
If $\mathfrak{B}_d$ is invariant, that is,
$(A \oplus A^*, \star, \mathfrak{B}_d)$ is a symmetric Frobenius algebra,
then the Frobenius algebra $(A \oplus A^*, \star, \mathfrak{B}_d)$ is called a
\textit{double construction of Frobenius algebra associated to $(A, \cdot)$ and $(A^*, \circ)$},
and denoted by $(A \bowtie A^*, \star , \mathfrak{B}_d)$.
The notation $A \bowtie A^*$ is justified since the algebra structure on $A \oplus A^*$ comes from a matched pair of algebras, that is, the product $\star$ on $A \oplus A^*$ is given by equation~\eqref{eq:multsum} for certain matched pair of algebras which in fact is shown to be $((A, \cdot), (A^*, \circ), R^*_A, L^*_A, R^*_{A^*}, L^*_{A^*})$.

We extend these notions to the context of differential algebras.
\begin{Definition}\label{def:differential_Frobenius_algebra}
 A \textit{differential Frobenius algebra} is a quadruple $(A, \cdot, \Phi, \mathfrak{B})$, where $(A, \cdot$, $\Phi = \{\partial_k\}_{k=1}^m)$ is a differential algebra
 and $(A, \cdot, \mathfrak{B})$ is a Frobenius algebra. It is
 called \textit{symmetric} if $\mathfrak B$ is symmetric.
 For all $k=1,\dots, m$, let $\hat{\partial_k}$ be the adjoint linear operator of $\partial_k$ under the nondegenerate bilinear form $\mathfrak{B}$:
 \begin{equation*}
 \mathfrak{B}(\partial_k(a), b) = \mathfrak{B}\big(a, \hat{\partial_k}(b)\big), \qquad
 \forall a, b \in A.
 \end{equation*}
 We call $\hat{\Phi} := \big\{\hat{\partial_k}\big\}_{k=1}^m$ the \textit{adjoint of $\Phi = \{\partial_k\}_{k=1}^m$ with respect to $\mathfrak B$}.
\end{Definition}

\begin{Proposition}\label{pro:FroRep}
 Let $(A, \cdot, \Phi, \mathfrak{B})$ be a symmetric differential Frobenius algebra.
 Let $\hat{\Phi}$ be the adjoint of $\Phi$ with respect to $\mathfrak{B}$.
 Then $\hat{\Phi}$ is admissible to $(A, \cdot, \Phi)$, or equivalently,
 $\big(A^*, R_A^*, L_A^*, \hat{\Phi}^*\big)$~is a~bimodule of the differential algebra $(A, \cdot, \Phi)$.
 Moreover as bimodules of $(A, \cdot, \Phi)$, $(A, L_A, R_A, \Phi)$ and $\big(A^*, R_A^*, L_A^*, \hat{\Phi}^*\big)$ are equivalent.
 Conversely let $(A, \cdot, \Phi)$ be a differential algebra and $\Psi$ be admissible to $(A, \cdot, \Phi)$.
 If the resulting bimodule $(A^*, R_A^*, L_A^*, \Psi^*)$ of $(A, \cdot, \Phi)$ is equivalent to $(A, L_A, R_A, \Phi)$,
 then there exists a nondegenerate invariant bilinear form $\mathfrak{B}$ on $A$ such that $\hat{\Phi} = \Psi$.
\end{Proposition}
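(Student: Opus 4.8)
The plan is to exploit the isomorphism $A \cong A^*$ furnished by the nondegenerate form $\mathfrak{B}$ to transport the canonical bimodule $(A, L_A, R_A, \Phi)$ onto the dual space, so that admissibility, the equivalence, and the converse all flow from a single map. For the forward direction, I would first define $\varphi\colon A \to A^*$ by $\langle \varphi(a), b\rangle := \mathfrak{B}(a, b)$; since $\mathfrak{B}$ is nondegenerate, $\varphi$ is a linear isomorphism. I then claim $\varphi$ satisfies the three intertwining relations of Definition~\ref{def:repd} between $(A, L_A, R_A, \Phi)$ and $\big(A^*, R_A^*, L_A^*, \hat{\Phi}^*\big)$, namely $\varphi(a \cdot v) = R_A^*(a)\varphi(v)$, $\varphi(v \cdot a) = L_A^*(a)\varphi(v)$ and $\varphi(\partial_k(v)) = \hat{\partial_k}^*(\varphi(v))$. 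Pairing each against an arbitrary $b \in A$, the second reduces to the invariance $\mathfrak{B}(v \cdot a, b) = \mathfrak{B}(v, a \cdot b)$, the first to the cyclic identity $\mathfrak{B}(a \cdot v, b) = \mathfrak{B}(v, b \cdot a)$ (which follows from invariance together with symmetry of $\mathfrak{B}$), and the third is exactly the transpose of the defining relation $\mathfrak{B}(\partial_k(a), b) = \mathfrak{B}\big(a, \hat{\partial_k}(b)\big)$ of the adjoint.

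Next I would upgrade this to a statement about bimodules of the differential algebra. Since $(A, L_A, R_A, \Phi)$ is a bimodule of $(A, \cdot, \Phi)$, I transport the axioms \eqref{eq:repd1}--\eqref{eq:repd2} through $\varphi$: writing an arbitrary dual vector as $\varphi(v)$, using $R_A^*(a)\varphi(v) = \varphi(a \cdot v)$ and $\hat{\partial_k}^* = \varphi\,\partial_k\,\varphi^{-1}$, expanding $\partial_k(a \cdot v)$ by the Leibniz rule, and pulling everything back shows that $\big(A^*, R_A^*, L_A^*, \hat{\Phi}^*\big)$ again satisfies \eqref{eq:repd1}--\eqref{eq:repd2}; that the $\hat{\partial_k}^*$ commute follows from $\widehat{\partial_j\partial_k} = \hat{\partial_k}\hat{\partial_j}$ and the commutativity of $\Phi$. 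Hence $\big(A^*, R_A^*, L_A^*, \hat{\Phi}^*\big)$ is a bimodule, which by Definition~\ref{def:admissible} is precisely the admissibility of $\hat{\Phi}$ (equivalently one may verify \eqref{eq:qadm1}--\eqref{eq:qadm2} directly via Corollary~\ref{cor:qadm}), and $\varphi$ itself exhibits the asserted equivalence.

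For the converse I would reverse the construction. Given an equivalence $\varphi\colon A \to A^*$ between $(A, L_A, R_A, \Phi)$ and $(A^*, R_A^*, L_A^*, \Psi^*)$, set $\mathfrak{B}(a, b) := \langle \varphi(a), b\rangle$, which is nondegenerate because $\varphi$ is bijective. Pairing the intertwining relation $\varphi(v \cdot a) = L_A^*(a)\varphi(v)$ with $b$ yields exactly $\mathfrak{B}(v \cdot a, b) = \mathfrak{B}(v, a \cdot b)$, that is, invariance of $\mathfrak{B}$ (note that symmetry is neither needed nor claimed here). Pairing $\varphi(\partial_k(a)) = \eth_k^*(\varphi(a))$ with $b$ gives $\mathfrak{B}(\partial_k(a), b) = \mathfrak{B}(a, \eth_k(b))$, and comparing with the defining relation of the adjoint and invoking nondegeneracy forces $\hat{\partial_k} = \eth_k$, so $\hat{\Phi} = \Psi$.

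The computations are uniformly of the ``pair against a third vector and apply (in)variance'' type, so the one place I would be most careful is the transpose bookkeeping: the dual bimodule swaps $L_A \leftrightarrow R_A^*$ and $R_A \leftrightarrow L_A^*$, and I must track precisely where symmetry of $\mathfrak{B}$ enters (only the first intertwining relation in the forward direction) as against where mere invariance suffices (the whole converse). I expect the transport of the differential axioms \eqref{eq:repd1}--\eqref{eq:repd2} along $\varphi$ to be the main step worth spelling out, since it is exactly what promotes a plain bimodule equivalence to an equivalence of bimodules of the differential algebra.
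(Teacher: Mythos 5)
Your proposal is correct and follows essentially the same route as the paper: the same isomorphism $\varphi\colon A \to A^*$, $\langle \varphi(a), b\rangle = \mathfrak{B}(a,b)$, the same three pairing computations for the intertwining relations (with symmetry of $\mathfrak{B}$ entering only in the $R_A^*$ relation, exactly as you flag), and the same reversal for the converse, where the paper likewise claims only a nondegenerate invariant form with $\hat{\Phi}=\Psi$. The one organizational difference is minor: the paper establishes admissibility of $\hat{\Phi}$ first, by directly expanding $0=\mathfrak{B}(\partial_k(a\cdot b)-\partial_k(a)\cdot b-a\cdot\partial_k(b),c)$ via invariance and the adjoint relation to obtain equations~\eqref{eq:qadm1}--\eqref{eq:qadm2}, whereas you derive it afterwards by transporting the bimodule axioms through $\varphi$ --- both arguments rest on the same identities and are equally valid.
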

\begin{proof}
 Suppose that $(A, \cdot, \Phi= \{\partial_k\}_{k=1}^m, \mathfrak{B})$ is a symmetric differential Frobenius algebra.
 Then for all $a, b, c \in A$ and $k=1, \dots, m$, we have
 \begin{align*}
 0 &\overset{\eqref{eq:derivation}}{=} \mathfrak{B}(\partial_k(a \cdot b), c) - \mathfrak{B}(\partial_k(a) \cdot b, c)-\mathfrak{B}(a \cdot \partial_k(b), c) \\
 &\overset{\hphantom{(2.1)}}{=} \mathfrak{B}(a \cdot b, \hat{\partial_k}(c)) - \mathfrak{B}(\partial_k(a), b \cdot c) - \mathfrak{B}(a, \partial_k(b) \cdot c) \\
 &\overset{\hphantom{(2.1)}}{=} \mathfrak{B}(a, b \cdot \hat{\partial_k}(c)) - \mathfrak{B}(a, \hat{\partial_k}(b \cdot c)) - \mathfrak{B}(a, \partial_k(b) \cdot c).
 \end{align*}
 Thus $b \cdot \hat{\partial_k}(c) = \hat{\partial_k}(b \cdot c) + \partial_k(b) \cdot c$,
 that is, equation~\eqref{eq:qadm2} holds.
 Similarly, equation~\eqref{eq:qadm1} holds.
 Hence $\hat{\Phi}$ is admissible to $(A, \cdot, \Phi)$, that is,
 $\big(A^*, R_A^*, L_A^*, \hat{\Phi}^*\big)$ is a bimodule of $(A, \cdot, \Phi)$.
 Define a~linear map $\varphi\colon A \to A^*$ by
 \begin{equation*}
 \varphi(a)(b) := \langle \varphi(a), b \rangle = \mathfrak{B}(a, b), \qquad
 \forall a, b \in A.
 \end{equation*}
 Obviously, $\varphi$ is a linear isomorphism.
 Moreover for all $a, b, c \in A$ and $k=1, \dots, m$, we have
 \begin{gather*}
 \langle \varphi(L_A(a)b), c \rangle = \langle \varphi(a \cdot b), c \rangle = \mathfrak{B}(a \cdot b, c) = \mathfrak{B}(b, c \cdot a) = \langle \varphi(b),c \cdot a \rangle = \langle R_A^*(a)\varphi(b), c\rangle, \\
 \langle\varphi(R_A(b)a), c \rangle = \langle \varphi(a \cdot b), c \rangle = \mathfrak{B}(a \cdot b,c)=\mathfrak{B}(a,b \cdot c) = \langle \varphi(a), b \cdot c\rangle = \langle L_A^*(b)\varphi(a), c\rangle, \\
 \langle \varphi(\partial_k(a)), b \rangle = \mathfrak{B}(\partial_k(a), b) = \mathfrak{B}(a, \hat{\partial_k}(b)) = \langle \varphi(a), \hat{\partial_k}(b)\rangle = \langle \hat{\partial_k}^*\varphi(a), b \rangle.
 \end{gather*}
 Hence $(A, L_A, R_A, \Phi)$ and $\big(A^*, R_A^*, L_A^*, \hat{\Phi}^*\big)$ are equivalent as bimodules of $(A, \cdot, \Phi)$.

 Conversely, suppose that $\varphi\colon A \to A^*$ is the linear isomorphism giving the equivalence between $(A, L_A$, $R_A$, $\Phi)$ and $(A^*, R_A^*, L_A^*, \Psi^*)$.
 Define a bilinear form $\mathfrak{B}$ on $A$ by
 \begin{equation*}
 \mathfrak{B}(a, b) := \langle \varphi(a),b \rangle , \qquad \forall a, b \in A.
 \end{equation*}
 Then by a similar argument as above,
 we show that $\mathfrak{B}$ is a nondegenerate invariant bilinear form on $(A, \cdot)$
 such that $\hat{\Phi} = \Psi$.
\end{proof}

\begin{Definition}
 Let $(A, \cdot, \Phi)$ be a differential algebra.
 Suppose that there is a differential algebra structure $(A^*, \circ, \Psi^*)$ on the dual space $A^{*}$.
 A \textit{double construction of differential Frobenius algebra associated to $(A, \cdot, \Phi)$ and $(A^*, \circ, \Psi^*)$} is
 a double construction of Frobenius algebra $(A \bowtie A^*,\allowbreak \star, \mathfrak{B}_d)$
 associated to $(A, \cdot)$ and $(A^*, \circ)$
 such that $(A \bowtie A^*, \star, \Phi + \Psi^*)$ is a differential algebra,
 which is denoted by $(A \bowtie A^*, \star, \Phi + \Psi^*, \mathfrak{B}_d)$.
\end{Definition}

\begin{Lemma}\label{lem:douadm}
 Let $(A \bowtie A^*, \star, \Phi + \Psi^*, \mathfrak{B}_d)$ be a double construction of differential Frobenius algebra associated to $(A, \cdot, \Phi)$ and $(A^*, \circ, \Psi^*)$. Then the following conclusions hold:
 \begin{enumerate}\itemsep=0pt
 \item[$(1)$] The adjoint $\widehat{\Phi + \Psi^*}$ of $\Phi + \Psi^*$ with respect to $\mathfrak{B}_d$ is $\Psi + \Phi^*$.
 Further $\Psi + \Phi^*$ is admissible to $(A \bowtie A^*,\star, \Phi + \Psi^*)$.
 \item[$(2)$] $\Psi$ is admissible to $(A, \cdot, \Phi)$.
 \item[$(3)$] $\Phi^*$ is admissible to $(A^*, \circ, \Psi^*)$.
 \end{enumerate}
\end{Lemma}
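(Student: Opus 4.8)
Write $\Phi = \{\partial_k\}_{k=1}^m$ and $\Psi = \{\eth_k\}_{k=1}^m$, so that $\Phi + \Psi^*$ is the family of operators $\partial_k + \eth_k^*$ on $A \oplus A^*$. The plan for part~$(1)$ is to compute the adjoint directly from the definition~\eqref{eq:bilform} of $\mathfrak{B}_d$ together with the defining property $\langle \eth_k^*(a^*), b\rangle = \langle a^*, \eth_k(b)\rangle$ of the transpose. First I would check, for all $a, b \in A$ and $a^*, b^* \in A^*$, that
\begin{align*}
\mathfrak{B}_d\big((\partial_k + \eth_k^*)(a + a^*), b + b^*\big)
&= \langle \partial_k(a), b^*\rangle + \langle \eth_k^*(a^*), b\rangle \\
&= \langle a, \partial_k^*(b^*)\rangle + \langle a^*, \eth_k(b)\rangle \\
&= \mathfrak{B}_d\big(a + a^*, (\eth_k + \partial_k^*)(b + b^*)\big),
\end{align*}
which shows that the adjoint of $\partial_k + \eth_k^*$ is $\eth_k + \partial_k^*$, i.e., $\widehat{\Phi + \Psi^*} = \Psi + \Phi^*$. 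Since $\mathfrak{B}_d$ is symmetric, $(A \bowtie A^*, \star, \Phi + \Psi^*, \mathfrak{B}_d)$ is a symmetric differential Frobenius algebra, so Proposition~\ref{pro:FroRep} immediately gives that its adjoint $\Psi + \Phi^*$ is admissible to $(A \bowtie A^*, \star, \Phi + \Psi^*)$, completing~$(1)$.

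For parts~$(2)$ and~$(3)$ the plan is to specialize the admissibility from~$(1)$, written in the form of Corollary~\ref{cor:qadm}, to the two subalgebras. Applying Corollary~\ref{cor:qadm} to $(A \bowtie A^*, \star, \Phi + \Psi^*)$ with the admissible family $\Psi + \Phi^* = \{\eth_k + \partial_k^*\}$ gives, for all $x, y \in A \oplus A^*$ and all $k$,
\begin{gather*}
(\eth_k + \partial_k^*)(x) \star y = x \star (\partial_k + \eth_k^*)(y) + (\eth_k + \partial_k^*)(x \star y), \\
x \star (\eth_k + \partial_k^*)(y) = (\partial_k + \eth_k^*)(x) \star y + (\eth_k + \partial_k^*)(x \star y).
\end{gather*}
Since $(A, \cdot)$ and $(A^*, \circ)$ are subalgebras of $(A \bowtie A^*, \star)$, each invariant under all the operators involved, I would restrict these identities. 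Setting $x = a$, $y = b \in A$ collapses the operators to $\eth_k$ and $\partial_k$ on $A$ and recovers exactly equations~\eqref{eq:qadm1}--\eqref{eq:qadm2} for $(A, \cdot, \Phi)$, so $\Psi$ is admissible to $(A, \cdot, \Phi)$, giving~$(2)$. Setting instead $x = a^*$, $y = b^* \in A^*$ collapses them to $\partial_k^*$ and $\eth_k^*$ on $A^*$ and recovers equations~\eqref{eq:qadm1}--\eqref{eq:qadm2} for the differential algebra $(A^*, \circ, \Psi^*)$, now with $\Phi^* = \{\partial_k^*\}$ playing the role of the candidate admissible family and $\Psi^* = \{\eth_k^*\}$ that of the derivations, so $\Phi^*$ is admissible to $(A^*, \circ, \Psi^*)$, giving~$(3)$.

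The argument is essentially bookkeeping, and I do not expect a genuine obstacle. The one place demanding care is tracking which operator acts on which summand: in particular the ``swap'' in the adjoint ($\partial_k \leftrightarrow \partial_k^*$ and $\eth_k \leftrightarrow \eth_k^*$), and the observation in~$(3)$ that, for $(A^*, \circ, \Psi^*)$, it is $\Phi^*$ that serves as the candidate admissible family while $\Psi^*$ supplies the derivations. Verifying that the restricted identities line up with the correct instances of~\eqref{eq:qadm1}--\eqref{eq:qadm2} is the only nontrivial matching to perform.
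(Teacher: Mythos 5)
Your proposal is correct and follows essentially the same route as the paper's proof: the identical direct computation of the adjoint of $\partial_k+\eth_k^*$ from equation~\eqref{eq:bilform}, the same appeal to Proposition~\ref{pro:FroRep} for the admissibility claim in~$(1)$, and the same restriction of the resulting identities for~$(2)$ and~$(3)$ (the paper sets $a^*=b^*=0$, respectively $a=b=0$, which is exactly your restriction to the invariant subalgebras $A$ and $A^*$ via Corollary~\ref{cor:qadm}). No gaps; the role-swap you flag in~$(3)$, with $\Phi^*$ as the admissible family and $\Psi^*$ as the derivations, is precisely what the paper's statement asserts.
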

\begin{proof}
	Set $\Phi=\{\partial_k\}_{k=1}^m$ and $\Psi^* = \{\eth_k^*\}_{k=1}^m$.
 Let $a, b \in A$, $a^*, b^* \in A^*$.

 (1)~By equation~\eqref{eq:bilform}, we have
 \begin{align*}
 	\mathfrak{B}_d((\partial_k + \eth_k^*)(a + a^*), b + b^*) &= \mathfrak{B}_d(\partial_k(a) + \eth_k^*(a^*), b + b^*) = \langle \partial_k(a), b^* \rangle + \langle \eth_k^*(a^*), b \rangle \\
 	& = \langle a, \partial_k^*(b^*) \rangle + \langle a^*, \eth_k(b) \rangle
 	= \mathfrak{B}_d(a + a^*, (\eth_k + \partial_k^*)(b + b^*)).
 \end{align*}
 Hence the adjoint $\widehat{\Phi + \Psi^*}$ of $\Phi + \Psi^*$ with respect to $\mathfrak{B}_d$ is $\Psi + \Phi^*$.
 By Proposition~\ref{pro:FroRep},
 $\Psi + \Phi^*$ is admissible to $(A \bowtie A^*, \star, \Phi + \Psi^*)$.

 (2)~By (1), $\Psi + \Phi^*$ is admissible to $(A \bowtie A^*, \star, \Phi + \Psi^*)$.
 Then by equations~\eqref{eq:qadm1} and \eqref{eq:qadm2}, we have
 \begin{gather*}
 (\eth_k + \partial_k^*)(a + a^*) \star (b + b^*) = (a + a^*) \star (\partial_k + \eth_k^*)(b + b^*) + (\eth_k + \partial_k^*)((a + a^*)\star (b + b^*)), \\
 (a + a^*) \star (\eth_k + \partial_k^*)(b + b^*) =(\partial_k + \eth_k^*)(b + b^*) \star (b + b^*) + (\eth_k + \partial_k^*)((a + a^*) \star (b + b^*)).
 \end{gather*}
 Taking $a^* = b^* = 0$ in the above equations,
 we show that $\Psi$ is admissible to $(A, \cdot, \Phi)$.

 (3) Taking $a = b = 0$ in the above equations,
 we show that $\Phi^*$ is admissible to $(A^*, \circ, \Psi^*)$.
\end{proof}

\begin{Theorem}\label{thm:doumat}
 Let $(A, \cdot, \Phi)$ be a differential algebra.
 Suppose that there is a differential algebra structure $(A^*, \circ, \Psi^*)$ on $A^*$.
 Then there is a double construction of differential Frobenius algebra
 $(A \bowtie A^*, \star, \Phi + \Psi^*, \mathfrak{B}_d)$ associated to $(A, \cdot, \Phi)$ and $(A^*, \circ, \Psi^*)$ if and only if
 $\big((A, \cdot, \Phi), (A^*, \circ, \Psi^*), R^*_A, L^*_A, R^*_{A^*}, L^*_{A^*}\big)$ is a matched pair of differential algebras.
\end{Theorem}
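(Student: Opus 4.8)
The plan is to decouple the statement into its purely associative (Frobenius) part and its differential (derivation) part, handling each with a result already at hand. The associative part is precisely the classical correspondence recalled from \cite{BaiDouble}: for the multiplications $\cdot$ on $A$ and $\circ$ on $A^*$, a double construction of Frobenius algebra $(A\bowtie A^*,\star,\mathfrak{B}_d)$ with $\mathfrak{B}_d$ as in \eqref{eq:bilform} exists if and only if $\big((A,\cdot),(A^*,\circ),R_A^*,L_A^*,R_{A^*}^*,L_{A^*}^*\big)$ is a matched pair of algebras, in which case $\star$ is given by \eqref{eq:multsum} with exactly these structure maps. The differential part I would then control entirely through Theorem~\ref{thm:matda}, applied with $B=A^*$, $\Phi_A=\Phi$, $\Phi_B=\Psi^*$, and $(l_A,r_A,l_B,r_B)=\big(R_A^*,L_A^*,R_{A^*}^*,L_{A^*}^*\big)$.

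For the forward direction, I would start from a double construction of differential Frobenius algebra $(A\bowtie A^*,\star,\Phi+\Psi^*,\mathfrak{B}_d)$. By definition this packages two pieces of data: a double construction of Frobenius algebra $(A\bowtie A^*,\star,\mathfrak{B}_d)$ for $(A,\cdot)$ and $(A^*,\circ)$, and the assertion that $(A\bowtie A^*,\star,\Phi+\Psi^*)$ is a differential algebra. The first piece, through the recalled result of \cite{BaiDouble}, yields that $\big((A,\cdot),(A^*,\circ),R_A^*,L_A^*,R_{A^*}^*,L_{A^*}^*\big)$ is a matched pair of algebras and that $\star$ is the product \eqref{eq:multsum}. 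This matched pair of algebras is exactly the standing hypothesis of Theorem~\ref{thm:matda}, while the second piece is its ``differential algebra'' hypothesis; so that theorem promotes the matched pair of algebras to a matched pair of differential algebras, which is the claim.

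For the converse, I would begin with a matched pair of differential algebras $\big((A,\cdot,\Phi),(A^*,\circ,\Psi^*),R_A^*,L_A^*,R_{A^*}^*,L_{A^*}^*\big)$. It contains, in particular, a matched pair of algebras with these structure maps, so \cite{BaiDouble} makes the form $\mathfrak{B}_d$ of \eqref{eq:bilform} invariant and renders $(A\bowtie A^*,\star,\mathfrak{B}_d)$ a double construction of Frobenius algebra. Reading Theorem~\ref{thm:matda} in the opposite direction then gives that $(A\bowtie A^*,\star,\Phi+\Psi^*)$ is a differential algebra, and the two facts together are exactly a double construction of differential Frobenius algebra.

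Essentially all the computation is absorbed into the two invoked results, so the only point that genuinely needs care is the bookkeeping that lets them interlock: one must confirm that the structure maps delivered by the Frobenius correspondence of \cite{BaiDouble} are literally $\big(R_A^*,L_A^*,R_{A^*}^*,L_{A^*}^*\big)$, so that Theorem~\ref{thm:matda} may be applied to these same maps, and that invariance of $\mathfrak{B}_d$ is a purely associative condition unaffected by the derivations $\Phi+\Psi^*$. I anticipate no obstacle beyond this alignment; the admissibility consequences recorded in Lemma~\ref{lem:douadm} offer a useful consistency check but are not required by the argument.
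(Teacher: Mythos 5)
Your proof is correct, and its skeleton coincides with the paper's: both directions rest on combining the associative correspondence of \cite[Theorem~2.2.1]{BaiDouble} (which, as the paper records before Definition~\ref{def:differential_Frobenius_algebra}, identifies the structure maps as exactly $R_A^*$, $L_A^*$, $R_{A^*}^*$, $L_{A^*}^*$ and the product $\star$ as \eqref{eq:multsum}) with Theorem~\ref{thm:matda}; your converse direction is word-for-word the paper's. The one genuine difference is in the forward direction: the paper does not invoke Theorem~\ref{thm:matda} there, but instead extracts the two bimodule conditions from Lemma~\ref{lem:douadm}, i.e., from the invariance of $\mathfrak{B}_d$ --- the adjoint of $\Phi+\Psi^*$ with respect to $\mathfrak{B}_d$ is computed to be $\Psi+\Phi^*$, which is therefore admissible by Proposition~\ref{pro:FroRep}, and restricting to components gives that $(A^*, R_A^*, L_A^*, \Psi^*)$ is a bimodule of $(A,\cdot,\Phi)$ and $(A, R_{A^*}^*, L_{A^*}^*, \Phi)$ of $(A^*,\circ,\Psi^*)$. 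You instead read the same bimodule identities off the Leibniz rule for $\Phi+\Psi^*$ on the matched-pair product, via the ``only if'' half of Theorem~\ref{thm:matda}. Both are sound. Your route is the more economical and symmetric one: the bilinear form enters only through the purely associative correspondence, and you rightly note that invariance of $\mathfrak{B}_d$ involves no derivations. The paper's detour through Lemma~\ref{lem:douadm} is not wasted, however: the admissibility statements it records ($\Psi$ admissible to $(A,\cdot,\Phi)$ and $\Phi^*$ to $(A^*,\circ,\Psi^*)$) are precisely what gets reused in Theorem~\ref{thm:asimat} to pass to differential ASI bialgebras, so the paper front-loads information your argument defers. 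Your closing remark that Lemma~\ref{lem:douadm} serves as a consistency check rather than a logical necessity for this theorem is accurate.
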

\begin{proof}
 ($\Longrightarrow$)
 By assumption, $(A \oplus A^*, \star, \mathfrak{B}_d)$ is a double construction of Frobenius algebra associated to $(A,\cdot)$ and $(A^*, \circ)$.
 By \cite[Theorem~2.2.1]{BaiDouble}, $\big((A, \cdot), (A^*, \circ), R^*_A, L^*_A, R^*_{A^*}, L^*_{A^*}\big)$ is a~matched pair of algebras.
 On the other hand, by Lemma~\ref{lem:douadm}, $(A^*, R^*_A, L^*_A, \Psi^*)$ is a~bimodule of $(A, \cdot, \Phi)$ and
 $(A, R^*_{A^*}, L^*_{A^*}, \Phi)$ is a bimodule of $(A^*, \circ, \Psi^*)$ respectively.
 Hence $\big((A, \cdot, \Phi),\allowbreak (A^*, \circ, \Psi^*),R^*_A, L^*_A, R^*_{A^*}, L^*_{A^*}\big)$ is a matched pair of differential algebras.

 ($\Longleftarrow$)
 If $\big((A, \cdot, \Phi), (A^*, \circ, \Psi^*), R^*_A, L^*_A, R^*_{A^*}, L^*_{A^*}\big)$ is a matched pair of differential algebras,
 then $\big((A, \cdot), (A^*, \circ), R^*_A, L^*_A, R^*_{A^*}, L^*_{A^*}\big)$ is a matched pair of algebras.
 Hence by \cite[Theorem~2.2.1]{BaiDouble} again,
 $(A \bowtie A^*, \star, \mathfrak{B}_d)$ is a Frobenius algebra, where $\star$ is defined by equation~\eqref{eq:multsum}.
 Moreover by Theorem~\ref{thm:matda},
 $(A \bowtie A^*, \star, \Phi + \Psi^*)$ is a differential algebra.
 Hence $(A \bowtie A^*, \star, \Phi + \Psi^*, \mathfrak{B}_d)$ is a~double construction of differential Frobenius algebra associated to $(A, \cdot, \Phi)$ and $(A^*, \circ, \Psi^*)$.
\end{proof}

\subsection{Differential ASI bialgebras}\label{subsec:dasi}
With the previous preparations, we are ready to introduce the
notion of a differential ASI bialgebra as an enrichment of the
notion of an ASI bialgebra \cite{AguiarOn, BaiDouble, joni1979coalgebras, zhelyabian1997jordan}.

Recall that a \textit{coalgebra} $(A, \Delta)$ is a vector space $A$ with a linear map $\Delta\colon A \to A \otimes A$ satisfying the coassociative law:
\begin{equation*}
 (\Delta \otimes \id)\Delta = (\id \otimes \Delta) \Delta.
\end{equation*}
A coalgebra $(A, \Delta)$ is called \textit{cocommutative}
if $\Delta = \sigma \Delta$,
where $\sigma\colon A \otimes A \to A \otimes A$ is the flip
operator defined by $\sigma(a \otimes b) := b \otimes a$ for all $a, b \in A$.

\begin{Definition}[\cite{BaiDouble}]\label{def:asi_bialgebra}
An \textit{antisymmetric infinitesimal bialgebra} or simply an \textit{ASI bialgebra} is a~triple $(A, \cdot, \Delta)$ consisting of a vector space $A$ and linear maps $\cdot\colon A \otimes A \to A$ and $\Delta\colon A \to A \otimes A$ such that
 \begin{enumerate}\itemsep=0pt
 \item[(1)] $(A, \cdot)$ is an algebra;
 \item[(2)] $(A, \Delta)$ is a coalgebra;
 \item[(3)] the following equations hold:
 \begin{gather}
 \Delta(a \cdot b) = (R_A(b) \otimes \id )\Delta(a) + (\id \otimes L_A(a)) \Delta(b), \label{eq:asiifs} \\
 (L_A(a) \otimes \id - \id \otimes R_A(a))\Delta(b) = \sigma \big( (\id \otimes R_A(b) - L_A(b) \otimes \id )\Delta(a) \big), \label{eq:asisym}
 \end{gather}
 	for all $a, b \in A$.
 \end{enumerate}
\end{Definition}

\begin{Definition}[\cite{doi1981homological}]\label{def:coderivation}
Let $(A, \Delta)$ be a coalgebra.
 A linear map $\eth\colon A \to A$ is called a~\textit{coderivation} on $(A, \Delta)$ if the following equation holds:
 \begin{equation}
 \Delta \eth = (\eth \otimes \id + \id \otimes \eth) \Delta. \label{eq:coderivation}
 \end{equation}
\end{Definition}

\begin{Definition}
 A \textit{differential coalgebra} is a triple $(A, \Delta, \Psi)$,
 consisting of a coalgebra $(A, \Delta)$ and a finite set of commuting coderivations $\Psi = \{\eth_k\colon A \to A\}_{k=1}^m$.
 A differential coalgebra $(A, \Delta, \Psi)$ is called \textit{cocommutative} if $(A, \Delta)$ is cocommutative.
\end{Definition}

\begin{Remark}
 The notion of a differential coalgebra is the dualization of the notion of a~differential algebra, that is, $(A, \Delta, \Psi)$ is a differential coalgebra if and only if $(A^*, \Delta^*, \Psi^*)$ is a~differential algebra.
 Moreover, $(A,\Delta)$ is cocommutative if and only if
 $(A^*,\Delta^*)$ is commutative.
\end{Remark}

\begin{Lemma}\label{lem:coadm}
 Let $(A, \Delta, \Psi = \{\eth_k\}_{k=1}^m)$ be a differential coalgebra and
 $\Phi = \{\partial_k\colon A \to A\}_{k=1}^m$ be a set of commuting linear maps.
 Then $\Phi^*$ is admissible to the differential algebra $(A^*, \Delta^*, \Psi^*)$ if and only
 if the following equations hold:
 \begin{gather}
 (\partial_k \otimes \id) \Delta = (\id \otimes \eth_k) \Delta + \Delta \partial_k, \label{eq:psadm1} \\
 (\id \otimes \partial_k) \Delta = (\eth_k \otimes \id) \Delta + \Delta \partial_k, \qquad \forall k=1, \dots, m. \label{eq:psadm2}
 \end{gather}
\end{Lemma}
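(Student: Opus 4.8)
The plan is to deduce the two displayed identities from Corollary~\ref{cor:qadm} by dualization. By the preceding remark, $(A^*, \Delta^*, \Psi^*)$ is a differential algebra; I write its multiplication as $a^* * b^* := \Delta^*(a^* \otimes b^*)$, so that $\langle a^* * b^*, c \rangle = \langle a^* \otimes b^*, \Delta(c) \rangle$ for all $a^*, b^* \in A^*$ and $c \in A$. In this differential algebra the set of commuting derivations is $\Psi^* = \{\eth_k^*\}$, and I test the admissibility of the commuting linear maps $\Phi^* = \{\partial_k^*\}$. The one point to keep straight is that the \emph{roles are interchanged} relative to Corollary~\ref{cor:qadm}: the ambient derivations are now the $\eth_k^*$ (playing the part of the $\partial_k$ there), while the maps being tested are the $\partial_k^*$ (playing the part of the $\eth_k$).

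First I would apply Corollary~\ref{cor:qadm} to $(A^*, \Delta^*, \Psi^*)$ and $\Phi^*$, which says that $\Phi^*$ is admissible if and only if, for all $k = 1, \dots, m$ and all $a^*, b^* \in A^*$,
\begin{gather*}
 \partial_k^*(a^*) * b^* = a^* * \eth_k^*(b^*) + \partial_k^*(a^* * b^*), \\
 a^* * \partial_k^*(b^*) = \eth_k^*(a^*) * b^* + \partial_k^*(a^* * b^*).
\end{gather*}
Next I would pair each equation with an arbitrary $c \in A$ and unwind it using the defining formula for $*$ together with the transpose relations $\langle \partial_k^*(a^*), x \rangle = \langle a^*, \partial_k(x) \rangle$ and $\langle \eth_k^*(b^*), x \rangle = \langle b^*, \eth_k(x) \rangle$. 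For the first equation this turns the left-hand side into $\langle a^* \otimes b^*, (\partial_k \otimes \id)\Delta(c) \rangle$ and the right-hand side into $\langle a^* \otimes b^*, (\id \otimes \eth_k)\Delta(c) + \Delta\partial_k(c) \rangle$; since $a^*, b^*, c$ are arbitrary and the pairing of $A^* \otimes A^*$ with $A \otimes A$ is nondegenerate, this is precisely equation~\eqref{eq:psadm1}. The same computation on the second equation places $\partial_k$ on the second tensor leg (coming from $\partial_k^*(b^*)$) on the left, and $\eth_k$ on the first leg (coming from $\eth_k^*(a^*)$) on the right, yielding equation~\eqref{eq:psadm2}.

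Since each step is an equivalence, the proof is complete. There is no genuinely hard step here: the entire content is the faithful transposition of Corollary~\ref{cor:qadm}, and the only thing demanding care is the bookkeeping of which operator acts on which tensor factor, together with the role-swap between $\Phi$ and $\Psi$ under passage to the dual.
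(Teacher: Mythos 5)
Your proof is correct and follows essentially the same route as the paper: the paper's proof likewise applies Corollary~\ref{cor:qadm} to the dual differential algebra $(A^*, \Delta^*, \Psi^*)$ with $\Phi^*$ as the tested maps (so with the roles of the derivations and the tested maps interchanged, exactly as you note), obtains the same two identities in $A^*$, and rewrites them in terms of the comultiplication to get equations~\eqref{eq:psadm1} and~\eqref{eq:psadm2}. Your unwinding of the pairing, including which tensor leg each of $\partial_k$ and $\eth_k$ lands on, matches the paper's computation.
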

\begin{proof}
By Corollary~\ref{cor:qadm}, $\Phi^*$ is admissible to $(A^*, \Delta^*, \Psi^*)$ if and only if the following equations hold:
\begin{gather*}
	\partial_k^*(a^*) \circ b^* = a^* \circ \eth_k^*(b^*) + \partial_k^*(a^* \circ b^*), \\
 a^* \circ \partial_k^*(b^*) = \eth_k^*(a^*) \circ b^* + \partial_k^*(a^* \circ b^*), \qquad \forall k=1, \dots, m,
\end{gather*}
where $a^* \circ b^* = \Delta^*(a^*\otimes b^*)$, for all $a^*, b^* \in A^*$.
Rewriting the above equations in terms of the comultiplication, we
get equations~\eqref{eq:psadm1} and \eqref{eq:psadm2} respectively.
Hence the conclusion holds.
\end{proof}

\begin{Definition}\label{def:dasi}
 A \textit{differential antisymmetric infinitesimal bialgebra} or simply a \textit{differential ASI bialgebra} is a quintuple
 $(A, \cdot, \Delta, \Phi, \Psi)$ satisfying
 \begin{enumerate}\itemsep=0pt
 \item[(1)] $(A, \cdot, \Delta)$ is an ASI bialgebra;
 \item[(2)] $(A, \cdot, \Phi = \{\partial_k\}_{k=1}^m)$ is a differential algebra;
 \item[(3)] $(A, \Delta, \Psi = \{\eth_k\}_{k=1}^m)$ is a differential coalgebra;
 \item[(4)] $\Psi$ and $\Phi^*$ are admissible to the differential algebras $(A, \cdot, \Phi)$ and $(A^*, \Delta^*, \Psi^*)$ respectively, that is,
 equations~\eqref{eq:qadm1}, \eqref{eq:qadm2}, \eqref{eq:psadm1} and \eqref{eq:psadm2} hold.
 \end{enumerate}
 A differential ASI bialgebra $(A, \cdot, \Delta, \Phi, \Psi)$ is called \textit{commutative and cocommutative} if $(A, \cdot)$ is a commutative algebra and $(A, \Delta)$ is a cocommutative coalgebra.
\end{Definition}

\begin{Theorem}\label{thm:asimat}
 Let $(A, \cdot, \Phi)$ be a differential algebra.
 Suppose that there is a differential algebra structure $(A^*, \circ, \Psi^*)$ on $A^*$.
 Let $\Delta\colon A \to A \otimes A$ denote the linear dual of the multiplication $\circ\colon A^* \otimes A^* \to A^*$, that is, $(A, \Delta, \Psi)$ is a differential coalgebra.
 Then the quintuple $(A, \cdot, \Delta, \Phi, \Psi)$ is a differential ASI bialgebra if and only if $((A, \cdot, \Phi), (A^*, \circ, \Psi^*), R^*_A, L^*_A, R^*_{A^*}, L^*_{A^*})$ is a matched pair of differential algebras.
\end{Theorem}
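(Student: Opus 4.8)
The plan is to show that the four defining conditions of a differential ASI bialgebra in Definition~\ref{def:dasi} match, term by term, the three conditions defining a matched pair of differential algebras, so that the whole equivalence reduces to the classical (non-differential) correspondence from \cite{BaiDouble} together with the definition of admissibility. Concretely, I would argue that conditions (1)--(4) of Definition~\ref{def:dasi} hold if and only if conditions (1)--(3) in the definition of a matched pair of differential algebras hold, with the two differential algebras being $(A,\cdot,\Phi)$ and $(A^*,\circ,\Psi^*)$.

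First I would invoke the classical theory of ASI bialgebras: since $\Delta$ is the linear dual of $\circ$, condition~(1) of Definition~\ref{def:dasi}, that $(A,\cdot,\Delta)$ is an ASI bialgebra, is equivalent by \cite{BaiDouble} to the statement that $\big((A,\cdot),(A^*,\circ),R^*_A,L^*_A,R^*_{A^*},L^*_{A^*}\big)$ is a matched pair of algebras, which is exactly condition~(3) required for a matched pair of differential algebras. Conditions~(2) and~(3) of Definition~\ref{def:dasi}, namely that $(A,\cdot,\Phi)$ is a differential algebra and $(A,\Delta,\Psi)$ a differential coalgebra (equivalently $(A^*,\circ,\Psi^*)$ a differential algebra), are precisely the standing hypotheses on the two differential algebras of the matched pair.

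It remains to treat condition~(4), which splits into two admissibility requirements. By Definition~\ref{def:admissible}, applied with the regular bimodule $(A,L_A,R_A)$, the admissibility of $\Psi$ to $(A,\cdot,\Phi)$ is literally the assertion that $\big(A^*,R^*_A,L^*_A,\Psi^*\big)$ is a bimodule of $(A,\cdot,\Phi)$, which is condition~(2) of the matched pair of differential algebras. For the dual side I would use that $\Delta^*=\circ$ and apply Definition~\ref{def:admissible} to the regular bimodule $(A^*,L_{A^*},R_{A^*})$ of $(A^*,\circ,\Psi^*)$: after the canonical identifications $A^{**}=A$ and $(\Phi^*)^*=\Phi$, the admissibility of $\Phi^*$ to $(A^*,\circ,\Psi^*)$ becomes the assertion that $\big(A,R^*_{A^*},L^*_{A^*},\Phi\big)$ is a bimodule of $(A^*,\circ,\Psi^*)$, which is condition~(1) of the matched pair of differential algebras. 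Assembling these four identifications yields the equivalence in both directions.

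The main obstacle is the bookkeeping in the dual-side step: one must track the transpose maps carefully so that $R^*_{A^*}$ and $L^*_{A^*}$ occupy the correct left and right slots of the bimodule of $(A^*,\circ,\Psi^*)$, and confirm that double-dualization sends $\Phi^*$ back to $\Phi$ under $A^{**}\cong A$. Once this is verified, the remaining work is a direct matching of conditions; no computation beyond Corollary~\ref{cor:qadm} and Lemma~\ref{lem:coadm}, which re-express the two admissibility conditions as equations~\eqref{eq:qadm1}--\eqref{eq:qadm2} and \eqref{eq:psadm1}--\eqref{eq:psadm2}, is required.
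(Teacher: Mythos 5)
Your proposal is correct and follows essentially the same route as the paper's own proof: condition~(1) of Definition~\ref{def:dasi} is converted to the matched pair of algebras via \cite[Theorem~2.2.3]{BaiDouble}, while the two admissibility requirements in condition~(4) are unwound via Definition~\ref{def:admissible} into exactly the two bimodule conditions $(A^*, R^*_A, L^*_A, \Psi^*)$ over $(A,\cdot,\Phi)$ and $(A, R^*_{A^*}, L^*_{A^*}, \Phi)$ over $(A^*,\circ,\Psi^*)$, with the finite-dimensional identifications $A^{**}\cong A$ and $(\Phi^*)^*=\Phi$ handling the dual side just as the paper implicitly does. The bookkeeping you flag as the main obstacle indeed checks out, so nothing further is needed.
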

\begin{proof}
 ($\Longrightarrow$)
 If the quintuple $(A, \cdot, \Delta, \Phi, \Psi)$ is a differential ASI bialgebra,
 then $(A, \cdot, \Delta)$ is an ASI bialgebra,
 and $\Psi$ and $\Phi^*$ are admissible to $(A, \cdot, \Phi)$ and $(A^*, \circ, \Psi^*)$ respectively.
 The former means that $((A, \cdot), (A^*, \circ), R^*_A, L^*_A, R^*_{A^*}, L^*_{A^*})$ is a matched pair of algebras by \cite[Theorem~2.2.3]{BaiDouble}.
 The latter means that $(A^*, R^*_A, L^*_A, \Psi^*)$ is a~bimodule of $(A, \cdot, \Phi)$
 and $(A, R^*_{A^*}, L^*_{A^*}, \Phi)$ is a~bimodule of $(A^*, \circ, \Psi^*)$.
 Hence $((A, \cdot, \Phi), (A^*, \circ, \Psi^*), R^*_A, L^*_A, R^*_{A^*}, L^*_{A^*})$ is a matched pair of differential algebras.

 ($\Longleftarrow$)
 If $((A, \cdot, \Phi), (A^*, \circ, \Psi^*), R^*_A, L^*_A, R^*_{A^*}, L^*_{A^*})$ is a matched pair of differential algebras,
 then $((A, \cdot)$, $(A^*, \circ)$, $R^*_A, L^*_A, R^*_{A^*}, L^*_{A^*})$ is a matched pair of algebras,
 and $\Psi$ and $\Phi^*$ are admissible to $(A, \cdot, \Phi)$ and $(A^*, \circ, \Psi^*)$ respectively.
 By \cite[Theorem~2.2.3]{BaiDouble} again, $(A, \cdot, \Delta)$ is an ASI bialgebra and hence $(A, \cdot, \Delta, \Phi, \Psi)$ is a differential ASI bialgebra.
\end{proof}

Combining Theorems~\ref{thm:doumat} and \ref{thm:asimat}, we have the following conclusion.
\begin{Theorem}\label{thm:asieqv}
 Let $(A, \cdot, \Phi)$ be a differential algebra.
 Suppose that there is a differential algebra structure $(A^*, \circ, \Psi^*)$ on $A^*$.
 Let $\Delta\colon A \to A \otimes A$ denote the linear dual of the multiplication $\circ\colon A^* \otimes A^* \to A^*$.
 Then the following conditions are equivalent:
 \begin{enumerate}\itemsep=0pt
 \item[$1.$] There is a double construction of differential Frobenius algebra associated to $(A, \cdot, \Phi)$ and $(A^*, \circ$, $\Psi^*)$.
 \item[$2.$] $((A, \cdot, \Phi), (A^*, \circ, \Psi^*), R^*_A, L^*_A, R^*_{A^*}, L^*_{A^*})$ is a matched pair of differential algebras.
 \item[$3.$] $(A, \cdot, \Delta, \Phi, \Psi)$ is a differential ASI bialgebra.
 \end{enumerate}
\end{Theorem}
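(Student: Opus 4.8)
The plan is to use condition~2 (the matched pair of differential algebras) as a central pivot and to chain together the two equivalences already established, exactly as the phrase ``Combining Theorems~\ref{thm:doumat} and~\ref{thm:asimat}'' suggests. Indeed, Theorem~\ref{thm:doumat} asserts that condition~1, the existence of a double construction of differential Frobenius algebra associated to $(A, \cdot, \Phi)$ and $(A^*, \circ, \Psi^*)$, is equivalent to the statement that $\big((A, \cdot, \Phi), (A^*, \circ, \Psi^*), R^*_A, L^*_A, R^*_{A^*}, L^*_{A^*}\big)$ is a matched pair of differential algebras. Likewise, Theorem~\ref{thm:asimat} asserts that condition~3, that $(A, \cdot, \Delta, \Phi, \Psi)$ is a differential ASI bialgebra, is equivalent to the very same matched pair condition.

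First I would check that the hypotheses of the present theorem match those of both cited theorems verbatim: we are given a differential algebra $(A, \cdot, \Phi)$, a differential algebra structure $(A^*, \circ, \Psi^*)$ on the dual space, and $\Delta$ is defined as the linear dual of $\circ$, so that $(A, \Delta, \Psi)$ is automatically a differential coalgebra. Under these shared hypotheses the matched pair datum $R^*_A, L^*_A, R^*_{A^*}, L^*_{A^*}$ appearing in condition~2 is literally the same object in Theorem~\ref{thm:doumat} and in Theorem~\ref{thm:asimat}. Hence both equivalences refer to one and the same intermediate statement, and no reconciliation between two different formulations of the matched pair is required.

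It then follows immediately that $1 \Leftrightarrow 2$ by Theorem~\ref{thm:doumat} and $2 \Leftrightarrow 3$ by Theorem~\ref{thm:asimat}, whence $1 \Leftrightarrow 2 \Leftrightarrow 3$ and all three conditions are mutually equivalent. The only point that needs attention is confirming that the pivot condition is identical in both cited results; the substantive work, namely translating the admissibility requirements and the ASI bialgebra axioms into the bimodule and matched pair language, has already been done in Lemma~\ref{lem:douadm}, Theorem~\ref{thm:matda}, Theorem~\ref{thm:doumat}, and Theorem~\ref{thm:asimat}. I therefore do not expect any genuine obstacle in this final step beyond this bookkeeping.
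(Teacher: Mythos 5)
Your proposal is correct and coincides with the paper's own argument: the theorem is stated immediately after the sentence ``Combining Theorems~\ref{thm:doumat} and \ref{thm:asimat}, we have the following conclusion,'' i.e., the paper likewise chains $1 \Leftrightarrow 2$ (Theorem~\ref{thm:doumat}) and $2 \Leftrightarrow 3$ (Theorem~\ref{thm:asimat}) through the identical matched pair condition. Your extra check that the pivot datum $\big(R^*_A, L^*_A, R^*_{A^*}, L^*_{A^*}\big)$ is literally the same in both cited results is the only bookkeeping needed, and it holds.
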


\subsection{Coherent derivations on ASI bialgebras}
In this subsection, we consider the case that the set $\Phi$ in a differential algebra $(A, \cdot, \Phi)$ contains exactly one derivation.

Definition~\ref{def:dasi} motivates us to give the following notion.

\begin{Definition}\label{def:chd} A \textit{coherent derivation} on an ASI bialgebra $(A, \cdot, \Delta)$ is a pair $(\partial, \eth)$,
 where $\partial$ is a derivation on the algebra $(A, \cdot)$ and $\eth$ is a coderivation on the coalgebra $(A, \Delta)$
 satisfying equations~\eqref{eq:qadm1}, \eqref{eq:qadm2}, \eqref{eq:psadm1} and \eqref{eq:psadm2}.
\end{Definition}

\begin{Corollary}\label{cor:chd}
Let $(A,\cdot,\Delta)$ be an ASI bialgebra. Then the following conditions are equivalent:
\begin{enumerate}\itemsep=0pt
\item[$1.$]
$(A, \cdot, \Delta, \{\partial\}, \{\eth\})$ is a differential ASI bialgebra.
\item[$2.$]
$(\partial, \eth)$ is a coherent derivation on the ASI bialgebra $(A, \cdot, \Delta)$.
\item[$3.$]
$\partial + \eth^*$ is a derivation on the algebra $\big(A \bowtie_{R_A^*, L_A^*}^{R_{A^*}^*, L_{A^*}^*}
A^*, \star\big)$, where the algebra structure on~$A^*$ is given by~$\Delta^*$.
\end{enumerate}
\end{Corollary}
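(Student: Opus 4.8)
The plan is to establish the three-way equivalence by chaining together the theorems already proved in this section, specializing everywhere to the case $m=1$, i.e.\ $\Phi = \{\partial\}$ and $\Psi = \{\eth\}$. The equivalence of conditions~$1$ and~$2$ is essentially definitional: a coherent derivation is precisely a pair $(\partial,\eth)$ with $\partial$ a derivation on $(A,\cdot)$, $\eth$ a coderivation on $(A,\Delta)$, satisfying equations~\eqref{eq:qadm1}, \eqref{eq:qadm2}, \eqref{eq:psadm1} and \eqref{eq:psadm2}. Unwinding Definition~\ref{def:dasi} in the singleton case, condition~(2) there asks that $(A,\cdot,\{\partial\})$ be a differential algebra (equivalently, $\partial$ is a derivation), condition~(3) asks that $(A,\Delta,\{\eth\})$ be a differential coalgebra (equivalently, $\eth$ is a coderivation, since a single map trivially commutes with itself), and condition~(4) is exactly the admissibility encoded by equations~\eqref{eq:qadm1}--\eqref{eq:psadm2}. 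So conditions~$1$ and~$2$ coincide once one observes that the commutativity requirements in the definitions of differential algebra and differential coalgebra are vacuous for a single operator.

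For the equivalence of condition~$1$ with condition~$3$, the idea is to invoke Theorem~\ref{thm:asieqv}, which tells us that $(A,\cdot,\Delta,\Phi,\Psi)$ being a differential ASI bialgebra is equivalent to the existence of a double construction of differential Frobenius algebra $(A\bowtie A^*,\star,\Phi+\Psi^*,\mathfrak{B}_d)$ associated to $(A,\cdot,\Phi)$ and $(A^*,\circ,\Psi^*)$. By Definition~\ref{def:differential_Frobenius_algebra}, this double construction differs from the plain double construction of Frobenius algebra $(A\bowtie A^*,\star,\mathfrak{B}_d)$ (which exists because $(A,\cdot,\Delta)$ is an ASI bialgebra) precisely by the requirement that $\Phi+\Psi^*$ be a set of commuting derivations on the algebra $(A\bowtie A^*,\star)$. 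In the singleton case $\Phi+\Psi^* = \{\partial + \eth^*\}$, this is exactly the statement that $\partial+\eth^*$ is a derivation on $\big(A\bowtie A^*,\star\big)$, which is condition~$3$.

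Thus the proof reduces to verifying that, given that $(A,\cdot,\Delta)$ is already an ASI bialgebra, the data assembling into a differential ASI bialgebra is equivalent to the single derivation condition on $A\bowtie A^*$. First I would recall from Theorem~\ref{thm:asimat} that, for an ASI bialgebra, the quintuple being a differential ASI bialgebra is equivalent to $\big((A,\cdot,\Phi),(A^*,\circ,\Psi^*),R_A^*,L_A^*,R_{A^*}^*,L_{A^*}^*\big)$ being a matched pair of differential algebras; then by Theorem~\ref{thm:matda}, this matched pair condition is equivalent to $\Phi+\Psi^*$ being commuting derivations on $(A\bowtie A^*,\star)$, which in the singleton case is precisely that $\partial+\eth^*$ is a derivation. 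The main subtlety to be careful about is the bookkeeping across the identifications: one must confirm that the algebra structure $\star$ on $A\oplus A^*$ arising from the matched pair $\big(R_A^*,L_A^*,R_{A^*}^*,L_{A^*}^*\big)$ is exactly $\big(A\bowtie_{R_A^*,L_A^*}^{R_{A^*}^*,L_{A^*}^*} A^*,\star\big)$ named in condition~$3$, and that the dual algebra structure on $A^*$ is the one given by $\Delta^*$; both are guaranteed by the conventions fixed in Section~\ref{subsec:double} and Theorem~\ref{thm:asimat}. No genuinely new computation is needed—the entire statement is a specialization of Theorem~\ref{thm:asieqv} and its constituent theorems to the case of a single derivation—so there is no serious obstacle beyond tracking these identifications accurately.
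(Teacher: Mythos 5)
Your proposal is correct and follows essentially the same route as the paper: the equivalence of conditions~$1$ and~$2$ is read off directly from Definitions~\ref{def:dasi} and~\ref{def:chd} in the singleton case, and the equivalence of conditions~$1$ and~$3$ is obtained by combining Theorems~\ref{thm:asimat} and~\ref{thm:matda}, exactly as in the paper's proof. Your initial detour through Theorem~\ref{thm:asieqv} is harmless (it is itself the combination of Theorems~\ref{thm:doumat} and~\ref{thm:asimat}), and your final reduction lands on the paper's argument.
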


\begin{proof}
(1) $\Longleftrightarrow$ (2). It follows from
Definitions~\ref{def:dasi} and~\ref{def:chd}.

(1) $\Longleftrightarrow$ (3). It follows from
Theorems~\ref{thm:asimat} and~\ref{thm:matda}.
\end{proof}

Recall \cite{KP,leger2000generalized} that $f \in \End(A)$ is called a \textit{generalized derivation} on an algebra $(A, \cdot)$ if there exist $f', f'' \in \End(A)$ such that
\begin{equation*}
	f(a) \cdot b + a \cdot f'(b) = f''(a \cdot b), \qquad \forall a, b \in
	A.
\end{equation*}
For a coherent derivation $(\partial, \eth)$ on an ASI bialgebra $(A, \cdot,
\Delta)$, we have the following conclusions.
\begin{enumerate}\itemsep=0pt
	\item
	Since $\partial$ is a derivation on the algebra $(A,\cdot)$,
	it is a generalized derivation automatically.
	By equation~\eqref{eq:qadm1}, $\eth$ is a generalized derivation on $(A,\cdot)$.
	By equation~\eqref{eq:qadm2}, we get again that $\partial$ is a generalized derivation on $(A,\cdot)$ from another approach.

	\item
	Since $\eth^*$ is a derivation on the algebra $(A^*,\Delta^*)$, it is a generalized derivation automatically.
	Due to Lemma~\ref{lem:coadm}, $\partial^*$ is a generalized derivation on $(A^*, \Delta^*)$ by equation~\eqref{eq:psadm1}.
	Moreover, by equation~\eqref{eq:psadm2}, we show again that $\eth^*$ is a generalized derivation on $(A^*, \Delta^*)$ from another approach.
\end{enumerate}

Following the notion of a derivation on a Lie bialgebra \cite{chari1995guide}, we give the following notion.

\begin{Definition}A \textit{derivation} on an ASI bialgebra $(A, \cdot, \Delta)$ is a linear map $\partial\colon A \to A$ such that
 $\partial$ is both a derivation on the algebra $(A, \cdot)$ and a coderivation on the coalgebra $(A, \Delta)$.
\end{Definition}
Note that such derivations on ASI bialgebras are called biderivations in~\cite{aguiar2004infinitesimal}.
However, the notion of a biderivation (defined on an algebra $(A,\cdot)$) usually refers to a bilinear map ${f\colon A\otimes A \rightarrow A}$ satisfying certain conditions, which is a different structure~\cite{liu2018biderivations}.
Hence in order to avoid the possible confusion, we adopt the present notion.

\begin{Example}
Every ASI bialgebra has a canonical derivation.
Let $(A, \cdot, \Delta)$ be an ASI bialgebra and $\partial\colon A \to A$ be
the composite of the comultiplication $\Delta\colon A \to A \otimes A$,
the flip operator~$\sigma$, and the multiplication $\cdot\colon A \otimes A \to A$,
i.e., $\partial(a) = \sum_i a_i^2 \cdot a_i^1$ if $\Delta(a) = \sum_i a_i^1 \otimes a_i^2$
for all $a \in A$.
Then $\partial$ is a derivation on the ASI bialgebra $(A, \cdot, \Delta)$.
In fact, let $a, b \in A$ and write
$\Delta(a) = \sum_i a_i^1 \otimes a_i^2$, $\Delta(b) = \sum_i b_i^1 \otimes b_i^2$.
Hence we have
\begin{align*}
 \partial(a \cdot b) & = \cdot \sigma \Delta(a \cdot b) \overset{\eqref{eq:asiifs}}{=} \cdot \sigma \bigg(\sum_i a_i^1 \cdot b \otimes a_i^2 + \sum_i b_i^1 \otimes a \cdot b_i^2\bigg)\\
 & = \sum_i a_i^2 \cdot a_i^1 \cdot b + \sum_i a \cdot b_i^2 \cdot b_i^1 = \partial(a) \cdot b + a \cdot \partial(b),
\end{align*}
that is, $\partial$ is a derivation on $(A, \cdot)$.
Applying the preceding argument to $(A^*, \Delta^*)$,
we obtain $\partial^*$ is a derivation on $(A^*, \Delta^*)$.
Thus $\partial$ is a derivation on the ASI bialgebra $(A, \cdot, \Delta)$.
\end{Example}

\begin{Proposition}
 Let $(A, \cdot, \Delta)$ be an ASI bialgebra.
 Then $\partial$ is a derivation on $(A, \cdot, \Delta)$ if and only if $(\partial, -\partial)$ is a coherent derivation on $(A, \cdot, \Delta)$.
\end{Proposition}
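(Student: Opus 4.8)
The plan is to prove both implications by directly substituting $\eth = -\partial$ into the defining equations of a coherent derivation (Definition~\ref{def:chd}) and observing that the four admissibility conditions \eqref{eq:qadm1}, \eqref{eq:qadm2}, \eqref{eq:psadm1}, \eqref{eq:psadm2}, taken in their single-operator form with $\partial_k = \partial$ and $\eth_k = -\partial$, reduce to consequences of the Leibniz rule \eqref{eq:derivation} and the coderivation identity \eqref{eq:coderivation}. No structural machinery beyond these two identities is needed.

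For the forward implication, I would assume $\partial$ is a derivation on the ASI bialgebra, so it satisfies both \eqref{eq:derivation} and \eqref{eq:coderivation}. To show $(\partial, -\partial)$ is a coherent derivation I first note that $\partial$ is by hypothesis a derivation on $(A, \cdot)$, and that $-\partial$ is a coderivation on $(A, \Delta)$, the latter being immediate because \eqref{eq:coderivation} is linear in the operator. It then remains to verify the four admissibility equations. Substituting $\eth = -\partial$ into \eqref{eq:qadm1} and expanding $\partial(a \cdot b)$ via \eqref{eq:derivation} turns the right-hand side $a \cdot \partial(b) - \partial(a \cdot b)$ into $-\partial(a) \cdot b$, matching the left-hand side; the same cancellation settles \eqref{eq:qadm2}. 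Dually, substituting $\eth = -\partial$ into \eqref{eq:psadm1} and expanding $\Delta \partial$ via \eqref{eq:coderivation} turns the right-hand side $-(\id \otimes \partial)\Delta + \Delta \partial$ into $(\partial \otimes \id)\Delta$, again matching the left-hand side, and likewise for \eqref{eq:psadm2}.

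For the converse, I would assume $(\partial, -\partial)$ is a coherent derivation. By Definition~\ref{def:chd} this already includes that $\partial$ is a derivation on the algebra $(A, \cdot)$ and that $-\partial$ is a coderivation on the coalgebra $(A, \Delta)$; by linearity of \eqref{eq:coderivation} the latter forces $\partial$ itself to be a coderivation on $(A, \Delta)$. Hence $\partial$ is simultaneously a derivation on the algebra and a coderivation on the coalgebra, i.e., a derivation on the ASI bialgebra. Notably, this direction uses none of the four admissibility equations.

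There is no genuine obstacle here: the entire content is the observation that, upon setting $\eth = -\partial$, the admissibility conditions \eqref{eq:qadm1}--\eqref{eq:qadm2} and \eqref{eq:psadm1}--\eqref{eq:psadm2} cease to be independent constraints and become automatic consequences of the Leibniz rule and the coderivation identity. The only point worth making transparent in the write-up is this collapse, so I would display the substitution explicitly for \eqref{eq:qadm1} (and, by duality, for \eqref{eq:psadm1}) so that the cancellation is visible, leaving \eqref{eq:qadm2} and \eqref{eq:psadm2} to an identical argument.
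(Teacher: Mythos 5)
Your proposal is correct and follows essentially the same route as the paper: the paper's proof also just sets $\eth=-\partial$, notes that $-\partial$ is a coderivation by linearity of equation~\eqref{eq:coderivation}, and asserts that equations~\eqref{eq:qadm1}, \eqref{eq:qadm2}, \eqref{eq:psadm1} and \eqref{eq:psadm2} ``hold naturally'' --- precisely the cancellation via the Leibniz rule and the coderivation identity that you carry out explicitly. Your write-up merely makes visible the collapse the paper leaves implicit, and your converse (using only the derivation/coderivation parts of Definition~\ref{def:chd}, none of the admissibility equations) matches the paper's verbatim.
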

\begin{proof}
 ($\Longrightarrow$)
 Take $\eth = -\partial$.
 Since $\partial$ is a coderivation on $(A, \Delta)$, $\eth$ is a coderivation.
 Moreover, equations~\eqref{eq:qadm1}, \eqref{eq:qadm2}, \eqref{eq:psadm1} and~\eqref{eq:psadm2} hold naturally.
 Thus $(\partial, -\partial)$ is a coherent derivation on the ASI bialgebra $(A, \cdot, \Delta)$.

 ($\Longleftarrow$)
 Since $-\partial$ is a coderivation on $(A, \Delta)$, $\partial$ is a coderivation.
 Thus $\partial$ is a derivation on the ASI bialgebra $(A, \cdot, \Delta)$.
\end{proof}

The notions of coherent homomorphisms and coherent isomorphisms on Lie bialgebras were introduced in \cite{BGS} to interpret the categorial equivalences among Lie bialgebras, Manin triples and matched pairs.
Transferring them to the context of ASI bialgebras, we give the following notion.

\begin{Definition}
 A \textit{coherent endomorphism} on an ASI bialgebra $(A, \cdot, \Delta)$ is a pair $(\phi, \psi)$
 consisting of an algebra endomorphism $\phi\colon A \to A$ on the algebra $(A,\cdot)$ and a coalgebra endomorphism $\psi\colon A \to A$ on the coalgebra $(A,\Delta)$, that is, $\psi$ is a linear map satisfying $(\psi \otimes \psi)\Delta = \Delta \psi$,
 such that
 \begin{gather}
 \psi(\phi(a) \cdot b) = a \cdot \psi(b), \label{eq:coendo1} \\
 \psi(a \cdot \phi(b)) = \psi(a) \cdot b, \qquad \forall a, b \in A, \label{eq:coendo2} \\
 (\id \otimes \phi)\Delta = (\psi \otimes \id )\Delta \phi, \label{eq:coendo3}\\
 (\phi \otimes \id)\Delta = (\id \otimes \psi) \Delta \phi. \label{eq:coendo4}
 \end{gather}
 If in addition both $\phi$ and $\psi$ are bijective,
 then the pair $(\phi, \psi)$ is called a \textit{coherent automorphism} on the ASI bialgebra $(A, \cdot, \Delta)$.
\end{Definition}

\begin{Lemma}\label{lem:endo}
 Let $(A, \cdot, \Delta)$ be an ASI bialgebra and $\phi, \psi\colon A \to A$ be linear maps.
 Then $(\phi, \psi)$ is a~coherent endomorphism on $(A, \cdot, \Delta)$ if and only if $\phi + \psi^*$ is an endomorphism on ${\big(A \bowtie_{R_A^*, L_A^*}^{R_{A^*}^*, L_{A^*}^*} A^*, \star\big)}$, where the algebra structure on $A^*$ is given by $\Delta^*$.
\end{Lemma}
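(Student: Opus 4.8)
The plan is to reduce the "coherent endomorphism" condition to the single statement that $\phi + \psi^*$ is an algebra endomorphism of the big algebra $\big(A \bowtie_{R_A^*, L_A^*}^{R_{A^*}^*, L_{A^*}^*} A^*, \star\big)$, by expanding both sides of the endomorphism equation on a general element $a + a^*$ and matching components in $A$ and $A^*$. Since the product $\star$ on $A \oplus A^*$ is given by equation~\eqref{eq:multsum} with the specific maps $l_A = R_A^*$, $r_A = L_A^*$, $l_B = R_{A^*}^*$, $r_B = L_{A^*}^*$ coming from the matched pair, the endomorphism condition $(\phi + \psi^*)\big((a+a^*)\star(b+b^*)\big) = \big((\phi+\psi^*)(a+a^*)\big)\star\big((\phi+\psi^*)(b+b^*)\big)$ will split into several scalar-type identities once I pair everything against test elements. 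The key observation, exactly as in Corollary~\ref{cor:chd} and Lemma~\ref{lem:dualrep}, is that operators like $R_A^*$, $L_A^*$ and the comultiplication $\Delta$ (the dual of $\circ = \Delta^*$) interact with $\phi$ and $\psi^*$ through transposition, turning statements about $A^*$ back into statements about $\Delta$ on $A$.

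First I would compute $(\phi + \psi^*)(a+a^*) = \phi(a) + \psi^*(a^*)$ and expand the product $\star$ on both the "input" and "output" sides using equation~\eqref{eq:multsum}. Collecting the terms that land in $A$ versus those that land in $A^*$ gives two families of identities. The purely algebraic pieces---those involving only $a, b \in A$ on the $A$-component and only $a^*, b^* \in A^*$ on the $A^*$-component---recover that $\phi$ is an algebra endomorphism of $(A,\cdot)$ and that $\psi^*$ is an algebra endomorphism of $(A^*, \circ) = (A^*, \Delta^*)$; the latter dualizes precisely to $\psi$ being a coalgebra endomorphism of $(A, \Delta)$, i.e. $(\psi \otimes \psi)\Delta = \Delta\psi$. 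The remaining cross terms, which mix $l_A = R_A^*$, $r_A = L_A^*$ acting on $A^*$ with $l_B = R_{A^*}^*$, $r_B = L_{A^*}^*$ acting on $A$, will, after taking transposes and rewriting in terms of $\Delta$, yield exactly the four mixed compatibility equations \eqref{eq:coendo1}, \eqref{eq:coendo2}, \eqref{eq:coendo3} and \eqref{eq:coendo4}.

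Concretely, equations \eqref{eq:coendo1} and \eqref{eq:coendo2}, which read $\psi(\phi(a)\cdot b) = a \cdot \psi(b)$ and $\psi(a \cdot \phi(b)) = \psi(a)\cdot b$, should emerge from pairing the $A^*$-valued cross terms against elements of $A$, using $\langle R_A^*(a)b^*, c\rangle = \langle b^*, c \cdot a\rangle$ and the analogous identity for $L_A^*$, together with the definition $\langle \psi^*(b^*), c\rangle = \langle b^*, \psi(c)\rangle$. Dually, \eqref{eq:coendo3} and \eqref{eq:coendo4}, phrased with $\Delta$, should come from the $A$-valued cross terms involving $R_{A^*}^*$ and $L_{A^*}^*$, after unwinding that $\circ = \Delta^*$ so that these operators are the transposes of left/right multiplication by $\Delta$. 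The bookkeeping here---keeping track of which slot of $\Delta$ each operator touches and correctly transposing the flip---is where the care is needed.

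The main obstacle will be the bilinear-form bookkeeping in the cross terms: each of the four mixed identities sits inside the pairing $\mathfrak{B}_d$, and one must transpose the correct operator at the correct argument, being vigilant that $R_A^*$ and $L_A^*$ swap the roles of left and right multiplication and that $\circ$ is the dual of $\Delta$ rather than of a given product on $A$. Once the pairings are set up correctly, each equivalence is a direct matching of coefficients, so no deep idea beyond the transposition calculus is required; the proof is therefore a careful but routine expansion, and I would present it by displaying the expansion of the endomorphism equation, then reading off the four conditions one slot at a time.
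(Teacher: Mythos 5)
Your plan coincides with the paper's own proof: the paper likewise expands $(\phi+\psi^*)\big((a+a^*)\star(b+b^*)\big)$ via equation~\eqref{eq:multsum}, splits it into six component identities (the diagonal ones giving that $\phi$ is an algebra endomorphism of $(A,\cdot)$ and $\psi^*$ of $(A^*,\Delta^*)$, the four cross terms involving $R_A^*$, $L_A^*$, $R_{A^*}^*$, $L_{A^*}^*$), and then translates the cross terms through the dual pairing into equations~\eqref{eq:coendo1}--\eqref{eq:coendo4}, exactly the pairing computation you describe (the paper carries out the case $L_{A^*}^*(\psi^*(b^*))\phi(a) = \phi(L_{A^*}^*(b^*)a) \Leftrightarrow \eqref{eq:coendo3}$ explicitly and notes the others are similar). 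Your proposal is correct and is essentially the same argument.
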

\begin{proof}
It follows from a similar proof as the one for the equivalence between items~(1) and~(3) in Corollary~\ref{cor:chd} or a direct proof as follows.
Note that $\phi + \psi^*$ is an endomorphism on ${\big(A \bowtie_{R_A^*, L_A^*}^{R_{A^*}^*, L_{A^*}^*} A^*, \star\big)}$ if and only if
 \begin{gather*}
 (\phi + \psi^*)(a + a^*) \star (\phi + \psi^*)(b+b^*) = (\phi + \psi^*)((a+a^*) \star (b+b^*)),
 \end{gather*}
for all $a, b \in A$, $a^*, b^* \in A^*$. By equation~\eqref{eq:multsum}, the above equation holds if and only if the following equations hold:
 \begin{alignat*}{3}
 &\psi^*(a^*) \circ \psi^*(b^*) = \psi^*(a^* \circ b^*), \qquad&&R_{A}^*(\phi(a))\psi^*(b^*) = \psi^*(R_{A}^*(a) b^*),& \\
 &L_{A}^*(\phi(b))\psi^*(a^*) = \psi^*(L_{A}^*(b) a^*), \qquad&& \phi(a) \cdot \phi(b) = \phi(a \cdot b),& \\
 &L_{A^*}^*(\psi^*(b^*))\phi(a) = \phi(L_{A^*}^*(b^*) a), \qquad &&R_{A^*}^*(\psi^*(a^*))\phi(b) = \phi(R_{A^*}^*(a^*) b),&
 \end{alignat*}
 for all $a, b \in A$ and $a^*, b^* \in A^*$. Thus these equations hold if and only if $(\phi, \psi)$ is a coherent endomorphism on $(A, \cdot,
 \Delta)$. In fact, as an example, we give an explicit proof for the case that $L_{A^*}^*(\psi^*(b^*))\phi(a) = \phi(L_{A^*}^*(b^*) a)$ for all $a\in A, b^*\in A^*$ if and only if equation~\eqref{eq:coendo3}
 holds and the proofs for the other cases are similar. Note
 that for all $a \in A$ and $a^*, b^* \in A^*$, we have
 \begin{gather*}
 \langle a^*, L_{A^*}^*(\psi^*(b^*))\phi(a)\rangle = \langle \psi^*(b^*) \circ a^*, \phi(a)\rangle = \langle b^* \otimes a^*, (\psi \otimes \id)\Delta(\phi(a))\rangle, \\
 \langle a^*, \phi(L_{A^*}^*(b^*) a)\rangle = \langle b^* \circ \phi^*(a^*), a\rangle = \langle b^* \otimes a^*, (\id \otimes
 \phi)\Delta(a)\rangle.
 \end{gather*}
 Hence the conclusion follows and thus the proof is completed.
\end{proof}

\begin{Proposition}\label{pro:autder}
 Suppose that $(A,\cdot, \Delta)$ is an ASI bialgebra over the real number field $\mathbb{R}$ and $\partial, \eth\colon A \to A$ are linear maps.
 Then $(\partial, \eth)$ is a coherent derivation on $(A, \cdot, \Delta)$ if and only if $(e^{t\partial},e^{t\eth})$ is a coherent automorphism on $(A, \cdot, \Delta)$ for all $t \in \mathbb{R}$.
\end{Proposition}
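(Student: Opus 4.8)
The plan is to reduce the statement to the classical Lie-theoretic correspondence between derivations and one-parameter groups of automorphisms, carried out on the double algebra $\big(A \bowtie_{R_A^*, L_A^*}^{R_{A^*}^*, L_{A^*}^*} A^*, \star\big)$, hereafter denoted $D$, whose structure on $A^*$ is given by $\Delta^*$. The two equivalences already proved are tailor-made for this: by Corollary~\ref{cor:chd}, $(\partial, \eth)$ is a coherent derivation on $(A, \cdot, \Delta)$ if and only if $\partial + \eth^*$ is a derivation on $D$; and by Lemma~\ref{lem:endo}, a pair of linear maps is a coherent endomorphism of $(A, \cdot, \Delta)$ if and only if its associated map $\phi + \psi^*$ is an endomorphism of $D$. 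So the whole proposition will follow once I match $(e^{t\partial}, e^{t\eth})$ with the exponential of $\partial + \eth^*$.

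First I would record two elementary exponential identities. Since $(\eth^n)^* = (\eth^*)^n$ for all $n$ and transposition is linear, $(e^{t\eth})^* = e^{t\eth^*}$. Moreover, writing $\partial + \eth^*$ for the block-diagonal operator on $A \oplus A^*$ acting as $\partial$ on $A$ and as $\eth^*$ on $A^*$ (exactly as in Corollary~\ref{cor:chd}), the two blocks act on complementary summands and hence commute, so
$$
 e^{t(\partial + \eth^*)} = e^{t\partial} + e^{t\eth^*} = e^{t\partial} + (e^{t\eth})^*.
$$
In particular each of $e^{t\partial}$, $e^{t\eth}$ and $e^{t(\partial + \eth^*)}$ is invertible, with inverse obtained by replacing $t$ by $-t$; thus bijectivity is automatic, and for these maps ``endomorphism'' and ``automorphism'' coincide, on both $(A,\cdot,\Delta)$ and $D$.

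Next I would invoke the standard fact that, for the finite-dimensional associative algebra $D$ over $\mathbb{R}$, a linear map $d\colon D \to D$ is a derivation if and only if $e^{td}$ is an automorphism of $D$ for every $t \in \mathbb{R}$. Both directions reduce to a one-line uniqueness argument: setting $\phi(t) = e^{td}(x \star y)$ and $\psi(t) = e^{td}(x) \star e^{td}(y)$, the Leibniz rule for $d$ makes both functions solve the linear ODE $h'(t) = d\,h(t)$ with common initial value $x \star y$, whence $\phi = \psi$; conversely, differentiating $e^{td}(x \star y) = e^{td}(x) \star e^{td}(y)$ at $t = 0$ recovers the Leibniz rule. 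Applying this with $d = \partial + \eth^*$ and chaining the equivalences gives
$$
 (\partial, \eth)\ \text{coherent derivation} \iff \partial + \eth^*\ \text{a derivation on } D \iff e^{t(\partial+\eth^*)}\ \text{an automorphism of } D\ \forall t \iff (e^{t\partial}, e^{t\eth})\ \text{a coherent automorphism}\ \forall t,
$$
where the first equivalence is Corollary~\ref{cor:chd}, the middle one is the standard fact, and the last one is Lemma~\ref{lem:endo} together with the identity $e^{t(\partial+\eth^*)} = e^{t\partial} + (e^{t\eth})^*$ and the automatic bijectivity noted above.

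All the computations are routine; there is no genuine obstacle, only a bookkeeping point that must be handled with care: one must check that the transpose appearing in Lemma~\ref{lem:endo} lines up exactly with the transpose block in Corollary~\ref{cor:chd}, so that the endomorphism of $D$ attached to $(e^{t\partial}, e^{t\eth})$ is precisely the exponential of the derivation of $D$ attached to $(\partial, \eth)$. A reader wanting a self-contained argument could instead verify the six defining conditions of a coherent automorphism directly, each by the same ODE-uniqueness technique, but that is markedly more laborious and is exactly what passing through $D$ avoids.
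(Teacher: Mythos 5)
Your proposal is correct and follows essentially the same route as the paper: the paper's proof likewise reduces the statement via Lemma~\ref{lem:endo} to the identity $e^{t\partial} + \big(e^{t\eth}\big)^* = e^{t(\partial+\eth^*)}$, invokes the standard fact that $e^{t(\partial+\eth^*)}$ is an automorphism of the double algebra for all $t$ if and only if $\partial+\eth^*$ is a derivation on it, and concludes by Corollary~\ref{cor:chd}. The only difference is that you spell out the details the paper leaves implicit (the transpose--exponential compatibility, the block-diagonal commutation, and the ODE-uniqueness argument behind the ``known'' fact), which is fine.
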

\begin{proof}
Let $t \in \mathbb{R}$.
Note that $e^{t\partial}, e^{t\eth}$ and $e^{t (\partial + \eth^*)}$ are invertible.
By Lemma~\ref{lem:endo}, $\big(e^{t\partial}, e^{t\eth}\big)$ is a~coherent automorphism if and only if $e^{t\partial} + \big(e^{t\eth}\big)^* = e^{t(\partial + \eth^*)}$ is an automorphism on the algebra $\big(A \bowtie_{R_A^*, L_A^*}^{R_{A^*}^*, L_{A^*}^*} A^*, \star\big)$, where the algebra structure on~$A^*$ is given by $\Delta^*$.
On the other hand, it is known that for all $t \in \mathbb{R}$, $e^{t (\partial + \eth^*)}$ is an automorphism on $\big(A \bowtie_{R_A^*, L_A^*}^{R_{A^*}^*, L_{A^*}^*} A^*, \star\big)$,
if and only if $\partial + \eth^*$ is a derivation on $\big(A \bowtie_{R_A^*, L_A^*}^{R_{A^*}^*, L_{A^*}^*} A^*, \star\big)$.
Hence by Corollary~\ref{cor:chd}, the conclusion holds.
\end{proof}

Given a Lie algebra $(\mathfrak{g}, [\ ,\ ])$, we define the \textit{opposite Lie algebra} $\mathfrak{g}^{\rm op}$ as the vector space $\mathfrak{g}$ with the Lie bracket $[\ ,\ ]^{\rm op}$ given by
\begin{equation*}
 [a, b]^{\rm op} := [b, a] = -[a, b], \qquad \forall a, b \in \mathfrak{g}.
\end{equation*}
In particular, let $V$ be a vector space and $\mathfrak{gl}(V)$ be the general linear Lie algebra with the Lie bracket $[S, T] = S T - T S$ for all $S, T \in \mathfrak{gl}(V)$.
Then $\mathfrak{gl}(V)^{\rm op} \oplus \mathfrak{gl}(V)$ is a Lie algebra with the following Lie bracket
\begin{equation}
 [(S_1, S_2), (T_1, T_2)] := (-[S_1, T_1], [S_2, T_2]) = (T_1 S_1 - S_1 T_1, S_2 T_2 - T_2 S_2), \label{eq:coherent}
\end{equation}
for all $S_1, S_2, T_1, T_2 \in \mathfrak{gl}(V)$.

Given a Lie group $G$, let $G^{\rm op}$ be the opposite group of
$G$ with the same manifold structure as $G$. Then $G^{\rm op}$ is
a Lie group, called the \textit{opposite Lie group} of $G$. Let
${\rm Lie}(G)$ denote the Lie algebra of the Lie group $G$. Then
${\rm Lie}(G^{\rm op}) = ({\rm Lie}(G))^{\rm op}$ \cite{nicolas1989lie}.
In particular, let $V$ be a vector space and ${\rm GL}(V)$ be the general
linear Lie group. Then the direct product ${\rm GL}(V)^{\rm op} \times
{\rm GL}(V)$ is a Lie group with the following multiplication:
\begin{equation}
 (\alpha_1, \alpha_2) (\beta_1, \beta_2) = (\beta_1 \alpha_1, \alpha_2 \beta_2), \qquad \forall \alpha_1, \alpha_2, \beta_1, \beta_2 \in {\rm GL}(V). \label{eq:coherent2}
\end{equation}

Moreover, we have ${\rm Lie}({\rm GL}(V)^{\rm op} \times {\rm GL}(V)) \cong {\rm Lie}({\rm GL}(V)^{\rm op}) \oplus {\rm Lie}({\rm GL}(V)) = \mathfrak{gl}(V)^{\rm op} \oplus \mathfrak{gl}(V)$.

\begin{Theorem}
 Let $(A, \cdot, \Delta)$ be a real ASI bialgebra.
 Let $G$ be the set of coherent automorphisms on $(A, \cdot, \Delta)$ and $\mathfrak{D}$ be the set of coherent derivations on $(A, \cdot, \Delta)$.
 Then $G$ is a closed Lie subgroup of ${\rm GL}(A)^{\rm op} \times {\rm GL}(A)$ and $\mathfrak{D}$ is a Lie subalgebra of $\mathfrak{gl}(A)^{\rm op} \oplus \mathfrak{gl}(A)$.
 Moreover the Lie algebra of~$G$ is exactly~$\mathfrak D$.
\end{Theorem}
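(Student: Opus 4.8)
The plan is to pull back the Lie group $\mathrm{Aut}(\mathcal{H})$ and the Lie algebra $\mathrm{Der}(\mathcal{H})$ of the double construction algebra $\mathcal{H} := \big(A \bowtie A^*, \star\big)$ appearing in Corollary~\ref{cor:chd} (with algebra structure on $A^*$ given by $\Delta^*$) along the embedding that sends a pair of operators on $A$ to the associated block operator on $A \oplus A^*$. All the nontrivial algebraic content has already been packaged into Lemma~\ref{lem:endo}, Corollary~\ref{cor:chd} and Proposition~\ref{pro:autder}; what remains is Lie theory.

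First I would introduce the map $\Theta\colon {\rm GL}(A)^{\rm op}\times{\rm GL}(A)\to{\rm GL}(A\oplus A^*)$, $(\phi,\psi)\mapsto\phi+\psi^*$, where $\phi+\psi^*$ is the operator acting as $\phi$ on $A$ and as $\psi^*$ on $A^*$. It is injective and smooth (polynomial in the entries), and, using $(\psi_1\psi_2)^*=\psi_2^*\psi_1^*$ together with the multiplication~\eqref{eq:coherent2}, one checks that $\Theta((\phi_1,\psi_1)(\phi_2,\psi_2))=\Theta(\phi_2,\psi_2)\Theta(\phi_1,\psi_1)$, so $\Theta$ is a group anti-homomorphism; in particular it fixes the identity and pulls subgroups back to subgroups. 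Since $\phi+\psi^*$ is block diagonal it is invertible exactly when $\phi$ and $\psi$ are, so Lemma~\ref{lem:endo} yields that $(\phi,\psi)$ is a coherent automorphism if and only if $\Theta(\phi,\psi)\in\mathrm{Aut}(\mathcal{H})$; that is, $G=\Theta^{-1}(\mathrm{Aut}(\mathcal{H}))$. As $\mathrm{Aut}(\mathcal{H})$ is cut out in ${\rm GL}(A\oplus A^*)$ by the polynomial identities expressing preservation of $\star$, it is a closed subgroup, hence a closed Lie subgroup by Cartan's closed subgroup theorem; since $\Theta$ is continuous, $G$ is a closed subgroup of ${\rm GL}(A)^{\rm op}\times{\rm GL}(A)$ and therefore itself a closed Lie subgroup, which settles the first assertion.

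For the Lie algebra I would first note that, since $\mathbb{R}$ is abelian, one-parameter subgroups of an opposite group coincide with those of the group, so the exponential of ${\rm GL}(A)^{\rm op}$ is the ordinary matrix exponential and $\exp(t(\partial,\eth))=(e^{t\partial},e^{t\eth})$ in ${\rm GL}(A)^{\rm op}\times{\rm GL}(A)$. Because $G$ is a closed Lie subgroup, its Lie algebra is ${\rm Lie}(G)=\{(\partial,\eth)\in\mathfrak{gl}(A)^{\rm op}\oplus\mathfrak{gl}(A):\ (e^{t\partial},e^{t\eth})\in G\ \text{for all}\ t\in\mathbb{R}\}$. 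By Proposition~\ref{pro:autder}, the condition that $(e^{t\partial},e^{t\eth})$ be a coherent automorphism for all $t$ is precisely the condition that $(\partial,\eth)$ be a coherent derivation, whence ${\rm Lie}(G)=\mathfrak{D}$. In particular $\mathfrak{D}$, being the Lie algebra of a Lie subgroup of ${\rm GL}(A)^{\rm op}\times{\rm GL}(A)$, is automatically a Lie subalgebra of $\mathfrak{gl}(A)^{\rm op}\oplus\mathfrak{gl}(A)$; equivalently, one checks directly that $(\partial,\eth)\mapsto\partial+\eth^*$ is a Lie algebra anti-homomorphism and, by Corollary~\ref{cor:chd}, realizes $\mathfrak{D}$ as the preimage of the Lie subalgebra $\mathrm{Der}(\mathcal{H})\subseteq\mathfrak{gl}(A\oplus A^*)$.

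The step I expect to be most delicate is the bookkeeping of the opposite structures: one must verify that $\Theta$ and its differential are genuinely anti-homomorphisms---matching the ${\rm op}$ on the first factor and the sign in the bracket~\eqref{eq:coherent}---and that passing to the opposite group leaves the exponential unchanged, so that Proposition~\ref{pro:autder} applies verbatim. Once these compatibilities are pinned down, both Lie-theoretic conclusions follow from Cartan's closed subgroup theorem and the standard exponential description of the Lie algebra of a closed subgroup.
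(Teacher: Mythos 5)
Your proposal is correct, and it reaches the two structural claims by a genuinely different mechanism than the paper, even though both arguments end the same way (the closed subgroup theorem plus Proposition~\ref{pro:autder}). The paper verifies everything by hand: it checks directly, using the coherence identities~\eqref{eq:qadm2}, \eqref{eq:psadm1}, etc., that $(-[\partial_1,\partial_2],[\eth_1,\eth_2])$ is again a coherent derivation and that $(\phi_2\phi_1,\psi_1\psi_2)$ is again a coherent automorphism, so that $\mathfrak D$ and $G$ are closed under the twisted bracket~\eqref{eq:coherent} and multiplication~\eqref{eq:coherent2}; it then observes that $G$ is cut out by the closed conditions \eqref{eq:coendo1}--\eqref{eq:coendo4} and invokes \cite[Theorem~6.9]{SW} and Proposition~\ref{pro:autder} to get ${\rm Lie}(G)=\mathfrak D$. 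You instead transport everything through $\Theta(\phi,\psi)=\phi+\psi^*$ into ${\rm GL}(A\oplus A^*)$: your computation that $\Theta$ reverses products under \eqref{eq:coherent2} (via $(\psi_1\psi_2)^*=\psi_2^*\psi_1^*$) is correct, Lemma~\ref{lem:endo} (with the block-diagonal invertibility remark) gives $G=\Theta^{-1}\big({\rm Aut}\big(A\bowtie A^*\big)\big)$, and Corollary~\ref{cor:chd} gives $\mathfrak D=(d\Theta)^{-1}\big({\rm Der}\big(A\bowtie A^*\big)\big)$, with $d\Theta$ reversing brackets in a way that matches the sign in~\eqref{eq:coherent}; since preimages of subgroups and subalgebras under (anti-)homomorphisms are again subgroups and subalgebras, the hands-on closure computations of the paper become unnecessary, and indeed in your scheme the subalgebra property of $\mathfrak D$ can simply be read off from ${\rm Lie}(G)=\mathfrak D$ once $G$ is known to be a closed Lie subgroup. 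What your route buys is economy and a conceptual explanation (the Lie group/Lie algebra pair on $(A,\cdot,\Delta)$ is pulled back from the automorphisms/derivations of the double construction); what the paper's route buys is a self-contained verification that never leaves $A$ and makes the twisted operations \eqref{eq:coherent} and \eqref{eq:coherent2} explicit. The delicate points you flag are real but you resolve them correctly: in particular, one-parameter subgroups of an opposite group coincide with those of the group, so the exponential is unchanged and Proposition~\ref{pro:autder} applies verbatim in ${\rm GL}(A)^{\rm op}\times{\rm GL}(A)$.
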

\begin{proof}
It is straightforward to verify that $\mathfrak{D}$ is a vector space by
\begin{equation*}
 \lambda (\partial_1, \eth_1) + \mu (\partial_2, \eth_2) = (\lambda \partial_1 + \mu\eth_1, \lambda \partial_2 + \mu \eth_2), \qquad
 \forall (\partial_1, \eth_1), (\partial_2, \eth_2) \in \mathfrak{D},\quad
 \lambda, \mu \in \mathbb{R}.
\end{equation*}
Let $(\partial_1, \eth_2),(\partial_2, \eth_2) \in \mathfrak{D}$ and $a, b \in A$.
It is known that $-[\partial_1,\partial_2]$ is a derivation on $(A, \cdot)$ and $[\eth_1,\eth_2]$ is a coderivation on $(A, \Delta)$.
Moreover, we have
\begin{gather*}
 [\eth_1,\eth_2](a \cdot b) = \eth_1(\eth_2(a \cdot b)) - \eth_2(\eth_1(a \cdot b)) \\
 \hspace*{19mm}{}
 \overset{\eqref{eq:qadm2}}{=} a \cdot \eth_1(\eth_2(b)) - \partial_1(a) \cdot \eth_2(b) - (\partial_2(a) \cdot \eth_1(b) - \partial_1(\partial_2(a)) \cdot b ) \\
 \hphantom{[\eth_1,\eth_2](a \cdot b) =}{}
 - (a \cdot \eth_2(\eth_1(b)) - \partial_2(a) \cdot \eth_1(b)) + (\partial_1(a) \cdot \eth_2(b) - \partial_2(\partial_1(a)) \cdot b ) \\
 \hphantom{[\eth_1,\eth_2](a \cdot b)}{}= a \cdot [\eth_1,\eth_2](b) + [\partial_1,\partial_2](a) \cdot b, \\
 \Delta [\partial_1,\partial_2] = \Delta \partial_1 \partial_2 - \Delta \partial_2 \partial_1 \\
 \hspace*{13mm}{} \overset{\eqref{eq:psadm1}}{=} (\partial_1 \otimes \id - \id \otimes \eth_1) (\partial_2 \otimes \id - \id \otimes \eth_2)\Delta \\
\hphantom{\Delta [\partial_1,\partial_2]=}{}
 - (\partial_2 \otimes \id - \id \otimes \eth_2)(\partial_1 \otimes \id - \id \otimes \eth_1)\Delta \\
 \hphantom{\Delta [\partial_1,\partial_2]}{}
= ([\partial_1,\partial_2] \otimes \id) \Delta + (\id \otimes [\eth_1,\eth_2])\Delta.
\end{gather*}
Similarly we have
\begin{gather*}
 [\eth_1,\eth_2](a \cdot b) = a \cdot [\eth_1,\eth_2](b) + [\partial_1,\partial_2](a) \cdot b, \\
 \Delta [\partial_1, \partial_2] = (\id \otimes [\partial_1, \partial_2])\Delta + ([\eth_1,\eth_2] \otimes \id )\Delta.
\end{gather*}
Thus $(-[\partial_1,\partial_2], [\eth_1,\eth_2])$ is a coherent derivation on $(A, \cdot, \Delta)$.
Then $\mathfrak{D}$ is a Lie algebra with the Lie bracket defined by equation~(\ref{eq:coherent}).
Hence $\mathfrak{D}$ is a Lie subalgebra of $\mathfrak{gl}(A)^{\rm op} \oplus \mathfrak{gl}(A)$.

Let $(\phi_1, \psi_1), (\phi_2,\psi_2) \in G$ and $a, b \in A$.
It is known that $\phi_2 \phi_1$ is an algebra automorphism on $(A, \cdot)$ and $\psi_1 \psi_2$ is a coalgebra automorphism. Moreover, we have
\begin{gather*}
 (\psi_1 \psi_2)((\phi_2 \phi_1)(a) \cdot b) \overset{\eqref{eq:coendo1}}{=} \psi_1(\phi_1(a) \cdot \psi_2(b)) \overset{\eqref{eq:coendo1}}{=} a \cdot (\psi_1\psi_2)(b), \\
 (\id \otimes \phi_2 \phi_1)\Delta \overset{\eqref{eq:coendo3}}{=} (\psi_1 \otimes \phi_2)\Delta \phi_1 \overset{\eqref{eq:coendo3}}{=} (\psi_1 \psi_2 \otimes \id)\Delta(\phi_2 \phi_1).
\end{gather*}
Similarly, we have
\begin{equation*}
 (\psi_1 \psi_2)(a \cdot (\phi_2 \phi_1)(b) ) = (\psi_1\psi_2)(a) \cdot b, \qquad (\phi_2 \phi_1 \otimes \id)\Delta = (\id \otimes \psi_1 \psi_2)\Delta(\phi_2 \phi_1).
\end{equation*}
Thus $(\phi_2 \phi_1, \psi_1 \psi_2)$ is a coherent automorphism.
Meanwhile, it is straightforward to verify $(\id, \id) \in G$ and $\big(\phi_1^{-1}, \psi_1^{-1}\big) \in G$.
Then $G$ is a group with the multiplication defined by equation~\eqref{eq:coherent2}.
Hence $G$ is a subgroup of ${\rm GL}(A)^{\rm op} \times {\rm GL}(A)$.

Moreover, $G$ is closed in ${\rm GL}(A)^{\rm op} \times {\rm GL}(A)$ since $A$ is finite-dimensional and $G$ is a subset of ${\rm GL}(A)^{\rm op} \times {\rm GL}(A)$ determined by equations~\eqref{eq:coendo1}--\eqref{eq:coendo4} and the following equations
\begin{equation*}
 \phi(a \cdot b) = \phi(a) \cdot \phi(b), \qquad (\psi \otimes \psi) \Delta = \Delta \psi, \qquad \forall a, b \in A.
\end{equation*}
Hence by \cite[Theorem~6.9]{SW}, $G$ is a closed Lie subgroup of ${\rm GL}(A)^{\rm op} \times {\rm GL}(A)$ and the Lie algebra of $G$ is
$\big\{(\partial, \eth) \in \mathfrak{gl}(A)^{\rm op} \oplus \mathfrak{gl}(A)\colon \big(e^{t\partial}, e^{t\eth}\big) \in G, \forall t \in \mathbb{R}\big\}$.
By Proposition~\ref{pro:autder}, we have ${\rm Lie}(G) = \mathfrak{D}$.
\end{proof}

\section{Coboundary differential ASI bialgebras}\label{sec:cob}

We study the coboundary differential ASI bialgebras, leading to
the introduction of the notion of $\Psi$-admissible associative
Yang--Baxter equation (AYBE) in a differential algebra. The
antisymmetric solutions of the latter give the former. The notions
of $\mathcal{O}$-operators of differential algebras and
differential dendriform algebras are introduced to provide
antisymmetric solutions of $\Psi$-admissible AYBE in semi-direct
product differential algebras and hence give rise to differential
ASI bialgebras.


\subsection{Coboundary differential ASI bialgebras and admissible AYBE}\label{subsec:cobdasi}

\begin{Definition}\label{def:cobdasi}
 A differential ASI bialgebra $(A, \cdot, \Delta, \Phi, \Psi)$ is called \textit{coboundary}
 if $\Delta$ is defined by
 \begin{equation}
 \Delta(a) := (\id \otimes L_A(a) - R_A(a) \otimes \id)(r), \qquad \forall a \in A, \label{eq:cbd}
 \end{equation}
 for some $r \in A \otimes A$.
\end{Definition}

\begin{Proposition}[{\cite[Theorem~2.3.5]{BaiDouble}}]\label{pro:cobasi}
Let $(A, \cdot)$ be an algebra and $r \in A \otimes A$.
 Define a linear map $\Delta\colon A \to A \otimes A$ by equation~\eqref{eq:cbd}.
 Then $(A, \cdot, \Delta)$ is an ASI bialgebra if and only if the following equations hold:
 \begin{gather}
 (L_A(a) \otimes \id - \id \otimes R_A(a))(\id \otimes L_A(b) - R_A(b) \otimes \id)(r + \sigma(r)) = 0, \label{eq:cobanti} \\
 (\id \otimes \id \otimes L_A(a) - R_A(a) \otimes \id \otimes \id)(r_{12}r_{13}+r_{13}r_{23}-r_{23}r_{12}) = 0, \qquad \forall a,b\in A. \label{eq:cobcoa}
 \end{gather}
 Here for $r = \sum_i a_i \otimes b_i$, we denote
 \begin{gather*}
 r_{12} r_{13} = \sum_{i,j} a_i \cdot a_j \otimes b_i \otimes b_j, \qquad
 r_{13} r_{23} = \sum_{i,j} a_i \otimes a_j \otimes b_i \cdot b_j, \\
 r_{23} r_{12} = \sum_{i,j} a_j \otimes a_i \cdot b_j \otimes b_i.
 \end{gather*}
\end{Proposition}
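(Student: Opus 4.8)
The plan is to verify the three defining conditions of an ASI bialgebra (Definition~\ref{def:asi_bialgebra}) directly for the coboundary comultiplication and to show that they collapse precisely to \eqref{eq:cobanti} and \eqref{eq:cobcoa}. Writing $r = \sum_i a_i \otimes b_i$, equation~\eqref{eq:cbd} becomes $\Delta(a) = \sum_i a_i \otimes (a \cdot b_i) - \sum_i (a_i \cdot a) \otimes b_i$. Since $(A,\cdot)$ is an algebra by hypothesis, only the coassociativity of $\Delta$ and the two compatibility equations \eqref{eq:asiifs}--\eqref{eq:asisym} remain to be analyzed, and I would handle them in that order of increasing difficulty.

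First I would dispatch \eqref{eq:asiifs}. Expanding $\Delta(a \cdot b)$ and the right-hand side $(R_A(b) \otimes \id)\Delta(a) + (\id \otimes L_A(a))\Delta(b)$ with the explicit formula above, the two cross terms $\pm\sum_i (a_i \cdot b) \otimes (a \cdot b_i)$ cancel, and associativity of $\cdot$ collapses what remains to the common value $\sum_i a_i \otimes (a \cdot b \cdot b_i) - \sum_i (a_i \cdot a \cdot b) \otimes b_i$. Thus \eqref{eq:asiifs} holds automatically for every $r$, imposing no condition. Next I would treat the symmetry condition \eqref{eq:asisym}: each side expands into four monomials in the $a_i,b_i$, and I would check that the difference between the left-hand side and the $\sigma$-image on the right is, term for term, exactly the expansion of the left-hand side of \eqref{eq:cobanti}, namely $(L_A(a) \otimes \id - \id \otimes R_A(a))(\id \otimes L_A(b) - R_A(b) \otimes \id)(r + \sigma(r))$. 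This matching is purely combinatorial, the only care being to track which monomials arise from $r$ and which from $\sigma(r)$; consequently \eqref{eq:asisym} is equivalent to \eqref{eq:cobanti}.

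The main work, and the chief obstacle, is coassociativity $(\id \otimes \Delta)\Delta = (\Delta \otimes \id)\Delta$. Here I would apply $\id \otimes \Delta$ and $\Delta \otimes \id$ to $\Delta(a)$ separately, each producing four terms indexed by a second copy $r = \sum_j a_j \otimes b_j$, so that the difference has eight terms. The two key observations are: the two terms in which $a$ occupies the middle tensor slot cancel against each other after the relabelling $i \leftrightarrow j$; and the remaining six terms reorganize, again after an index swap, into $(\id \otimes \id \otimes L_A(a) - R_A(a) \otimes \id \otimes \id)$ applied respectively to $r_{13}r_{23}$, to $r_{12}r_{13}$, and to $-r_{23}r_{12}$, invoking the definitions of these products from the statement. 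Hence $(\id \otimes \Delta)\Delta(a) - (\Delta \otimes \id)\Delta(a)$ equals exactly the left-hand side of \eqref{eq:cobcoa}, so coassociativity is equivalent to \eqref{eq:cobcoa}.

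Assembling the four steps gives the stated equivalence: the algebra axiom and \eqref{eq:asiifs} hold freely, \eqref{eq:asisym} is equivalent to \eqref{eq:cobanti}, and coassociativity is equivalent to \eqref{eq:cobcoa}. I expect the bookkeeping in the coassociativity calculation --- correctly matching the eight monomials to the three expressions $r_{12}r_{13}$, $r_{13}r_{23}$, $r_{23}r_{12}$ after the index relabellings, and verifying the middle-slot cancellation --- to be the delicate part, whereas the reductions of \eqref{eq:asiifs} and \eqref{eq:asisym} are routine expansions.
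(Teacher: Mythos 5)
Your proposal is correct: the paper itself gives no proof of this proposition (it is quoted from \cite[Theorem~2.3.5]{BaiDouble}), and your direct term-by-term verification is exactly the standard argument behind that citation --- I checked that \eqref{eq:asiifs} indeed holds identically for every $r$ (the cross terms $\sum_i (a_i\cdot b)\otimes(a\cdot b_i)$ cancel), that the eight monomials in the difference of the two sides of \eqref{eq:asisym} match the expansion of \eqref{eq:cobanti}, and that in the coassociativity computation the two middle-slot terms $\mp\sum_{i,j} a_i \otimes (a_j \cdot a \cdot b_i) \otimes b_j$ cancel after the swap $i\leftrightarrow j$, leaving precisely $(\id \otimes \id \otimes L_A(a) - R_A(a) \otimes \id \otimes \id)(r_{12}r_{13}+r_{13}r_{23}-r_{23}r_{12})$. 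No gaps; the bookkeeping you flag as delicate is carried out correctly.
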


\begin{Proposition}\label{pro:cobAdmeq}
 Let $(A, \cdot, \Phi = \{\partial_k\}_{k=1}^m)$ be a $\Psi = \{\eth_k\}_{k=1}^m$-admissible differential algebra and $r \in A \otimes A$.
 Define a linear map $\Delta\colon A \to A \otimes A$ by equation~\eqref{eq:cbd}.
 Then the following conclusions hold.
 \begin{enumerate}\itemsep=0pt
 \item[$1.$] For all $k=1, \dots, m$, equation~\eqref{eq:coderivation} holds
 in which $\eth$ is replaced by $\eth_k$ if and only if the following equation holds:
 \begin{gather}
 ( \id \otimes L_A(a) )(\id \otimes \partial_k - \eth_k \otimes \id)(r) \nonumber\\
 \qquad{} + (R_A(a)\otimes \id )(\id \otimes \eth_k - \partial_k \otimes \id)(r) = 0, \qquad \forall a \in A. \label{eq:cobdcod}
 \end{gather}
 \item[$2.$] \label{it:2} For all $k=1, \dots, m$, equation~\eqref{eq:psadm1} holds if and only if the following equation holds:
 \begin{equation}
 (\id \otimes L_A(a) - R_A(a) \otimes \id )(\partial_k \otimes \id - \id \otimes \eth_k)(r) = 0, \qquad \forall a \in A. \label{eq:cobpsadm1}
 \end{equation}
 \item[$3.$] \label{it:3} For all $k=1, \dots, m$, equation~\eqref{eq:psadm2} holds if and only if the following equation holds:
 \begin{equation}
 (\id \otimes L_A(a) - R_A(a) \otimes \id )(\id \otimes \partial_k - \eth_k \otimes \id)(r) = 0, \qquad \forall a \in A. \label{eq:cobpsadm2}
 \end{equation}
 \end{enumerate}
\end{Proposition}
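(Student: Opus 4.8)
The plan is to establish all three equivalences by direct computation, fixing an expansion $r = \sum_i a_i \otimes b_i$, for which the definition \eqref{eq:cbd} reads $\Delta(a) = \sum_i a_i \otimes a \cdot b_i - \sum_i (a_i \cdot a) \otimes b_i$. For each item I would form the difference of the two sides of the relevant identity -- the coderivation relation \eqref{eq:coderivation} for item~1, equation~\eqref{eq:psadm1} for item~2, and equation~\eqref{eq:psadm2} for item~3 -- substitute this expansion, and simplify using the Leibniz rule \eqref{eq:derivation} for $\partial_k$ together with the admissibility relations \eqref{eq:qadm1}--\eqref{eq:qadm2} provided by the hypothesis that $(A, \cdot, \Phi)$ is $\Psi$-admissible. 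The aim in each case is to show that this difference equals exactly the left-hand side of \eqref{eq:cobdcod}, \eqref{eq:cobpsadm1}, or \eqref{eq:cobpsadm2}, so that the vanishing of one identity is equivalent to the vanishing of the other; since the computation is pointwise in $a$, this yields the stated equivalence of conditions holding for all $a \in A$.

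For item~1 I would expand $\Delta \eth_k(a)$ and $(\eth_k \otimes \id + \id \otimes \eth_k)\Delta(a)$. In $\Delta\eth_k(a)$ the two tensor entries carry factors $\eth_k(a) \cdot b_i$ and $a_i \cdot \eth_k(a)$; rewriting the former by \eqref{eq:qadm1} as $a \cdot \partial_k(b_i) + \eth_k(a \cdot b_i)$ and the latter by \eqref{eq:qadm2} as $\partial_k(a_i)\cdot a + \eth_k(a_i \cdot a)$ produces precisely the terms $\eth_k(a\cdot b_i)$ and $\eth_k(a_i\cdot a)$ that also occur in $(\id\otimes\eth_k)\Delta(a)$ and $(\eth_k\otimes\id)\Delta(a)$. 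These cancel, and the surviving terms regroup as $(\id\otimes L_A(a))(\id\otimes\partial_k - \eth_k\otimes\id)(r) + (R_A(a)\otimes\id)(\id\otimes\eth_k - \partial_k\otimes\id)(r)$, which is the content of \eqref{eq:cobdcod}.

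Items~2 and~3 follow the same template, now using a single derivation identity in place of two admissibility identities. For item~2 I would expand $(\partial_k\otimes\id)\Delta(a) - (\id\otimes\eth_k)\Delta(a) - \Delta\partial_k(a)$, apply the Leibniz rule to split $\partial_k(a_i\cdot a)$ and \eqref{eq:qadm2} to rewrite $\eth_k(a\cdot b_i)$; after the induced cancellations the remainder collapses to $(\id\otimes L_A(a) - R_A(a)\otimes\id)(\partial_k\otimes\id - \id\otimes\eth_k)(r)$, i.e., \eqref{eq:cobpsadm1}. Item~3 is the mirror image: expanding $(\id\otimes\partial_k)\Delta(a) - (\eth_k\otimes\id)\Delta(a) - \Delta\partial_k(a)$, splitting $\partial_k(a\cdot b_i)$ by Leibniz and rewriting $\eth_k(a_i\cdot a)$ by \eqref{eq:qadm1} yields \eqref{eq:cobpsadm2}.

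I expect no conceptual obstacle; the difficulty is purely bookkeeping -- tracking which of the two admissibility identities to apply to each mixed term so that the products involving $\eth_k$ cancel cleanly. The point worth flagging is that the admissibility hypothesis is exactly what makes those $\eth_k$-of-a-product terms disappear, leaving a condition phrased entirely in terms of $r$, the operators $L_A, R_A$, and the maps $\partial_k, \eth_k$; without admissibility the reduction to \eqref{eq:cobdcod}--\eqref{eq:cobpsadm2} would not go through.
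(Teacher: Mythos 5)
Your proposal is correct and follows essentially the same route as the paper: both form the difference of the two sides of \eqref{eq:coderivation}, \eqref{eq:psadm1}, \eqref{eq:psadm2} with $\Delta$ given by \eqref{eq:cbd}, then use exactly the admissibility identities \eqref{eq:qadm1}--\eqref{eq:qadm2} (for item~1), and \eqref{eq:qadm2} plus the Leibniz rule (items~2--3, mirrored via \eqref{eq:qadm1}), to cancel the $\eth_k$-of-product terms and recognize the remainder as the left-hand sides of \eqref{eq:cobdcod}--\eqref{eq:cobpsadm2}. The only difference is notational: the paper packages the same substitutions as operator identities, e.g.\ $L_A(\eth_k(a)) = L_A(a)\partial_k + \eth_k L_A(a)$ and $\partial_k R_A(a) = R_A(a)\partial_k + R_A(\partial_k(a))$, and computes with compositions acting on $r$, whereas you expand $r = \sum_i a_i \otimes b_i$ and track summands -- your cancellations and final regroupings check out in all three items.
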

\begin{proof}
 (1)~By equations~\eqref{eq:qadm1}--\eqref{eq:qadm2}, we have
 \begin{gather}
 L_A(\eth_k(a)) = L_A(a) \partial_k + \eth_k L_A(a), \qquad\!
 R_A(\eth_k(a)) = R_A(a) \partial_k + \eth_k R_A(a), \qquad\!
 \forall a \in A.\!\!\! \label{eq:pf1}
 \end{gather}
 Then for all $a \in A$, we have
 \begin{gather*}
 \Delta \eth_k(a) - (\eth_k \otimes \id + \id \otimes \eth_k ) \Delta(a) \\
 \qquad{}\overset{\hphantom{(4.7)}}{=} (\id \otimes L_A(\eth_k(a)) - R_A(\eth_k(a)) \otimes \id )(r) \\
 \qquad\qquad {}- (\eth_k \otimes L_A(a) - \eth_k R_A(a) \otimes \id + \id \otimes \eth_k L_A(a) - R_A(a) \otimes \eth_k )(r) \\
 \qquad{}\overset{\eqref{eq:pf1}}{=} (\id \otimes L_A(a) \partial_k - R_A(a) \partial_k \otimes \id - \eth_k \otimes L_A(a) + R_A(a) \otimes \eth_k )(r) \\
 \qquad{}\overset{\hphantom{(4.7)}}{=} ( \id \otimes L_A(a) )(\id \otimes \partial_k - \eth_k \otimes \id)(r) + (R_A(a)\otimes \id )(\id \otimes \eth_k - \partial_k \otimes \id)(r).
 \end{gather*}
 Hence equation~\eqref{eq:coderivation} holds if and only if equation~\eqref{eq:cobdcod} holds.

 (2)~By equations~\eqref{eq:qadm2} and \eqref{eq:derivation}, we have
 \begin{gather}
 L_A(a) \eth_k = L_A(\partial_k(a)) + \eth_k L_A(a), \qquad\!
 \partial_k R_A(a) = R_A(a) \partial_k + R_A(\partial_k(a)), \qquad\!
 \forall a \in A.\!\!\! \label{eq:pf2}
 \end{gather}
 Then for all $a \in A$, we have
 \begin{gather*}
 (\partial_k \otimes \id)\Delta(a) - (\id \otimes \eth_k )\Delta(a) - \Delta \partial_k(a) \\
 \qquad{}\overset{\hphantom{(4.8)}}{=} ( \partial_k \otimes L_A(a) - \partial_k R_A(a) \otimes \id - \id \otimes \eth_k L_A(a) + R_A(a) \otimes \eth_k )(r) \\
 \qquad\qquad {} - (\id \otimes L_A(\partial_k(a)) - R_A(\partial_k(a)) \otimes \id )(r) \\
 \qquad{} \overset{\eqref{eq:pf2}}{=} ( \partial_k \otimes L_A(a) - \id \otimes L_A(a) \eth_k - R_A(a) \partial_k \otimes \id + R_A(a) \otimes \eth_k )(r) \\
 \qquad{} \overset{\hphantom{(4.8)}}{=} (\id \otimes L_A(a) - R_A(a) \otimes \id )(\partial_k \otimes \id - \id \otimes \eth_k)(r).
 \end{gather*}
 Hence equation~\eqref{eq:psadm1} holds if and only if equation~\eqref{eq:cobpsadm1} holds.

 (3)~It follows from a similar argument as the one of (2).
\end{proof}

Combining Propositions~\ref{pro:cobasi} and \ref{pro:cobAdmeq}, we have the following conclusion.
\begin{Corollary}\label{cor:cobdasiCon}
 Let $(A, \cdot, \Phi = \{\partial_k\}_{k=1}^m)$ be a $\Psi = \{\eth_k\}_{k=1}^m$-admissible differential algebra and $r \in A \otimes A$.
 Define a linear map $\Delta$ by equation~\eqref{eq:cbd}. Then $(A, \cdot, \Delta, \Phi, \Psi)$ is a differential ASI bialgebra if and only if
 equations~\eqref{eq:cobanti}--\eqref{eq:cobpsadm2} are satisfied.
\end{Corollary}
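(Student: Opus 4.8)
The plan is to unpack Definition~\ref{def:dasi} condition by condition and match each requirement against one of the two Propositions just established, while keeping track of which conditions are already guaranteed by the standing hypothesis. By Definition~\ref{def:dasi}, the quintuple $(A, \cdot, \Delta, \Phi, \Psi)$ is a differential ASI bialgebra precisely when: (i)~$(A, \cdot, \Delta)$ is an ASI bialgebra; (ii)~$(A, \cdot, \Phi)$ is a differential algebra; (iii)~$(A, \Delta, \Psi)$ is a differential coalgebra; and (iv)~equations~\eqref{eq:qadm1}, \eqref{eq:qadm2}, \eqref{eq:psadm1} and~\eqref{eq:psadm2} hold. Since the hypothesis already assumes that $(A, \cdot, \Phi)$ is a $\Psi$-admissible differential algebra, condition~(ii) is automatic and the $\Psi$-admissibility part of~(iv)---that is, \eqref{eq:qadm1} and~\eqref{eq:qadm2}---holds by definition, so these impose no new constraints.

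Next I would translate the three remaining demands into the displayed equations. Condition~(i) is, by Proposition~\ref{pro:cobasi} applied to the $\Delta$ defined in~\eqref{eq:cbd}, equivalent to \eqref{eq:cobanti} together with~\eqref{eq:cobcoa}; in particular this already delivers the coassociativity of $\Delta$ needed for~(iii). For condition~(iii) it then remains only to check that each $\eth_k$ is a coderivation on $(A, \Delta)$---the commutativity of the $\eth_k$ being part of the $\Psi$-admissibility hypothesis---and this is exactly equation~\eqref{eq:cobdcod} by Proposition~\ref{pro:cobAdmeq}(1). Finally, the two remaining equations of~(iv), namely \eqref{eq:psadm1} and~\eqref{eq:psadm2}, are equivalent to \eqref{eq:cobpsadm1} and~\eqref{eq:cobpsadm2} respectively, by Proposition~\ref{pro:cobAdmeq}(2) and~(3).

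Assembling these equivalences, $(A, \cdot, \Delta, \Phi, \Psi)$ is a differential ASI bialgebra if and only if \eqref{eq:cobanti}, \eqref{eq:cobcoa}, \eqref{eq:cobdcod}, \eqref{eq:cobpsadm1} and~\eqref{eq:cobpsadm2} all hold, which is precisely the assertion that equations~\eqref{eq:cobanti}--\eqref{eq:cobpsadm2} are satisfied. I do not anticipate a genuine obstacle here, since the two Propositions carry out all the computational work; the only point demanding care is the bookkeeping---correctly recognizing that the coassociativity required for~(iii) is supplied by~(i), and that the commutativity of $\Psi$ together with equations~\eqref{eq:qadm1}--\eqref{eq:qadm2} is furnished by the hypothesis rather than needing to be re-derived.
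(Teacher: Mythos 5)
Your proposal is correct and follows exactly the paper's route: the paper obtains this corollary precisely by combining Proposition~\ref{pro:cobasi} (which handles the ASI bialgebra condition via equations~\eqref{eq:cobanti}--\eqref{eq:cobcoa}, including the coassociativity of $\Delta$) with the three parts of Proposition~\ref{pro:cobAdmeq} (which handle the coderivation and admissibility-compatibility conditions via equations~\eqref{eq:cobdcod}--\eqref{eq:cobpsadm2}), with the remaining conditions of Definition~\ref{def:dasi} supplied by the standing $\Psi$-admissibility hypothesis. Your more explicit bookkeeping of which conditions are automatic is a faithful expansion of the paper's one-line proof.
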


\begin{Theorem}
 Let $(A, \cdot, \Delta, \Phi, \Psi)$ be a differential ASI bialgebra.
 Let $\delta\colon A^* \to A^* \otimes A^*$ be the linear dual of the multiplication $\cdot$ on $A$ and $\circ\colon A^* \otimes A^* \to A^*$ be the linear dual of $\Delta$.
 Then $(A^*, \circ, -\delta, \Psi^*, \Phi^*)$ is a differential ASI bialgebra.
 Further, there is a differential ASI bialgebra structure on the direct sum $A \oplus A^*$ of the underlying vector spaces of $A$ and $A^*$, containing the two differential ASI bialgebras as differential ASI sub-bialgebras.
\end{Theorem}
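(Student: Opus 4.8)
The plan is to treat the two assertions in turn, verifying the defining conditions of Definition~\ref{def:dasi} for the dual quintuple by hand and then realizing both bialgebras as sub-bialgebras of a single structure on $A\oplus A^*$, which is where the sign in $-\delta$ gets pinned down.

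For the first assertion I would check the four items of Definition~\ref{def:dasi} for $(A^*,\circ,-\delta,\Psi^*,\Phi^*)$. Item~(1), that $(A^*,\circ,-\delta)$ is an ASI bialgebra, is the dual ASI bialgebra of $(A,\cdot,\Delta)$ in the sense of \cite{AguiarOn,BaiDouble}; since each of \eqref{eq:asiifs}, \eqref{eq:asisym} and the coassociative law is homogeneous in the comultiplication, the overall sign is immaterial at this stage. Items~(2) and~(3) amount to the fact that transposition interchanges derivations and coderivations: the commuting coderivations $\Psi$ on $(A,\Delta)$ transpose to commuting derivations $\Psi^*$ on $(A^*,\Delta^*)=(A^*,\circ)$, while the commuting derivations $\Phi$ on $(A,\cdot)$ transpose to commuting coderivations $\Phi^*$ on the dual coalgebra $(A^*,\delta)$, $\delta=\cdot^*$, hence on $(A^*,-\delta)$ as well, because \eqref{eq:coderivation} is linear in the comultiplication.

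The crux of the first assertion is item~(4), which holds by a clean exchange of the two admissibility conditions. For the quintuple on $A^*$ this item demands that $\Phi^*$ be admissible to $(A^*,\circ,\Psi^*)$ and that $(\Psi^*)^*=\Psi$ be admissible to $\big((A^*)^*,(-\delta)^*,\Phi\big)=(A,-\cdot,\Phi)$. The former is precisely \eqref{eq:psadm1}--\eqref{eq:psadm2} for the original bialgebra, read through Lemma~\ref{lem:coadm}; the latter follows from the original \eqref{eq:qadm1}--\eqref{eq:qadm2} via Corollary~\ref{cor:qadm}, since those equations are homogeneous of degree one in the multiplication and so are unaffected by passing from $\cdot$ to $-\cdot$. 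Thus the algebra-side and coalgebra-side admissibilities simply trade places under duality, and $(A^*,\circ,-\delta,\Psi^*,\Phi^*)$ is a differential ASI bialgebra.

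For the second assertion I would build the structure directly on $E:=A\oplus A^*$. Theorem~\ref{thm:asieqv} already furnishes the double construction of differential Frobenius algebra $(A\bowtie A^*,\star,\Phi+\Psi^*,\mathfrak{B}_d)$, so $(E,\star,\Phi+\Psi^*)$ is a differential algebra, and by Lemma~\ref{lem:douadm} its $\mathfrak{B}_d$-adjoint $\widehat{\Phi+\Psi^*}=\Psi+\Phi^*$ is admissible to it. I would then define $\Delta_E$ on $E$ by $\Delta_E|_A=\Delta$ and $\Delta_E|_{A^*}=-\delta$, so that $A$ and $A^*$ are subcoalgebras by construction; this block form also makes $\Psi+\Phi^*$ a set of commuting coderivations of $(E,\Delta_E)$, while the admissibility of $\Psi+\Phi^*$ to $(E,\star,\Phi+\Psi^*)$ is Lemma~\ref{lem:douadm} and the dual admissibility of $(\Phi+\Psi^*)^*$ follows symmetrically. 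The one substantial point is that $(E,\star,\Delta_E)$ is an ASI bialgebra: expanding \eqref{eq:asiifs}--\eqref{eq:asisym} on mixed arguments $a\in A$, $b^*\in A^*$, the components valued in $A^*\otimes A$ must cancel, and this cancellation is exactly the matched-pair compatibility of $\big((A,\cdot),(A^*,\circ),R^*_A,L^*_A,R^*_{A^*},L^*_{A^*}\big)$ supplied by Theorem~\ref{thm:asimat}; it is here that the sign in $\Delta_E|_{A^*}=-\delta$ is forced, the opposite sign destroying the cancellation. Granting this, Definition~\ref{def:dasi} holds for $(E,\star,\Delta_E,\Phi+\Psi^*,\Psi+\Phi^*)$, and restricting to $A$ and to $A^*$ recovers $(A,\cdot,\Delta,\Phi,\Psi)$ and $(A^*,\circ,-\delta,\Psi^*,\Phi^*)$ as differential ASI sub-bialgebras. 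The main obstacle is precisely this mixed ASI compatibility on $E$, which I would discharge through the matched-pair reformulation rather than a coboundary/Casimir argument, since the symmetric Casimir element of $\mathfrak{B}_d$ does not reproduce the block-diagonal $\Delta_E$.
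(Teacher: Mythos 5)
Your treatment of the first assertion is essentially the paper's own argument: the paper also quotes \cite{BaiDouble} (Remark~2.2.4) for $(A^*,\circ,-\delta)$ being an ASI bialgebra, and disposes of the sign exactly by your homogeneity observation, namely that $\Psi$ is admissible to $(A,-\delta^*,\Phi)$ if and only if it is admissible to $(A,\delta^*,\Phi)$, the rest of item~(4) being the given admissibility of $\Phi^*$ to $(A^*,\circ,\Psi^*)$. For the second assertion you genuinely diverge. The paper does \emph{not} verify the ASI identities on $A\oplus A^*$ block by block: it defines $\Delta_{A\bowtie A^*}$ by the coboundary formula~\eqref{eq:cbd} with $r=\sum_{i=1}^n e_i\otimes e_i^*$ (the element of $A\otimes A^*$ corresponding to $\id_A$), checks by a basis computation that $\big((\partial_k+\eth_k^*)\otimes\id-\id\otimes(\eth_k+\partial_k^*)\big)(r)=0$ and its mirror, so that equations~\eqref{eq:cobdcod}--\eqref{eq:cobpsadm2} hold, gets~\eqref{eq:cobanti}--\eqref{eq:cobcoa} from \cite{BaiDouble} (Theorem~2.3.6), obtains the needed $(\Psi+\Phi^*)$-admissibility of $(A\bowtie A^*,\star,\Phi+\Psi^*)$ from Theorem~\ref{thm:asieqv} and Lemma~\ref{lem:douadm}, and concludes by Corollary~\ref{cor:cobdasiCon}. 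Your block-diagonal $\Delta_E$ is in fact the same map: for $a\in A$ one computes $\sum_i e_i\otimes R_A^*(a)e_i^*=\sum_i (e_i\cdot a)\otimes e_i^*$, which cancels the $-\sum_i(e_i\cdot a)\otimes e_i^*$ term, and $\sum_i e_i\otimes L_{A^*}^*(e_i^*)a=\Delta(a)$, so the coboundary comultiplication restricts to $\Delta$ on $A$ and (similarly) to $-\delta$ on $A^*$. What your route buys is independence from the Section~\ref{sec:cob} coboundary machinery; what it costs is that you must verify the mixed instances of~\eqref{eq:asiifs}--\eqref{eq:asisym} on $E$ by hand, and your handling of the differential layer (coderivation property of $\Psi+\Phi^*$ for the block-diagonal $\Delta_E$, Lemma~\ref{lem:douadm} for the algebra-side admissibility, block reduction of~\eqref{eq:psadm1}--\eqref{eq:psadm2} to~\eqref{eq:qadm1}--\eqref{eq:qadm2}) is correct and parallels the paper's check that the derivations annihilate~$r$.

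Two corrections, however. First, your citation for the crucial mixed cancellation is not adequate as stated: Theorem~\ref{thm:asimat} characterizes when $\big((A,\cdot,\Phi),(A^*,\circ,\Psi^*),R_A^*,L_A^*,R_{A^*}^*,L_{A^*}^*\big)$ is a matched pair, i.e., it governs the \emph{algebra} $\star$ on $A\oplus A^*$; it does not assert that $(E,\star,\Delta_E)$ is an ASI bialgebra. To apply Theorem~\ref{thm:asimat} to $E$ itself you would need a matched pair between $E$ and $E^*$, which you have not produced; the statement you actually need is the content of \cite{BaiDouble} (Theorem~2.3.6), or else a direct computation of the $A^*\otimes A$-component cancellation that you must carry out rather than cite. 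Second, your closing reason for rejecting the coboundary route is mistaken: the paper's $r$ is not the symmetric Casimir $\sum_i\big(e_i\otimes e_i^*+e_i^*\otimes e_i\big)$ of $\mathfrak{B}_d$ --- that element would indeed fail, since substituted into~\eqref{eq:cbd} the two halves cancel and give $\Delta_E=0$ --- but the one-sided tensor $r=\sum_i e_i\otimes e_i^*$ corresponding to $\id_A$, and this $r$ \emph{does} reproduce your block-diagonal $\Delta_E$, as the computation above shows. So the coboundary argument you dismissed is precisely the paper's proof and would have discharged your one unproven step cleanly.
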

\begin{proof}
 By \cite[Remark 2.2.4]{BaiDouble}, $(A^*, \circ,-\delta)$ is an ASI bialgebra.
 Moreover $\Psi$ is admissible to the differential algebra $(A, -\delta^*, \Phi)$ if and only if $\Psi$ is admissible to the differential algebra $(A, \delta^*, \Phi)$.
 Therefore with the fact that $\Phi^*$ is admissible to $(A^*, \circ, \Psi^*)$, we know that $(A^*, \circ, -\delta, \Psi^*, \Phi^*)$ is a differential ASI bialgebra.

 Let $r \in A \otimes A^* \subset (A \oplus A^*) \otimes (A \oplus A^*)$ correspond to the identity map $\id: A \to A$.
 Let $\{e_1,e_2,\dots,e_n\}$ be a basis of $A$ and $\{e_1^*,e_2^*,\dots,e_n^*\}$ be the dual basis.
 Then $r = \sum_{i=1}^n e_i \otimes e_i^*$.
 Let $(A \bowtie A^*, \star)$ be the algebra structure on $A \oplus A^*$ obtained from the matched pair of algebras $\big(A, A^*, R^*_A, L^*_A, R^*_{A^*}, L^*_{A^*}\big)$.
 Define
 \begin{equation*}
 \Delta_{A \bowtie A^{*}}(u) = (\id \otimes L_{A \bowtie A^{*}}(u) - R_{A \bowtie A^{*}}(u) \otimes \id)(r), \qquad \forall u \in A \bowtie A^{*}.
 \end{equation*}

	Moreover, $(A \bowtie A^*, \star, \Phi + \Psi^*)$ is a $(\Psi + \Phi^*)$-admissible differential algebra by Theorem~\ref{thm:asieqv} and Lemma~\ref{lem:douadm}.
 Set $\Phi = \{\partial_k\}_{k=1}^m$ and $\Psi = \{\eth_k\}_{k=1}^m$.
	For all $k=1, \dots, m$, we have
 \begin{gather*}
 ((\partial_k + \eth_k^*) \otimes \id - \id \otimes(\eth_k + \partial_k^*))(r) = ((\partial_k + \eth_k^*) \otimes \id - \id \otimes (\eth_k + \partial_k^*)) \left(\sum_{i=1}^n e_i \otimes e_i^*\right) \\
 \qquad{}= \sum_{i=1}^n (\partial_k(e_i) \otimes e_i^* - e_i \otimes \partial_k^*(e_i^*) ) = \sum_{i=1}^n \partial_k(e_i) \otimes e_i^* - \sum_{i=1}^n \sum_{j=1}^n e_i \otimes \langle\partial_k^*(e_i^*),e_j\rangle e_j^* \\
 \qquad{}= \sum_{i=1}^n \partial_k(e_i) \otimes e_i^* - \sum_{j=1}^n \sum_{i=1}^n \langle e_i^*, \partial_k(e_j) \rangle e_i \otimes e_j^* = \sum_{i=1}^n \partial_k(e_i)\otimes e_i^* -
 \sum_{j=1}^n \partial_k(e_j) \otimes e_j^* = 0,
	\end{gather*}
 and similarly $((\eth_k + \partial_k^*) \otimes \id - \id \otimes(\partial_k + \eth_k^*))(r) = 0$.
 Hence equations~\eqref{eq:cobdcod}--\eqref{eq:cobpsadm2} hold.
 By \cite[Theorem~2.3.6]{BaiDouble}, we show that $r$ satisfies equations~\eqref{eq:cobanti} and \eqref{eq:cobcoa}.
 Therefore $(A \bowtie A^*, \star, \Delta_{A \bowtie A^*}, \Phi + \Psi^*, \Psi + \Phi^*)$ is a differential ASI bialgebra by Corollary~\ref{cor:cobdasiCon}.
 Obviously it contains $(A, \cdot, \Delta, \Phi, \Psi)$ and $(A^*, \circ, -\delta, \Psi^*, \Phi^*)$ as differential ASI sub-bialgebras.
\end{proof}

We have the following conclusion as a consequence of Corollary~\ref{cor:cobdasiCon}.
\begin{Corollary}
 Let $(A, \cdot, \Phi = \{\partial_k\}_{k=1}^m)$ be a $\Psi = \{\eth_k\}_{k=1}^m$-admissible differential algebra and $r \in A \otimes A$.
 Then the linear map $\Delta$ defined by equation~\eqref{eq:cbd} makes $(A, \cdot, \Delta, \Phi, \Psi)$ be a differential ASI bialgebra
 if equation~\eqref{eq:cobanti} and the following equations hold:
 \begin{gather}
 r_{12} r_{13} + r_{13} r_{23} - r_{23} r_{12} = 0,\label{eq:aybe} \\
 (\partial_k \otimes \id - \id \otimes \eth_k)(r) = 0, \label{eq:pqadm1} \\
 (\eth_k \otimes \id - \id \otimes \partial_k)(r) = 0 \label{eq:pqadm2}
 \end{gather}
	for all $i=1, \dots, m$.
\end{Corollary}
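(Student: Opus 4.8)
The plan is to reduce the entire statement to Corollary~\ref{cor:cobdasiCon}. Under the standing assumptions that $(A, \cdot, \Phi)$ is $\Psi$-admissible and that $\Delta$ is defined by~\eqref{eq:cbd}, that corollary already tells us that $(A, \cdot, \Delta, \Phi, \Psi)$ is a differential ASI bialgebra \emph{if and only if} the five equations \eqref{eq:cobanti}--\eqref{eq:cobpsadm2} all hold. So the only task is to deduce those five equations from the four hypotheses \eqref{eq:cobanti}, \eqref{eq:aybe}, \eqref{eq:pqadm1}, \eqref{eq:pqadm2}. The conceptual point is that \eqref{eq:aybe}, \eqref{eq:pqadm1}, \eqref{eq:pqadm2} are strictly stronger, operator-free statements: each asserts the vanishing of a fixed element of $A \otimes A$ or $A \otimes A \otimes A$, whereas each of \eqref{eq:cobcoa}, \eqref{eq:cobdcod}, \eqref{eq:cobpsadm1}, \eqref{eq:cobpsadm2} is obtained by applying some linear operator to exactly one of these elements. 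Applying an operator to $0$ gives $0$, which is the whole mechanism.

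Concretely, I would argue as follows. Equation~\eqref{eq:cobanti} is hypothesized verbatim, so nothing is needed there. Equation~\eqref{eq:cobcoa} follows from \eqref{eq:aybe} by applying $\id \otimes \id \otimes L_A(a) - R_A(a) \otimes \id \otimes \id$ to the identity $r_{12}r_{13} + r_{13}r_{23} - r_{23}r_{12} = 0$. For the admissibility-type equations I would first record the two vanishing tensors furnished by the hypotheses: \eqref{eq:pqadm1} says $(\partial_k \otimes \id - \id \otimes \eth_k)(r) = 0$, and \eqref{eq:pqadm2}, after multiplying by $-1$, says $(\id \otimes \partial_k - \eth_k \otimes \id)(r) = 0$. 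Then \eqref{eq:cobpsadm1} is $\id \otimes L_A(a) - R_A(a) \otimes \id$ applied to the first of these (hence $0$), and \eqref{eq:cobpsadm2} is the same operator applied to the second (hence $0$). Finally, in \eqref{eq:cobdcod} the first summand contains the factor $(\id \otimes \partial_k - \eth_k \otimes \id)(r)$, which vanishes by \eqref{eq:pqadm2}, while the second summand contains $(\id \otimes \eth_k - \partial_k \otimes \id)(r) = -(\partial_k \otimes \id - \id \otimes \eth_k)(r)$, which vanishes by \eqref{eq:pqadm1}; hence \eqref{eq:cobdcod} holds as well.

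Since all of \eqref{eq:cobanti}--\eqref{eq:cobpsadm2} are now verified, Corollary~\ref{cor:cobdasiCon} immediately yields that $(A, \cdot, \Delta, \Phi, \Psi)$ is a differential ASI bialgebra, completing the argument. I do not expect any genuine obstacle here, since the substantive work is already encapsulated in Corollary~\ref{cor:cobdasiCon} (and, through it, in Propositions~\ref{pro:cobasi} and~\ref{pro:cobAdmeq}). The only place demanding a little care is bookkeeping of the two sign conventions that relate the factors $(\eth_k \otimes \id - \id \otimes \partial_k)(r)$ and $(\id \otimes \eth_k - \partial_k \otimes \id)(r)$ appearing in \eqref{eq:cobdcod}--\eqref{eq:cobpsadm2} to the normalized vanishing tensors of \eqref{eq:pqadm1}--\eqref{eq:pqadm2}.
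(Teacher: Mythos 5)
Your proposal is correct and matches the paper's intended argument: the paper states this corollary without proof as an immediate consequence of Corollary~\ref{cor:cobdasiCon}, precisely because equations~\eqref{eq:pqadm1}--\eqref{eq:pqadm2} make the tensors $(\partial_k \otimes \id - \id \otimes \eth_k)(r)$ and $(\eth_k \otimes \id - \id \otimes \partial_k)(r)$ vanish, so the operator-applied conditions \eqref{eq:cobdcod}--\eqref{eq:cobpsadm2} hold trivially, while \eqref{eq:aybe} similarly forces \eqref{eq:cobcoa}. Your sign bookkeeping is also accurate, so nothing is missing.
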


\begin{Definition}
 Let $(A, \cdot, \Phi = \{\partial_k\}_{k=1}^m)$ be a differential algebra.
 Suppose that $r \in A \otimes A$ and $\Psi = \{\eth_k\colon A \to A\}_{k=1}^m$ is a set of commuting linear maps.
 Then equation~\eqref{eq:aybe} with conditions given by equations~\eqref{eq:pqadm1} and \eqref{eq:pqadm2} is called
 \textit{$\Psi$-admissible associative Yang--Baxter equation} in $(A, \cdot, \Phi)$ or simply \textit{$\Psi$-admissible AYBE} in $(A, \cdot, \Phi)$.
\end{Definition}

\begin{Remark}
 Note that equation~\eqref{eq:aybe} is exactly the associative Yang--Baxter equation (AYBE) in an algebra.
 Also note that if $r$ is antisymmetric (i.e., $r = - \sigma(r)$),
 then equation~\eqref{eq:cobanti} holds naturally, and in this case, equation~\eqref{eq:pqadm1} holds if and only if equation~\eqref{eq:pqadm2} holds.
\end{Remark}

In terms of $\Psi$-admissible AYBE, we have the following conclusion.
\begin{Corollary}\label{cor:admAYBEdasi}
 Let $(A, \cdot, \Phi)$ be a $\Psi$-admissible differential algebra and $r \in A \otimes A$.
 If $r$ is an antisymmetric solution of $\Psi$-admissible AYBE in $(A, \cdot, \Phi)$,
 then $(A, \cdot, \Delta, \Phi ,\Psi)$ is a differential ASI bialgebra,
 where the linear map $\Delta$ is defined by equation~\eqref{eq:cbd}.
\end{Corollary}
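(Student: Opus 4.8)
The plan is to recognize this as an immediate specialization of the preceding sufficient-condition corollary, so that no new computation is required; the entire content is in checking that the hypotheses of that corollary are met under the present assumptions.

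First I would unwind the definition of a $\Psi$-admissible AYBE: to say that $r$ is a solution means precisely that equations~\eqref{eq:aybe}, \eqref{eq:pqadm1} and~\eqref{eq:pqadm2} all hold. These are three of the four ingredients needed to apply the preceding corollary (equivalently, to feed into Corollary~\ref{cor:cobdasiCon} via the reductions recorded in Propositions~\ref{pro:cobasi} and~\ref{pro:cobAdmeq}). The remaining ingredient is equation~\eqref{eq:cobanti}, and here the antisymmetry of $r$ does the work: \eqref{eq:cobanti} is built from the symmetric combination $r + \sigma(r)$, which vanishes identically when $r = -\sigma(r)$, so~\eqref{eq:cobanti} holds automatically. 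This is exactly the observation recorded in the Remark following the definition of $\Psi$-admissible AYBE.

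With \eqref{eq:cobanti}, \eqref{eq:aybe}, \eqref{eq:pqadm1} and~\eqref{eq:pqadm2} all in hand, I would then invoke the preceding corollary to conclude that the map $\Delta$ defined by~\eqref{eq:cbd} makes $(A, \cdot, \Delta, \Phi, \Psi)$ a differential ASI bialgebra. Every substantive step behind that corollary has already been done upstream: the passage from~\eqref{eq:aybe} to the coassociativity-type condition~\eqref{eq:cobcoa} (Proposition~\ref{pro:cobasi}), and from the strengthened conditions~\eqref{eq:pqadm1}--\eqref{eq:pqadm2} to the coderivation and admissibility conditions~\eqref{eq:cobdcod}--\eqref{eq:cobpsadm2} (Proposition~\ref{pro:cobAdmeq}), are packaged together in Corollary~\ref{cor:cobdasiCon}.

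Consequently there is no genuine obstacle in this statement; it is pure assembly. The only point worth flagging is the direction of the implications used: the $\Psi$-admissible AYBE conditions~\eqref{eq:pqadm1}--\eqref{eq:pqadm2}, which assert $(\partial_k \otimes \id - \id \otimes \eth_k)(r) = 0$ and $(\eth_k \otimes \id - \id \otimes \partial_k)(r) = 0$ outright, are strictly stronger than the pointwise conditions~\eqref{eq:cobpsadm1}--\eqref{eq:cobpsadm2}, which only require these same expressions to be annihilated by the operator $\id \otimes L_A(a) - R_A(a) \otimes \id$ for every $a \in A$. Thus the hypotheses of the corollary are sufficient to produce a differential ASI bialgebra but need not be necessary, which is precisely why the statement is phrased as a one-directional implication rather than an equivalence.
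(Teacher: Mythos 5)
Your proposal is correct and matches the paper's own (implicit) argument exactly: the paper derives this corollary by combining the unnamed corollary preceding it (sufficiency of equations~\eqref{eq:cobanti}, \eqref{eq:aybe}, \eqref{eq:pqadm1}, \eqref{eq:pqadm2}, itself assembled from Propositions~\ref{pro:cobasi} and~\ref{pro:cobAdmeq} via Corollary~\ref{cor:cobdasiCon}) with the Remark that antisymmetry of $r$ makes equation~\eqref{eq:cobanti} hold automatically. Your additional observation that \eqref{eq:pqadm1}--\eqref{eq:pqadm2} are strictly stronger than the pointwise conditions \eqref{eq:cobpsadm1}--\eqref{eq:cobpsadm2}, explaining the one-directional phrasing, is accurate and consistent with the paper.
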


For a vector space $A$, through the isomorphism $A \otimes A \cong \Hom(A^*,A)$, any $r = \sum_{i} a_i \otimes b_i \in A \otimes A$ can be identified as a map from $A^*$ to $A$, which we denote by $r^\sharp$, explicitly,
\begin{equation*}
 r^\sharp\colon \ A^* \to A, \qquad a^* \mapsto \sum_{i} \langle a^*, a_i \rangle b_i, \qquad
 \forall a^* \in A^*.
\end{equation*}

\begin{Theorem}\label{thm:AYBEO}
 Let $(A, \cdot, \Phi = \{\partial_k\}_{k=1}^m)$ be a differential algebra and $r \in A \otimes A$ be antisymmetric.
 Let $\Psi = \{\eth_k\colon A \to A\}_{k=1}^m$ be a set of commuting linear maps.
 Then $r$ is a solution of $\Psi$-admissible AYBE in $(A, \cdot, \Phi)$ if and only if $r^\sharp$
 satisfies the following equations:
 \begin{gather}
 r^\sharp(a^*) \cdot r^\sharp(b^*) = r^\sharp\big( R^*_A(r^\sharp(a^*)) b^* + L^*_A(r^\sharp(b^*)) a^* \big), \qquad \forall a^*, b^* \in A^*, \label{eq:AYBEO1} \\
 \partial_k r^\sharp = r^\sharp \eth_k^*, \qquad \forall k=1, \dots, m. \label{eq:AYBEO2}
 \end{gather}
\end{Theorem}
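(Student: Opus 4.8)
The plan is to observe that the $\Psi$-admissible AYBE splits into two logically independent requirements on $r$, matched one-to-one by the two right-hand conditions. Equation~\eqref{eq:aybe} is the ordinary associative Yang--Baxter equation $r_{12}r_{13}+r_{13}r_{23}-r_{23}r_{12}=0$, which only involves the product $\cdot$ and is insensitive to $\Phi$ and $\Psi$; equations~\eqref{eq:pqadm1} and~\eqref{eq:pqadm2} are the admissibility constraints coupling $r$ to $\Phi$ and $\Psi$. Accordingly I would prove separately that \eqref{eq:aybe} is equivalent to~\eqref{eq:AYBEO1}, and that the pair \eqref{eq:pqadm1}--\eqref{eq:pqadm2} is equivalent to~\eqref{eq:AYBEO2}, and then combine the two.

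For the first equivalence, the key remark is that~\eqref{eq:AYBEO1} is exactly the statement that $r^\sharp\colon A^*\to A$ is an $\mathcal{O}$-operator of $(A,\cdot)$ associated with the dual bimodule $(A^*, R_A^*, L_A^*)$. The correspondence between antisymmetric solutions of the AYBE and such $\mathcal{O}$-operators is the classical characterization established in~\cite{BaiDouble}, and since it involves only the associative structure, I would invoke it directly. If a self-contained argument is wanted, one expands $r_{12}r_{13}+r_{13}r_{23}-r_{23}r_{12}$, pairs the result against covectors in two of the three tensor slots, and, using the antisymmetry $r=-\sigma(r)$, recognizes the surviving terms as $r^\sharp(a^*)\cdot r^\sharp(b^*)$ and $r^\sharp\big(R_A^*(r^\sharp(a^*))b^*+L_A^*(r^\sharp(b^*))a^*\big)$; this is the computational core, but it is entirely standard.

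For the second equivalence, I would use that the identification $A\otimes A\cong\Hom(A^*,A)$, $r\mapsto r^\sharp$, is a linear isomorphism that intertwines the two tensor slots with post- and pre-composition: writing $r=\sum_i a_i\otimes b_i$, one checks immediately that $\big((\id\otimes\psi)r\big)^\sharp=\psi\, r^\sharp$ and $\big((\phi\otimes\id)r\big)^\sharp=r^\sharp\phi^*$ for any linear maps $\phi,\psi$. Applying $\sharp$ to~\eqref{eq:pqadm2}, namely $(\eth_k\otimes\id-\id\otimes\partial_k)(r)=0$, then yields $r^\sharp\eth_k^*-\partial_k r^\sharp=0$, which is precisely~\eqref{eq:AYBEO2}; and since $\sharp$ is injective the implication reverses. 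Finally, because $r$ is antisymmetric, the Remark following the definition of $\Psi$-admissible AYBE tells us that~\eqref{eq:pqadm1} holds if and only if~\eqref{eq:pqadm2} does, so controlling~\eqref{eq:pqadm2} controls the full admissibility condition.

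The only delicate point is bookkeeping: keeping the conventions for transposes and tensor slots consistent—which factor $r^\sharp$ contracts, and whether a given operator appears as a pre- or post-composition under $\sharp$. Once the two intertwining identities above are pinned down, the admissibility equivalence is immediate. The heaviest computation is the AYBE $\Leftrightarrow$ \eqref{eq:AYBEO1} step, which I expect to be the main obstacle if reproved from scratch; but it is exactly the known $\mathcal{O}$-operator correspondence and transfers verbatim, since it does not involve $\Phi$ or $\Psi$, so no new difficulty is introduced by the differential structure.
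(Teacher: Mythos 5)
Your proposal is correct and takes essentially the same route as the paper: the paper likewise quotes \cite[Proposition~2.4.7]{BaiDouble} to identify equation~\eqref{eq:AYBEO1} with the AYBE part, and proves the equivalence of equation~\eqref{eq:AYBEO2} with equation~\eqref{eq:pqadm2} by writing $r=\sum_i a_i\otimes b_i$ and computing $r^\sharp(\eth_k^*(a^*))$ and $\partial_k(r^\sharp(a^*))$ directly, which is exactly your two intertwining identities in expanded form, with equation~\eqref{eq:pqadm1} disposed of via the same antisymmetry remark you invoke. No gaps.
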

\begin{proof}
 By \cite[Proposition~2.4.7]{BaiDouble},
 $r$ is a solution of AYBE in $(A, \cdot)$ if and only if equation~\eqref{eq:AYBEO1} holds.
 Moreover, writing $r = \sum_i a_i \otimes b_i$, then for all $a^* \in A^*$ and $k=1, \dots,
 m$, we have
 \begin{equation*}
 r^\sharp(\eth_k^*(a^*)) = \sum_{i} \langle \eth_k^*(a^*), a_i \rangle b_i = \sum_{i} \langle a^*, \eth_k(a_i) \rangle b_i, \qquad \partial_k(r^\sharp(a^*)) = \sum_{i}\langle a^*, a_i \rangle \partial_k(b_i).
 \end{equation*}
 So $\partial_k r^\sharp = r^\sharp \eth_k^*$ if and only if equation~\eqref{eq:pqadm2} holds.
\end{proof}

Now let $(A, \cdot, \Phi, \mathfrak{B})$ be a symmetric
differential Frobenius algebra. Then under the natural bijection
$\Hom(A \otimes A, \mathbb{F}) \cong \Hom(A, A^*)$, the bilinear
form $\mathfrak{B}$ corresponds to the linear map (see the proof
of Proposition~\ref{pro:FroRep})
\begin{equation*}
 \varphi\colon \ A \to A^*, \qquad a \mapsto \varphi(a),\qquad
 \text{where} \quad \langle \varphi(a), b \rangle := \mathfrak{B}(a, b), \quad
 \forall a, b \in A.
\end{equation*}
For any $r \in A \otimes A$, define a linear map $P_r\colon A \to A$ by
\begin{equation*}
 P_r\colon \ A \to A, \qquad a \mapsto r^\sharp(\varphi(a)), \qquad \forall a \in A.
\end{equation*}

\begin{Theorem}
 Let $(A, \cdot, \Phi = \{\partial_k\}_{k=1}^m, \mathfrak{B})$ be a symmetric differential Frobenius algebra and $r \in A \otimes A$ be antisymmetric.
 Suppose that $\hat{\Phi}$ is the adjoint of $\Phi$ with respect to $\mathfrak{B}$.
	Then $r$ is a~solution of $\hat{\Phi}$-admissible AYBE in $(A, \cdot, \Phi)$ if and only if
 $P_r$ satisfies the following equations:
 \begin{gather}
 P_r(a) \cdot P_r(b) = P_r(a \cdot P_r(b) + P_r(a) \cdot b), \qquad \forall a, b \in
 A,\label{eq:RB0}\\
		\partial_k P_r = P_r \partial_k,\qquad \forall k=1, \dots, m.
 \end{gather}
Moreover, in this case, $\big(A, \cdot, \Delta, \Phi, \hat{\Phi}\big)$ is
a differential ASI bialgebra, where $\Delta$ is defined by
equation~\eqref{eq:cbd}.
\end{Theorem}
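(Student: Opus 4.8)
The plan is to reduce the asserted equivalence to Theorem~\ref{thm:AYBEO} by transporting its conditions on $r^\sharp$ through the isomorphism $\varphi$. First I would record the intertwining relations satisfied by $\varphi$, all of which were in effect established in the proof of Proposition~\ref{pro:FroRep}: for all $a, b \in A$ and all $k=1,\dots,m$,
\[
\varphi(a \cdot b) = R_A^*(a)\varphi(b) = L_A^*(b)\varphi(a), \qquad \varphi \partial_k = \hat{\partial_k}^* \varphi .
\]
Since $r$ is antisymmetric, Theorem~\ref{thm:AYBEO} applied with $\Psi = \hat{\Phi}$ says that $r$ is a solution of the $\hat{\Phi}$-admissible AYBE in $(A,\cdot,\Phi)$ if and only if $r^\sharp$ satisfies equations~\eqref{eq:AYBEO1} and \eqref{eq:AYBEO2} (the latter reading $\partial_k r^\sharp = r^\sharp \hat{\partial_k}^*$). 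The task is therefore to show that these two conditions on $r^\sharp$ are equivalent to the two displayed conditions on $P_r = r^\sharp \varphi$, and bijectivity of $\varphi$ will supply both directions at once.

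For the multiplicative identity, I would substitute $a^* = \varphi(a)$ and $b^* = \varphi(b)$ into \eqref{eq:AYBEO1}, so that $r^\sharp(a^*) = P_r(a)$ and $r^\sharp(b^*) = P_r(b)$ and the left-hand side becomes $P_r(a)\cdot P_r(b)$. On the right-hand side, the intertwining relations give $R_A^*(P_r(a))\varphi(b) = \varphi(P_r(a)\cdot b)$ and $L_A^*(P_r(b))\varphi(a) = \varphi(a\cdot P_r(b))$, so the argument of $r^\sharp$ collapses to $\varphi\big(P_r(a)\cdot b + a\cdot P_r(b)\big)$ and the right-hand side equals $P_r\big(a\cdot P_r(b)+P_r(a)\cdot b\big)$. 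Since $\varphi$ is a bijection, letting $a^*,b^*$ range over $A^*$ is the same as letting $a,b$ range over $A$, so \eqref{eq:AYBEO1} is equivalent to \eqref{eq:RB0}.

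For the derivation-compatibility I would compute $P_r \partial_k = r^\sharp \varphi \partial_k = r^\sharp \hat{\partial_k}^* \varphi$ using $\varphi \partial_k = \hat{\partial_k}^* \varphi$, while $\partial_k P_r = \partial_k r^\sharp \varphi$; invertibility of $\varphi$ then shows that $\partial_k P_r = P_r \partial_k$ holds if and only if $\partial_k r^\sharp = r^\sharp \hat{\partial_k}^*$, which is precisely \eqref{eq:AYBEO2} for $\Psi = \hat{\Phi}$. This finishes the equivalence. For the ``moreover'' clause, Proposition~\ref{pro:FroRep} guarantees that $\hat{\Phi}$ is admissible to $(A,\cdot,\Phi)$, i.e.\ $(A,\cdot,\Phi)$ is a $\hat{\Phi}$-admissible differential algebra; since $r$ is then an antisymmetric solution of the $\hat{\Phi}$-admissible AYBE, Corollary~\ref{cor:admAYBEdasi} immediately yields that $\big(A,\cdot,\Delta,\Phi,\hat{\Phi}\big)$ is a differential ASI bialgebra with $\Delta$ defined by \eqref{eq:cbd}.

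The computations are entirely mechanical once the intertwining relations are in hand, so I do not anticipate a genuine obstacle. The only point demanding care is the bookkeeping in \eqref{eq:AYBEO1}: one must match $R_A^*$ against the relation $\varphi(a\cdot b)=R_A^*(a)\varphi(b)$ and $L_A^*$ against $\varphi(a\cdot b)=L_A^*(b)\varphi(a)$, so that the two terms $P_r(a)\cdot b$ and $a\cdot P_r(b)$ land in their correct slots; confusing the two would swap the factors and break the identification with \eqref{eq:RB0}.
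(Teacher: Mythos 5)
Your proposal is correct and follows essentially the same route as the paper: both reduce the theorem to Theorem~\ref{thm:AYBEO} by transporting conditions on $r^\sharp$ through the isomorphism $\varphi$ (using the relations $\varphi(a\cdot b)=R_A^*(a)\varphi(b)=L_A^*(b)\varphi(a)$ and $\varphi\partial_k=\hat{\partial_k}^*\varphi$ established in the proof of Proposition~\ref{pro:FroRep}, the latter being exactly the paper's computation $P_r\partial_k(a)=r^\sharp\hat{\partial_k}^*(\varphi(a))$), and both settle the ``moreover'' clause via Proposition~\ref{pro:FroRep} and Corollary~\ref{cor:admAYBEdasi}. The only difference is cosmetic: for the equivalence of equation~\eqref{eq:AYBEO1} with the Rota--Baxter identity you inline the substitution $a^*=\varphi(a)$, $b^*=\varphi(b)$, whereas the paper simply cites \cite[Corollary~3.17]{Bai2013O} for that step; your direct computation is valid and makes the argument self-contained.
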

\begin{proof}
 By \cite[Corollary~3.17]{Bai2013O},
 $r$ is a solution of AYBE in the algebra $(A, \cdot)$ if and only if $P_r$ satisfies equation~(\ref{eq:RB0}).
 Moreover, set $r = \sum_i a_i\otimes b_i$.
 For all $a \in A$, we have
 \begin{gather*}
 \partial_k P_r(a) = \partial_k r^\sharp (\varphi(a)), \\
 P_r \partial_k(a) = r^\sharp(\varphi(\partial_k(a))) = \sum_i \mathfrak{B}(\partial_k(a),a_i) b_i = \sum_i \mathfrak{B}(a, \hat{\partial_k}(a_i)) b_i \\
 \hphantom{P_r \partial_k(a)}{}
 = \sum_i \langle \varphi(a), \hat{\partial_k}(a_i) \rangle b_i = r^\sharp \hat{\partial_k}^*(\varphi(a)).
 \end{gather*}
 Note that $\varphi$ is a linear isomorphism. Thus $\partial_k r^\sharp = r^\sharp \hat{\partial_k}^*$ if and only if $\partial_k P_r = P_r \partial_k$.
 Hence the conclusion follows from Theorem~\ref{thm:AYBEO}, Proposition~\ref{pro:FroRep} and Corollary~\ref{cor:admAYBEdasi}.
\end{proof}

\subsection[O-operators of differential algebras]{$\boldsymbol{\mathcal{O}}$-operators of differential algebras}\label{subsec:o_operator}

Theorem~\ref{thm:AYBEO} leads us to give the following notion.
\begin{Definition}
	Let $(A, \cdot, \Phi = \{\partial_k\}_{k=1}^m)$ be a differential algebra and $(V, l, r, \Omega)$ be a bimodule of $(A, \cdot, \Phi)$.
	A linear map $T\colon V \to A$ is called an \textit{$\mathcal{O}$-operator of $(A, \cdot, \Phi)$ associated to $(V, l, r, \Omega)$} if $T$ satisfies
	\begin{gather}
		T(u) \cdot T(v) = T( l(T(u))v + r(T(v)) u ), \qquad \forall u, v \in V, \label{eq:oop1} \\
		\partial_k T = T \alpha_k, \qquad \forall k=1, \dots, m. \label{eq:oop2}
	\end{gather}
\end{Definition}

\begin{Example}\label{example:o_operator}
 Let $(A, \cdot, \Phi)$ be a differential algebra.
 Then the identity map $\id\colon A \to A$ is an $\mathcal{O}$-operator of $(A, \cdot, \Phi)$ associated to $(A, L_A, 0, \Phi)$ or $(A, 0, R_A, \Phi)$.
\end{Example}

Note \cite{BaiDouble} that for an algebra $(A,\cdot)$ and an $A$-bimodule $(V, l, r)$, a linear map $T\colon V \to A$ satisfying equation~\eqref{eq:oop1} is called an \textit{$\mathcal{O}$-operator of $(A, \cdot)$ associated to $(V, l, r)$}.
In particular, when $(V,l,r)$ is taken to be $(A, L_A, R_A)$, an $\mathcal O$-operator $R\colon A \to A$ is called a \textit{Rota--Baxter operator (of weight zero)} on $(A, \cdot)$, that is, $R$ satisfies
\begin{equation*}
	R(a) \cdot R(b) = R(R(a) \cdot b + a \cdot R(b)), \qquad \forall a, b \in A.
\end{equation*}
Hence equation~(\ref{eq:RB0}) means that $P_r$ is a Rota--Baxter operator on the algebra $(A,\cdot)$.

Furthermore we have the following conclusion.
\begin{Lemma}[\cite{BGN0}]\label{lem:o2r}
	Let $(A, \cdot)$ be an algebra and $(V, l, r)$ be an $A$-bimodule.
	A linear map $T\colon V \to A$ is an $\mathcal{O}$-operator of $(A, \cdot)$ associated to $(V, l, r)$ if and only if $\hat T:=0 + T\colon A \oplus V \to A \oplus V$ is a Rota--Baxter operator on the semi-direct product algebra $(A \ltimes_{l, r} V, \cdot)$,
	where the linear map $\hat T$ is defined as
	\begin{equation}
		\hat T(a+u)=T(u),\qquad \forall a\in A,\quad u\in V. \label{eq:hatT}
	\end{equation}
\end{Lemma}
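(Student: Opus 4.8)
The plan is to verify the Rota--Baxter condition for $\hat T$ directly, by computing both sides with the semi-direct product multiplication~\eqref{eq:semiprod} together with the definition~\eqref{eq:hatT} of $\hat T$, and to observe that this condition collapses exactly to the defining identity~\eqref{eq:oop1} of an $\mathcal{O}$-operator. The guiding observation is that $\hat T$ ignores the $A$-component of its argument: since $\hat T(a + u) = T(u) \in A$, the image of $\hat T$ lies in the subalgebra $A$, and the value of $\hat T$ depends only on the $V$-component of the input.

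I would fix arbitrary elements $x = a + u$ and $y = b + v$ of $A \oplus V$ and first treat the left-hand side. Here $\hat T(x) = T(u)$ and $\hat T(y) = T(v)$ both lie in $A$, so equation~\eqref{eq:semiprod} gives $\hat T(x) \cdot \hat T(y) = T(u) \cdot T(v) \in A$, with vanishing $V$-component.

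Next I would compute the two products on the right-hand side. Using~\eqref{eq:semiprod}, one gets $\hat T(x) \cdot y = T(u) \cdot b + l(T(u))v$ and $x \cdot \hat T(y) = a \cdot T(v) + r(T(v))u$, so that $\hat T(x) \cdot y + x \cdot \hat T(y)$ has $A$-component $T(u) \cdot b + a \cdot T(v)$ and $V$-component $l(T(u))v + r(T(v))u$. Applying $\hat T$ and using that $\hat T$ annihilates the $A$-component and sends the $V$-component $w$ to $T(w)$, the right-hand side reduces to $T\big(l(T(u))v + r(T(v))u\big)$.

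Comparing the two sides, the Rota--Baxter identity for $\hat T$ holds for all $x, y \in A \oplus V$ if and only if $T(u) \cdot T(v) = T\big(l(T(u))v + r(T(v))u\big)$ for all $u, v \in V$, which is precisely~\eqref{eq:oop1}. I do not expect any genuine obstacle: the computation is routine and the parameters $a, b$ drop out automatically, reflecting that $\hat T$ sees only the $V$-component. The single point meriting a moment of care is to confirm that the $A$-components generated along the way are indeed discarded by $\hat T$, which is immediate from~\eqref{eq:hatT}.
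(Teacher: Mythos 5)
Your verification is correct and complete: the computation with \eqref{eq:semiprod} and \eqref{eq:hatT} does collapse the Rota--Baxter identity for $\hat T$ exactly to \eqref{eq:oop1}, with the parameters $a,b$ dropping out so that both implications are immediate. The paper itself gives no proof, citing \cite{BGN0} instead, and your direct check is precisely the standard argument for this result.
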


In terms of $\mathcal{O}$-operators, Theorem~\ref{thm:AYBEO} is rewritten as follows.
\begin{Corollary}\label{cor:aybe_o}
 Let $(A, \cdot, \Phi = \{\partial_k\}_{k=1}^m)$ be a differential algebra.
 Let $r \in A \otimes A$ be antisymmetric and
 $\Psi = \{\eth_k\colon A \to A\}_{k=1}^m$ be a set of commuting linear maps.
 Then $r$ is a solution of $\Psi$-admissible AYBE in $(A, \cdot, \Phi)$ if and only if
 $r^\sharp$ is an
	$\mathcal{O}$-operator of $(A, \cdot)$ associated to $(A^*, R_A^*, L_A^*)$ such that $\partial_k r^\sharp = r^\sharp \eth_k^*$ for all $k=1, \dots, m$.
 If in addition, $(A, \cdot, \Phi)$ is $\Psi$-admissible,
 then $r$ is a solution of $\Psi$-admissible AYBE in $(A, \cdot, \Phi)$ if and only if $r^\sharp$ is an $\mathcal{O}$-operator of $(A, \cdot, \Phi)$ associated to the bimodule $(A^*, R_A^*, L_A^*, \Psi^*)$.
\end{Corollary}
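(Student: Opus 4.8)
The plan is to recognize this corollary as a direct restatement of Theorem~\ref{thm:AYBEO} in the language of $\mathcal{O}$-operators, so that the proof amounts to matching two pairs of defining equations rather than performing any fresh computation.

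First I would treat the first equivalence. By Theorem~\ref{thm:AYBEO}, for antisymmetric $r$ the statement ``$r$ is a solution of $\Psi$-admissible AYBE in $(A, \cdot, \Phi)$'' is equivalent to the simultaneous validity of equations~\eqref{eq:AYBEO1} and~\eqref{eq:AYBEO2}. I would then observe that, upon specializing the bimodule $(V, l, r)$ in equation~\eqref{eq:oop1} to $(A^*, R_A^*, L_A^*)$ (which is an $A$-bimodule, as recorded just before Lemma~\ref{lem:dualrep}) and setting $T = r^\sharp$, the term $l(T(a^*)) b^* + r(T(b^*)) a^*$ becomes precisely $R_A^*(r^\sharp(a^*)) b^* + L_A^*(r^\sharp(b^*)) a^*$. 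Hence equation~\eqref{eq:AYBEO1} is verbatim the condition that $r^\sharp$ is an $\mathcal{O}$-operator of $(A, \cdot)$ associated to $(A^*, R_A^*, L_A^*)$, while equation~\eqref{eq:AYBEO2} is literally the stated condition $\partial_k r^\sharp = r^\sharp \eth_k^*$. The first equivalence follows.

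For the second equivalence, the additional hypothesis that $(A, \cdot, \Phi)$ be $\Psi$-admissible is exactly what upgrades $(A^*, R_A^*, L_A^*)$ to a bimodule of the differential algebra: by Definition~\ref{def:admissible}, $\Psi$-admissibility of $(A, \cdot, \Phi)$ means that $(A^*, R_A^*, L_A^*, \Psi^*)$ is a bimodule of $(A, \cdot, \Phi)$. An $\mathcal{O}$-operator of this differential algebra associated to $(A^*, R_A^*, L_A^*, \Psi^*)$ is then required to satisfy both equation~\eqref{eq:oop1} and equation~\eqref{eq:oop2}; the latter reads $\partial_k r^\sharp = r^\sharp \eth_k^*$ since here $\Omega = \Psi^* = \{\eth_k^*\}_{k=1}^m$. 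Thus the two conditions packaged into ``$r^\sharp$ is an $\mathcal{O}$-operator of $(A, \cdot, \Phi)$ associated to $(A^*, R_A^*, L_A^*, \Psi^*)$'' are exactly equations~\eqref{eq:AYBEO1}--\eqref{eq:AYBEO2}, and the conclusion again follows from Theorem~\ref{thm:AYBEO}. The only point needing attention, and hence the sole ``obstacle'', is purely bookkeeping: one must confirm that the $\Psi$-admissibility hypothesis is precisely the condition that makes $\Psi^*$ serve as the commuting operators of a bona fide bimodule of the \emph{differential} algebra, so that the second formulation is even well posed; everything else is an identification of identical equations.
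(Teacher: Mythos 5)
Your proposal is correct and matches the paper's own treatment exactly: the paper states this corollary without proof, presenting it as Theorem~\ref{thm:AYBEO} ``rewritten'' in the language of $\mathcal{O}$-operators, which is precisely your identification of equation~\eqref{eq:AYBEO1} with equation~\eqref{eq:oop1} for $(V,l,r)=(A^*,R_A^*,L_A^*)$ and of equation~\eqref{eq:AYBEO2} with equation~\eqref{eq:oop2} for $\Omega=\Psi^*$. Your observation that $\Psi$-admissibility is exactly what makes $(A^*,R_A^*,L_A^*,\Psi^*)$ a bimodule of the differential algebra (Definition~\ref{def:admissible}), so that the second formulation is well posed, is the right and only nontrivial point.
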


Recall that $\mathcal{O}$-operators of algebras give antisymmetric
solutions of AYBE in semi-direct product algebras.

\begin{Proposition}[{\cite[Corollary~3.10]{Bai2013O}}]\label{pro:O-cons}
 Let $(A, \cdot)$ be an algebra and $(V, l, r)$ be an $A$-bimodule.
 Let $T\colon V \to A$ be a linear map which is identified as an element in $(A \ltimes_{r^*, l^*} V^*) \otimes(A \ltimes_{r^*, l^*} V^*)$
 (through $\Hom(V, A) \cong A \otimes V^* \subset (A \ltimes_{r^*, l^*} V^*) \otimes (A \ltimes_{r^*, l^*} V^*)$).
 Then $r = T - \sigma(T)$ is an antisymmetric solution of AYBE in the algebra $(A \ltimes_{r^*, l^*}V^*, \cdot)$ if and only if $T$ is an $\mathcal{O}$-operator of $(A, \cdot)$ associated to $(V, l, r)$.
\end{Proposition}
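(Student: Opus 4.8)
The plan is to realize $r=T-\sigma(T)$ explicitly in coordinates and to expand the associative Yang--Baxter expression inside the semi-direct product algebra $\mathfrak{A}:=A\ltimes_{r^*,l^*}V^*$, tracking which tensor components survive. Writing $T=\sum_i a_i\otimes u_i^*$ with $a_i\in A$ and $u_i^*\in V^*$, so that $T(w)=\sum_i\langle u_i^*,w\rangle a_i$, one has
\begin{equation*}
 r=\sum_i\big(a_i\otimes u_i^*-u_i^*\otimes a_i\big),
\end{equation*}
which satisfies $\sigma(r)=-r$; hence $r$ is automatically antisymmetric and the only condition to analyse is the AYBE \eqref{eq:aybe}, namely $r_{12}r_{13}+r_{13}r_{23}-r_{23}r_{12}=0$ in $\mathfrak{A}$. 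Here the products are taken in $\mathfrak{A}$, where by \eqref{eq:semiprod} one has $a\cdot u^*=r^*(a)u^*$ and $u^*\cdot a=l^*(a)u^*$ for $a\in A$, $u^*\in V^*$, while crucially $u^*\cdot v^*=0$ for $u^*,v^*\in V^*$.

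First I would decompose $\mathfrak{A}^{\otimes 3}=(A\oplus V^*)^{\otimes 3}$ into its eight homogeneous pieces according to whether each tensor slot lies in $A$ or in $V^*$. The relation $u^*\cdot v^*=0$ kills most contributions, and since every term of $r$ is of mixed type $A\otimes V^*$ or $V^*\otimes A$, the essential surviving component of $r_{12}r_{13}+r_{13}r_{23}-r_{23}r_{12}$ lies in $A\otimes V^*\otimes V^*$. A direct computation of this component gives
\begin{equation*}
 \sum_{i,j}a_i\cdot a_j\otimes u_i^*\otimes u_j^*-\sum_{i,j}a_i\otimes u_j^*\otimes l^*(a_j)u_i^*-\sum_{i,j}a_j\otimes r^*(a_i)u_j^*\otimes u_i^*.
\end{equation*}

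The key step is then to pair the second and third slots with arbitrary $u,v\in V$. Using $\langle l^*(a)u_i^*,v\rangle=\langle u_i^*,l(a)v\rangle$, $\langle r^*(a)u_j^*,u\rangle=\langle u_j^*,r(a)u\rangle$ and $\sum_i\langle u_i^*,w\rangle a_i=T(w)$, this component collapses to
\begin{equation*}
 T(u)\cdot T(v)-T\big(l(T(u))v+r(T(v))u\big),
\end{equation*}
so that its vanishing is exactly the $\mathcal{O}$-operator identity \eqref{eq:oop1}. The remaining components either vanish identically (any slot-assignment forcing a factor $u^*\cdot v^*$) or, by the antisymmetry $\sigma(r)=-r$ together with the bimodule axioms for $(V,l,r)$, reduce to the transpose of this same identity and impose no new condition; this settles both implications at once.

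The main obstacle I anticipate is the systematic bookkeeping of the full eight-component decomposition — in particular, confirming that no component outside $A\otimes V^*\otimes V^*$ and its two cyclic relatives contributes an independent equation. An alternative that sidesteps most of this is to apply the known correspondence \cite[Proposition~2.4.7]{BaiDouble} (the non-differential content of Theorem~\ref{thm:AYBEO}) directly to $\mathfrak{A}$: $r$ solves the AYBE in $\mathfrak{A}$ iff $r^\sharp\colon\mathfrak{A}^*\to\mathfrak{A}$ is an $\mathcal{O}$-operator associated to $\big(\mathfrak{A}^*,R^*_{\mathfrak{A}},L^*_{\mathfrak{A}}\big)$. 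Under the identification $\mathfrak{A}^*=A^*\oplus V$ one computes $r^\sharp(a^*+v)=T^*(a^*)-T(v)$, and substituting this into the $\mathcal{O}$-operator condition for $\mathfrak{A}$ the $A$-component again yields \eqref{eq:oop1} while the $V^*$-component is forced by duality; there the obstacle merely shifts to computing $L^*_{\mathfrak{A}}$ and $R^*_{\mathfrak{A}}$ on $A^*\oplus V$.
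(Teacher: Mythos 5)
You should first note what you are being compared against: the paper offers no proof of Proposition~\ref{pro:O-cons} at all --- it is quoted verbatim from \cite[Corollary~3.10]{Bai2013O} --- so your expansion is a self-contained substitute rather than a parallel of an in-paper argument, and it is correct. Writing $T=\sum_i a_i\otimes u_i^*$, the element $r=T-\sigma(T)$ is indeed automatically antisymmetric, and since every term of $r$ is of mixed type while $V^*\cdot V^*=0$ in $\mathfrak{A}=A\ltimes_{r^*,l^*}V^*$, the only possibly nonzero homogeneous components of $r_{12}r_{13}+r_{13}r_{23}-r_{23}r_{12}$ are those in $A\otimes V^*\otimes V^*$, $V^*\otimes A\otimes V^*$ and $V^*\otimes V^*\otimes A$; your displayed $A\otimes V^*\otimes V^*$ component is exactly right, and pairing its two $V^*$ slots against arbitrary $u,v\in V$ (legitimate by nondegeneracy of the pairing, using the paper's standing finite-dimensionality assumption) collapses it to $T(u)\cdot T(v)-T\big(l(T(u))v+r(T(v))u\big)$, i.e., to equation~\eqref{eq:oop1}, which settles both directions since the homogeneous components of a direct-sum decomposition vanish independently. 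One small correction: your claim that the two remaining components reduce to ``the transpose of this same identity'' via ``the antisymmetry $\sigma(r)=-r$ together with the bimodule axioms'' overstates what is needed --- a direct computation shows the $V^*\otimes A\otimes V^*$ component pairs to $T(v)\cdot T(u)-T\big(l(T(v))u+r(T(u))v\big)$ and the $V^*\otimes V^*\otimes A$ component pairs to the identity itself, so both collapse to \eqref{eq:oop1} by the duality pairing alone, with no bimodule axiom invoked. Your alternative route through \cite[Proposition~2.4.7]{BaiDouble} with $r^\sharp(a^*+v)=T^*(a^*)-T(v)$ is also viable, though it trades the eight-component bookkeeping for computing $L^*_{\mathfrak{A}}$, $R^*_{\mathfrak{A}}$ on $A^*\oplus V$, which is comparable work. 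It is worth observing that the paper's own proof of Theorem~\ref{thm:aybesemi}\,(1) carries out precisely your style of dual-basis computation for the extra differential conditions $\partial_k T=T\alpha_k$, $\eth_k T=T\beta_k$, while citing Proposition~\ref{pro:O-cons} for the AYBE part; your expansion supplies exactly the half of that argument which the paper outsources to the literature, at the modest cost of the component bookkeeping you anticipated.
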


We next generalize the above construction to the context of differential algebras,
showing that $\mathcal{O}$-operators of differential algebras give antisymmetric solutions of admissible AYBE in semi-direct product differential algebras and hence give rise to differential ASI bialgebras.

\begin{Proposition}\label{pro:admsemi}
 Let $(A, \cdot, \Phi = \{\partial_k\}_{k=1}^m)$ be a differential algebra and
 $(V, l, r)$ be an $A$-bimodule.
 Let $\Psi = \{\eth_k\colon A \to A \}_{k=1}^m$, $\Omega = \{\alpha_k\colon V \to V\}_{k=1}^m$ and
 $\Pi = \{\beta_k\colon V \to V\}_{k=1}^m$ be sets of commuting linear maps.
 Then the following conditions are equivalent.
 \begin{enumerate}\itemsep=0pt
 \item[$1.$] There is a differential algebra $(A \ltimes_{l,r} V, \cdot, \Phi + \Omega)$ such that $\Psi + \Pi$ is admissible to $(A \ltimes_{l,r} V, \cdot, \Phi + \Omega)$.

 \item[$2.$] There is a differential algebra $(A \ltimes_{r^*, l^*} V^*, \cdot, \Phi + \Pi^*)$ such that $\Psi + \Omega^*$ is admissible to $(A \ltimes_{r^*,l^*} V^*, \Phi + \Pi^*)$.

 \item[$3.$] The following conditions are satisfied:
 \begin{enumerate}\itemsep=0pt
 \item[$(i)$] $(V, l, r, \Omega)$ is a bimodule of $(A, \cdot, \Phi)$;
 \item[$(ii)$] $(A, \cdot, \Phi)$ is $\Psi$-admissible;
 \item[$(iii)$] $\Pi$ is admissible to $(A, \cdot, \Phi)$ on $(V, l, r)$;
 \item[$(iv)$] for all $a \in A, v \in V$ and $k=1, \dots, m$, the following equations hold:
 \begin{gather}
 l(\eth_k(a))v = l(a) \alpha_k(v) + \beta_k( l(a)v ), \label{eq:admsemi1} \\
 r(\eth_k(a))v = r(a) \alpha_k(v) + \beta_k( r(a)v ). \label{eq:admsemi2}
 \end{gather}
 \end{enumerate}
 \end{enumerate}
\end{Proposition}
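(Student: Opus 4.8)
The plan is to prove the chain $(1) \Leftrightarrow (3)$ by a direct computation on the semi-direct product, and then to obtain $(2) \Leftrightarrow (3)$ by applying the equivalence $(1) \Leftrightarrow (3)$ to the \emph{dual} bimodule, exploiting the fact that condition $(3)$ is invariant under the duality $(V,l,r,\Omega,\Pi) \leftrightarrow (V^*, r^*, l^*, \Pi^*, \Omega^*)$.

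For $(1) \Leftrightarrow (3)$, I would first invoke Proposition~\ref{pro:semidirect}: the assertion that $(A \ltimes_{l,r} V, \cdot, \Phi + \Omega)$ is a differential algebra is exactly condition $(i)$, that $(V, l, r, \Omega)$ is a bimodule of $(A, \cdot, \Phi)$. Granting this, I would expand the admissibility of $\Psi + \Pi$ to this differential algebra by applying Corollary~\ref{cor:qadm} to the commuting maps $\{\eth_k+\beta_k\}$ and the derivations $\{\partial_k+\alpha_k\}$. Writing a typical element of $A \oplus V$ as $a + u$ and using the product~\eqref{eq:semiprod}, each of \eqref{eq:qadm1}--\eqref{eq:qadm2} splits, upon projecting onto the $A$- and $V$-summands, into several identities. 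The $A$-components reproduce \eqref{eq:qadm1}--\eqref{eq:qadm2} for $(A, \cdot, \Phi)$ and $\Psi$, i.e.\ condition $(ii)$. In the $V$-components, setting the $V$-variable to zero isolates \eqref{eq:admrep1}--\eqref{eq:admrep2}, which is condition $(iii)$, while setting the $A$-variable to zero isolates \eqref{eq:admsemi1}--\eqref{eq:admsemi2}, which is condition $(iv)$. Conversely, assembling $(ii)$--$(iv)$ recovers the two full admissibility identities, giving $(1) \Leftrightarrow (3)$.

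For $(2) \Leftrightarrow (3)$, I would apply the equivalence $(1) \Leftrightarrow (3)$ just proved to the $A$-bimodule $(V^*, r^*, l^*)$ (a bimodule by the remark preceding Lemma~\ref{lem:dualrep}), with the roles of $\Omega$ and $\Pi$ played by $\Pi^*$ and $\Omega^*$, and with the same $\Psi$. For this data condition $(1)$ reads precisely as condition $(2)$. It then remains to match the resulting condition $(3)$ with the original. Here $(ii)$ is literally unchanged. By Definition~\ref{def:admissible}, the statement that $(V^*, r^*, l^*, \Pi^*)$ is a bimodule of $(A, \cdot, \Phi)$ is exactly $(iii)$, and, using $(V^*)^* \cong V$ together with the interchange of left and right actions under dualization, the statement that $\Omega^*$ is admissible on $(V^*, r^*, l^*)$ is exactly $(i)$. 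Finally, the analogues of \eqref{eq:admsemi1}--\eqref{eq:admsemi2} for the dual data are the transposes of \eqref{eq:admsemi1}--\eqref{eq:admsemi2}: since $l(a)^* = l^*(a)$, $r(a)^* = r^*(a)$, the maps $\alpha_k^*, \beta_k^*$ are the transposes of $\alpha_k, \beta_k$, and transposition reverses composition, the dual version of \eqref{eq:admsemi1} is the transpose of \eqref{eq:admsemi2} and vice versa. As transposition is a bijection in finite dimension, $(iv)$ for the dual data is equivalent to $(iv)$ for the original. Thus condition $(3)$ is self-dual and $(2) \Leftrightarrow (3)$ follows.

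The main obstacle is the bookkeeping in the $(1) \Leftrightarrow (3)$ step: one must check that projecting the two admissibility identities onto the two summands and performing the two specializations (one variable zero at a time) peels off the six identities of $(ii)$--$(iv)$ cleanly and without overlap, with no additional constraint hidden in the mixed terms. The only other delicate point is tracking the swap of the left and right actions under dualization, so that $(i)$ and $(iii)$ are matched to the correct dualized conditions and the transposes in $(iv)$ emerge in precisely the stated form.
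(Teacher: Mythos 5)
Your proposal is correct and takes essentially the same route as the paper: the paper likewise proves $(1)\Leftrightarrow(3)$ by invoking Proposition~\ref{pro:semidirect} for condition $(i)$ and then expanding the two admissibility identities on $A \ltimes_{l,r} V$, specializing $u=v=0$, $a=v=0$ and $b=u=0$ to peel off $(ii)$, $(iii)$ and $(iv)$ respectively. It then obtains $(2)\Leftrightarrow(3)$ exactly as you do, by substituting $(V^*, r^*, l^*)$ with $\Pi^*$, $\Omega^*$ into item~$(1)$ and matching the dualized conditions via transposition, including the observation that the dual of \eqref{eq:admsemi1} is the transpose of \eqref{eq:admsemi2} and vice versa.
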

\begin{proof}
 (1) $\iff$ (3).
 By Proposition~\ref{pro:semidirect}, $(A \ltimes_{l,r} V, \cdot, \Phi + \Omega)$ is a differential algebra if and only if
 $(V, l, r, \Omega)$ is a bimodule of the differential algebra $(A, \cdot, \Phi)$.
 Let $a, b \in A$, $u, v \in V$ and $k=1, \dots, m$. Then
 \begin{gather*}
 ((\eth_k + \beta_k)(a + u)) \cdot (b + v) = \eth_k(a) \cdot b + r(b)\beta_k(u) + l(\eth_k(a)) v, \\
 (a + u) \cdot ((\partial_k + \alpha_k)(b + v)) = a \cdot \partial_k(b) + l(a)\alpha_k(v) + r(\partial_k(b)) u, \\
 (\eth_k + \beta_k)((a + u)\cdot (b + v)) = \eth_k(a \cdot b) + \beta_k(l(a)v) + \beta_k(r(b)u).
 \end{gather*}
 Therefore equation~\eqref{eq:qadm1} holds (where $\eth_k$ is replaced by $\eth_k + \beta_k$, $\partial_k$ by $\partial_k + \alpha_k$, $a$ by $a + u$ and $b$ by $b + v$)
 if and only if equation~\eqref{eq:qadm1} (corresponding to $u = v = 0$), equation~\eqref{eq:admrep1}, where $a$ is replaced by $b$ and $v$ by $u$, (corresponding to $a = v = 0$) and equation~\eqref{eq:admsemi1} (corresponding to $b = u =0$) hold.
 Similarly, equation~\eqref{eq:qadm2} holds (where $\eth_k$ is replaced by $\eth_k + \beta_k$, $\partial_k$ by $\partial_k + \alpha_k$, $a$ by $a + u$ and $b$ by $b + v$)
 if and only if equation~\eqref{eq:qadm2}, equation~\eqref{eq:admrep2} and equation~\eqref{eq:admsemi2} hold.

 (2) $\iff$ (3).
 In item~(1), take $V = V^*$, $l = r^*$, $r = l^*$, $\Pi = \Omega^*$, $\Omega = \Pi^*$.
 Then from the above equivalence between item~(1) and item~(3),
 we show that item~(2) holds if and only if the conditions (i)--(iii) in item~(3)
 as well as the following equations hold (for all $a \in A$, $v^* \in V^*$, $k=1, \dots, m$):
 \begin{gather}
 r^*(\eth_k(a)) v^* = r^*(a) \beta_k^*(v^*) + \alpha_k^* (r^*(a)v^*), \label{eq:pf_admsemi1} \\
 l^*(\eth_k(a)) v^* = l^*(a) \beta_k^*(v^*) + \alpha_k^* (l^*(a)v^*). \label{eq:pf_admsemi2}
 \end{gather}
	Let $a \in A$. Then we have
	\begin{equation*}
		r^*(a) \beta_k^* + \alpha_k^* r^*(a) = ( \beta_k r(a) + r(a) \alpha_k)^*.
	\end{equation*}
	Hence equation~\eqref{eq:pf_admsemi1} holds if and only if equation~\eqref{eq:admsemi2} holds.
	Similarly, equation~\eqref{eq:pf_admsemi2} holds if and only if equation~\eqref{eq:admsemi1} holds.
	Therefore item~(2) holds if and only if item~(3) holds.
\end{proof}

\begin{Theorem}\label{thm:aybesemi}
 Let $(A, \cdot, \Phi = \{\partial_k\}_{k=1}^m)$ be a differential algebra.
 Suppose that $\Pi = \{\beta_k\colon V \to V\}_{k=1}^m$ is admissible to $(A, \cdot, \Phi)$ on $(V, l, r)$ and
 hence $(V^*, r^*, l^*, \Pi^*)$ is a bimodule of $(A, \cdot, \Phi)$.
 Let $\Psi = \{\eth_k\colon A \to A\}_{k=1}^m$ and $\Omega = \{\alpha_k\colon V \to V\}_{k=1}^m$ be sets of commuting linear maps.
 Let $T\colon V \to A$ be a linear map.
 \begin{enumerate}\itemsep=0pt
 \item[$1.$] The element $r = T - \sigma(T)$ is an antisymmetric solution of $(\Psi + \Omega^*)$-admissible AYBE in the differential algebra $(A \ltimes_{r^*, l^*}V^*, \cdot, \Phi + \Pi^*)$
 if and only if $T$ is an $\mathcal{O}$-operator of $(A, \cdot)$ associated to $(V, l, r)$ such that $\partial_k T = T \alpha_k$ and $T \beta_k = \eth_k T$ for all $k=1,\dots,m$.

 \item[$2.$] Assume that $(V, l, r, \Omega)$ is a bimodule of $(A, \cdot, \Phi)$.
 If $T$ is an $\mathcal{O}$-operator of $(A, \cdot, \Phi)$ associated to $(V, l, r,\Omega)$ and $T \beta_k = \eth_k T$ for all $k=1, \dots, m$,
 then $r = T - \sigma(T)$ is an antisymmetric solution of $(\Psi + \Omega^*)$-admissible AYBE in the differential algebra $(A \ltimes_{r^*, l^*} V^*, \cdot, \Phi + \Pi^*)$.
 If in addition, $(A, \cdot, \Phi)$ is $\Psi$-admissible and equations~\eqref{eq:admsemi1}--\eqref{eq:admsemi2} are satisfied,
 then the differential algebra $(A \ltimes_{r^*, l^*} V^*, \cdot, \Phi + \Pi^*)$ is $(\Psi + \Omega^*)$-admissible.
 Therefore in this case,
 there is a differential ASI bialgebra $(A \ltimes_{r^*, l^*} V^*, \cdot, \Delta, \Phi + \Pi^*, \Psi + \Omega^*)$,
 where the linear map $\Delta$ is defined by
 equation~\eqref{eq:cbd} with $r=T-\sigma(T)$.
\end{enumerate}
\end{Theorem}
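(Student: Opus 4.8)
The plan is to separate the two defining ingredients of a $(\Psi + \Omega^*)$-admissible AYBE in the differential algebra $\big(A \ltimes_{r^*, l^*} V^*, \cdot, \Phi + \Pi^*\big)$: the underlying AYBE \eqref{eq:aybe} in the semi-direct product algebra, and the admissibility conditions \eqref{eq:pqadm1}--\eqref{eq:pqadm2}. Since $r = T - \sigma(T)$ is antisymmetric, the Remark following the definition of $\Psi$-admissible AYBE guarantees that \eqref{eq:cobanti} holds automatically and that the two admissibility conditions are equivalent, so it suffices to analyse one of them. For the AYBE part I would invoke Proposition~\ref{pro:O-cons} verbatim: $r = T - \sigma(T)$ solves the ordinary AYBE in $\big(A \ltimes_{r^*, l^*} V^*, \cdot\big)$ if and only if $T$ is an $\mathcal{O}$-operator of $(A, \cdot)$ associated to $(V, l, r)$, that is, $T$ satisfies \eqref{eq:oop1}. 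This disposes of the algebraic content of Part~1 with no new computation.

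The core of Part~1 is then a direct unpacking of the admissibility condition. I would identify $T \in \Hom(V, A)$ with $\sum_i a_i \otimes v_i^* \in A \otimes V^* \subset (A \oplus V^*)^{\otimes 2}$, so that $r = \sum_i (a_i \otimes v_i^* - v_i^* \otimes a_i)$, and then compute
\[
\big((\partial_k + \beta_k^*) \otimes \id - \id \otimes (\eth_k + \alpha_k^*)\big)(r),
\]
recalling that $\partial_k + \beta_k^*$ acts as $\partial_k$ on $A$ and as $\beta_k^*$ on $V^*$, and likewise for $\eth_k + \alpha_k^*$. Collecting the result into its $A \otimes V^*$ and $V^* \otimes A$ components and using the standard identifications $\sum_i \partial_k(a_i) \otimes v_i^* \leftrightarrow \partial_k T$ and $\sum_i a_i \otimes \alpha_k^*(v_i^*) \leftrightarrow T\alpha_k$ (and similarly for the $\beta_k^*$, $\eth_k$ terms, read off through $\sigma$), the two components vanish precisely when $\partial_k T = T\alpha_k$ and $\eth_k T = T\beta_k$. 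Combined with the previous paragraph, this establishes Part~1. The only genuinely computational step is this decomposition; the main thing to watch is the bookkeeping of transposes and the flip $\sigma$, so that $\sum_i a_i \otimes \beta_k^*(v_i^*)$ is correctly read as $T\beta_k$ and the $V^* \otimes A$ summands are recognised as $\sigma$ of the corresponding $A \otimes V^*$ expressions.

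Part~2 is then assembled from earlier results. The first assertion is immediate from Part~1: being an $\mathcal{O}$-operator of the differential algebra $(A, \cdot, \Phi)$ associated to $(V, l, r, \Omega)$ packages exactly \eqref{eq:oop1} together with $\partial_k T = T\alpha_k$ (that is, \eqref{eq:oop2}), so adding the hypothesis $T\beta_k = \eth_k T$ yields the right-hand side of the equivalence in Part~1. For the second assertion, I would verify that the four conditions of item~(3) of Proposition~\ref{pro:admsemi} all hold here: (i) $(V, l, r, \Omega)$ is a bimodule by hypothesis, (ii) $(A, \cdot, \Phi)$ is $\Psi$-admissible by the added hypothesis, (iii) $\Pi$ is admissible to $(A, \cdot, \Phi)$ on $(V, l, r)$ by the standing assumption of the theorem, and (iv) equations \eqref{eq:admsemi1}--\eqref{eq:admsemi2} by the added hypothesis. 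Item~(2) of that proposition then gives that $\big(A \ltimes_{r^*, l^*} V^*, \cdot, \Phi + \Pi^*\big)$ is $(\Psi + \Omega^*)$-admissible. Finally, having an antisymmetric solution of $(\Psi + \Omega^*)$-admissible AYBE in a $(\Psi + \Omega^*)$-admissible differential algebra, Corollary~\ref{cor:admAYBEdasi} applied to $\big(A \ltimes_{r^*, l^*} V^*, \cdot, \Phi + \Pi^*\big)$ produces the claimed differential ASI bialgebra with $\Delta$ given by \eqref{eq:cbd}, completing the proof.
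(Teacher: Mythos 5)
Your proposal is correct and takes essentially the same route as the paper's own proof: Proposition~\ref{pro:O-cons} disposes of the AYBE part, the dual-basis decomposition of $\big((\partial_k+\beta_k^*)\otimes\id-\id\otimes(\eth_k+\alpha_k^*)\big)(r)$ into its $A\otimes V^*$ and $V^*\otimes A$ components yields precisely $\partial_k T = T\alpha_k$ and $\eth_k T = T\beta_k$, and Part~2 is assembled from Part~1, Proposition~\ref{pro:admsemi} and Corollary~\ref{cor:admAYBEdasi} exactly as the paper does. The only cosmetic difference is that you phrase the identification of $T$ abstractly as $\sum_i a_i\otimes v_i^*$, whereas the paper writes $T=\sum_{i=1}^n T(v_i)\otimes v_i^*$ in a chosen basis and carries out the same transpose bookkeeping explicitly.
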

\begin{proof}
 (1)
 By Proposition~\ref{pro:O-cons}, $r$ satisfies equation~\eqref{eq:aybe} if and only if equation~\eqref{eq:oop1} holds.
 Let $\{v_1, v_2, \dots, v_n\}$ be a basis of $V$ and $\{v_1^*, v_2^*, \dots, v_n^*\}$ be the dual basis.
 Then $T = \sum_{i=1}^n T(v_i) \otimes v_i^* \in (A \ltimes_{r^*, l^*} V^*) \otimes(A \ltimes_{r^*, l^*} V^*)$ and
 $r = T - \sigma(T) = \sum_{i=1}^n (T(v_i) \otimes v_i^* - v_i^* \otimes T(v_i))$.
 Note that for all $k=1, \dots, m$, we have
 \begin{gather*}
 ((\partial_k + \beta_k^*) \otimes \id)(r) = \sum_{i=1}^n (\partial_k T(v_i) \otimes v_i^* - \beta_k^*(v_i^*) \otimes T(v_i)),\\
 (\id \otimes(\eth_k + \alpha_k^*))(r) = \sum_{i=1}^n (T(v_i)\otimes \alpha_k^*(v_i^*) - v_i^* \otimes \eth_k T(v_i)).
 \end{gather*}
 Moreover, we have
 \begin{gather*}
 \sum_{i=1}^n \beta_k^*(v_i^*) \otimes T(v_i) = \sum_{i=1}^n\left(\sum_{j=1}^n \langle \beta_k^*(v_i^*), v_j \rangle v_j^* \otimes T(v_i)\right) = \sum_{j=1}^n v_j^* \otimes \sum_{i=1}^n
 \langle v_i^*, \beta_k(v_j) \rangle T(v_i) \\
 \hphantom{\sum_{i=1}^n \beta_k^*(v_i^*) \otimes T(v_i}{}
 =\sum_{i=1}^n v_i^* \otimes T\left(\sum_{j=1}^n \langle \beta_k(v_i), v_j^* \rangle v_j\right) = \sum_{i=1}^n v_i^* \otimes T \beta_k(v_i).
 \end{gather*}
 And similarly, $\sum_{i=1}^n T(v_i) \otimes \alpha_k^*(v_i^*) = \sum_{i=1}^n T \alpha_k(v_i)\otimes v_i^*$.
 Therefore $((\partial_k + \beta_k^*)\otimes \id)(r) = (\id \otimes (\eth_k + \alpha_k^*))(r)$ if and only if
 $\partial_k T = T \alpha_k$ and $\eth_k T = T \beta_k$ for all $k=1, \dots, m$.
 Hence item~(1) holds.

(2) It follows from item~(1), Proposition~\ref{pro:admsemi} and Corollary~\ref{cor:admAYBEdasi}.
\end{proof}

\begin{Corollary}\label{cor:qminp}
 Let $(A, \cdot, \Phi = \{\partial_k\}_{k=1}^m)$ be a differential algebra and
 $(V, l, r, \Omega = \{\alpha_k\}_{k=1}^m)$ be a bimodule of $(A, \cdot, \Phi)$.
 Let $(\theta_1, \dots, \theta_m) \in \mathbb{F}^m$ be given.
 Let $T\colon V \to A$ be an $\mathcal{O}$-operator of $(A, \cdot, \Phi)$ associated to $(V, l, r,
 \Omega)$.
 Then $r = T - \sigma(T)$ is an antisymmetric solution of $(\{-\partial_k + \theta_k \id_A\}_{k=1}^m + \Omega^*)$-admissible AYBE
 in the
 $(\{-\partial_k + \theta_k \id_A\}_{k=1}^m + \Omega^*)$-admissible
 differential algebra $(A \ltimes_{r^*, l^*}V^*, \cdot ,\Phi + \{-\alpha_k^* + \theta_k \id_{V^*}\}_{k=1}^m)$.
 Therefore there is a differential ASI bialgebra
 $(A \ltimes_{r^*, l^*}V^*,\allowbreak \cdot, \Delta, \Phi + \{-\alpha_k^* + \theta_k \id_{V^*}\}_{k=1}^m, \{-\partial_k + \theta_k \id_A\}_{k=1}^m + \Omega^*)$,
 where the linear map $\Delta$ is defined by equation~\eqref{eq:cbd} with $r = T - \sigma(T)$.
\end{Corollary}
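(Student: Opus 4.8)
The plan is to recognize this corollary as the special case of Theorem~\ref{thm:aybesemi}(2) obtained via the admissibility construction of Example~\ref{ex:admissible}(2). Concretely, I would set $\Psi := \{-\partial_k + \theta_k \id_A\}_{k=1}^m$ and $\Pi := \{-\alpha_k + \theta_k \id_V\}_{k=1}^m$, so that $\Pi^* = \{-\alpha_k^* + \theta_k \id_{V^*}\}_{k=1}^m$ and $\Psi + \Omega^* = \{-\partial_k + \theta_k\id_A\}_{k=1}^m + \Omega^*$ match exactly the operator sets appearing in the statement. With these identifications, the goal reduces to verifying that the hypotheses of Theorem~\ref{thm:aybesemi}(2) hold.

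First I would invoke Example~\ref{ex:admissible}(2) twice: it yields that $\Pi = \{-\alpha_k + \theta_k \id_V\}_{k=1}^m$ is admissible to $(A, \cdot, \Phi)$ on $(V, l, r)$ (so that $(V^*, r^*, l^*, \Pi^*)$ is the required bimodule), and that $\Psi = \{-\partial_k + \theta_k \id_A\}_{k=1}^m$ is admissible to $(A, \cdot, \Phi)$, i.e.\ $(A, \cdot, \Phi)$ is $\Psi$-admissible. This discharges both the standing admissibility assumption of Theorem~\ref{thm:aybesemi} and the $\Psi$-admissibility demanded in its ``in addition'' clause.

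Next I would check the two remaining compatibility conditions. For $T\beta_k = \eth_k T$: since $T$ is an $\mathcal{O}$-operator we have $\partial_k T = T \alpha_k$ by equation~\eqref{eq:oop2}, whence $\eth_k T = (-\partial_k + \theta_k\id_A)T = -T\alpha_k + \theta_k T = T(-\alpha_k + \theta_k \id_V) = T\beta_k$. For equations~\eqref{eq:admsemi1}--\eqref{eq:admsemi2}: substituting $\eth_k = -\partial_k + \theta_k\id_A$ and $\beta_k = -\alpha_k + \theta_k\id_V$ and cancelling the common term $\theta_k l(a)v$ (resp.\ $\theta_k r(a)v$) reduces equation~\eqref{eq:admsemi1} to $\alpha_k(l(a)v) = l(\partial_k(a))v + l(a)\alpha_k(v)$ and equation~\eqref{eq:admsemi2} to the analogous $r$-identity, which are precisely the bimodule-of-differential-algebra axioms~\eqref{eq:repd1}--\eqref{eq:repd2} assumed in the hypothesis that $(V, l, r, \Omega)$ is a bimodule of $(A, \cdot, \Phi)$.

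With all hypotheses verified, applying Theorem~\ref{thm:aybesemi}(2) gives that $r = T - \sigma(T)$ is an antisymmetric solution of $(\Psi + \Omega^*)$-admissible AYBE in the $(\Psi + \Omega^*)$-admissible differential algebra $(A \ltimes_{r^*, l^*} V^*, \cdot, \Phi + \Pi^*)$, and hence that the asserted differential ASI bialgebra exists, with $\Delta$ defined by equation~\eqref{eq:cbd}. I expect no genuine obstacle: the entire content lies in correctly matching the sign conventions $\eth_k = -\partial_k + \theta_k\id$ and $\beta_k = -\alpha_k + \theta_k\id$ so that the additive $\theta_k$-terms cancel and the bimodule axioms reappear; the only care needed is to confirm that Example~\ref{ex:admissible}(2) supplies exactly the admissibility statements required, rather than re-deriving them from scratch.
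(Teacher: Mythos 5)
Your proposal is correct and takes essentially the same route as the paper's own proof: the paper likewise sets $\Psi = \{-\partial_k + \theta_k \id_A\}_{k=1}^m$ and $\Pi = \{-\alpha_k + \theta_k \id_V\}_{k=1}^m$ in Theorem~\ref{thm:aybesemi}\,(2), invokes Example~\ref{ex:admissible}\,(2) for both admissibility facts, observes $T\beta_k = \eth_k T$ from $\partial_k T = T\alpha_k$, and notes that equations~\eqref{eq:admsemi1}--\eqref{eq:admsemi2} hold for these choices. The only difference is that you explicitly carry out the cancellation of the $\theta_k$-terms reducing \eqref{eq:admsemi1}--\eqref{eq:admsemi2} to the bimodule axioms \eqref{eq:repd1}--\eqref{eq:repd2}, a verification the paper states without detail.
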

\begin{proof}
 Take $\Psi = \{-\partial_k + \theta_k \id_A\}_{k=1}^m$ and $\Pi = \{-\alpha_k + \theta_k \id_V\}_{k=1}^m$ in Theorem~\ref{thm:aybesemi}\,(2).
 By Example~\ref{ex:admissible},
 $\Pi$ is admissible to the differential algebra $(A,\cdot,\Phi)$ on $(V,l,r)$ and $(A,\cdot, \Phi)$ is $\Psi$-admissible.
 Obviously, we have
 \begin{equation*}
 T (-\alpha_k + \theta_k \id_V) = (-\partial_k + \theta_k \id_A) T, \qquad \forall k=1, \dots, m,
 \end{equation*}
 for $T$ being an $\mathcal{O}$-operator of $(A, \cdot, \Phi)$ associated to $(V, l, r, \Omega)$.
 Moreover, equations~\eqref{eq:admsemi1}--\eqref{eq:admsemi2} hold, where $\eth_k=-\partial_k + \theta_k \id_A, \beta_k=-\alpha_k + \theta_k \id_V$ for all $k=1,\dots, m$.
 Hence the conclusion follows from Theorem~\ref{thm:aybesemi}\,(2).
\end{proof}

\begin{Corollary}\label{cor:idoop}
 Let $(A, \cdot, \Phi = \{\partial_k\}_{k=1}^m)$ be a differential algebra. Let $\{e_1, e_2, \dots, e_n\}$ be a~basis of $A$ and $\{e_1^*,e_2^*, \dots, e_n^*\}$ be the dual basis.
 Let $(\theta_1, \dots, \theta_m) \in \mathbb{F}^m$ be given.
 Then $r = \sum_{i=1}^n (e_i \otimes e_i^* - e_i^* \otimes e_i)$ is an antisymmetric solution of
 $(\{-\partial_k + \theta_k \id_A\}_{k=1}^m + \Phi^*)$-admissible AYBE in the
 $(\{-\partial_k + \theta_k \id_A\}_{k=1}^m +
 \Phi^*)$-admissible
 differential algebra $(A \ltimes_{R_A^*, 0}A^*, \cdot ,\Phi + \{-\partial_k^* + \theta_k \id_{A^*}\}_{k=1}^m)$ or $(A \ltimes_{0, L_A^*}A^*, \cdot ,\Phi + \{-\partial_k^* + \theta_k \id_{A^*}\}_{k=1}^m)$.
 Hence there are differential ASI bialgebras
 $(A \ltimes_{R_A^*, 0}A^*, \cdot, \Delta, \Phi + \{-\partial_k^* + \theta_k \id_{A^*}\}_{k=1}^m, \{-\partial_k + \theta_k \id_A\}_{k=1}^m + \Phi^*)$ and
 $(A \ltimes_{0, L_A^*}A^*, \cdot, \Delta, \Phi + \{-\partial_k^* + \theta_k \id_{A^*}\}_{k=1}^m, \{-\partial_k + \theta_k \id_A\}_{k=1}^m + \Phi^*)$,
 where the linear map $\Delta$ is defined by equation~\eqref{eq:cbd}
 with the above $r$.
\end{Corollary}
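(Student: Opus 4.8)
The plan is to recognize this corollary as the specialization of Corollary~\ref{cor:qminp} to the case $V = A$ and $T = \id$, so that the proof reduces to identifying the two bimodules involved and computing the element $r$. No new estimate or construction is needed; everything is already packaged in Corollary~\ref{cor:qminp}.

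First I would invoke Example~\ref{example:o_operator}, which asserts that the identity map $\id\colon A \to A$ is an $\mathcal{O}$-operator of $(A, \cdot, \Phi)$ associated to the bimodule $(A, 0, R_A, \Phi)$ as well as to $(A, L_A, 0, \Phi)$. These are exactly the hypotheses demanded by Corollary~\ref{cor:qminp}, applied with $V = A$, $T = \id$, and $\Omega = \Phi$ so that $\alpha_k = \partial_k$ for all $k$.

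Next I would track the dualization conventions to match the two semi-direct products named in the statement. In Corollary~\ref{cor:qminp} the ambient differential algebra is $A \ltimes_{r^*, l^*} V^*$. For the bimodule $(A, 0, R_A, \Phi)$ we have $l = 0$ and $r = R_A$, so the dual bimodule is $(A^*, R_A^*, 0)$ and the ambient algebra is $A \ltimes_{R_A^*, 0} A^*$; its set of derivations is $\Phi + \{-\partial_k^* + \theta_k \id_{A^*}\}_{k=1}^m$ and the admissible set is $\{-\partial_k + \theta_k \id_A\}_{k=1}^m + \Phi^*$, since $\Omega^* = \Phi^*$. This is precisely the first differential ASI bialgebra claimed. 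For the bimodule $(A, L_A, 0, \Phi)$ we instead have $l = L_A$ and $r = 0$, so the dual bimodule is $(A^*, 0, L_A^*)$ and the ambient algebra is $A \ltimes_{0, L_A^*} A^*$, with the same derivation and admissible sets, giving the second bialgebra.

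Finally I would compute $r = T - \sigma(T)$ explicitly. Under the identification $\Hom(A, A) \cong A \otimes A^*$ fixed by the chosen basis, the identity map corresponds to the canonical element $\sum_{i=1}^n e_i \otimes e_i^*$, whence $r = \sum_{i=1}^n (e_i \otimes e_i^* - e_i^* \otimes e_i)$, exactly as stated. With these identifications both assertions then follow at once from Corollary~\ref{cor:qminp}. I do not expect any genuine obstacle here; the only point requiring care is the bookkeeping of the left and right actions under transpose, so that the two semi-direct products $A \ltimes_{R_A^*, 0} A^*$ and $A \ltimes_{0, L_A^*} A^*$ are not inadvertently interchanged.
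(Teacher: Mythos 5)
Your proposal is correct and follows essentially the same route as the paper's own proof: invoke Example~\ref{example:o_operator} for $\id$ as an $\mathcal{O}$-operator associated to $(A, L_A, 0, \Phi)$ and $(A, 0, R_A, \Phi)$, identify $\id = \sum_{i=1}^n e_i \otimes e_i^*$, and apply Corollary~\ref{cor:qminp}. Your extra bookkeeping of the dual actions (matching $(A,0,R_A,\Phi)$ to $A \ltimes_{R_A^*,0} A^*$ and $(A,L_A,0,\Phi)$ to $A \ltimes_{0,L_A^*} A^*$) is exactly the detail the paper leaves implicit, and it is carried out correctly.
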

\begin{proof}
 By Example~\ref{example:o_operator},
 the identity map $\id\colon A \to A$ is an $\mathcal{O}$-operator of $(A, \cdot, \Phi)$ associated to $(A, L_A, 0,\Phi)$ or $(A, 0, R_A, \Phi)$.
 Note that $\id = \sum_{i=1}^n e_i \otimes e_i^*$.
 Hence the conclusion follows from Corollary~\ref{cor:qminp}.
\end{proof}

\subsection{Differential dendriform algebras}\label{subsec:dendriform}

We recall the notion of dendriform algebras.

\begin{Definition}[\cite{Loday2001Dialgebras}]
 Let $A$ be a vector space with two bilinear multiplications denoted by~$\succ$ and~$\prec$ respectively.
 The triple $(A, \succ, \prec)$ is called a \textit{dendriform algebra} if
 \begin{gather*}
 (a \prec b) \prec c = a \prec (b \prec c +b \succ c), \qquad
 (a \succ b) \prec c = a \succ (b \prec c), \\
 (a \prec b + a \succ b) \succ c = a \succ (b \succ c)
 \end{gather*}
	for all $a, b, c \in A$.
\end{Definition}

For a dendriform algebra $(A, \succ, \prec)$, define two linear
maps $L_\succ, R_\prec\colon A \to \End(A)$ respectively by
\begin{equation*}
	L_\succ (a)b = a \succ b, \qquad R_\prec(a)b = b\prec a,\qquad \forall a, b\in A.
\end{equation*}

\begin{Proposition}[\cite{BaiDouble,Loday2001Dialgebras}]
 Let $(A, \succ, \prec)$ be a dendriform algebra.
 Then the bilinear multiplication
 \begin{equation}
 a \cdot b := a \succ b + a \prec b, \qquad \forall a, b \in A, \label{eq:muldend}
 \end{equation}
 defines an algebra $(A, \cdot)$, called the \textit{associated algebra} of $(A, \succ, \prec)$.
 Moreover $(A, L_\succ, R_\prec)$ is an $A$-bimodule and
 the identity map $\id$ is an $\mathcal{O}$-operator of $(A, \cdot)$ associated to $(A, L_\succ, R_\prec)$.
\end{Proposition}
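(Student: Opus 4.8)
The plan is to verify the three assertions of the proposition—associativity of $\cdot$, the $A$-bimodule axioms for $(A, L_\succ, R_\prec)$, and the $\mathcal{O}$-operator identity for $\id$—directly from the three defining relations of a dendriform algebra. The organizing observation is that, since $a \cdot b = a \succ b + a \prec b$, those relations can be rewritten compactly as
\begin{gather*}
(a \prec b) \prec c = a \prec (b \cdot c), \qquad
(a \succ b) \prec c = a \succ (b \prec c), \\
(a \cdot b) \succ c = a \succ (b \succ c),
\end{gather*}
and in this form each relation feeds almost verbatim into one of the claims.

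For associativity I would expand $(a \cdot b) \cdot c = (a \cdot b) \succ c + (a \cdot b) \prec c$ and split the second term as $(a \succ b) \prec c + (a \prec b) \prec c$. Applying the third, second, and first rewritten relations respectively converts these three summands into $a \succ (b \succ c)$, $a \succ (b \prec c)$, and $a \prec (b \cdot c)$. Recombining the first two as $a \succ (b \succ c + b \prec c) = a \succ (b \cdot c)$ leaves $a \succ (b \cdot c) + a \prec (b \cdot c) = a \cdot (b \cdot c)$, which is the desired identity.

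For the bimodule property I would match each of the three $A$-bimodule conditions with a single dendriform relation: $L_\succ(a) L_\succ(b) = L_\succ(a \cdot b)$ is exactly the third relation (acting on an element $v$ in the slot of $c$), $R_\prec(b) R_\prec(a) = R_\prec(a \cdot b)$ is the first (with the arguments relabelled so that the acted-on element plays the role of the leftmost entry), and the commuting relation $R_\prec(b) L_\succ(a) = L_\succ(a) R_\prec(b)$ is the second. The $\mathcal{O}$-operator identity is then immediate: $\id(u) \cdot \id(v) = \id\big(L_\succ(\id(u)) v + R_\prec(\id(v)) u\big)$ reduces to $u \cdot v = u \succ v + u \prec v$, which is precisely the definition of $\cdot$ in equation~\eqref{eq:muldend}, so $\id$ satisfies equation~\eqref{eq:oop1} associated to $(A, L_\succ, R_\prec)$.

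There is no serious obstacle here: every step is a one-line application of a defining relation, and the only bookkeeping worth some care is keeping the order of the two operations straight when rewriting the mixed term $(a \succ b) \prec c$ and when relabelling arguments in the $R_\prec$ computation. The real content of the statement is that the three dendriform axioms are packaged exactly so as to deliver associativity of $\cdot$ and the bimodule structure simultaneously, with the $\mathcal{O}$-operator claim being essentially a tautology once $\cdot$ is defined as the sum of the two products.
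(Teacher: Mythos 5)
Your verification is correct, and it is exactly the standard direct argument: the paper itself states this proposition without proof (citing \cite{BaiDouble,Loday2001Dialgebras}), and the cited sources establish it precisely by the routine check you perform, matching each dendriform axiom to one associativity summand and one bimodule condition. Your rewriting of the axioms in terms of $\cdot$ and the observation that the $\mathcal{O}$-operator identity for $\id$ reduces to the definition of $\cdot$ are both accurate, so there is nothing to correct.
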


\begin{Definition}
	A \textit{derivation} on a dendriform algebra $(A, \succ, \prec)$ is a linear map $\partial\colon A\rightarrow A$ satisfying the following equations:
 \begin{gather}
 \partial(a \succ b) = \partial(a) \succ b + a \succ \partial(b), \label{eq:derdend1} \\
 \partial(a \prec b) = \partial(a) \prec b + a \prec \partial(b), \qquad \forall a, b \in A. \label{eq:derdend2}
 \end{gather}
 $(A, \succ, \prec, \Phi)$ is called a \textit{differential dendriform algebra} if
 $(A, \succ, \prec)$ is a dendriform algebra and $\Phi = \{\partial_k\colon A \to A\}_{k=1}^m$ is a finite set of commuting derivations on $(A, \succ, \prec)$.
\end{Definition}

\begin{Proposition}\label{pro:assda}
 Let $(A, \succ, \prec, \Phi)$ be a differential dendriform algebra.
 Then with the bilinear multiplication $\cdot$ defined by equation~\eqref{eq:muldend},
 $(A, \cdot, \Phi)$ is a differential algebra,
 called the \textit{associated differential algebra of $(A, \succ, \prec, \Phi)$}.
 Moreover, $(A, L_\succ, R_\prec, \Phi)$ is a bimodule of the associated differential algebra $(A, \cdot, \Phi)$
 and the identity map $\id\colon A \to A$ is an $\mathcal{O}$-operator of $(A, \cdot, \Phi)$ associated to $(A, L_\succ, R_\prec, \Phi)$.
\end{Proposition}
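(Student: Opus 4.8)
The plan is to reduce the entire statement to the already-cited non-differential proposition together with the two dendriform derivation axioms \eqref{eq:derdend1} and \eqref{eq:derdend2}; no genuinely new computation is needed beyond bookkeeping, and I would organize the argument according to the three assertions of the statement.

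First I would establish that $(A, \cdot, \Phi)$ is a differential algebra. The underlying $(A, \cdot)$ is already an algebra by the preceding proposition, so it only remains to check that each $\partial_k$ is a derivation for $\cdot$. Using linearity of $\partial_k$ and the splitting $a \cdot b = a \succ b + a \prec b$ from \eqref{eq:muldend}, I would compute
\begin{align*}
\partial_k(a \cdot b) &= \partial_k(a \succ b) + \partial_k(a \prec b) \\
&= \big(\partial_k(a) \succ b + a \succ \partial_k(b)\big) + \big(\partial_k(a) \prec b + a \prec \partial_k(b)\big) \\
&= \partial_k(a) \cdot b + a \cdot \partial_k(b),
\end{align*}
where the middle equality is exactly \eqref{eq:derdend1} and \eqref{eq:derdend2}. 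Since the $\partial_k$ commute by hypothesis, $\Phi$ is a set of commuting derivations for $\cdot$.

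Next I would verify that $(A, L_\succ, R_\prec, \Phi)$ is a bimodule of $(A, \cdot, \Phi)$. The triple $(A, L_\succ, R_\prec)$ is an $A$-bimodule by the preceding proposition, so it suffices to check the compatibility conditions \eqref{eq:repd1} and \eqref{eq:repd2} with $l = L_\succ$, $r = R_\prec$ and $\alpha_k = \partial_k$. Here the key observation is that, after unwinding $L_\succ(a)v = a \succ v$ and $R_\prec(a)v = v \prec a$, condition \eqref{eq:repd1} becomes precisely \eqref{eq:derdend1} and condition \eqref{eq:repd2} becomes precisely \eqref{eq:derdend2}; both therefore hold. The only point demanding care is the order of arguments in $R_\prec$, since $R_\prec(a)v = v \prec a$ places $v$ on the left, so one must match $R_\prec(\partial_k(a))v + R_\prec(a)\partial_k(v) = v \prec \partial_k(a) + \partial_k(v) \prec a$ against $\partial_k(v \prec a)$ as given by \eqref{eq:derdend2}.

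Finally I would confirm that $\id\colon A \to A$ is an $\mathcal{O}$-operator of $(A, \cdot, \Phi)$ associated to $(A, L_\succ, R_\prec, \Phi)$. The preceding proposition already gives \eqref{eq:oop1} for $T = \id$, and the additional condition \eqref{eq:oop2}, namely $\partial_k \id = \id \partial_k$, is trivially satisfied. Thus the main obstacle is not computational but conceptual: recognizing that the single pair of dendriform derivation axioms simultaneously yields the Leibniz rule (by summing the two identities) and the two bimodule-compatibility equations (by keeping them separate). Once this is seen, every step is immediate, and the only care required is the argument-ordering in $R_\prec$ noted above.
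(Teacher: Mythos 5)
Your proposal is correct and follows essentially the same route as the paper's proof: both derive the Leibniz rule for $\cdot$ by summing equations~\eqref{eq:derdend1} and~\eqref{eq:derdend2}, identify the bimodule conditions~\eqref{eq:repd1} and~\eqref{eq:repd2} (with $l = L_\succ$, $r = R_\prec$, $\alpha_k = \partial_k$) with those same two dendriform derivation axioms, and observe that the $\mathcal{O}$-operator condition~\eqref{eq:oop2} for $T = \id$ is trivial. Your explicit caution about the argument ordering in $R_\prec(a)v = v \prec a$ is a point the paper leaves implicit, but the substance of the argument is the same.
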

\begin{proof}
	Set $\Phi = \{\partial_k\}_{k=1}^m$.
 It is straightforward that $\partial_k$ is a derivation on $(A, \cdot)$ for all $k=1, \dots, m$.
 Moreover, equations~\eqref{eq:repd1} and \eqref{eq:repd2} hold,
 where $l = L_\succ$, $r = R_\prec$,
 if and only if equations~\eqref{eq:derdend1} and \eqref{eq:derdend2} hold respectively.
 That is, $(A, L_\succ, R_\prec, \Phi)$ is a bimodule of the differential algebra $(A, \cdot, \Phi)$
 and the last conclusion follows immediately.
\end{proof}

Recall \cite{aguiar2000pre,bai2013splitting} that a Rota--Baxter operator $R$ on an algebra $(A,\cdot)$ gives a dendriform algebra $(A,\succ,\prec)$, where
\begin{equation}
	a \succ b = R(a) \cdot b, \qquad a\prec b = a\cdot R(b), \qquad \forall a, b \in A. \label{eq:dd}
\end{equation}

It is straightforward to get the following conclusion.
\begin{Lemma}\label{lem:ff}
Let $(A, \cdot, \Phi = \{\partial_k\}_{k=1}^m)$ be a differential
algebra and $R$ be a Rota--Baxter operator on the algebra
$(A,\cdot)$. If $R\partial_k=\partial_kR$ for all $k=1,\dots, m$, then
$(A,\succ,\prec, \Phi)$ is a differential dendriform algebra,
where $\succ$ and $\prec$ are respectively defined by equation~\eqref{eq:dd}.
\end{Lemma}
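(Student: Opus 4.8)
The plan is to note that the dendriform algebra structure is already supplied by the cited result \cite{aguiar2000pre,bai2013splitting}, so the only new content is to verify that each $\partial_k$ is a derivation of the dendriform algebra $(A,\succ,\prec)$ in the sense of equations~\eqref{eq:derdend1} and~\eqref{eq:derdend2}; the commutativity of the $\partial_k$ is inherited directly from the hypothesis that $\Phi$ is a differential structure on $(A,\cdot)$.

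For the key verification I would start from the definition~\eqref{eq:dd} of $\succ$ and apply the Leibniz rule~\eqref{eq:derivation} for $\partial_k$ on the associative product $(A,\cdot)$:
\begin{equation*}
\partial_k(a \succ b) = \partial_k(R(a) \cdot b) = \partial_k(R(a)) \cdot b + R(a) \cdot \partial_k(b).
\end{equation*}
The crucial step is to invoke the hypothesis $R\partial_k = \partial_k R$, which yields $\partial_k(R(a)) = R(\partial_k(a))$; substituting this and re-reading the result through~\eqref{eq:dd} gives $\partial_k(a \succ b) = \partial_k(a) \succ b + a \succ \partial_k(b)$, which is exactly equation~\eqref{eq:derdend1}. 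The verification of~\eqref{eq:derdend2} is entirely parallel: one starts from $a \prec b = a \cdot R(b)$, applies~\eqref{eq:derivation}, and again uses the commutation to replace $\partial_k(R(b))$ by $R(\partial_k(b))$.

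There is no genuine obstacle here, and the proof is a direct computation. The only point worth emphasizing is that the single hypothesis $R\partial_k = \partial_k R$ is precisely what converts the Leibniz identity for the associative product $\cdot$ into the two separate Leibniz-type identities for $\succ$ and $\prec$. Once both~\eqref{eq:derdend1} and~\eqref{eq:derdend2} hold for each $k$ and the $\partial_k$ commute, the quadruple $(A,\succ,\prec,\Phi)$ satisfies the definition of a differential dendriform algebra, completing the argument.
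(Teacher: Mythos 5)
Your proof is correct and is exactly the direct verification the paper has in mind: the paper states this lemma without proof (``It is straightforward to get the following conclusion''), and your computation --- applying the Leibniz rule for $\partial_k$ on $(A,\cdot)$ to $a \succ b = R(a)\cdot b$ and $a \prec b = a \cdot R(b)$, then using $R\partial_k = \partial_k R$ to identify $\partial_k(R(a))$ with $R(\partial_k(a))$ --- is precisely that straightforward check. Nothing is missing: the dendriform axioms come from the cited Rota--Baxter construction, and commutativity of the $\partial_k$ is inherited from $\Phi$, as you note.
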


\begin{Proposition}\label{pro:opd} Let $T\colon V \to A$ be an $\mathcal{O}$-operator of a differential algebra $(A, \cdot, \Phi)$ associated~to a~bimodule $(V, l, r, \Omega)$.
 Then there exists a differential dendriform algebra structure $(V, \succ, \prec$, $\Omega)$ on $V$,
 where $\succ$ and $\prec$ are respectively defined by
 \begin{equation}
 u \succ v := l(T(u)) v \qquad \text{and} \qquad
 u \prec v := r(T(v)) u, \qquad
 \forall u, v \in V. \label{eq:otd}
 \end{equation}
\end{Proposition}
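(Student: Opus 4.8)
The plan is to split the statement into two parts. First, that $(V,\succ,\prec)$ is a dendriform algebra: this is the purely associative content, which depends only on the $\mathcal{O}$-operator identity~\eqref{eq:oop1} together with the $A$-bimodule relations $l(a)l(b)=l(a\cdot b)$, $r(b)r(a)=r(a\cdot b)$, $r(b)l(a)=l(a)r(b)$. Second, that each $\alpha_k$ is a derivation on this dendriform algebra: this is the genuinely ``differential'' content, and it uses the second defining condition~\eqref{eq:oop2} of the $\mathcal{O}$-operator together with the compatibility conditions~\eqref{eq:repd1}--\eqref{eq:repd2} that make $(V,l,r,\Omega)$ a bimodule of the differential algebra $(A,\cdot,\Phi)$.

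For the three dendriform axioms I would verify each identity directly. For the middle axiom $(u\succ v)\prec w = u\succ(v\prec w)$, the left side is $r(T(w))l(T(u))v$ and the right side is $l(T(u))r(T(w))v$, and these agree by the bimodule relation $r(b)l(a)=l(a)r(b)$. For the first and third axioms the key step is to rewrite the composite products $T(u)\cdot T(v)$ using~\eqref{eq:oop1}: since $u\prec v + u\succ v = r(T(v))u + l(T(u))v$, we have $T(u\prec v + u\succ v)=T(u)\cdot T(v)$, after which the relations $r(b)r(a)=r(a\cdot b)$ and $l(a)l(b)=l(a\cdot b)$ close both computations. This recovers the classical fact that an $\mathcal{O}$-operator yields a dendriform structure via~\eqref{eq:otd}, and its associated algebra~\eqref{eq:muldend} is the one on which $T$ is an $\mathcal{O}$-operator.

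For the derivation property I would compute directly. From $u\succ v=l(T(u))v$, applying~\eqref{eq:repd1} with $a=T(u)$ gives $\alpha_k(l(T(u))v)=l(\partial_k(T(u)))v + l(T(u))\alpha_k(v)$; then invoking~\eqref{eq:oop2} in the form $\partial_k(T(u))=T(\alpha_k(u))$ rewrites the first term as $l(T(\alpha_k(u)))v=\alpha_k(u)\succ v$, which is exactly~\eqref{eq:derdend1}. The verification of~\eqref{eq:derdend2} for $\prec$ is symmetric, using~\eqref{eq:repd2} with $a=T(v)$ and again~\eqref{eq:oop2}. Since $\Omega$ is a commuting family of linear maps by hypothesis, it follows that $(V,\succ,\prec,\Omega)$ is a differential dendriform algebra.

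All the computations are routine; the one conceptual point, and the only place the differential structure really enters, is the interlocking of the two defining conditions of the $\mathcal{O}$-operator. Condition~\eqref{eq:oop2} converts $\partial_k\circ T$ into $T\circ\alpha_k$ precisely so that the extra term produced by pushing $\alpha_k$ through~\eqref{eq:repd1}--\eqref{eq:repd2} lands back inside $T$ and reassembles into a $\succ$ or $\prec$ product. I expect no substantive obstacle beyond bookkeeping, namely keeping track of which argument of $l$ and $r$ the operator $\alpha_k$ acts on.
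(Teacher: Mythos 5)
Your proof is correct, but it takes a genuinely different route from the paper's. You verify everything directly on $V$: the middle dendriform axiom from the commutation relation $r(b)l(a)=l(a)r(b)$, the outer two axioms by rewriting $T(u\prec v+u\succ v)=T(u)\cdot T(v)$ via \eqref{eq:oop1} and then closing with $r(b)r(a)=r(a\cdot b)$ and $l(a)l(b)=l(a\cdot b)$, and the derivation property of each $\alpha_k$ from \eqref{eq:repd1}--\eqref{eq:repd2} with $a=T(u)$ (resp.\ $a=T(v)$) followed by the substitution $\partial_k T=T\alpha_k$ from \eqref{eq:oop2}; all of these computations are sound, and the commutativity of $\Omega$ is indeed part of the bimodule hypothesis. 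The paper instead argues by lifting: by Lemma \ref{lem:o2r}, the map $\hat T$ of equation \eqref{eq:hatT} is a Rota--Baxter operator on the semi-direct product algebra $(A\ltimes_{l,r}V,\cdot)$; since $\partial_k T=T\alpha_k$, $\hat T$ commutes with each $\partial_k+\alpha_k$, so Lemma \ref{lem:ff} (Rota--Baxter operator commuting with the derivations yields a differential dendriform algebra via \eqref{eq:dd}) produces a differential dendriform structure on $A\oplus V$, of which $V$ is a subalgebra whose induced products are precisely \eqref{eq:otd}. Your approach is more elementary and self-contained --- it never leaves $V$ and reproves the classical $\mathcal{O}$-operator-to-dendriform step by hand --- whereas the paper's route is shorter on the page because it reuses Lemmas \ref{lem:o2r} and \ref{lem:ff}, and it packages structurally the conceptual point you identified at the end: condition \eqref{eq:oop2} is exactly the statement that $\hat T$ commutes with the derivation set $\Phi+\Omega$ on the semi-direct product.
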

\begin{proof}
	By Lemma~\ref{lem:o2r}, the linear map $\hat T$ defined by equation~(\ref{eq:hatT}) is a Rota--Baxter operator on the algebra $(A \ltimes_{l, r} V, \cdot)$.
	Then there is a dendriform algebra structure on the direct sum $A\oplus V$ of vector spaces defined by
	\begin{gather*}
		(a+u) \succ (b+v) :=\hat T(a+u) \cdot (b+v)=T(u)\cdot (b+v)=T(u)\cdot b+l(T(u)) v, \\
		(a+u) \prec (b+v) := (a+u) \cdot \hat T(b+v)=(a+u)\cdot T(v)=a\cdot T(v)+ r(T(v)) u,
	\end{gather*}
	for all $a, b \in A, u, v \in V$.
	Set $\Phi = \{\partial_k\}_{k=1}^m$ and $\Omega = \{\alpha_k\}_{k=1}^m$.
	By equation~\eqref{eq:oop2}, $\hat T$ commutes with $\partial_k + \alpha_k$ for all $k=1, \dots, m$.
	Hence by Lemma~\ref{lem:ff}, $(A\oplus V, \succ, \prec, \Phi + \Omega)$ is a differential dendriform algebra.
	In particular, on the vector space $V$, there is a differential dendriform subalgebra $(V,\succ,\prec, \Omega)$,
	in which $\succ$ and $\prec$ are exactly defined by equation~(\ref{eq:otd}).
\end{proof}

At the end of this section, we illustrate a construction of
antisymmetric solutions of admissible AYBE in differential
algebras and thus differential ASI bialgebras from differential
dendriform algebras.

\begin{Proposition}\label{pro:denddasi}
 Let $(A, \succ, \prec, \Phi = \{\partial_k\}_{k=1}^m)$ be a differential dendriform algebra and $(A, \cdot, \Phi)$ be the associated differential algebra.
 Let $\{e_1, e_2, \dots, e_n\}$ be a basis of $A$ and $\{e_1^*, e_2^*, \dots, e_n^*\}$ be the dual basis.
 Let $(\theta_1, \dots, \theta_m) \in \mathbb{F}^m$ be given.
 Then $r = \sum_{i=1}^n (e_i \otimes e_i^* - e_i^* \otimes e_i)$ is an antisymmetric solution of
 $(\{-\partial_k + \theta_k \id_A\}_{k=1}^m + \Phi^*)$-admissible AYBE in the
 $(\{-\partial_k + \theta_k \id_A\}_{k=1}^m + \Phi^*)$-admissible differential algebra
 $(A \ltimes_{R^*_{\prec}, L^*_{\succ}}A^*, \cdot, \Phi + \{-\partial_k^* + \theta_k \id_{A^*}\})$.
 Therefore there is a differential ASI bialgebra
 $(A \ltimes_{R^*_{\prec}, L^*_{\succ}}A^*, \cdot, \Delta, \Phi + \{-\partial_k^* + \theta_k \id_{A^*}\}_{k=1}^m, \{-\partial_k + \theta_k \id_A\}_{k=1}^m + \Phi^*)$,
 where the linear map $\Delta$ is defined by equation~\eqref{eq:cbd} with the above $r$.
\end{Proposition}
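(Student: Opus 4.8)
The plan is to recognize this proposition as a direct specialization of Corollary~\ref{cor:qminp} applied to the canonical $\mathcal{O}$-operator supplied by the dendriform structure. First I would invoke Proposition~\ref{pro:assda}: since $(A, \succ, \prec, \Phi)$ is a differential dendriform algebra, the quadruple $(A, L_\succ, R_\prec, \Phi)$ is a bimodule of the associated differential algebra $(A, \cdot, \Phi)$, and the identity map $\id\colon A \to A$ is an $\mathcal{O}$-operator of $(A, \cdot, \Phi)$ associated to this bimodule. This is precisely the hypothesis needed to feed into the general construction.

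Next I would apply Corollary~\ref{cor:qminp} with the choices $V = A$, $l = L_\succ$, $r = R_\prec$, $\Omega = \Phi$ (so that each $\alpha_k = \partial_k$), and $T = \id$. The corollary then asserts that $r := T - \sigma(T)$ is an antisymmetric solution of $(\{-\partial_k + \theta_k \id_A\}_{k=1}^m + \Omega^*)$-admissible AYBE in the $(\{-\partial_k + \theta_k \id_A\}_{k=1}^m + \Omega^*)$-admissible differential algebra $(A \ltimes_{r^*, l^*} V^*, \cdot, \Phi + \{-\alpha_k^* + \theta_k \id_{V^*}\}_{k=1}^m)$, and that $\Delta$ defined by equation~\eqref{eq:cbd} yields a differential ASI bialgebra. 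Substituting the chosen data gives $\Omega^* = \Phi^*$, $r^* = R_\prec^*$, $l^* = L_\succ^*$, $V^* = A^*$ and $\alpha_k^* = \partial_k^*$, which reproduces verbatim all the structures named in the statement, including the semi-direct product $(A \ltimes_{R^*_{\prec}, L^*_{\succ}}A^*, \cdot, \Phi + \{-\partial_k^* + \theta_k \id_{A^*}\})$ and the pair of admissible operator sets.

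The only point requiring care is the explicit form of $r$. I would identify $T = \id \in \Hom(A, A)$ with the element $\sum_{i=1}^n e_i \otimes e_i^* \in A \otimes A^* \subset (A \oplus A^*) \otimes (A \oplus A^*)$ under the isomorphism $\Hom(A, A) \cong A \otimes A^*$. Then $\sigma(T) = \sum_{i=1}^n e_i^* \otimes e_i$, so $r = T - \sigma(T) = \sum_{i=1}^n (e_i \otimes e_i^* - e_i^* \otimes e_i)$, matching exactly the element displayed in the statement.

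I expect no genuine obstacle here: the substantive content has already been established in Proposition~\ref{pro:assda} and Corollary~\ref{cor:qminp}, and this proposition merely packages the dendriform bimodule $(A, L_\succ, R_\prec, \Phi)$ into that machinery, in complete parallel with the way Corollary~\ref{cor:idoop} packages the one-sided bimodules $(A, L_A, 0, \Phi)$ and $(A, 0, R_A, \Phi)$. Thus the proof reduces to citing these two results together with the basis identification of $\id$ above.
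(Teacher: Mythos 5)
Your proposal is correct and follows exactly the paper's own proof: invoke Proposition~\ref{pro:assda} to obtain $\id$ as an $\mathcal{O}$-operator of $(A,\cdot,\Phi)$ associated to $(A, L_\succ, R_\prec, \Phi)$, identify $\id = \sum_{i=1}^n e_i \otimes e_i^*$, and apply Corollary~\ref{cor:qminp} with $V=A$, $l=L_\succ$, $r=R_\prec$, $\Omega=\Phi$, $T=\id$. The substitution details and the computation of $r = T - \sigma(T)$ are all accurate, so nothing is missing.
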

\begin{proof}
 By Proposition~\ref{pro:assda}, $(A, \cdot, \Phi)$ is a differential algebra and the identity map $\id$ is an $\mathcal{O}$-operator of $(A, \cdot, \Phi)$ associated to $(A,L_\succ, R_\prec, \Phi)$.
 Note that $\id = \sum^n_{i=1} e_i\otimes e_i^*$.
 Then the conclusion follows from Corollary~\ref{cor:qminp}.
\end{proof}

\begin{Remark}Corollary~\ref{cor:idoop} is a special case of Proposition~\ref{pro:denddasi},
 that is, the former corresponds to the trivial differential dendriform algebra structure
 $(A, \succ, \prec, \Phi)$ on a differential algebra $(A, \cdot, \Phi)$ given by $\succ = \cdot$, $\prec = 0$ or $\succ = 0$, $\prec = \cdot$.
\end{Remark}

\section[Poisson bialgebras via commutative and cocommutative differential ASI bialgebras]{Poisson bialgebras via commutative\\ and cocommutative differential ASI bialgebras}\label{sec:poisson}

We generalize the construction of Poisson algebras from commutative algebras with a pair of commuting derivations to the context of bialgebras,
that is, we construct Poisson bialgebras introduced in~\cite{ni2013poisson} from commutative and cocommutative differential ASI bialgebras.
We establish the explicit relationships between them, as well as the equivalent interpretation in terms of the corresponding double constructions (Manin triples) and matched pairs.
For the coboundary cases, the relationships between the involved structures on commutative differential algebras and Poisson algebras,
such as admissible AYBE and Poisson Yang--Baxter equation (PYBE), $\mathcal{O}$-operators of the two structures, and differential Zinbiel (commutative dendriform) algebras and pre-Poisson algebras, are also given.
In particular, we give a construction of Poisson bialgebras from differential Zinbiel algebras.

In this section, we always assume that $(A, \cdot)$ is a commutative algebra.
In this case, we use $(V, \mu)$ to denote an $A$-bimodule $(V,l,r)$,
where $\mu = l = r$, and call $(V, \mu)$ an \textit{$A$-module}.
In particular, $(A, L_A)$ is an $A$-module, and $(V^*, \mu^*)$ is again an $A$-module if $(V, \mu)$ is an $A$-module.
Further, we use $(A \ltimes_\mu V, \cdot)$ to denote the semi-direct product algebra by $(A, \cdot)$ and $(V, \mu)$.
Bimodules of commutative differential algebras are also called \textit{modules of commutative differential algebras}.

\begin{Definition}
 A \textit{Poisson algebra} is a triple $(A, [\ ,\ ], \cdot)$, where $(A, [\ ,\ ])$ is a Lie algebra and $(A, \cdot)$ is a commutative algebra satisfying the following equation:
 \begin{equation*}
 [a, b \cdot c] = [a, b] \cdot c + b \cdot [a, c], \qquad
 \forall a, b, c \in A.
 \end{equation*}
\end{Definition}

The following result is known (cf.~\cite{BV1}).

\begin{Proposition}
 Let $(A, \cdot, \Phi = \{\partial_1, \partial_2\})$ be a commutative differential algebra.
 Then $(A, [\ ,\ ], \cdot)$ is a Poisson algebra,
 called the \textit{induced Poisson algebra of $(A, \cdot, \Phi)$},
 where $[\ ,\ ]$ is defined by
 \begin{equation}
 [a, b] := \partial_1(a) \cdot \partial_2(b) - \partial_2(a) \cdot \partial_1(b), \qquad
 \forall a, b \in A. \label{eq:lie}
 \end{equation}
\end{Proposition}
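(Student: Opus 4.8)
The plan is to verify the three defining properties of a Poisson algebra: that $[\ ,\ ]$ is antisymmetric, that it satisfies the Jacobi identity, and that the Leibniz-type compatibility $[a, b \cdot c] = [a,b] \cdot c + b \cdot [a,c]$ holds; commutativity of $(A, \cdot)$ is given. Antisymmetry is immediate: swapping $a$ and $b$ in the defining formula \eqref{eq:lie} and using commutativity of $\cdot$ rewrites $\partial_1(b)\cdot\partial_2(a) - \partial_2(b)\cdot\partial_1(a)$ as $-[a,b]$. So the real content lies in the Jacobi identity and, to a lesser extent, the compatibility condition.

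For the compatibility condition, I would expand $[a, b\cdot c]$ by applying the Leibniz rule \eqref{eq:derivation} for $\partial_1$ and $\partial_2$ to the products, writing $\partial_i(b\cdot c) = \partial_i(b)\cdot c + b\cdot\partial_i(c)$ for $i=1,2$. Using associativity and commutativity of $\cdot$ to reorder factors, the resulting four terms regroup exactly as $[a,b]\cdot c + b\cdot[a,c]$. This step is routine bookkeeping and needs only the Leibniz rule together with commutativity of the multiplication, not the commuting of the two derivations.

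The Jacobi identity is the main obstacle and the only place where $\partial_1\partial_2 = \partial_2\partial_1$ is essential. I would compute $[[a,b],c] = \partial_1([a,b])\cdot\partial_2(c) - \partial_2([a,b])\cdot\partial_1(c)$ and then expand $\partial_1([a,b])$ and $\partial_2([a,b])$ via the Leibniz rule. This produces a sum of products of three factors, each factor being a first or second derivative of one of $a,b,c$. I would then form the cyclic sum $[[a,b],c] + [[b,c],a] + [[c,a],b]$ and sort the terms according to which kind of second-derivative factor they carry: a pure $\partial_1^2$, a pure $\partial_2^2$, or a mixed $\partial_1\partial_2$ (equivalently $\partial_2\partial_1$).

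The terms carrying a pure $\partial_1^2$ factor cancel in pairs within the cyclic sum after reordering the two remaining first-derivative factors by commutativity of $\cdot$; the $\partial_2^2$ terms cancel by the same argument with $\partial_1$ and $\partial_2$ interchanged. For the mixed terms I would invoke $\partial_1\partial_2 = \partial_2\partial_1$ to identify $\partial_1\partial_2(x)$ with $\partial_2\partial_1(x)$ for each $x \in \{a,b,c\}$; treating each mixed second derivative as a single symbol, the remaining terms again cancel pairwise using commutativity of $\cdot$, so the cyclic sum vanishes. The key insight is that commutativity of the two derivations is precisely what collapses the mixed second-derivative terms so the cancellation goes through: without it, a surviving obstruction involving $(\partial_1\partial_2 - \partial_2\partial_1)$ would obstruct the Jacobi identity.
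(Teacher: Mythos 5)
Your proof is correct and complete: antisymmetry and the Leibniz compatibility $[a,b\cdot c]=[a,b]\cdot c+b\cdot[a,c]$ indeed follow from commutativity and associativity of $\cdot$ alone, and your sorting of the cyclic Jacobi sum by second-derivative type ($\partial_1^2$, $\partial_2^2$, mixed) does yield full pairwise cancellation, with $\partial_1\partial_2=\partial_2\partial_1$ used exactly and only to identify the mixed terms. The paper itself gives no proof of this proposition---it states it as known, citing \cite{BV1}---so your direct verification is precisely the standard argument that the citation stands in for.
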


Moreover, if $(A, \cdot, \Phi = \{\partial_1^k, \partial_2^k\}_{k=1}^m)$ is a commutative differential algebra,
that is, the number of the commuting derivations is even, then
\begin{equation*}
	[a, b] := \sum_{k=1}^m \partial^k_1(a) \partial^k_2(b) - \partial^k_2(a) \partial^k_1(b),\qquad \forall a, b \in A,
\end{equation*}
still defines a Lie algebra $(A,[\ ,\ ])$ such that $(A, [\ ,\ ], \cdot)$ is a Poisson algebra.
Therefore all the results of this section that are proved for one pair $\{\partial_1, \partial_2\}$ of derivations remain valid for several pairs,
whereas we illustrate these results in terms of one pair of derivations for simplicity.

\subsection[Poisson algebras via commutative differential algebras: modules and matched pairs]{Poisson algebras via commutative differential algebras:\\ modules and matched pairs}

\begin{Definition}[\cite{ni2013poisson}]
 Let $(A, [\ ,\ ], \cdot)$ be a Poisson algebra, $V$ be a vector space and $\rho, \mu\colon A \to \End (V)$ be two linear maps.
 Then $(V, \rho, \mu)$ is called a \textit{module of the Poisson algebra} $(A, [\ ,\ ], \cdot)$
 if $(V, \rho)$ is a module of the Lie algebra $(A, [\ ,\ ])$
 and $(V, \mu)$ is an $A$-module such that the following equations hold:
 \begin{equation*}
 \rho(a \cdot b) = \mu(b) \rho(a) + \mu(a) \rho(b), \qquad \mu([a,b])= \rho(a) \mu(b) - \mu(b) \rho(a), \qquad \forall a,b\in A.
 \end{equation*}
\end{Definition}

\begin{Lemma}\label{lem:rep_poisson}
 Let $(A, \cdot, \Phi = \{\partial_1, \partial_2\})$ be a commutative differential algebra and $(A, [\ ,\ ], \cdot)$ be the induced Poisson algebra of $(A, \cdot, \Phi)$.
 Suppose that $(V, \mu, \Omega = \{\alpha_1, \alpha_2\})$ is a module of $(A, \cdot, \Phi)$.
 Define $\rho_\mu\colon A \to \End (V)$ by
 \begin{equation}
 \rho_\mu(a) := \mu(\partial_1(a)) \alpha_2 - \mu(\partial_2(a))\alpha_1, \qquad
 \forall a \in A. \label{eq:def_rep_lie}
 \end{equation}
 Then $(V, \rho_\mu, \mu)$ is a module of the Poisson algebra $(A, [\ ,\ ], \cdot)$,
 called the \textit{induced module of $(A, [\ ,\ ], \cdot)$ with respect to $(V, \mu, \Omega)$}.
\end{Lemma}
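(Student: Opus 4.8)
The plan is to verify directly the three defining conditions for $(V, \rho_\mu, \mu)$ to be a module of the Poisson algebra $(A, [\ ,\ ], \cdot)$: that $(V, \rho_\mu)$ is a module of the Lie algebra $(A, [\ ,\ ])$, that $(V, \mu)$ is an $A$-module, and that the two compatibility identities $\rho_\mu(a \cdot b) = \mu(b)\rho_\mu(a) + \mu(a)\rho_\mu(b)$ and $\mu([a,b]) = \rho_\mu(a)\mu(b) - \mu(b)\rho_\mu(a)$ hold. The $A$-module condition is given by hypothesis. The whole computation rests on three facts I would record up front from the data and then use repeatedly: since $(A,\cdot)$ is commutative and $(V,\mu)$ is an $A$-module, the operators $\mu(a)$ pairwise commute and satisfy $\mu(a)\mu(b) = \mu(a \cdot b)$; the module axioms \eqref{eq:repd1}--\eqref{eq:repd2} specialize (with $l = r = \mu$) to the commutator relation $\alpha_k \mu(a) - \mu(a)\alpha_k = \mu(\partial_k(a))$; and $\alpha_1, \alpha_2$ commute.

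First I would dispatch the two easy identities. For $\rho_\mu(a \cdot b) = \mu(b)\rho_\mu(a) + \mu(a)\rho_\mu(b)$, I expand $\rho_\mu(a\cdot b)$ using the Leibniz rule $\partial_k(a\cdot b) = \partial_k(a)\cdot b + a\cdot\partial_k(b)$ together with $\mu(c\cdot d) = \mu(c)\mu(d)$; the result matches the right-hand side once the $\mu$'s are permuted. For $\mu([a,b]) = \rho_\mu(a)\mu(b) - \mu(b)\rho_\mu(a) = [\rho_\mu(a), \mu(b)]$, I note that $\mu(\partial_1(a))$ and $\mu(\partial_2(a))$ commute with $\mu(b)$, so the commutator collapses to $\mu(\partial_1(a))[\alpha_2, \mu(b)] - \mu(\partial_2(a))[\alpha_1, \mu(b)]$, which by the commutator relation equals $\mu(\partial_1(a))\mu(\partial_2(b)) - \mu(\partial_2(a))\mu(\partial_1(b)) = \mu([a,b])$ by the definition \eqref{eq:lie} of the bracket.

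The main work, and the chief obstacle, is the Lie-module axiom $\rho_\mu([a,b]) = \rho_\mu(a)\rho_\mu(b) - \rho_\mu(b)\rho_\mu(a)$. I would expand the product $\rho_\mu(a)\rho_\mu(b) = (\mu(\partial_1(a))\alpha_2 - \mu(\partial_2(a))\alpha_1)(\mu(\partial_1(b))\alpha_2 - \mu(\partial_2(b))\alpha_1)$, moving each $\alpha_k$ past the intervening $\mu(\partial_j(b))$ via $\alpha_k\mu(c) = \mu(c)\alpha_k + \mu(\partial_k(c))$. This splits the product into terms quadratic in $\{\alpha_1,\alpha_2\}$ and terms linear in them. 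On forming $[\rho_\mu(a),\rho_\mu(b)]$, the quadratic terms cancel: the coefficients of $\alpha_2^2$, of $\alpha_1^2$, and of $\alpha_1\alpha_2 = \alpha_2\alpha_1$ are each symmetric in $a,b$ once one uses that the $\mu$'s commute, so they drop out. What survives is first order in the $\alpha_k$, with the coefficient of $\alpha_2$ equal to $\mu\!\left(\partial_1(a)\cdot\partial_1\partial_2(b) - \partial_2(a)\cdot\partial_1^2(b) - \partial_1\partial_2(a)\cdot\partial_1(b) + \partial_1^2(a)\cdot\partial_2(b)\right)$ and a sign-mirrored expression serving as the coefficient of $\alpha_1$.

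Finally I would compare this with $\rho_\mu([a,b]) = \mu(\partial_1([a,b]))\alpha_2 - \mu(\partial_2([a,b]))\alpha_1$. Expanding $\partial_1([a,b])$ and $\partial_2([a,b])$ by the Leibniz rule from \eqref{eq:lie} and using $\partial_1\partial_2 = \partial_2\partial_1$, the argument of $\mu$ in the $\alpha_2$-coefficient is precisely $\partial_1([a,b])$ and in the $\alpha_1$-coefficient precisely $-\partial_2([a,b])$, so the two sides agree term by term. The only delicate bookkeeping is tracking the second derivatives $\partial_i\partial_j$ and the signs; it is the commutativity of the derivations together with the commutativity of the operators $\mu(a)$ that forces every spurious term to cancel. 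This completes the verification that $(V, \rho_\mu, \mu)$ is a module of the induced Poisson algebra.
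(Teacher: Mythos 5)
Your proof is correct, and it follows the paper's overall strategy of directly verifying the three defining conditions for a module of a Poisson algebra; the only real divergence is in how the hardest identity, $\rho_\mu([a,b]) = [\rho_\mu(a),\rho_\mu(b)]$, is organized. The paper first rewrites $\rho_\mu(a)$ in the normal form $\alpha_1\mu(a)\alpha_2 - \alpha_2\mu(a)\alpha_1$ (using $\mu(\partial_k(a)) = \alpha_k\mu(a) - \mu(a)\alpha_k$ together with $\alpha_1\alpha_2 = \alpha_2\alpha_1$), and after substituting the commutation relations the product $\rho_\mu(a)\rho_\mu(b)$ collapses into four sandwich terms of the shape $\alpha_i\,\mu(\partial_j(a)\cdot\partial_k(b))\,\alpha_l$, so the commutator is visibly $\alpha_1\mu([a,b])\alpha_2 - \alpha_2\mu([a,b])\alpha_1 = \rho_\mu([a,b])$ with no term-by-term comparison needed. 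You instead keep the defining expression, push each $\alpha_k$ rightward through the $\mu$'s, grade the result by degree in $\{\alpha_1,\alpha_2\}$, kill the quadratic part of the commutator by its symmetry in $a$ and $b$, and match the surviving linear part against the Leibniz expansion of $\partial_1([a,b])$ and $-\partial_2([a,b])$. Both computations are sound: your stated $\alpha_2$-coefficient $\mu\big(\partial_1(a)\cdot\partial_1\partial_2(b) - \partial_2(a)\cdot\partial_1^2(b) - \partial_1\partial_2(a)\cdot\partial_1(b) + \partial_1^2(a)\cdot\partial_2(b)\big)$ is exactly $\mu(\partial_1([a,b]))$ once $\partial_1\partial_2 = \partial_2\partial_1$ is used, and likewise the $\alpha_1$-coefficient is $-\mu(\partial_2([a,b]))$. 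The paper's normal form buys a shorter, self-closing calculation in which the bracket of the induced Poisson structure appears automatically, while your graded expansion buys transparency about which hypotheses (commutativity of the operators $\mu(a)$, of the maps $\alpha_k$, and of the derivations $\partial_k$) force each cancellation. For the two compatibility identities the paper only says ``similarly''; your observation that $[\rho_\mu(a),\mu(b)] = \mu(\partial_1(a))[\alpha_2,\mu(b)] - \mu(\partial_2(a))[\alpha_1,\mu(b)] = \mu([a,b])$, since $\mu(\partial_i(a))$ commutes with $\mu(b)$, is a clean way to make that explicit.
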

\begin{proof}
 Let $a, b \in A$.
 Note that
 \begin{align*}
 	\rho_\mu(a) &\overset{\hphantom{(2.3)}}{=} \mu(\partial_1(a))\alpha_2 - \mu(\partial_2(a))\alpha_1 \\
 	&\overset{\eqref{eq:repd1}}{=} (\alpha_1 \mu(a) - \mu(a) \alpha_1)\alpha_2 - (\alpha_2 \mu(a) - \mu(a) \alpha_2)\alpha_1 = \alpha_1 \mu(a) \alpha_2 - \alpha_2 \mu(a) \alpha_1.
 \end{align*}
 Hence we have
 \begin{equation*}
 \begin{aligned}
 &\rho_\mu(a)\rho_\mu(b) \\
 &\overset{\hphantom{(2.3)}}{=} \alpha_1 \mu(a)\alpha_2 \alpha_1 \mu(b) \alpha_2 - \alpha_1 \mu(a)\alpha_2 \alpha_2 \mu(b) \alpha_1 - \alpha_2 \mu(a)\alpha_1 \alpha_1 \mu(b) \alpha_2 + \alpha_2 \mu(a)\alpha_1 \alpha_2 \mu(b) \alpha_1 \\
 &\overset{\eqref{eq:repd1}}{=} \alpha_1(\alpha_2 \mu(a) - \mu(\partial_2(a)))(\mu(\partial_1(b)) + \mu(b)\alpha_1) \alpha_2 \\
 &\qquad {}- \alpha_1(\alpha_2\mu(a)- \mu(\partial_2(a)))(\mu(\partial_2(b)) + \mu(b)\alpha_2)\alpha_1 \\
 & \qquad {}-\alpha_2(\alpha_1 \mu(a) - \mu(\partial_1(a)))(\mu(\partial_1(b)) + \mu(b)\alpha_1) \alpha_2 \\
 &\qquad {}+ \alpha_2(\alpha_1 \mu(a) - \mu(\partial_1(a)))(\mu(\partial_2(b)) + \mu(b)\alpha_2)\alpha_1 \\
 &\overset{\hphantom{(2.3)}}{=} \alpha_1 \mu(\partial_2(a) \cdot \partial_2(b)) \alpha_1 - \alpha_1 \mu(\partial_2(a) \cdot \partial_1(b)) \alpha_2 + \alpha_2 \mu(\partial_1(a) \cdot \partial_1(b)) \alpha_2 - \alpha_2 \mu(\partial_1(a) \cdot \partial_2(b)) \alpha_1.
 \end{aligned}
 \end{equation*}
 Therefore we have
 \begin{equation*}
 [\rho_\mu(a), \rho_\mu(b)] = \rho_\mu(a)\rho_\mu(b) - \rho_\mu(b)\rho_\mu(a) = \alpha_1 \mu([a,b]) \alpha_2 - \alpha_2 \mu([a,b]) \alpha_1 = \rho_\mu([a,b]).
 \end{equation*}
 That is, $(V, \rho_\mu)$ is a module of the Lie algebra $(A, [\ ,\ ])$.
 Similarly, we show that
 \begin{equation*}
 \rho_\mu(a)\mu(b) - \mu(b)\rho_\mu(a) = \mu([a,b]), \qquad \mu(b)\rho_\mu(a) + \mu(a) \rho_\mu(b) = \rho_\mu(a \cdot b).
 \end{equation*}
 Therefore $(V, \rho_\mu, \mu)$ is a module of $(A, [\ ,\ ], \cdot)$.
\end{proof}

\begin{Example} Let $(A, \cdot, \Phi = \{\partial_1, \partial_2\})$ be a commutative differential algebra and $(A, [\ ,\ ], \cdot)$ be the induced Poisson algebra of $(A, \cdot, \Phi)$.
 The induced module $(A, \rho_{L_A}, L_A)$ of $(A, [\ ,\ ], \cdot)$
 with respect to the module $(A, L_A, \Phi)$ of $(A, \cdot, \Phi)$
 is exactly the module $(A, \ad_A, L_A)$ of $(A, [\ ,\ ], \cdot)$,
 where $\ad_A\colon A \to \End(A)$ is defined by $\ad_A(a)(b) = [a, b]$ for all $a, b \in A$.
\end{Example}

\begin{Lemma}[{\cite[Proposition~2.5]{liu2020noncommutative}}]
 Let $(A, [\ ,\ ], \cdot)$ be a Poisson algebra and $(V, \rho, \mu)$ be a module of $(A, [\ ,\ ], \cdot)$.
 Then $(V^*, -\rho^*, \mu^*)$ is a module of $(A, [\ ,\ ], \cdot)$.
\end{Lemma}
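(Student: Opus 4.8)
The plan is to check the three defining conditions of a module of a Poisson algebra for the triple $(V^*, -\rho^*, \mu^*)$ by transposing the corresponding conditions satisfied by $(V, \rho, \mu)$. Throughout I will use repeatedly that transposition reverses the order of composition, that is, $(\varphi\psi)^* = \psi^*\varphi^*$.

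First, $(V^*, \mu^*)$ is an $A$-module, as already recorded above. Next, from the fact that $(V, \rho)$ is a module of the Lie algebra $(A, [\ ,\ ])$, i.e.\ $\rho([a,b]) = \rho(a)\rho(b) - \rho(b)\rho(a)$, transposing gives $\rho^*([a,b]) = \rho^*(b)\rho^*(a) - \rho^*(a)\rho^*(b)$, so that $-\rho^*([a,b]) = (-\rho^*(a))(-\rho^*(b)) - (-\rho^*(b))(-\rho^*(a))$. Hence $(V^*, -\rho^*)$ is a module of the Lie algebra $(A, [\ ,\ ])$; this is nothing but the standard dual representation.

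The crux is to transpose the compatibility condition $\mu([a,b]) = \rho(a)\mu(b) - \mu(b)\rho(a)$, which yields $\mu^*([a,b]) = \mu^*(b)\rho^*(a) - \rho^*(a)\mu^*(b)$, i.e.\ the commutation relation
\[
\rho^*(a)\mu^*(b) - \mu^*(b)\rho^*(a) = -\mu^*([a,b]), \qquad \forall a, b \in A.
\]
From this relation the condition $\mu^*([a,b]) = (-\rho^*(a))\mu^*(b) - \mu^*(b)(-\rho^*(a))$ for the dual triple follows at once. For the last compatibility, transpose $\rho(a \cdot b) = \mu(b)\rho(a) + \mu(a)\rho(b)$ to get $-\rho^*(a \cdot b) = -\rho^*(a)\mu^*(b) - \rho^*(b)\mu^*(a)$; then use the commutation relation to move each $\mu^*$ to the left, producing two correction terms $\mu^*([a,b])$ and $\mu^*([b,a])$ which cancel by antisymmetry of the bracket, leaving precisely $-\rho^*(a \cdot b) = \mu^*(b)(-\rho^*(a)) + \mu^*(a)(-\rho^*(b))$.

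The main obstacle is purely bookkeeping: because transposition reverses composition order, the dualized equations are not literally the module axioms, and reconciling them requires using the second compatibility condition as a commutation rule together with $[b,a] = -[a,b]$ so that the spurious $\mu^*([a,b])$ terms cancel in pairs. Tracking the signs carefully through these transpositions is the only place an error is likely to creep in; once that is handled, the verification is routine.
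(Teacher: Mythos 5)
Your proof is correct: each transposition and each sign checks out, and the use of $\rho^*(a)\mu^*(b)=\mu^*(b)\rho^*(a)-\mu^*([a,b])$ as a commutation rule, with the $\mu^*([a,b])$ and $\mu^*([b,a])$ terms cancelling by antisymmetry, cleanly yields the remaining compatibility condition. The paper itself gives no proof, simply citing \cite[Proposition~2.5]{liu2020noncommutative}, and your direct dualization is precisely the standard verification that the cited reference carries out, so there is nothing to reconcile.
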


\begin{Proposition}\label{pro:assdualrep}
 Let $(A, \cdot, \Phi = \{\partial_1, \partial_2\})$ be a commutative differential algebra and $(A, [\ ,\ ], \cdot)$ be the induced Poisson algebra of $(A, \cdot, \Phi)$.
 Suppose that $(V, \mu, \Omega = \{\alpha_1, \alpha_2\} )$ is a module of $(A, \cdot, \Phi)$ and
 $\Pi = \{\beta_1, \beta_2\}$ is admissible to $(A, \cdot, \Phi)$ on $(V, \mu)$.
 Then the induced module $(V^*, \rho_{\mu^*}, \mu^*)$ of the Poisson algebra $(A, [\ ,\ ], \cdot)$ with respect to $(V^*, \mu^*, \Pi^*)$ is exactly the module $(V^*, -\rho^*_\mu, \mu^*)$ of $(A, [\ ,\ ], \cdot)$,
 where $(V, \rho_\mu, \mu)$ is the induced module of $(A, [\ ,\ ], \cdot)$ with respect to $(V, \mu, \Omega)$,
 if and only if the following equation holds:
 \begin{gather}
 \beta_2(\mu(\partial_1(a))v) - \beta_1(\mu(\partial_2(a))v) + \mu(\partial_1(a))\alpha_2(v) - \mu(\partial_2(a))\alpha_1(v) = 0, \label{eq:dra}
 \end{gather}
for all $a \in A$, $v \in V$. In particular, when taking $(V, \mu, \Omega) = (A, L_A, \Phi)$,
 the induced module $(A^*, \rho_{L_A^*},\allowbreak L_A^*)$ of $(A, [\ ,\ ], \cdot)$ with respect to $(A^*, L_A^*, \Pi^*)$
 is exactly the module $(A^*, -\ad_A^*, L_A^*)$ of $(A, [\ ,\ ], \cdot)$
 if and only if the following equation holds:
 \begin{equation}
 \beta_2(\partial_1(a)) \cdot b = \beta_1(\partial_2(a)) \cdot b, \qquad \forall a, b \in A. \label{eq:coadjoint}
 \end{equation}
\end{Proposition}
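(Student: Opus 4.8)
The plan is to reduce the claimed equality of modules to a single operator identity on $V^*$, and then to recognise that identity as the transpose of~\eqref{eq:dra}. I would first note that both $(V^*, \rho_{\mu^*}, \mu^*)$ and $(V^*, -\rho_\mu^*, \mu^*)$ are modules of the Poisson algebra $(A, [\ ,\ ], \cdot)$ sharing the same commutative component $\mu^*$, so they agree \emph{exactly} precisely when their Lie-module components coincide, that is, when $\rho_{\mu^*}(a) = -\rho_\mu^*(a)$ for all $a \in A$.

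Next I would compute both sides. Applying the defining formula~\eqref{eq:def_rep_lie} to the module $(V^*, \mu^*, \Pi^*)$ of $(A, \cdot, \Phi)$ gives
\[
 \rho_{\mu^*}(a) = \mu^*(\partial_1(a))\beta_2^* - \mu^*(\partial_2(a))\beta_1^*, \qquad \forall a \in A,
\]
while transposing the expression $\rho_\mu(a) = \mu(\partial_1(a))\alpha_2 - \mu(\partial_2(a))\alpha_1$ of~\eqref{eq:def_rep_lie} and using $(ST)^* = T^*S^*$ gives
\[
 -\rho_\mu^*(a) = \alpha_1^*\mu^*(\partial_2(a)) - \alpha_2^*\mu^*(\partial_1(a)), \qquad \forall a \in A.
\]
Since an operator vanishes iff its transpose does, the desired identity $\rho_{\mu^*}(a) = -\rho_\mu^*(a)$ holds for all $a$ iff the transpose of $\rho_{\mu^*}(a) + \rho_\mu^*(a) = 0$ does; transposing and evaluating on $v \in V$ I expect to land exactly on
\[
 \beta_2(\mu(\partial_1(a))v) - \beta_1(\mu(\partial_2(a))v) + \mu(\partial_1(a))\alpha_2(v) - \mu(\partial_2(a))\alpha_1(v) = 0,
\]
which is~\eqref{eq:dra}. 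This would settle the general equivalence.

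For the ``in particular'' assertion I would specialise $(V, \mu, \Omega) = (A, L_A, \Phi)$, so that $\alpha_k = \partial_k$ and, by the Example following Lemma~\ref{lem:rep_poisson}, $\rho_{L_A} = \ad_A$; thus~\eqref{eq:dra} with $v = b$ becomes
\[
 \beta_2(\partial_1(a)\cdot b) - \beta_1(\partial_2(a)\cdot b) + \partial_1(a)\cdot\partial_2(b) - \partial_2(a)\cdot\partial_1(b) = 0,
\]
whose last two terms equal $[a,b]$ by~\eqref{eq:lie}. The one genuinely substantive step, and where I expect the main work to lie, is to invoke the admissibility of $\Pi$: by Corollary~\ref{cor:qadm} one has $\beta_k(x)\cdot y = x\cdot\partial_k(y) + \beta_k(x\cdot y)$, so choosing $x = \partial_1(a),\, k = 2$ and $x = \partial_2(a),\, k = 1$ lets me rewrite $\beta_2(\partial_1(a)\cdot b)$ and $\beta_1(\partial_2(a)\cdot b)$ in terms of $\beta_2(\partial_1(a))\cdot b$ and $\beta_1(\partial_2(a))\cdot b$ minus the products $\partial_1(a)\cdot\partial_2(b)$ and $\partial_2(a)\cdot\partial_1(b)$. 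Substituting back, these products cancel against $[a,b]$ and the condition telescopes to $\beta_2(\partial_1(a))\cdot b = \beta_1(\partial_2(a))\cdot b$, which is~\eqref{eq:coadjoint}. The main obstacle is keeping track of which of the two admissibility identities of Corollary~\ref{cor:qadm} to apply so that the bracketed products cancel cleanly; once that is arranged the rest is mechanical.
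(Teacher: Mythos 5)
Your proposal is correct and takes essentially the same approach as the paper's proof: both compute $\rho_{\mu^*}(a)=\mu^*(\partial_1(a))\beta_2^*-\mu^*(\partial_2(a))\beta_1^*$ from equation~\eqref{eq:def_rep_lie} and compare it with $-\rho_\mu^*(a)$ via duality (the paper pairs against $v\in V$ where you transpose operators, which is the identical computation), arriving at equation~\eqref{eq:dra}. For the particular case your use of Corollary~\ref{cor:qadm} (i.e., equation~\eqref{eq:qadm1} with $\eth_k=\beta_k$) to rewrite $\beta_2(\partial_1(a)\cdot b)$ and $\beta_1(\partial_2(a)\cdot b)$ so that the product terms cancel against $[a,b]$ is exactly the paper's one-line reduction of \eqref{eq:dra} to \eqref{eq:coadjoint}.
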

\begin{proof}
 For all $a \in A$, $v \in V$, $v^* \in V^*$, we have
 \begin{gather*}
\langle \rho_{\mu^*}(a) v^*, v \rangle = \langle \big(\mu^*(\partial_1(a)) \beta_2^* - \mu^*(\partial_2(a)) \beta_1^*\big)(v^*), v \rangle \\
\hphantom{\langle \rho_{\mu^*}(a) v^*, v \rangle}{}
= \langle v^*, \beta_2(\mu(\partial_1(a))v) - \beta_1(\mu(\partial_2(a))v)\rangle, \\
\langle -\rho^*_{\mu}(a) v^*, v \rangle = \langle v^*, -\rho_{\mu}(a) v \rangle = \langle v^*, -\mu(\partial_1(a))\alpha_2(v) + \mu(\partial_2(a))\alpha_1(v) \rangle.
 \end{gather*}
 Therefore $\rho_{\mu^*} = -\rho^*_{\mu}$ if and only if equation~\eqref{eq:dra} holds.
 For the particular case, we have
 \begin{gather*}
 \beta_2(\partial_1(a) \cdot b) - \beta_1(\partial_2(a) \cdot b) + \partial_1(a) \cdot \partial_2(b) - \partial_2(a) \cdot \partial_1(b)\\
 \qquad{} \overset{\eqref{eq:qadm1}}{=} \beta_2(\partial_1(a)) \cdot b - \beta_1(\partial_2(a)) \cdot b, \qquad \forall a,b \in A.
 \end{gather*}
 Note the induced module of $(A, [\ ,\ ], \cdot)$ with respect to $(A, L_A, \Phi)$ is $(A, \ad_A, L_A)$.
 We conclude $\rho_{L_A^*} = -\ad_A^*$ if and only if equation~\eqref{eq:coadjoint} holds.
\end{proof}

\begin{Definition}\label{def:matpoi}
 A \textit{matched pair of Poisson algebras} consists of Poisson algebras $(A, [\ ,\ ]_A$, $\cdot_A)$ and $(B, [\ ,\ ]_B,\cdot_B)$, together with linear maps $\rho_A, \mu_A\colon A \to \End(B)$ and $\rho_B, \mu_B\colon B \to \End(A)$ such that $(A \oplus B, [\ ,\ ], \cdot)$ is a Poisson algebra, where $[\ ,\ ]$ and $\cdot$ are respectively defined by
 \begin{gather*}
 	[a + b, a^\prime + b^\prime] := ([a, a^\prime]_A + \rho_B(b)a^\prime - \rho_B(b^\prime) a ) + ([b, b^\prime]_B + \rho_A(a)b^\prime - \rho_A(a^\prime)b ), \\
 	(a + b) \cdot (a^\prime + b^\prime) := (a \cdot_A a^\prime + \mu_B(b)a^\prime + \mu_B(b^\prime)a) + (b \cdot_B b^\prime + \mu_A(a) b^\prime + \mu_A(a^\prime)b ),
 \end{gather*}
 for all $a,a^\prime\in A$ and $b,b^\prime\in B$.
 The matched pair of Poisson algebras is denoted by $((A, [\ ,\ ]_A, \cdot_A)$, $(B, [\ ,\ ]_B$, $\cdot_B)$, $\rho_A, \mu_A, \rho_B, \mu_B)$ and the resulting Poisson algebra $(A \oplus B, [\ ,\ ], \cdot)$ is denoted by $\big(A \bowtie_{\rho_A, \mu_A}^{\rho_B, \mu_B} B, [\ ,\ ], \cdot\big)$ or simply $(A \bowtie B, [\ ,\ ], \cdot)$.
\end{Definition}

Note that such a notion of a matched pair of Poisson algebras is equivalent to the one given in~\cite[Theorem 1]{ni2013poisson}.
Moreover, for a matched pair of Poisson algebras $((A, [\ ,\ ]_A, \cdot_A), (B, [\ ,\ ]_B$, $\cdot_B)$, $\rho_A$, $\mu_A, \rho_B, \mu_B)$, $(A,\rho_B,\mu_B)$ is a module of
$(B, [\ ,\ ]_B, \cdot_B)$ and $(B,\rho_A,\mu_A)$ is a module of $(A, [\ ,\ ]_A, \cdot_A)$. In particular, for the case that $B=V$ which is a vector
space equipped with the zero multiplication, we have the following conclusion.

\begin{Lemma}[\cite{ni2013poisson}]
 Let $(A, [\ ,\ ], \cdot)$ be a Poisson algebra and $(V, \rho,\mu)$ be a module of $(A, [\ ,\ ],
 \cdot)$.
 Define two bilinear multiplications still denoted by $[\ ,\ ]$ and
 $\cdot$ on $A\oplus V$ respectively by
	\begin{gather*}
 [(a+u), (b+v)] := [a,b] + (\rho(a)v - \rho(b)u),\\
 (a+u) \cdot (b+v) := a \cdot b + (\mu(a)v + \mu(b)u), \qquad
 \forall a, b \in A, u,v \in V.
 \end{gather*}
 Then $(A \oplus V, [\ ,\ ], \cdot)$ is a Poisson algebra, which is denoted by $(A \ltimes_{\rho, \mu} V, [\ ,\ ], \cdot)$ and called the \textit{semi-direct product Poisson algebra} by $(A, [\ ,\ ], \cdot)$ and $(V, \rho, \mu)$.
\end{Lemma}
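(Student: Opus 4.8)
The plan is to verify directly the three defining axioms of a Poisson algebra for $(A \oplus V, [\ ,\ ], \cdot)$: that $(A \oplus V, [\ ,\ ])$ is a Lie algebra, that $(A \oplus V, \cdot)$ is a commutative algebra, and that the Leibniz compatibility $[x, y \cdot z] = [x,y] \cdot z + y \cdot [x, z]$ holds for all $x, y, z \in A \oplus V$. Since both operations are bilinear, it suffices to check each identity on homogeneous arguments, i.e., with each of the three inputs lying entirely in $A$ or entirely in $V$. The key simplifying observation is that both multiplications annihilate $V \otimes V$: for $u, v \in V$ one has $[u, v] = 0$ and $u \cdot v = 0$, since the defining formulas only produce terms of the form $\rho(a)v$ and $\mu(a)v$. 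Consequently every case in which two or more arguments lie in $V$ collapses to the trivial identity $0 = 0$, and only a short list of genuinely nontrivial cases remains.

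For the Lie algebra structure, antisymmetry of $[\ ,\ ]$ is immediate from the formula together with antisymmetry of $[\ ,\ ]$ on $A$. For the Jacobi identity, the case with all three arguments in $A$ reduces to the Jacobi identity in $(A, [\ ,\ ])$, while the cases with exactly one argument in $V$ reduce to the condition that $(V, \rho)$ is a module of the Lie algebra $(A, [\ ,\ ])$, namely $\rho([a,b]) = \rho(a)\rho(b) - \rho(b)\rho(a)$; all other cases vanish. Entirely analogously, commutativity and associativity of $\cdot$ on $A \oplus V$ follow from the corresponding properties of $(A, \cdot)$ together with the $A$-module axiom $\mu(a)\mu(b) = \mu(a \cdot b)$ for $(V, \mu)$.

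The heart of the proof is the Leibniz identity, and this is where the two genuine module-of-a-Poisson-algebra compatibility conditions enter. Writing $x = a + u$, $y = b + v$, $z = c + w$ and reducing to homogeneous inputs, the case $x, y, z \in A$ is just the Leibniz rule of the Poisson algebra $(A, [\ ,\ ], \cdot)$. When the bracket is applied to a product landing in $V$ -- the cases $[a, b \cdot w]$ and $[a, v \cdot c]$ -- matching both sides forces exactly the identity $\mu([a,b]) = \rho(a)\mu(b) - \mu(b)\rho(a)$. The complementary case, in which a $V$-argument is bracketed against a product of two $A$-elements, $[u, b \cdot c]$, produces, after using antisymmetry to rewrite $[u, bc] = -\rho(b \cdot c)u$ and $[u, b] = -\rho(b)u$, precisely the identity $\rho(b \cdot c) = \mu(b)\rho(c) + \mu(c)\rho(b)$. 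Every remaining case has at least two arguments in $V$ and hence vanishes on both sides. I expect the main obstacle to be purely bookkeeping: tracking the antisymmetry signs that appear when a $V$-element occupies the first slot of the bracket, and matching each surviving case to the correct one of the two module compatibility conditions rather than confusing them.

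Finally, I note that this result can alternatively be obtained as the special case of a matched pair of Poisson algebras (Definition~\ref{def:matpoi}) in which $B = V$ is equipped with the zero Lie bracket and the zero commutative multiplication, and the actions $\rho_B, \mu_B$ of $V$ on $A$ are taken to be zero, with $\rho_A = \rho$ and $\mu_A = \mu$; this parallels the way Proposition~\ref{pro:semidirect} is obtained from the matched pair of differential algebras. Either route reduces the claim to the same set of elementary verifications.
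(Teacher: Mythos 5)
Your proof is correct, and it supplies the verification that the paper itself omits: the paper states this lemma as a quoted result from \cite{ni2013poisson}, obtaining it as the special case of a matched pair of Poisson algebras in which $B=V$ carries the zero multiplications and acts trivially on $A$ --- exactly the alternative you sketch in your closing paragraph. Your main argument, the homogeneous case analysis, is the self-contained route, and your bookkeeping is accurate: the cases $[a, b\cdot w]$ and $[a, v\cdot c]$ both reduce to $\mu([a,b])=\rho(a)\mu(b)-\mu(b)\rho(a)$; the case $[u, b\cdot c]$, after the sign rewrites you indicate, reduces to $\rho(b\cdot c)=\mu(b)\rho(c)+\mu(c)\rho(b)$; the one-$V$-argument Jacobi cases reduce to $\rho([a,b])=\rho(a)\rho(b)-\rho(b)\rho(a)$; associativity with the $V$-entry in the first slot uses $\mu(b\cdot c)=\mu(c)\mu(b)$, which follows from the module axiom together with commutativity of $(A,\cdot)$; and every case with at least two arguments in $V$ vanishes on both sides because both operations annihilate $V\otimes V$. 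One caveat on your alternative route: in Definition~\ref{def:matpoi} a matched pair is \emph{defined} by the requirement that $A\oplus B$ be a Poisson algebra, so ``specialize the matched pair to $B=V$'' is not by itself a proof --- it presupposes the characterization of matched pairs by explicit compatibility conditions in \cite[Theorem~1]{ni2013poisson}, whose verification in this degenerate case is precisely your direct computation. You implicitly acknowledge this (``either route reduces the claim to the same set of elementary verifications''), but the direct check should be regarded as the primary argument, not an interchangeable one.
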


\begin{Proposition}\label{pro:assmat}
 Let $((A, \cdot_A, \Phi_A = \{\partial_{A, 1}, \partial_{A, 2}\}), (B, \cdot_B, \Phi_B= \{\partial_{B,1},\partial_{B,2}\}), \mu_A,\mu_B)$ be a~mat\-ched pair of commutative differential algebras.
 Let $(A, [\ ,\ ]_A, \cdot_A)$ and $(B, [\ ,\ ]_B, \cdot_B)$ be the induced Poisson algebras of $(A, \cdot_A, \Phi_A)$ and $(B, \cdot_B, \Phi_B)$ respectively.
 Then $((A, [\ ,\ ]_A, \cdot_A), (B, [\ ,\ ]_B, \cdot_B), \allowbreak \rho_{\mu_A}, \mu_A, \rho_{\mu_B}, \mu_B)$ is a matched pair of Poisson algebras,
 called the induced matched pair of Poisson algebras with respect to $((A, \cdot_A, \Phi_A), (B, \cdot_B, \Phi_B),\mu_A,\mu_B)$,
 where $(B, \rho_{\mu_A}, \mu_A)$ is the induced module of $(A, [\ ,\ ]_A, \cdot_A)$ with respect to $(B, \mu_A,\Phi_B)$ and
 $(A, \rho_{\mu_B}, \mu_B)$ is the induced module of $(B, [\ ,\ ]_B, \cdot_B)$ with respect to $(A, \mu_B, \Phi_A)$.
 Moreover, the Poisson algebra $(A \bowtie B, [\ ,\ ], \cdot)$ obtained in Definition~{\rm \ref{def:matpoi}} by the matched pair of Poisson algebras
 $((A, [\ ,\ ]_A, \cdot_A), (B, [\ ,\ ]_B, \cdot_B), \rho_{\mu_A}, \mu_A,\allowbreak \rho_{\mu_B}, \mu_B)$ is exactly
 the induced Poisson algebra of the commutative differential algebra $(A \bowtie B,\allowbreak \star, \Phi_A + \Phi_B)$ obtained in Theorem~{\rm \ref{thm:matda}} by the matched pair of commutative differential algebras $((A, \cdot_A, \Phi_A), (B, \cdot_B, \Phi_B), \mu_A, \mu_B)$.
\end{Proposition}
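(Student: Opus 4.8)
The plan is to prove the second (identification) assertion first and then read off the first assertion as an immediate consequence. By Theorem~\ref{thm:matda}, the matched pair of commutative differential algebras produces a commutative differential algebra $(A \bowtie B, \star, \Phi_A + \Phi_B)$ with the two commuting derivations $\{\partial_{A,1} + \partial_{B,1},\, \partial_{A,2} + \partial_{B,2}\}$; by the construction of the induced Poisson algebra recalled above, equipping $(A \bowtie B, \star)$ with the bracket from equation~\eqref{eq:lie} makes it a Poisson algebra. I would then show by direct expansion that this induced bracket, together with the commutative product $\star$, coincides term by term with the bracket and product prescribed in Definition~\ref{def:matpoi} for the data $(\rho_{\mu_A}, \mu_A, \rho_{\mu_B}, \mu_B)$.

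The commutative parts match with no computation: since $A$ and $B$ are commutative we have $l_A = r_A = \mu_A$ and $l_B = r_B = \mu_B$, so the product $\star$ given by equation~\eqref{eq:multsum} is literally the commutative product displayed in Definition~\ref{def:matpoi}. The substance lies in the bracket. Writing $D_i = \partial_{A,i} + \partial_{B,i}$, the induced bracket is $[u, u'] = D_1(u) \star D_2(u') - D_2(u) \star D_1(u')$, and applying $D_i$ to $u = a + b$ simply differentiates $a$ and $b$ separately, so no Leibniz rule is invoked at this stage. I would expand each $\star$-product via equation~\eqref{eq:multsum}, collect the result into its $A$- and $B$-components, and recognize the pure-$A$ terms $\partial_{A,1}(a) \cdot_A \partial_{A,2}(a') - \partial_{A,2}(a) \cdot_A \partial_{A,1}(a')$ as $[a, a']_A$, the pure-$B$ terms as $[b, b']_B$, and the mixed terms $\mu_B(\partial_{B,1}(b))\partial_{A,2}(a') - \mu_B(\partial_{B,2}(b))\partial_{A,1}(a')$ as precisely $\rho_{\mu_B}(b) a'$ in the sense of equation~\eqref{eq:def_rep_lie} applied to the module $(A, \mu_B, \Phi_A)$ over $(B, \cdot_B, \Phi_B)$, and symmetrically for $\rho_{\mu_A}$. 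Antisymmetrizing in $1 \leftrightarrow 2$ then reproduces the $A$-component $[a,a']_A + \rho_{\mu_B}(b) a' - \rho_{\mu_B}(b') a$ and the dual $B$-component exactly as in Definition~\ref{def:matpoi}.

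Once this identification is established, the first assertion follows with no further work. The triples $(B, \rho_{\mu_A}, \mu_A)$ and $(A, \rho_{\mu_B}, \mu_B)$ are genuine Poisson modules by Lemma~\ref{lem:rep_poisson}, the subalgebras $(A, [\ ,\ ]_A, \cdot_A)$ and $(B, [\ ,\ ]_B, \cdot_B)$ are Poisson by the induced-algebra construction, and the sole remaining requirement of Definition~\ref{def:matpoi}, that the prescribed structure on $A \oplus B$ be a Poisson algebra, holds because that prescribed structure \emph{is} the induced Poisson structure on $A \bowtie B$, which we have already noted is Poisson. Hence $(\rho_{\mu_A}, \mu_A, \rho_{\mu_B}, \mu_B)$ is a matched pair of Poisson algebras.

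The main obstacle is purely organizational rather than conceptual: each $\star$-product unfolds into six summands, so the bracket expands into a dozen terms that must be sorted by component and by sign, and one must correctly align the derivation indices $1, 2$ in equation~\eqref{eq:def_rep_lie} with those generated by $D_1, D_2$. The computation is entirely mechanical once equation~\eqref{eq:def_rep_lie} is rewritten in the explicit two-derivation form for the relevant modules, but the bookkeeping of indices and the antisymmetrization is where care is required.
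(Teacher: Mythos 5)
Your proposal is correct and follows essentially the same route as the paper's own proof: expand the induced bracket $\{a+b,a'+b'\} = D_1(a+b)\star D_2(a'+b') - D_2(a+b)\star D_1(a'+b')$ on $A\bowtie B$ via equation~\eqref{eq:multsum}, recognize the pure terms as $[\ ,\ ]_A$, $[\ ,\ ]_B$ and the mixed terms as $\rho_{\mu_A}$, $\rho_{\mu_B}$ via equation~\eqref{eq:def_rep_lie}, and conclude both the matched-pair property and the identification in one computation. Your index bookkeeping (e.g., the mixed $A$-component terms assembling into $\rho_{\mu_B}(b)a' - \rho_{\mu_B}(b')a$) matches the paper's expansion exactly, and your extra appeal to Lemma~\ref{lem:rep_poisson} is harmless, since Definition~\ref{def:matpoi} only requires the prescribed structure on $A\oplus B$ to be Poisson.
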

\begin{proof}
 Let $(A \bowtie B, \star, \Phi_A + \Phi_B)$ be the commutative differential algebra obtained in Theorem~\ref{thm:matda} by the matched pair of commutative differential algebras $((A, \cdot_A, \Phi_A), (B, \cdot_B, \Phi_B), \mu_A, \allowbreak \mu_B)$.
 Suppose that $(A \oplus B, \{\ ,\ \}, \star)$ is the induced Poisson algebra of $(A \bowtie B, \star, \Phi_A + \Phi_B)$.
 Therefore for all $a, a^\prime \in A$ and $b, b^\prime \in B$, we have
 \begin{gather*}
 \{a + b, a^\prime + b^\prime\} \\
 = (\partial_{A,1}(a)+\partial_{B,1}(b)) \star (\partial_{A,2}(a^\prime)+\partial_{B,2}(b^\prime)) - (\partial_{A,2}(a)+\partial_{B,2}(b)) \star (\partial_{A,1}(a^\prime)+\partial_{B,1}(b^\prime)) \\
 = (\partial_{A,1}(a) \cdot_A \partial_{A,2}(a^\prime) + \mu_B(\partial_{B,2}(b^\prime))\partial_{A,1}(a) + \mu_B(\partial_{B,1}(b))\partial_{A,2}(a^\prime)) \\
 \quad + (\partial_{B,1}(b) \cdot_B \partial_{B,2}(b^\prime) + \mu_A(\partial_{A,1}(a))\partial_{B,2}(b^\prime) + \mu_A(\partial_{A,2}(a^\prime))\partial_{B,1}(b)) \\
 \quad - (\partial_{A,2}(a) \cdot_A \partial_{A,1}(a^\prime) + \mu_B(\partial_{B,1}(b^\prime))\partial_{A,2}(a) + \mu_B(\partial_{B,2}(b))\partial_{A,1}(a^\prime)) \\
 \quad - (\partial_{B,2}(b) \cdot_B \partial_{B,1}(b^\prime) + \mu_A(\partial_{A,2}(a))\partial_{B,1}(b^\prime) + \mu_A(\partial_{A,1}(a^\prime))\partial_{B,2}(b)) \\
 = ([a, a^\prime]_A - \rho_{\mu_B}(b^\prime)a + \rho_{\mu_B}(b)a^\prime) + ([b, b^\prime]_B + \rho_{\mu_A}(a)b^\prime - \rho_{\mu_A}(a^\prime)b).
 \end{gather*}
 Hence $((A, [\ ,\ ]_A, \cdot_A), (B, [\ ,\ ]_B, \cdot_B), \rho_{\mu_A}, \mu_A, \rho_{\mu_B}, \mu_B)$ is a matched pair of Poisson algebras.
 Note that by the above proof, we have already shown that the Poisson algebra structure on $A\oplus B$ obtained from this induced matched pair of
 Poisson algebras is exactly the induced Poisson algebra of the
 commutative differential algebra $(A \bowtie B, \star, \Phi_A + \Phi_B)$.
\end{proof}

When taking $B = V$ which is a vector space equipped with the zero multiplication in Proposition~\ref{pro:assmat}, we have the following conclusion.
\begin{Corollary}\label{cor:asssemi}
 Let $(A, \cdot, \Phi = \{\partial_1, \partial_2\})$ be a commutative differential algebra and $(A, [\ ,\ ], \cdot)$ be the induced Poisson algebra of $(A, \cdot, \Phi)$.
 	Let $(V, \mu, \Omega)$ be a module of $(A, \cdot, \Phi)$.
 	Suppose that $(A \ltimes_\mu V, \cdot, \Phi + \Omega)$ is the semi-direct product (commutative) differential algebra by $(A, \cdot, \Phi)$ and $(V, \mu, \Omega)$.
 Then the semi-direct product Poisson algebra by $(A, [\ ,\ ], \cdot)$ and $(V, \rho_\mu, \mu)$,
 where $(V, \rho_\mu, \mu)$ is the induced module of $(A, [\ ,\ ], \cdot)$ with respect to $(V, \mu, \Omega)$,
 is exactly the induced Poisson algebra of $(A \ltimes_\mu V, \cdot, \Phi + \Omega)$.
\end{Corollary}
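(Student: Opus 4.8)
The plan is to specialize Proposition~\ref{pro:assmat} to the case in which $B = V$ is equipped with the zero multiplication. First I would set $(B, \cdot_B, \Phi_B) := (V, 0, \Omega)$, which is indeed a commutative differential algebra, since the zero multiplication renders every linear map a derivation, so the commuting family $\Omega$ qualifies as a set of commuting derivations. I would then take $\mu_A := \mu$ and $\mu_B := 0$. The hypothesis that $(V, \mu, \Omega)$ is a module of $(A, \cdot, \Phi)$ says precisely that $(B, \mu_A, \Phi_B)$ is a bimodule of $(A, \cdot, \Phi)$, whereas $(A, \mu_B, \Phi_A)$ is trivially a bimodule of the zero algebra $(V, 0, \Omega)$; together with the fact that $(V, \mu)$ being an $A$-module makes $((A, \cdot), (V, 0), \mu, 0)$ a matched pair of algebras, this exhibits $((A, \cdot, \Phi), (V, 0, \Omega), \mu, 0)$ as a matched pair of commutative differential algebras. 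By Theorem~\ref{thm:matda}, the resulting differential algebra $(A \bowtie V, \star, \Phi + \Omega)$ is exactly the semi-direct product $(A \ltimes_\mu V, \cdot, \Phi + \Omega)$.

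Next I would read off what the induced matched pair of Poisson algebras produced by Proposition~\ref{pro:assmat} becomes under this specialization. Since $\cdot_B = 0$, the induced Lie bracket on $V$ given by formula~\eqref{eq:lie} vanishes, so that $(V, [\ ,\ ]_B, \cdot_B) = (V, 0, 0)$; and since $\mu_B = 0$, the induced module $(A, \rho_{\mu_B}, \mu_B)$ of $(V, 0, 0)$ defined by~\eqref{eq:def_rep_lie} has $\rho_{\mu_B} = 0$. Hence the induced matched pair of Poisson algebras reduces to $((A, [\ ,\ ], \cdot), (V, 0, 0), \rho_\mu, \mu, 0, 0)$, in which $V$ carries the trivial Poisson structure and acts trivially on $A$. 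Comparing the defining formulas of Definition~\ref{def:matpoi} with those of the semi-direct product Poisson algebra, this matched pair yields precisely $(A \ltimes_{\rho_\mu, \mu} V, [\ ,\ ], \cdot)$, where $(V, \rho_\mu, \mu)$ is the induced module of $(A, [\ ,\ ], \cdot)$ with respect to $(V, \mu, \Omega)$.

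Finally, the \emph{Moreover} part of Proposition~\ref{pro:assmat} asserts that the Poisson algebra built from the induced matched pair coincides with the induced Poisson algebra of $(A \bowtie V, \star, \Phi + \Omega)$, which we have identified with $(A \ltimes_\mu V, \cdot, \Phi + \Omega)$; this is exactly the desired conclusion. The only point requiring care is the bookkeeping of the specialization, namely verifying that the vanishing of $\cdot_B$ and of $\mu_B$ forces both $[\ ,\ ]_B$ and $\rho_{\mu_B}$ to vanish, so that the general matched-pair Poisson structure of Definition~\ref{def:matpoi} collapses to the semi-direct product structure. Once this is checked, no further computation is needed beyond the equalities already established in Proposition~\ref{pro:assmat}.
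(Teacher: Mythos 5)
Your proposal is correct and is essentially the paper's own argument: the paper obtains Corollary~\ref{cor:asssemi} precisely by specializing Proposition~\ref{pro:assmat} to $B=V$ with the zero multiplication, under which $[\ ,\ ]_B$ and $\rho_{\mu_B}$ vanish and the matched-pair Poisson structure of Definition~\ref{def:matpoi} collapses to the semi-direct product one. Your bookkeeping of that specialization, including identifying $(A \bowtie V, \star, \Phi + \Omega)$ with $(A \ltimes_\mu V, \cdot, \Phi + \Omega)$ via Theorem~\ref{thm:matda}, is exactly the intended reading.
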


\begin{Corollary}\label{cor:adjassmat}
 Let $(A, \cdot, \Phi = \{\partial_1, \partial_2\})$ be a commutative differential algebra.
 Suppose that there is a commutative differential algebra structure $(A^*, \circ, \Psi^*= \{\eth_1^*,\eth_2^*\})$ on $A^*$.
 Let $(A, [\ ,\ ]_A, \cdot)$ and $(A^*, [\ ,\ ]_{A^*}, \circ)$ be the induced Poisson algebras of $(A, \cdot, \Phi)$ and $(A^*, \circ, \Psi^*)$ respectively.
 Suppose that $((A, \cdot, \Phi), (A^*, \circ, \Psi^*)$, $L_{A}^*$, $L_{A^*}^*)$ is a matched pair of commutative differential algebras.
 Then $((A, [\ ,\ ]_A, \cdot), (A^*, [\ ,\ ]_{A^*}, \circ), -\ad_A^*, L_A^*$, $-\ad_{A^*}^*$, $L_{A^*}^*)$ is a matched pair of Poisson algebras
 such that it is the induced matched pair of Poisson algebras with respect to
 $((A, \cdot, \Phi)$, $(A^*, \circ, \Psi^*)$,$L_{A}^*,L_{A^*}^*)$
 if and only if the following equations hold:
 \begin{gather}
 \partial_2^*(\eth_1^*(a^*)) \circ b^* = \partial_1^*(\eth_2^*(a^*)) \circ b^* , \qquad
 \forall a^*, b^* \in A^*, \label{eq:bvip2}\\
\eth_2(\partial_1(a)) \cdot b = \eth_1(\partial_2(a)) \cdot b , \qquad
 \forall a, b \in A. \label{eq:vip1}
 \end{gather}
\end{Corollary}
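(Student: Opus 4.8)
The plan is to reduce everything to Proposition~\ref{pro:assmat} together with the particular case of Proposition~\ref{pro:assdualrep}. First I would invoke Proposition~\ref{pro:assmat} with $B = A^*$: since $((A, \cdot, \Phi), (A^*, \circ, \Psi^*), L_A^*, L_{A^*}^*)$ is a matched pair of commutative differential algebras, the induced matched pair of Poisson algebras with respect to it is automatically
$$
((A, [\ ,\ ]_A, \cdot), (A^*, [\ ,\ ]_{A^*}, \circ), \rho_{L_A^*}, L_A^*, \rho_{L_{A^*}^*}, L_{A^*}^*),
$$
where $(A^*, \rho_{L_A^*}, L_A^*)$ is the induced module of $(A, [\ ,\ ]_A, \cdot)$ with respect to $(A^*, L_A^*, \Psi^*)$ and $(A, \rho_{L_{A^*}^*}, L_{A^*}^*)$ is the induced module of $(A^*, [\ ,\ ]_{A^*}, \circ)$ with respect to $(A, L_{A^*}^*, \Phi)$. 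This quadruple of maps already yields a matched pair of Poisson algebras, so the content of the statement is precisely that this induced matched pair coincides with the prescribed one, that is, that $\rho_{L_A^*} = -\ad_A^*$ and $\rho_{L_{A^*}^*} = -\ad_{A^*}^*$ simultaneously.

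Next I would record that the matched pair hypothesis supplies exactly the admissibility needed to run Proposition~\ref{pro:assdualrep}. Condition~(2) in the definition of a matched pair of differential algebras says that $(A^*, L_A^*, L_A^*, \Psi^*)$ is a bimodule of $(A, \cdot, \Phi)$, which by Definition~\ref{def:admissible} means $\Psi$ is admissible to $(A, \cdot, \Phi)$; dually, condition~(1) says $(A, L_{A^*}^*, L_{A^*}^*, \Phi)$ is a bimodule of $(A^*, \circ, \Psi^*)$, i.e., $\Phi^*$ is admissible to $(A^*, \circ, \Psi^*)$. Hence the hypotheses of the particular case of Proposition~\ref{pro:assdualrep} are met on each side.

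Then I would apply the particular case of Proposition~\ref{pro:assdualrep} twice. Applying it to $(A, \cdot, \Phi)$ with $(V, \mu, \Omega) = (A, L_A, \Phi)$ and $\Pi = \Psi$ (so that $\Pi^* = \Psi^*$ matches the differential part of the module $(A^*, L_A^*, \Psi^*)$ appearing above), the induced module $(A^*, \rho_{L_A^*}, L_A^*)$ equals $(A^*, -\ad_A^*, L_A^*)$ if and only if equation~\eqref{eq:coadjoint} holds with $\beta_k = \eth_k$, which is exactly equation~\eqref{eq:vip1}. Symmetrically, applying it to $(A^*, \circ, \Psi^*)$ with $(V, \mu, \Omega) = (A^*, L_{A^*}, \Psi^*)$ and $\Pi = \Phi^*$ (so that $\Pi^* = \Phi$ matches the module $(A, L_{A^*}^*, \Phi)$ under $A^{**} \cong A$), the induced module $(A, \rho_{L_{A^*}^*}, L_{A^*}^*)$ equals $(A, -\ad_{A^*}^*, L_{A^*}^*)$ if and only if the corresponding instance of equation~\eqref{eq:coadjoint} reads $\partial_2^*(\eth_1^*(a^*)) \circ b^* = \partial_1^*(\eth_2^*(a^*)) \circ b^*$, which is equation~\eqref{eq:bvip2}. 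Combining the two equivalences gives the claim.

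The main obstacle is bookkeeping rather than any genuinely new computation: one must keep straight which set of derivations plays the role of $\Phi$ and which the role of $\Pi$ in each invocation, and verify that the canonical identification $A^{**} \cong A$ carries $\Phi^* = \{\partial_1^*, \partial_2^*\}$ back to $\Phi$, so that the dual application of Proposition~\ref{pro:assdualrep} really produces the module $(A, L_{A^*}^*, \Phi)$ occurring in the induced matched pair. Once the roles are correctly aligned on both sides, the two instances of equation~\eqref{eq:coadjoint} transcribe directly into \eqref{eq:vip1} and \eqref{eq:bvip2}, and no further checking of the Poisson matched pair axioms is required, since Proposition~\ref{pro:assmat} already guarantees them.
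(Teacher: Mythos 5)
Your proposal is correct and coincides with the paper's own proof, which likewise derives the corollary by combining Proposition~\ref{pro:assmat} with two applications of the particular case of Proposition~\ref{pro:assdualrep} (once with $\Pi=\Psi$ on $(A,\cdot,\Phi)$ giving equation~\eqref{eq:vip1}, once with $\Pi=\Phi^*$ on $(A^*,\circ,\Psi^*)$ giving equation~\eqref{eq:bvip2}). The bookkeeping you spell out explicitly --- the admissibility on each side supplied by the matched pair hypothesis and the identification $A^{**}\cong A$ --- is exactly what the paper's one-line proof leaves implicit.
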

\begin{proof}
 It follows from Propositions~\ref{pro:assmat} and \ref{pro:assdualrep}.
\end{proof}

\begin{Remark}\label{rmk:cc}
 Let $\Delta\colon A \to A \otimes A$ denote the linear dual of the multiplication $\circ\colon A^* \otimes A^* \to A^*$, we rewrite equation~\eqref{eq:bvip2} in terms of the comultiplication as
 \begin{equation}
 	(\eth_2\partial_1 \otimes \id)\Delta = (\eth_1\partial_2 \otimes \id)\Delta. \label{eq:vip2}
 \end{equation}
\end{Remark}

Given a matched pair of commutative differential algebras $((A, \cdot, \Phi)$, $(A^*, \circ, \Psi^*)$, $L_{A}^*,L_{A^*}^*)$,
there is a more general condition such that $((A, [\ ,\ ]_A, \cdot)$, $(A^*, [\ ,\ ]_{A^*}, \circ)$, $-\ad_A^*, L_A^*, -\ad_{A^*}^*, L_{A^*}^*)$ is a matched pair of Poisson algebras,
\textit{without} the requirement that $((A, [\ ,\ ]_A, \cdot)$, $(A^*, [\ ,\ ]_{A^*}, \circ)$, $-\ad_A^*$, $L_A^*$, $-\ad_{A^*}^*, L_{A^*}^*)$ is the induced matched pair of Poisson algebras with respect to $((A, \cdot, \Phi)$, $(A^*, \circ, \Psi^*)$,$L_{A}^*,L_{A^*}^*)$.
Explicitly,
\begin{Proposition}\label{pro:gmp}
 With the same assumptions in Corollary~{\rm \ref{cor:adjassmat}}.
 Let $\Delta\colon A \to A \otimes A$ denote the linear dual of the multiplication $\circ\colon A^* \otimes A^* \to A^*$.
 Then $((A, [\ ,\ ]_A, \cdot)$, $(A^*, [\ ,\ ]_{A^*}, \circ)$, $-\ad_A^*, L_A^*, -\ad_{A^*}^*, L_{A^*}^*)$ is a matched pair of Poisson algebras
 if and only if the following equations hold:
 \begin{align}
 &(\id \otimes L_A(b) - L_A(b) \otimes \id )(S \otimes \id) \Delta(a) + ( L_A(S(a)) \otimes \id) \Delta(b)=0, \label{eq:dpb1} \\
 &(\ad_A (a) \otimes S - S \otimes \ad_A (a) )\Delta(b) - (\ad_A (b) \otimes S - S \otimes \ad_A (b) )\Delta(a) \nonumber \\
 &\quad - (L_A(S(a)) \otimes S - S \otimes L_A(S(a)))\Delta(b) + (L_A(S(b)) \otimes S - S \otimes L_A(S(b)))\Delta(a)\nonumber\\
 &\quad + (L_A(S(a) \otimes \id + \id \otimes L_A(S(a)))(\eth_1 \otimes \eth_2 - \eth_2 \otimes \eth_1) )\Delta(b) \nonumber \\
 &\quad - (L_A(S(b) \otimes \id + \id \otimes L_A(S(b)))(\eth_1 \otimes \eth_2 - \eth_2 \otimes \eth_1) )\Delta(a)=0, \label{eq:dpb2}
 \end{align}
 for all $a,b \in A$, where $S:= \eth_2 \partial_1 - \eth_1 \partial_2$.
\end{Proposition}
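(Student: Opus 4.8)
The plan is to unwind the definition of a matched pair of Poisson algebras and to peel off everything that the hypotheses already guarantee, so that only the two equations \eqref{eq:dpb1}--\eqref{eq:dpb2} remain to be checked. First I would note that by Theorem~\ref{thm:asimat} the given matched pair of commutative differential algebras $((A,\cdot,\Phi),(A^*,\circ,\Psi^*),L_A^*,L_{A^*}^*)$ is the same datum as a commutative and cocommutative differential ASI bialgebra $(A,\cdot,\Delta,\Phi,\Psi)$, with $\Delta$ dual to $\circ$; hence all four admissibility relations \eqref{eq:qadm1}, \eqref{eq:qadm2}, \eqref{eq:psadm1}, \eqref{eq:psadm2} are at my disposal to trade $\partial_k$ and $\eth_k$ against $\Delta$. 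By Definition~\ref{def:matpoi} the claim is that $(A\oplus A^*,[\ ,\ ],\cdot)$ is a Poisson algebra. Its product $\cdot$ is exactly the (commutative) associative double construction attached to the matched pair of commutative algebras, so the associativity and commutativity of $\cdot$ are free; and the coadjoint data $(A^*,-\ad_A^*,L_A^*)$, $(A,-\ad_{A^*}^*,L_{A^*}^*)$ are automatically modules of the Poisson algebras $(A,[\ ,\ ]_A,\cdot)$, $(A^*,[\ ,\ ]_{A^*},\circ)$, being the duals of the adjoint modules $(A,\ad_A,L_A)$, $(A^*,\ad_{A^*},L_{A^*})$ (\cite[Proposition~2.5]{liu2020noncommutative}). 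Thus the matched-pair axioms reduce to two families: that $((A,[\ ,\ ]_A),(A^*,[\ ,\ ]_{A^*}),-\ad_A^*,-\ad_{A^*}^*)$ be a matched pair of Lie algebras, and the genuinely mixed Leibniz compatibilities linking the bracket and the product across $A$ and $A^*$.

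The second step compares the coadjoint actions with the \emph{induced} ones. By Proposition~\ref{pro:assmat} the induced matched pair of Poisson algebras --- with the same associative actions $L_A^*,L_{A^*}^*$ but the induced Lie actions $\rho_{\mu_A}$ (of $A$ on $A^*$) and $\rho_{\mu_B}$ (of $A^*$ on $A$) --- is always a matched pair of Poisson algebras, hence satisfies every axiom identically. Proposition~\ref{pro:assdualrep} tells me $\rho_{\mu_A}=-\ad_A^*$ exactly when \eqref{eq:coadjoint} holds, and quantifying its failure by a direct computation with \eqref{eq:qadm1}--\eqref{eq:qadm2} should give
\begin{equation*}
 -\ad_A^*(a)=\rho_{\mu_A}(a)-L_A^*(S(a)),\qquad \forall a\in A,
\end{equation*}
and dually $-\ad_{A^*}^*$ differs from $\rho_{\mu_B}$ by a correction $L_{A^*}^*$ built from $S^*$. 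So the coadjoint data equals the induced data plus explicit $S$-corrections of associative type.

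The final step substitutes these corrections into the two surviving families. Since the induced actions verify each axiom identically, every term that survives is linear or bilinear in the $S$-corrections, and I would rewrite each such term through the pairing and \eqref{eq:psadm1}--\eqref{eq:psadm2} as a tensor identity in $\Delta$. I expect the mixed-Leibniz family to collapse to the single first-order identity \eqref{eq:dpb1}, in which the summand $(L_A(S(a))\otimes\id)\Delta(b)$ is visibly the trace of the $-L_A^*(S(\cdot))$ correction, while the Lie matched pair family collapses to \eqref{eq:dpb2}: there $[\ ,\ ]_{A^*}$ dualizes to the Lie cobracket $(\eth_1\otimes\eth_2-\eth_2\otimes\eth_1)\Delta$ --- precisely the block present in \eqref{eq:dpb2} --- and the $S^*$-correction on the $A^*$-side recombines with the $\ad_A(a)\otimes S-S\otimes\ad_A(a)$ and $L_A(S(a))\otimes S-S\otimes L_A(S(a))$ terms. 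The main obstacle is exactly this bookkeeping: marshalling the dualization so the many scalar-level terms regroup into the compact tensor forms \eqref{eq:dpb1}--\eqref{eq:dpb2}, and in particular confirming that the $A^*$-side correction fuses cleanly with the cobracket block of \eqref{eq:dpb2} without leaving spurious constraints. As an internal check, under \eqref{eq:vip1} and \eqref{eq:vip2} --- which assert $L_A(S(a))=0$ and $(S\otimes\id)\Delta=0$ --- both \eqref{eq:dpb1} and \eqref{eq:dpb2} are forced to hold, recovering the induced matched pair of Corollary~\ref{cor:adjassmat}.
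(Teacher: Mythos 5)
Your proposal is correct and follows essentially the same route as the paper's proof: reduce via \cite[Theorem~1]{ni2013poisson} to the six matched-pair identities, express the coadjoint actions as the induced actions plus corrections, namely $-\ad_A^*(a)=\rho_{L_A^*}(a)-L_A^*(S(a))$ and $-\ad_{A^*}^*(a^*)=\rho_{L_{A^*}^*}(a^*)+L_{A^*}^*(S^*(a^*))$ as in the proof of Proposition~\ref{pro:assdualrep}, cancel the induced parts, and dualize the surviving $S$-terms into \eqref{eq:dpb1}--\eqref{eq:dpb2}, with the four mixed Leibniz identities each equivalent to \eqref{eq:dpb1} and the two Lie identities each equivalent to \eqref{eq:dpb2}. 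The only cosmetic difference is that you justify the vanishing of the induced parts by explicitly invoking Proposition~\ref{pro:assmat}, which the paper leaves implicit in its direct computation, and your consistency check that \eqref{eq:vip1}--\eqref{eq:vip2} (i.e., $L_A(S(a))=0$ and $(S\otimes\id)\Delta=0$) force \eqref{eq:dpb1}--\eqref{eq:dpb2} matches the paper's subsequent remark.
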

\begin{proof}
	Note that $(A^*, -\ad_A^*, L_A^*)$ is a module of $(A, [\ ,\ ]_A, \cdot)$ and $(A, -\ad_{A^*}^*, L_{A^*}^*)$ is a module of $(A^*, [\ ,\ ]_{A^*}, \circ)$. By \cite[Theorem 1]{ni2013poisson}, it is sufficient to show equations~\eqref{eq:dpb1}--\eqref{eq:dpb2} hold if and only if the following equations hold:
 \begin{gather}
 -\ad_A^*(a)[a^*, b^*]_{A^*} + [\ad_A^*(a)a^*, b^*]_{A^*} + [a^*, \ad_A^*(a)b^*]_{A^*} \nonumber\\
 \qquad{}= - \ad_A^*(\ad_{A^*}^*(a^*)a) b^* + \ad_A^*(\ad_{A^*}^*(b^*)a) a^*, \label{eq:matlie1}\\
 -\ad_{A^*}^*(a^*)[a, b]_A + [\ad_{A^*}^*(a^*)a, b]_A + [a, \ad_{A^*}^*(a^*)b]_A \nonumber\\
\qquad{} = - \ad_{A^*}^*(\ad_A^*(a)a^*) b + \ad_{A^*}^*(\ad_A^*(b)a^*) a, \label{eq:matlie2} \\
 -\ad_{A^*}^*(a^*)(a \cdot b) - (-\ad_{A^*}^*(a^*) a) \cdot b - a \cdot (-\ad_{A^*}^*(a^*) b)\nonumber \\
 \qquad{} = - L_{A^*}^*(-\ad_A^*(a)a^*)b + L_{A^*}^*(\ad_A^*(b)a^*) a, \label{eq:matpoi1} \\
 L_{A^*}^*(-\ad_A^*(a)a^*)b + (\ad_{A^*}^*(a^*)a)\cdot b + L_{A^*}^*(a^*)([a, b]_A) \nonumber\\
\qquad{} = [a, L_{A^*}^*(a^*)b]_A + \ad_{A^*}^*(L_A^*(b)a^*)a, \label{eq:matpoi2} \\
 -\ad_A^*(a)(a^* \circ b^*) - (-\ad_A^*(a)a^*) \circ b^* - a^* \circ (-\ad_A^*(a)b^*) \nonumber\\
\qquad{} = - L_A^*(-\ad_{A^*}^*(a^*)a)b^* + L_A^*(\ad_{A^*}^*(b^*)a)a^*, \label{eq:matpoi3}\\
 L_A^*(-\ad_{A^*}^*(a^*)a)b^* + (\ad_A^*(a)a^*) \circ b^* + L_A^*(a)([a^*,b^*]_{A^*}) \nonumber \\
\qquad{} = [a^*, L_A^*(a)b^*]_{A^*} + \ad_A^*(L_{A^*}^*(b^*)a)a^*, \label{eq:matpoi4}
 \end{gather}
 for all $a, b \in A$ and $a^*, b^* \in A^*$. Let $(A^*,\rho_{L_A^*}, L_A^*)$ be the induced module of $(A, [\ ,\ ]_A, \cdot)$ with respect to $(A^*,L_A^*, \Psi^*)$ and $(A,\rho_{L_{A^*}^*}, L_{A^*}^*)$ be the induced module of $(A^*, [\ ,\ ]_{A^*}, \circ)$ with respect to $(A, L_{A^*}^*, \Phi)$.
 By the proof of Proposition~\ref{pro:assdualrep}, we have
 \begin{gather*}
 -\ad^*_A(a) = \rho_{L^*_A}(a) - L^*_A(S(a)),\qquad
 \forall a \in A; \\
 -\ad^*_{A^*}(a^*) = \rho_{L^*_{A^*}}(a^*) + L^*_{A^*}(S^*(a^*)), \qquad
 \forall a^* \in A^*.
 \end{gather*}
	Note that here we have $\partial_2^* \eth_1^* - \partial_1^* \eth_2^* = - (\eth_2 \partial_1 - \eth_1 \partial_2)^* = -S^*$.
 Let $a, b \in A$ and $a^*, b^* \in A^*$.
 Then we have
 \begin{gather*}
 -(\ad^*_{A^*}(a^*) a) \cdot b - a \cdot (\ad^*_{A^*}(a^*) b) + L^*_{A^*}(\ad^*_{A}(a)a^*)b + L^*_{A^*}(\ad^*_{A}(b)a^*) a + \ad^*_{A^*}(a^*)(a \cdot b) \\
 = (\rho_{L^*_{A^*}}(a^*) a) \cdot b + a \cdot (\rho_{L^*_{A^*}}(a^*) b) - L^*_{A^*}(\rho_{L^*_A}(a)a^*)b - L^*_{A^*}(\rho_{L^*_A}(b)a^*) a - \rho_{L^*_{A^*}}(a^*)(a \cdot b) \\
 \quad {}+(L^*_{A^*}(S^*(a^*)) a) \cdot b + a \cdot (L^*_{A^*}(S^*(a^*)) b) + L^*_{A^*}(L^*_A(S(a))a^*)b \\
 \quad {}+ L^*_{A^*}(L^*_A(S(b))a^*) a - L^*_{A^*}(S^*(a^*))(a \cdot b) \\
 = (L^*_{A^*}(S^*(a^*)) a) \cdot b + a \cdot (L^*_{A^*}(S^*(a^*)) b) + L^*_{A^*}(L^*_A(S(a))a^*)b \\
 \quad {}+ L^*_{A^*}(L^*_A(S(b))a^*) a - L^*_{A^*}(S^*(a^*))(a \cdot b) \\
 = (L^*_{A^*}(S^*(a^*)) a) \cdot b + L^*_{A^*}(L^*_A(S(a))a^*)b + L^*_{A^*}(L^*_A(S(b))a^*) a - L^*_{A^*}(L^*_A(b)S^*(a^*))a.
 \end{gather*}
 Hence, equation~\eqref{eq:matpoi1} holds if and only if the following equation holds:
 \begin{equation*}
 (L^*_{A^*}(S^*(a^*)) a) \cdot b + L^*_{A^*}(L^*_A(S(a))a^*)b + L^*_{A^*}(L^*_A(S(b))a^*) a - L^*_{A^*}(L^*_A(b)S^*(a^*))a = 0.
 \end{equation*}
 Rewrite the above equation in terms of the comultiplication as
 \begin{equation*}
 (S \otimes L_A(b) + L_A(S(b)) \otimes \id - S(L_A(b)) \otimes \id ) \Delta(a) + ( L_A(S(a)) \otimes \id) \Delta(b) = 0.
 \end{equation*}
 Note that $S(a \cdot b) = a \cdot S(b) + S(a) \cdot b$.
 Therefore equation~\eqref{eq:matpoi1} holds if and only if equation~\eqref{eq:dpb1} holds.
 Similarly we show that any equation of equations~\eqref{eq:matpoi2}--\eqref{eq:matpoi4} holds if and only if equation~\eqref{eq:dpb1} holds,
 and any equation of equations~\eqref{eq:matlie1}--\eqref{eq:matlie2} holds if and only if equation~\eqref{eq:dpb2} holds.
\end{proof}

\begin{Remark}
 If equations~\eqref{eq:vip1}--\eqref{eq:vip2} hold,
 then equations~\eqref{eq:dpb1}--\eqref{eq:dpb2} hold naturally, but the converse is not true.
 A counterexample could be given when $\Delta = 0$ with $S(a) \cdot b \neq 0$.
 That is, Corollary~\ref{cor:adjassmat} is a particular case of Proposition~\ref{pro:gmp}.
 Furthermore, note that in the case of Proposition~\ref{pro:gmp},
 the Poisson algebra structure on $A \oplus A^*$ obtained from the matched pair $((A, [\ ,\ ]_A, \cdot)$, $(A^*, [\ ,\ ]_{A^*}, \circ)$, $-\ad_A^*, L_A^*, -\ad_{A^*}^*, L_{A^*}^*)$ might not be
 the induced Poisson algebra of the commutative differential algebra on $A \oplus A^*$ obtained from the matched pair $((A, \cdot, \Phi)$, $(A^*, \circ, \Psi^*)$, $L_{A}^*$, $L_{A^*}^*)$.
\end{Remark}

\subsection[Manin triples of Poisson algebras via double constructions of commutative differential Frobenius algebras]{Manin triples of Poisson algebras via double constructions\\ of commutative differential Frobenius algebras}

Recall that a bilinear form $\mathfrak B$ on a Poisson algebra $(A, [\ ,\ ], \cdot)$ is called \textit{invariant} if
\begin{equation*}
	\mathfrak{B}([a, b], c) = \mathfrak{B}(a, [b, c]), \qquad \mathfrak{B}(a \cdot b, c) = \mathfrak{B}(a, b \cdot c), \qquad \forall a, b, c \in A.
\end{equation*}

\begin{Definition}[\cite{ni2013poisson}]
 A \textit{Manin triple of Poisson algebras} is a triple $((A, [\ ,\ ], \cdot, \mathfrak{B})$, $A^{+},
 A^{-})$,
 where $(A, [\ ,\ ], \cdot)$ is a Poisson algebra and $\mathfrak{B}$ is a nondegenerate symmetric invariant bilinear form on $(A, [\ ,\ ], \cdot)$ such that the following conditions are satisfied:
 \begin{enumerate}\itemsep=0pt
 \item[(1)] $A^{+}$ and $A^{-}$ are Poisson subalgebras of $A$;
 \item[(2)] $A = A^{+} \oplus A^{-}$ as vector spaces;
 \item[(3)]$A^{+}$ and $A^{-}$ are isotropic with respect to $\mathfrak{B}$.
 \end{enumerate}
\end{Definition}

\begin{Remark}It is obvious that a Manin triple of Poisson algebras is simultaneously a Manin triple of Lie algebras \cite{chari1995guide} and a double construction of Frobenius algebra.
\end{Remark}

\begin{Proposition}\label{pro:assFro}
 Let $(A, \cdot, \Phi = \{\partial_1, \partial_2\})$ be a commutative differential algebra.
 Suppose that there is a commutative differential algebra structure $(A^*, \circ, \Psi^*= \{\eth_1^*,\eth_2^*\})$ on $A^*$.
 Let $(A, [\ ,\ ]_A, \cdot)$ and $(A^*, [\ ,\ ]_{A^*}, \circ)$ be the induced Poisson algebras of $(A, \cdot, \Phi)$ and $(A^*, \circ, \Psi^*)$ respectively.
 Let $\Delta\colon A \to A \otimes A$ denote the linear dual of the multiplication $\circ\colon A^* \otimes A^* \to A^*$.
 Suppose that $(A \bowtie A^*, \star, \Phi + \Psi^*, \mathfrak{B}_d)$ is a double construction of commutative differential Frobenius algebra associated to $(A, \cdot, \Phi)$ and $(A^*, \circ, \Psi^*)$.
 Then there is a Manin triple $((A \oplus A^*,[\ ,\ ], \star, \mathfrak{B}_d), A, A^*)$ of Poisson algebras
 such that the Poisson algebra structure on $A \oplus A^*$ is the induced Poisson algebra of $(A \bowtie A^*, \star, \Phi + \Psi^*)$
 if and only equations~\eqref{eq:vip1}--\eqref{eq:vip2} hold.
 \end{Proposition}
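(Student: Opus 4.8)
The plan is to reduce everything to the matched-pair dictionaries already established and to Corollary~\ref{cor:adjassmat}.

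First I would translate the hypothesis. Since $(A \bowtie A^*, \star, \Phi + \Psi^*, \mathfrak{B}_d)$ is a double construction of commutative differential Frobenius algebra, Theorem~\ref{thm:doumat} yields that $\big((A, \cdot, \Phi), (A^*, \circ, \Psi^*), R_A^*, L_A^*, R_{A^*}^*, L_{A^*}^*\big)$ is a matched pair of differential algebras; as both $(A, \cdot)$ and $(A^*, \circ)$ are commutative we have $R_A^* = L_A^*$ and $R_{A^*}^* = L_{A^*}^*$, so this is a matched pair of commutative differential algebras $\big((A, \cdot, \Phi), (A^*, \circ, \Psi^*), L_A^*, L_{A^*}^*\big)$. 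Applying Proposition~\ref{pro:assmat} produces the induced matched pair of Poisson algebras $\big((A, [\ ,\ ]_A, \cdot), (A^*, [\ ,\ ]_{A^*}, \circ), \rho_{L_A^*}, L_A^*, \rho_{L_{A^*}^*}, L_{A^*}^*\big)$, whose associated Poisson algebra on $A \oplus A^*$ is exactly the induced Poisson algebra of $(A \bowtie A^*, \star, \Phi + \Psi^*)$. Thus the Poisson structure named in the statement is fixed, and the remaining question is whether $(A \oplus A^*, [\ ,\ ], \star, \mathfrak{B}_d)$ with the splitting $A \oplus A^*$ is a Manin triple.

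Next I would dispose of the automatic Manin-triple axioms. By its definition~\eqref{eq:bilform} the form $\mathfrak{B}_d$ is nondegenerate and symmetric, and its invariance with respect to the commutative product $\star$ is precisely the Frobenius condition assumed. The summands $A$ and $A^*$ are isotropic, since $\mathfrak{B}_d$ pairs $A$ against $A^*$ and vanishes on $A \times A$ and $A^* \times A^*$; they are Poisson subalgebras because the induced bracket restricts to $[\ ,\ ]_A$ on $A$ and to $[\ ,\ ]_{A^*}$ on $A^*$ (the derivations $\Phi + \Psi^*$ restrict to $\Phi$ and $\Psi^*$ on the two summands). Hence the only nontrivial requirement left is the invariance of $\mathfrak{B}_d$ with respect to the Lie bracket $[\ ,\ ]$.

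Then I would invoke the Manin triple/matched pair correspondence for Poisson algebras of~\cite{ni2013poisson}. Under the nondegenerate pairing $\mathfrak{B}_d$, bracket-invariance forces the Lie-algebra part of $(A \oplus A^*, [\ ,\ ])$, together with the isotropic Lie subalgebras $A$ and $A^*$, to be a Manin triple of Lie algebras, equivalently the cross actions of the matched pair to be the coadjoint ones $-\ad_A^*$ and $-\ad_{A^*}^*$. Combined with the commutative part, already equal to $L_A^*$ and $L_{A^*}^*$, this says that the matched pair of Poisson algebras underlying the Manin triple must be $\big((A, [\ ,\ ]_A, \cdot), (A^*, [\ ,\ ]_{A^*}, \circ), -\ad_A^*, L_A^*, -\ad_{A^*}^*, L_{A^*}^*\big)$. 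Therefore a Manin triple with the induced Poisson structure exists if and only if the induced matched pair obtained above coincides with this coadjoint matched pair, i.e.\ $\rho_{L_A^*} = -\ad_A^*$ and $\rho_{L_{A^*}^*} = -\ad_{A^*}^*$; by Corollary~\ref{cor:adjassmat} (equivalently, the two applications of Proposition~\ref{pro:assdualrep} to $A$ and to $A^*$) this is exactly the condition that equations~\eqref{eq:vip1} and~\eqref{eq:vip2} hold.

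The step I expect to be the main obstacle is the third one: pinning down that bracket-invariance of $\mathfrak{B}_d$ is equivalent to the coadjoint form of the Lie module actions. This is the Poisson-algebra version of the classical Manin-triple/matched-pair dictionary, and one must check carefully---using nondegeneracy of $\mathfrak{B}_d$ and the isotropy of $A$ and $A^*$---that invariance of the form couples the action of $A$ on $A^*$ and of $A^*$ on $A$ to the coadjoint representations with the correct signs, the associative actions being already forced to be $L_A^*$ and $L_{A^*}^*$ by the Frobenius hypothesis.
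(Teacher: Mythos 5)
Your proposal is correct, but it proves the proposition by a genuinely different route than the paper. The paper's proof is a direct computation: it expands $\mathfrak{B}_d(a+a^*,[b+b^*,c+c^*])$ and $\mathfrak{B}_d([a+a^*,b+b^*],c+c^*)$ for the induced bracket, extracts six component identities (equations~\eqref{eq:pf151}--\eqref{eq:pf156}), observes that four of them are redundant so that invariance reduces to \eqref{eq:pf151} and \eqref{eq:pf154}, and then uses the admissibility relations from Lemma~\ref{lem:douadm} and Corollary~\ref{cor:qadm} to convert these into \eqref{eq:bvip2} and \eqref{eq:vip1}, i.e.\ into \eqref{eq:vip1}--\eqref{eq:vip2} via Remark~\ref{rmk:cc}. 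You instead route everything through the matched-pair machinery: Theorem~\ref{thm:doumat} together with Proposition~\ref{pro:assmat} identifies the induced Poisson structure on $A\oplus A^*$ as the one arising from the induced matched pair with actions $\rho_{L_A^*}$ and $\rho_{L_{A^*}^*}$; all Manin-triple axioms except bracket-invariance of $\mathfrak{B}_d$ are automatic (isotropy and symmetry by \eqref{eq:bilform}, invariance for $\star$ by the Frobenius hypothesis, the subalgebra property because $\Phi+\Psi^*$ restricts to the summands); and bracket-invariance, given the isotropic splitting into Lie subalgebras, is equivalent to the mixed actions being $-\ad_A^*$ and $-\ad_{A^*}^*$ --- the step you rightly flag, and it does check out: for instance $\langle [a,b]_A, c^*\rangle=\mathfrak{B}_d(a,[b,c^*])=\langle a,\rho_A(b)c^*\rangle$ forces $\rho_A(b)=-\ad_A^*(b)$, and conversely the coadjoint form makes all eight component identities of invariance hold. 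Proposition~\ref{pro:assdualrep} (its particular case, applied once on $A$ with $\Pi=\Psi$ and once on $A^*$ with the admissible set $\Phi^*$, both supplied by Lemma~\ref{lem:douadm}) then converts $\rho_{L_A^*}=-\ad_A^*$ and $\rho_{L_{A^*}^*}=-\ad_{A^*}^*$ exactly into \eqref{eq:vip1} and \eqref{eq:bvip2}. What each approach buys: the paper's computation is self-contained and records explicitly which invariance identities are redundant; yours is shorter and conceptual, exhibiting Proposition~\ref{pro:assFro} as the Manin-triple avatar of Corollary~\ref{cor:adjassmat} --- a relationship the paper itself only remarks on after Theorem~\ref{thm:poibieq} --- at the cost that the dictionary step you flag is, in miniature, the same invariance computation restricted to the Lie part.
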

\begin{proof} Let $(A \oplus A^*, [\ ,\ ], \star)$ be the induced Poisson algebra of $(A \bowtie A^*, \star, \Phi + \Psi^*)$.
 It is sufficient to prove that
 $\mathfrak{B}_d$ is invariant on $(A \oplus A^*, [\ ,\ ], \star)$
 if and only if equations~\eqref{eq:vip1}--\eqref{eq:vip2} hold.
 Let $a, b, c \in A$, $a^*, b^*, c^* \in A^*$.
 Then we have
 \begin{gather*}
 \mathfrak{B}_d(a + a^*, [b + b^*, c + c^*]) \\
 = \mathfrak{B}_d(a + a^*, (\partial_1(b)+\eth_1^{*}(b^*))\star(\partial_2(c)+\eth_2^{*}(c^*)) - (\partial_2(b)+\eth_2^{*}(b^*))\star(\partial_1(c)+\eth_1^{*}(c^*))) \\
 =\mathfrak{B}_d(a + a^*, \partial_1(b) \star \partial_2(c) + \partial_1(b) \star \eth_2^{*}(c^*) + \eth_1^{*}(b^*) \star \partial_2(c) + \eth_1^{*}(b^*) \star \eth_2^{*}(c^*))\\
 \quad {}- \mathfrak{B}_d(a + a^*, \partial_2(b) \star \partial_1(c) + \partial_2(b) \star \eth_1^{*}(c^*) + \eth_2^{*}(b^*) \star \partial_1(c) + \eth_2^{*}(b^*) \star \eth_1^{*}(c^*)) \\
 = \mathfrak{B}_d(a, \partial_1(b) \cdot \partial_2(c)) + \mathfrak{B}_d( \eth_2^{*}(c^*), a \cdot \partial_1(b)) + \mathfrak{B}_d( \eth_1^{*}(b^*), \partial_2(c) \cdot a) \\
 \quad {}+ \mathfrak{B}_d( a, \eth_1^{*}(b^*) \circ \eth_2^{*}(c^*)) + \mathfrak{B}_d( a^*, \partial_1(b) \cdot \partial_2(c) ) + \mathfrak{B}_d(\eth_2^{*}(c^*) \circ a^*, \partial_1(b)) \\
 \quad {}+ \mathfrak{B}_d(a^* \circ \eth_1^{*}(b^*),\partial_2(c)) + \mathfrak{B}_d(a^*,\eth_1^{*}(b^*) \circ \eth_2^{*}(c^*)) \\
 \quad {}- \mathfrak{B}_d(a, \partial_2(b) \cdot \partial_1(c)) - \mathfrak{B}_d( \eth_1^{*}(c^*), a \cdot \partial_2(b)) - \mathfrak{B}_d( \eth_2^{*}(b^*), \partial_1(c) \cdot a) \\
 \quad {}- \mathfrak{B}_d( a, \eth_2^{*}(b^*) \circ \eth_1^{*}(c^*)) - \mathfrak{B}_d( a^*, \partial_2(b) \cdot \partial_1(c) ) - \mathfrak{B}_d(\eth_1^{*}(c^*) \circ a^*, \partial_2(b)) \\
 \quad {}- \mathfrak{B}_d(a^* \circ \eth_2^{*}(b^*),\partial_1(c)) - \mathfrak{B}_d(a^*,\eth_2^{*}(b^*) \circ \eth_1^{*}(c^*)) \\
 = \langle a, \eth_1^{*}(b^*) \circ \eth_2^{*}(c^*) - \eth_2^{*}(b^*) \circ \eth_1^{*}(c^*) \rangle + \langle a^*, \partial_1(b) \cdot \partial_2(c) - \partial_2(b) \cdot \partial_1(c) \rangle \\
 \quad {}+ \langle b, \partial_1^{*}(\eth_2^{*}(c^*) \circ a^*) - \partial_2^{*}(\eth_1^{*}(c^*) \circ a^*) \rangle + \langle b^*, \eth_1(\partial_2(c) \cdot a)- \eth_2(\partial_1(c) \cdot a) \rangle \\
 \quad {}+ \langle c, \partial_2^{*}(\eth_1^{*}(b^*) \circ a^*) - \partial_1^{*}(\eth_2^{*}(b^*) \circ a^*) \rangle + \langle c^*, \eth_2(\partial_1(b) \cdot a) - \eth_1(\partial_2(b) \cdot a) \rangle.
 \end{gather*}
 Similarly, we show that
 \begin{gather*}
 \mathfrak{B}_d([a + a^*, b + b^*], c + c^*) \\
 = \langle a, \partial_1^{*}(\eth_2^{*}(b^*) \circ c^*) - \partial_2^{*}(\eth_1^{*}(b^*) \circ c^*) \rangle + \langle a^*, \eth_1(\partial_2(b) \cdot c)- \eth_2(\partial_1(b) \cdot c) \rangle \\
 \quad {}+ \langle b, \partial_2^{*}(\eth_1^{*}(a^*) \circ c^*) - \partial_1^{*}(\eth_2^{*}(a^*) \circ c^*) \rangle + \langle b^*, \eth_2(\partial_1(a) \cdot c) - \eth_1(\partial_2(a) \cdot c) \rangle \\
 \quad {}+ \langle c, \eth_1^{*}(a^*) \circ \eth_2^{*}(b^*) - \eth_2^{*}(a^*) \circ \eth_1^{*}(b^*) \rangle + \langle c^*, \partial_1(a) \cdot \partial_2(b) - \partial_2(a) \cdot \partial_1(b) \rangle.
 \end{gather*}
 Therefore $\mathfrak{B}_d(a + a^*, [b + b^*, c + c^*]) = \mathfrak{B}_d([a + a^*, b + b^*], c + c^*)$ if and only if the following six equations hold:
 \begin{gather}
 \eth_1^{*}(b^*) \circ \eth_2^{*}(c^*) - \eth_2^{*}(b^*) \circ \eth_1^{*}(c^*) = \partial_1^{*}(\eth_2^{*}(b^*) \circ c^*) - \partial_2^{*}(\eth_1^{*}(b^*) \circ c^*), \label{eq:pf151}\\
 \partial_1^{*}(\eth_2^{*}(c^*) \circ a^*) - \partial_2^{*}(\eth_1^{*}(c^*) \circ a^*) = \partial_2^{*}(\eth_1^{*}(a^*) \circ c^*) - \partial_1^{*}(\eth_2^{*}(a^*) \circ c^*), \label{eq:pf152}\\
 \partial_2^{*}(\eth_1^{*}(b^*) \circ a^*) - \partial_1^{*}(\eth_2^{*}(b^*) \circ a^*) = \eth_1^{*}(a^*) \circ \eth_2^{*}(b^*) - \eth_2^{*}(a^*) \circ \eth_1^{*}(b^*), \label{eq:pf153}\\
 \partial_1(b) \cdot \partial_2(c) - \partial_2(b) \cdot \partial_1(c) = \eth_1(\partial_2(b) \cdot c)- \eth_2(\partial_1(b) \cdot c), \label{eq:pf154}\\
 \eth_1(\partial_2(c) \cdot a)- \eth_2(\partial_1(c) \cdot a) = \eth_2(\partial_1(a) \cdot c) - \eth_1(\partial_2(a) \cdot c), \label{eq:pf155}\\
 \eth_2(\partial_1(b) \cdot a) - \eth_1(\partial_2(b) \cdot a) = \partial_1(a) \cdot \partial_2(b) - \partial_2(a) \cdot \partial_1(b). \label{eq:pf156}
 \end{gather}
 Note that we have the following relationships:
 \begin{alignat*}{3}
 &{\rm equation~\eqref{eq:pf151}} \Longleftrightarrow {\rm equation~\eqref{eq:pf153}}, \qquad&&
 {\rm equation~\eqref{eq:pf154}} \Longleftrightarrow {\rm equation~\eqref{eq:pf156}}, &\\
 &{\rm equation~\eqref{eq:pf151}} \Longrightarrow {\rm equation~\eqref{eq:pf152}}, \qquad&&
 {\rm equation~\eqref{eq:pf154}} \Longrightarrow {\rm equation~\eqref{eq:pf155}}.&
 \end{alignat*}
 Hence $\mathfrak{B}_d(a + a^*, [b + b^*, c + c^*]) = \mathfrak{B}_d([a + a^*, b + b^*], c + c^*)$ if and only if equations~\eqref{eq:pf151} and~\eqref{eq:pf154} hold.

 On the other hand, by Lemma~\ref{lem:douadm},
 $\Psi$ is admissible to $(A, \cdot, \Phi)$ and $\Phi^*$ is admissible to $(A^*, \circ, \Psi^*)$.
 By Corollary~\ref{cor:qadm}, we have
 \begin{gather*}
 \partial_1(b) \cdot \partial_2(c) - \partial_2(b) \cdot \partial_1(c) - \eth_1(\partial_2(b) \cdot c) + \eth_2(\partial_1(b) \cdot c) = \eth_2(\partial_1(b)) \cdot c - \eth_1(\partial_2(b)) \cdot c,\\
 \eth_1^{*}(b^*) \circ \eth_2^{*}(c^*) - \eth_2^{*}(b^*) \circ \eth_1^{*}(c^*) - \partial_1^{*}(\eth_2^{*}(b^*) \circ c^*) + \partial_2^{*}(\eth_1^{*}(b^*) \circ c^*) \\
 \qquad{} = \partial_2^*(\eth_1^*(b^*)) \circ c^* - \partial_1^*(\eth_2^*(b^*)) \circ c^*.
 \end{gather*}
 Hence equations~\eqref{eq:pf151} and \eqref{eq:pf154} hold if and only if equations~(\ref{eq:bvip2})--(\ref{eq:vip1}) hold.
 By Remark~\ref{rmk:cc}, the conclusion follows.
\end{proof}

With the conditions above, if equations~\eqref{eq:vip1}--\eqref{eq:vip2} hold, then the resulting Manin triple of Poisson algebras is called the \textit{induced Manin triple of Poisson algebras with respect to $(A \bowtie A^*, \star, \Phi + \Psi^*, \mathfrak{B}_d)$}.

As in the study of matched pairs, given a double construction of commutative differential Frobenius algebra $(A \bowtie A^*, \star, \Phi + \Psi^*, \mathfrak{B}_d)$,
there is a more general condition, such that $((A \oplus A^*,\allowbreak [\ ,\ ], \star, \mathfrak{B}_d), A, A^*)$ is a Manin triple of Poisson algebras,
\textit{without} the requirement that $(A \oplus A^*,\allowbreak [\ ,\ ], \star)$ is the induced Poisson algebra of $(A \bowtie A^*, \star, \Phi + \Psi^*)$.
With a similar study as the one of Proposition~\ref{pro:gmp}, we give the following conclusion omitting the proof.

\begin{Proposition}\label{pro:gmtf}
 With the same assumptions in Proposition~{\rm \ref{pro:assFro}}.
 Then there is a Manin triple $((A \oplus A^*,[\ ,\ ], \star, \mathfrak{B}_d), A, A^*)$ of Poisson algebras if and only equations~\eqref{eq:dpb1}--\eqref{eq:dpb2} hold.
\end{Proposition}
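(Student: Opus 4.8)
The plan is to reduce the assertion to Proposition~\ref{pro:gmp} through the equivalence between Manin triples and matched pairs of Poisson algebras, in exactly the way that Proposition~\ref{pro:assFro} is the Manin-triple counterpart of Corollary~\ref{cor:adjassmat}. Observe first that, by the definition of $\mathfrak{B}_d$ in equation~\eqref{eq:bilform}, the form is nondegenerate, symmetric, and has $A$ and $A^*$ as isotropic subspaces; and since $(A \bowtie A^*, \star, \Phi + \Psi^*, \mathfrak{B}_d)$ is a double construction of commutative differential Frobenius algebra, the commutative product $\star$ on $A \oplus A^*$ is already in place, $A$ and $A^*$ are subalgebras for $\star$, and $\mathfrak{B}_d$ is invariant for $\star$. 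Hence the only remaining content of the Manin-triple condition is the existence of a Lie bracket $[\ ,\ ]$ on $A \oplus A^*$ restricting to $[\ ,\ ]_A$ and $[\ ,\ ]_{A^*}$, invariant for $\mathfrak{B}_d$, and compatible with $\star$ so that $(A \oplus A^*, [\ ,\ ], \star)$ is a Poisson algebra.

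Next I would invoke the Poisson-bialgebra correspondence of~\cite{ni2013poisson} (the Poisson counterpart of~\cite[Theorem~2.2.1]{BaiDouble}): nondegeneracy and invariance of $\mathfrak{B}_d$ together with isotropy of $A$ and $A^*$ force the cross actions of $A$ on $A^*$ and of $A^*$ on $A$ to be the coadjoint-type maps $-\ad_A^*, L_A^*$ and $-\ad_{A^*}^*, L_{A^*}^*$, and conversely these maps reassemble via Definition~\ref{def:matpoi} into the bracket and product on $A \oplus A^*$. Consequently the existence of the Manin triple $((A \oplus A^*, [\ ,\ ], \star, \mathfrak{B}_d), A, A^*)$ is equivalent to $\big((A, [\ ,\ ]_A, \cdot), (A^*, [\ ,\ ]_{A^*}, \circ), -\ad_A^*, L_A^*, -\ad_{A^*}^*, L_{A^*}^*\big)$ being a matched pair of Poisson algebras. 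Applying Proposition~\ref{pro:gmp} to this matched pair then shows it exists if and only if equations~\eqref{eq:dpb1}--\eqref{eq:dpb2} hold, and chaining the two equivalences gives the claim.

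The main obstacle is the coadjoint identification in the middle step, i.e.\ verifying that invariance of the nondegenerate form pins the two cross actions down to $-\ad_A^*, L_A^*, -\ad_{A^*}^*, L_{A^*}^*$ rather than leaving them free; this is the same nondegeneracy-plus-invariance bookkeeping that underlies~\cite[Theorem~2.2.1]{BaiDouble} and is suppressed in Proposition~\ref{pro:assFro}. Equivalently, one may proceed by brute force as in Proposition~\ref{pro:assFro}, expanding $\mathfrak{B}_d(x, [y,z])$ and $\mathfrak{B}_d([x,y], z)$ for $x=a+a^*$, $y=b+b^*$, $z=c+c^*$ with the coadjoint bracket, and recognizing the resulting scalar identities as the rewritten matched-pair relations~\eqref{eq:matlie1}--\eqref{eq:matpoi4} already dispatched inside the proof of Proposition~\ref{pro:gmp}. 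In either route the heavy reduction of those relations to~\eqref{eq:dpb1}--\eqref{eq:dpb2} is reused verbatim from Proposition~\ref{pro:gmp}, the crucial ingredient there being the Leibniz identity $S(a \cdot b) = a \cdot S(b) + S(a) \cdot b$ for $S = \eth_2\partial_1 - \eth_1\partial_2$ together with the admissibility relations of Corollary~\ref{cor:qadm}; the only genuinely new point over Proposition~\ref{pro:assFro} is that dropping the ``induced'' hypothesis relaxes~\eqref{eq:vip1}--\eqref{eq:vip2} to the weaker~\eqref{eq:dpb1}--\eqref{eq:dpb2}.
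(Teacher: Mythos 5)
Your proposal is correct and matches the paper's own treatment: the paper omits the proof, saying only that it follows by ``a similar study as the one of Proposition~\ref{pro:gmp}'', and your reduction --- Manin triple $\Leftrightarrow$ coadjoint matched pair $\big((A,[\ ,\ ]_A,\cdot),(A^*,[\ ,\ ]_{A^*},\circ),-\ad_A^*,L_A^*,-\ad_{A^*}^*,L_{A^*}^*\big)$ via Theorem~\ref{thm:poibieq} (equivalently, the direct expansion of invariance of $\mathfrak{B}_d$ as in Proposition~\ref{pro:assFro}), followed by the reduction of the matched-pair relations to equations~\eqref{eq:dpb1}--\eqref{eq:dpb2} already carried out in Proposition~\ref{pro:gmp} --- is precisely that study. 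Your closing observation also correctly isolates the only new content relative to Proposition~\ref{pro:assFro}: dropping the ``induced'' requirement relaxes~\eqref{eq:vip1}--\eqref{eq:vip2} to the weaker~\eqref{eq:dpb1}--\eqref{eq:dpb2}.
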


\subsection[Poisson bialgebras via commutative and cocommutative differential ASI bialgebras: the general case]{Poisson bialgebras via commutative and cocommutative differential\\ ASI bialgebras: the general case}

Recall that a pair $(A, \delta)$ is called a \textit{Lie
coalgebra}, where $A$ is a vector space and $\delta\colon A \to A
\otimes A$ is a linear map, if $\delta$ is
co-antisymmetric, in the sense that $\delta = -\sigma \delta$, and
satisfies the co-Jacobian identity:
\begin{equation*}
 \big(\id + \tau + \tau^2\big)(\id \otimes \delta)\delta = 0,
\end{equation*}
where $\tau(a \otimes b \otimes c):= c \otimes a \otimes b$ for all $a, b, c \in A$.
A \textit{Lie bialgebra} is a triple $(A, [\ ,\ ], \delta)$,
where $(A, [\ ,\ ])$ is a Lie algebra and $(A, \delta)$ is a Lie coalgebra,
satisfying the following equation:
\begin{gather}
 \delta([a, b]) = (\ad_A(a) \otimes \id + \id \otimes \ad_A (a) )\delta(b) - (\ad_A (b) \otimes \id + \id \otimes \ad_A (b) )\delta(a),
\label{eq:lie_bi}
\end{gather}
for all $a, b \in A$.

\begin{Definition}[\cite{ni2013poisson}]
 A \textit{Poisson coalgebra} is a triple $(A, \delta, \Delta)$,
 where $(A, \delta)$ is a Lie coalgebra and $(A, \Delta)$ is a commutative coalgebra
 such that the following equation holds:
 \begin{equation*}
 (\id \otimes \Delta)\delta(a) = (\delta \otimes \id)\Delta(a) + (\sigma \otimes \id)(\id \otimes \delta)\Delta(a), \qquad \forall a \in A. 
 \end{equation*}
\end{Definition}
\begin{Remark}
 The notion of a Poisson coalgebra is the dualization of the notion of a Poisson algebra, that is, $(A, \delta, \Delta)$ is a Poisson coalgebra if and only if $(A^*, \delta^*, \Delta^*)$ is a Poisson algebra.
\end{Remark}

\begin{Lemma} Let $(A, \Delta, \Psi \!=\! \{\eth_1, \eth_2\})$ be a cocommutative differential coalgebra.
 Then $(A, \delta, \Delta)$ is Poisson coalgebra, called the \textit{induced Poisson coalgebra} of $(A, \Delta, \Psi)$, where $\delta$ is defined by
 \begin{equation}
 \delta = (\eth_1 \otimes \eth_2 - \eth_2 \otimes \eth_1)
 \Delta. \label{eq:coP}
 \end{equation}
 Moreover, $(A^*, \delta^*, \Delta^*)$ is exactly the induced Poisson algebra of the commutative differential algebra $(A^*, \Delta^*, \Psi^*)$.
\end{Lemma}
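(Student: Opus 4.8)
The plan is to dualize and reduce the statement to the already-established construction of a Poisson algebra from a commutative differential algebra, rather than verifying the Poisson-coalgebra axioms directly. By the Remark following the definition of a differential coalgebra, since $(A, \Delta, \Psi = \{\eth_1, \eth_2\})$ is a cocommutative differential coalgebra, its dual $(A^*, \Delta^*, \Psi^* = \{\eth_1^*, \eth_2^*\})$ is a \emph{commutative} differential algebra; here cocommutativity of $(A,\Delta)$ is precisely what guarantees that $\circ := \Delta^*$ is commutative. Applying the Proposition on induced Poisson algebras to this commutative differential algebra produces a Poisson algebra $(A^*, [\ ,\ ], \circ)$ whose Lie bracket is $[a^*, b^*] = \eth_1^*(a^*) \circ \eth_2^*(b^*) - \eth_2^*(a^*) \circ \eth_1^*(b^*)$ for all $a^*, b^* \in A^*$.

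The key step is to identify $\delta^*$ with this induced bracket, which simultaneously settles the ``moreover'' assertion. Taking transposes in $\delta = (\eth_1 \otimes \eth_2 - \eth_2 \otimes \eth_1)\Delta$ and using $(\eth_i \otimes \eth_j)^* = \eth_i^* \otimes \eth_j^*$ together with $(fg)^* = g^* f^*$, I obtain $\delta^* = \Delta^*(\eth_1^* \otimes \eth_2^* - \eth_2^* \otimes \eth_1^*)$ as a map $A^* \otimes A^* \to A^*$. Evaluating on $a^* \otimes b^*$ and recalling that $\Delta^*(x^* \otimes y^*) = x^* \circ y^*$ yields $\delta^*(a^* \otimes b^*) = \eth_1^*(a^*) \circ \eth_2^*(b^*) - \eth_2^*(a^*) \circ \eth_1^*(b^*)$, which is exactly the bracket above. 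Hence $(A^*, \delta^*, \Delta^*)$ coincides with the induced Poisson algebra of $(A^*, \Delta^*, \Psi^*)$, and in particular it is a Poisson algebra.

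Finally, I would invoke the Remark stating that the notion of a Poisson coalgebra is the dualization of that of a Poisson algebra, i.e., $(A, \delta, \Delta)$ is a Poisson coalgebra if and only if $(A^*, \delta^*, \Delta^*)$ is a Poisson algebra. Since the latter has just been verified, $(A, \delta, \Delta)$ is a Poisson coalgebra, completing the proof. There is no genuine analytic obstacle: the entire argument is a clean dualization of prior results, and the only point demanding care is the bookkeeping of transposes and the ordering of the tensor factors, ensuring that the flip built into $\delta$ is correctly transferred to the antisymmetry of the induced bracket.
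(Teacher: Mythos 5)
Your proposal is correct and follows essentially the same route as the paper's own proof: dualize to the commutative differential algebra $(A^*, \Delta^*, \Psi^*)$, form its induced Poisson algebra, and identify the induced bracket with $\delta^*$, then invoke the duality remark to conclude $(A, \delta, \Delta)$ is a Poisson coalgebra. The only difference is that you spell out the transpose bookkeeping $\delta^* = \Delta^*\bigl(\eth_1^* \otimes \eth_2^* - \eth_2^* \otimes \eth_1^*\bigr)$ (valid here since $A$ is finite-dimensional, so $(A\otimes A)^*\cong A^*\otimes A^*$), which the paper dismisses as ``straightforward.''
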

\begin{proof}
 By assumption, $(A^*, \Delta^*, \Psi^*)$ is a commutative differential algebra.
 Let $(A^*, [\ ,\ ]_{A^*}, \Delta^*)$ be the induced Poisson algebra of $(A^*, \Delta^*, \Psi^*)$, that is, $[\ ,\ ]_{A^*}$ is defined by
 \begin{equation*}
 [a^*, b^*]_{A^*} := \eth_1^*(a^*) \circ \eth_2^*(b^*) - \eth_2^*(a^*) \circ \eth_1^*(b^*), \qquad \forall a^*, b^*\in A^*,
 \end{equation*}
 where $\circ = \Delta^*$.
 It is straightforward that $[\ ,\ ]_{A^*}$ is the linear dual of $\delta$.
 Hence $(A^*, \delta^*, \Delta^*)$ is the induced Poisson algebra of $(A^*, \Delta^*, \Psi^*)$ and thus $(A, \delta, \Delta)$ is a Poisson coalgebra.
\end{proof}

\begin{Definition}[\cite{ni2013poisson}]
 Let $(A, [\ ,\ ], \cdot)$ be a Poisson algebra.
 Suppose that it is equipped with two comultiplications $\delta, \Delta\colon A \to A \otimes A$ such that $(A, \delta, \Delta)$ is a Poisson coalgebra.
 If in addition, $(A, [\ ,\ ], \delta)$ is a Lie bialgebra,
 $(A, \cdot, \Delta)$ is a commutative and cocommutative ASI bialgebra,
 $\delta$ and $\Delta$ are compatible in the following sense:
 \begin{gather}
 	\delta(a \cdot b) = (L_A(a) \otimes \id)\delta(b) + (L_A(b) \otimes \id)\delta(a) \nonumber\\
 \hphantom{\delta(a \cdot b) =}{} + (\id \otimes \ad_A(a))\Delta(b) + (\id \otimes \ad_A(b))\Delta(a), \label{eq:poibi1} \\
 \Delta([a, b]) = (\ad_A(a) \otimes \id + \id \otimes \ad_A(a))\Delta(b) + (L_A(b) \otimes \id - \id \otimes L_A(b))\delta(a), \label{eq:poibi2}
 \end{gather}
 for all $a, b \in A$, then $(A, [\ ,\ ], \cdot, \delta, \Delta)$ is called a \textit{Poisson bialgebra}.
\end{Definition}

\begin{Theorem}\label{thm:asspoibi}
 Let $(A,\cdot, \Phi = \{\partial_1, \partial_2\})$ be a commutative differential algebra and $(A, \Delta, \Psi = \{\eth_1, \eth_2\})$ be a cocommutative differential coalgebra.
 Let $(A, [\ ,\ ], \cdot)$ be the induced Poisson algebra of $(A,\cdot, \Phi)$ and $(A, \delta, \Delta)$ be the induced Poisson coalgebra of $(A, \Delta, \Psi)$,
 that is, $[\ ,\ ]$ is defined by equation~\eqref{eq:lie} and $\delta$ is defined by equation~\eqref{eq:coP}.
 Suppose that $(A, \cdot, \Delta, \Phi, \Psi)$ is a~commutative and cocommutative differential ASI bialgebra.
 Then $(A, [\ ,\ ], \cdot, \delta, \Delta)$ is a Poisson bialgebra if and only if equations~\eqref{eq:dpb1}--\eqref{eq:dpb2} hold.
 In particular, if equations~\eqref{eq:vip1}--\eqref{eq:vip2} hold, then $(A, [\ ,\ ], \cdot, \delta, \Delta)$ is a Poisson bialgebra.
\end{Theorem}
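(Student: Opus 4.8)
The plan is to avoid checking the Poisson bialgebra axioms---the Lie bialgebra identity \eqref{eq:lie_bi} and the compatibilities \eqref{eq:poibi1}--\eqref{eq:poibi2}---directly, and instead to reduce everything to the matched pair criterion of Proposition~\ref{pro:gmp}, which is already phrased in terms of \eqref{eq:dpb1}--\eqref{eq:dpb2}. First I would verify that the hypotheses of Proposition~\ref{pro:gmp} are in force: since $(A, \cdot, \Delta, \Phi, \Psi)$ is a commutative and cocommutative differential ASI bialgebra, Theorem~\ref{thm:asieqv} (equivalence of items~2 and~3) shows that $\big((A, \cdot, \Phi), (A^*, \circ, \Psi^*), R_A^*, L_A^*, R_{A^*}^*, L_{A^*}^*\big)$ is a matched pair of differential algebras, where $\circ = \Delta^*$. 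By commutativity of $\cdot$ and cocommutativity of $\Delta$ we have $R_A^* = L_A^*$ and $R_{A^*}^* = L_{A^*}^*$, so this is exactly the matched pair of commutative differential algebras $\big((A, \cdot, \Phi), (A^*, \circ, \Psi^*), L_A^*, L_{A^*}^*\big)$ assumed in Corollary~\ref{cor:adjassmat} and Proposition~\ref{pro:gmp}.

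Next I would translate the Poisson bialgebra into a matched pair of Poisson algebras. By the theory of \cite{ni2013poisson}, the quintuple $(A, [\ ,\ ], \cdot, \delta, \Delta)$ is a Poisson bialgebra---where $(A^*, [\ ,\ ]_{A^*} = \delta^*, \circ = \Delta^*)$ is the dual Poisson algebra---precisely when $\big((A, [\ ,\ ]_A, \cdot), (A^*, [\ ,\ ]_{A^*}, \circ), -\ad_A^*, L_A^*, -\ad_{A^*}^*, L_{A^*}^*\big)$ is a matched pair of Poisson algebras. To apply this equivalence I must confirm that the induced data match the required dictionary: the induced Poisson algebra $(A, [\ ,\ ]_A, \cdot)$ of $(A, \cdot, \Phi)$ supplies $[\ ,\ ] = [\ ,\ ]_A$ through \eqref{eq:lie}, and by \eqref{eq:coP} together with the induced Poisson coalgebra construction, $\delta$ is the linear dual of the induced Lie bracket $[\ ,\ ]_{A^*}$ on $(A^*, \circ, \Psi^*)$ while $\Delta$ is dual to $\circ$. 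I expect this bookkeeping to be the main obstacle, since one must verify that the coadjoint representations $-\ad_A^*$, $-\ad_{A^*}^*$ and the coregular representations $L_A^*$, $L_{A^*}^*$ are exactly the module maps governing the matched pair, so that the Poisson bialgebra axioms correspond term-by-term to the matched pair conditions \eqref{eq:matlie1}--\eqref{eq:matpoi4} analysed in the proof of Proposition~\ref{pro:gmp}.

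With the dictionary in place, the two equivalences chain together. Proposition~\ref{pro:gmp} asserts that, under the matched pair of commutative differential algebras obtained in the first step, the matched pair of Poisson algebras $\big((A, [\ ,\ ]_A, \cdot), (A^*, [\ ,\ ]_{A^*}, \circ), -\ad_A^*, L_A^*, -\ad_{A^*}^*, L_{A^*}^*\big)$ holds if and only if \eqref{eq:dpb1}--\eqref{eq:dpb2} hold. Combining this with the Poisson bialgebra equivalence of the previous paragraph yields that $(A, [\ ,\ ], \cdot, \delta, \Delta)$ is a Poisson bialgebra if and only if \eqref{eq:dpb1}--\eqref{eq:dpb2} are satisfied, which is the first assertion. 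For the concluding ``in particular'' statement I would invoke the Remark following Proposition~\ref{pro:gmp}, which records that \eqref{eq:vip1}--\eqref{eq:vip2} imply \eqref{eq:dpb1}--\eqref{eq:dpb2}; hence whenever \eqref{eq:vip1}--\eqref{eq:vip2} hold, the criterion just established produces a Poisson bialgebra.
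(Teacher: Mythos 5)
Your proof is correct, but it takes a genuinely different route from the paper's. The paper proves Theorem~\ref{thm:asspoibi} by direct computation: using the admissibility conditions it derives the closed formulas $\ad_A(a) = \eth_1 L_A(\partial_2(a)) - \eth_2 L_A(\partial_1(a)) + L_A(S(a))$ and $\delta = (\id \otimes \eth_1)\Delta\partial_2 - (\id \otimes \eth_2)\Delta\partial_1 + (\id \otimes S)\Delta$, with $S = \eth_2\partial_1 - \eth_1\partial_2$, and then checks term by term that equation~\eqref{eq:poibi1} is equivalent to \eqref{eq:dpb1} and that \eqref{eq:poibi2} and \eqref{eq:lie_bi} are equivalent to \eqref{eq:dpb1} and \eqref{eq:dpb2} respectively. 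You instead chain three established equivalences: Theorem~\ref{thm:asieqv} to obtain the matched pair of commutative differential algebras (correctly noting $R_A^* = L_A^*$ and $R_{A^*}^* = L_{A^*}^*$ by commutativity and cocommutativity), Theorem~\ref{thm:poibieq} to translate the Poisson bialgebra condition into the matched pair of Poisson algebras, and Proposition~\ref{pro:gmp} to convert that into \eqref{eq:dpb1}--\eqref{eq:dpb2}. This is legitimate and non-circular: Proposition~\ref{pro:gmp} is proved directly from the matched pair conditions of \cite[Theorem~1]{ni2013poisson}, independently of Theorem~\ref{thm:asspoibi}, and the ``dictionary'' you flag as the main obstacle is in fact immediate --- the lemma on induced Poisson coalgebras already establishes that $\delta^*$ is exactly the induced Lie bracket $[\ ,\ ]_{A^*}$ on $(A^*, \circ, \Psi^*)$, while the module maps $-\ad_A^*, L_A^*, -\ad_{A^*}^*, L_{A^*}^*$ appearing in Theorem~\ref{thm:poibieq} coincide verbatim with those in Proposition~\ref{pro:gmp}, so no further verification of \eqref{eq:matlie1}--\eqref{eq:matpoi4} is needed on your end. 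Indeed, the paper itself endorses this reading in the paragraph following Theorem~\ref{thm:poibieq}, where Proposition~\ref{pro:gmp} is described as the ``matched pair version'' of Theorem~\ref{thm:asspoibi}; your argument simply runs that observation in the direction that derives the theorem. What each approach buys: the paper's computation produces the explicit operator identities in terms of $S$ that clarify where the obstruction terms in \eqref{eq:dpb1}--\eqref{eq:dpb2} come from, whereas your reduction is shorter, avoids duplicating work already done in Proposition~\ref{pro:gmp}, and makes transparent that the theorem is forced by the general structure theory. The concluding clause is handled correctly: by the Remark after Proposition~\ref{pro:gmp}, equations~\eqref{eq:vip1}--\eqref{eq:vip2} give $L_A(S(a)) = 0$ and $(S \otimes \id)\Delta = 0$ (hence also $(\id \otimes S)\Delta = 0$ by cocommutativity), so every term of \eqref{eq:dpb1}--\eqref{eq:dpb2} vanishes.
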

\begin{proof} Let $a, b \in A$. Then we have
 \begin{align*}
 [a,b] &\overset{\hphantom{(2.7)}}{=} \partial_1(a) \cdot \partial_2(b) - \partial_2(a) \cdot \partial_1(b) \\
 & \overset{\eqref{eq:qadm1}}{=} -\eth_2(\partial_1(a) \cdot b) + \eth_1(\partial_2(a) \cdot b) + \eth_2(\partial_1(a)) \cdot b - \eth_1(\partial_2(a)) \cdot b.
 \end{align*}
 Hence $\ad_A(a) = L_A(\partial_1(a))\partial_2 - L_A(\partial_2(a))\partial_1 = \eth_1 L_A(\partial_2(a)) - \eth_2 L_A(\partial_1(a)) + L_A(S(a))$. Meanwhile,
 \begin{align*}
 \delta &\overset{\hphantom{(3.6)}}{=} (\eth_1 \otimes \eth_2 - \eth_2 \otimes \eth_1)\Delta \\
 &\overset{\eqref{eq:psadm2}}{=}(\id \otimes \eth_2)( (\id \otimes \partial_1 )\Delta - \Delta \partial_1) - (\id \otimes \eth_1)( (\id \otimes \partial_2)\Delta - \Delta \partial_2) \\
 &\overset{\hphantom{(3.6)}}{=} (\id \otimes \eth_1)\Delta \partial_2 - (\id \otimes \eth_2)\Delta \partial_1 + (\id \otimes S)\Delta.
 \end{align*}
 Therefore we have
 \begin{gather*}
 (L_A(a) \otimes \id)\delta(b) + (L_A(b) \otimes \id)\delta(a) + (\id \otimes \ad_A(a))\Delta(b) +(\id \otimes \ad_A(b))\Delta(a) -\delta(a \cdot b) \\
 \overset{\hphantom{(3.2)}}{=} (L_A(a) \otimes \id)( (\id \otimes \eth_1)\Delta(\partial_2(b)) -(\id \otimes \eth_2)\Delta(\partial_1(b)) + (\id \otimes S)\Delta (b)) \\
 \quad {}+ (L_A(b) \otimes \id)( (\id \otimes \eth_1)\Delta(\partial_2(a))-(\id \otimes \eth_2)\Delta(\partial_1(a)) + (\id \otimes S)\Delta (a) ) \\
 \quad {}+ (\id \otimes (\eth_1 L_A(\partial_2(a)) -\eth_2 L_A(\partial_1(a)) + L_A(S(a)) ) )\Delta(b) \\
 \quad {}+ (\id \otimes (\eth_1 L_A(\partial_2(b)) -\eth_2 L_A(\partial_1(b)) + L_A(S(b)) ) )\Delta(a) \\
 \quad {}- (\id \otimes \eth_1)\Delta(\partial_2(a \cdot b)) + (\id \otimes \eth_2)\Delta(\partial_1(a \cdot b)) - (\id \otimes S)\Delta(a \cdot b) \\
 \overset{\eqref{eq:asiifs}}{=} (\id \otimes \eth_1) \Delta(a \cdot \partial_2(b) + \partial_2(a) \cdot b ) - (\id \otimes \eth_2) \Delta(a \cdot \partial_1(b)+ \partial_1(a) \cdot b) \\
 \quad {}- (\id \otimes \eth_1)\Delta(\partial_2(a \cdot b)) + (\id \otimes \eth_2)\Delta(\partial_1(a \cdot b)) \\
 \quad {}+ ( - (\id \otimes S)(\id \otimes L_A(b)) + L_A(b) \otimes S + \id \otimes L_A(S(b)) ) \Delta(a) + ( \id \otimes L_A(S(a)) ) \Delta(b) \\
 \overset{\hphantom{(3.2)}}{=} (L_A(b) \otimes \id - \id \otimes L_A(b))(\id \otimes S) \Delta(a) + ( \id \otimes L_A(S(a)) ) \Delta(b).
	\end{gather*}
 Hence equation~\eqref{eq:poibi1} holds if and only if equation~\eqref{eq:dpb1} holds.
 Similarly we show that equations~\eqref{eq:poibi2} and \eqref{eq:lie_bi} hold if and only if equations~\eqref{eq:dpb1} and \eqref{eq:dpb2} hold respectively.
 So the first conclusion follows. The particular case is obvious.
\end{proof}

With the conditions above, if equations~\eqref{eq:vip1}--\eqref{eq:vip2} hold, then $(A, [\ ,\ ], \cdot, \delta, \Delta)$ is called the \textit{induced Poisson bialgebra of $(A, \cdot, \Delta, \Phi, \Psi)$}.

\begin{Remark}\label{rmk:iff}
 Note that for a commutative and cocommutative differential ASI bialgebra $(A, \cdot, \Delta, \Phi, \Psi)$,
 there is a commutative differential algebra structure $(A \bowtie A^*, \star, \Phi + \Psi^*)$ on $A \oplus A^*$,
 and for a Poisson bialgebra $(A, [\ ,\ ], \cdot, \delta, \Delta)$,
 there is a Poisson algebra structure $(A \bowtie A^*, [\ ,\ ], \cdot)$ on $A \oplus A^*$.
 Hence with the conditions above, if equations~\eqref{eq:vip1}--\eqref{eq:vip2} hold,
 then the resulting induced Poisson bialgebra $(A, [\ ,\ ], \cdot, \delta, \Delta)$ of $(A, \cdot, \Delta, \Phi, \Psi)$ satisfies an additional condition that the Poisson structure on $A \oplus A^*$ is the induced Poisson algebra of $(A \bowtie A^*, \star,\allowbreak \Phi + \Psi^*)$.
 So if one characterizes the resulting Poisson bialgebra $(A, [\ ,\ ], \cdot, \delta, \Delta)$ of a commutative and cocommutative differential ASI bialgebra $(A, \cdot, \Delta, \Phi, \Psi)$ to be ``induced" by this additional condition,
 then the converse still holds, that is,
 $(A, [\ ,\ ], \cdot, \delta, \Delta)$ is the ``induced" Poisson bialgebra in this sense if and only if equations~\eqref{eq:vip1}--\eqref{eq:vip2} hold.
\end{Remark}

\begin{Theorem}[{\cite[structure Theorem~(II)]{ni2013poisson}}]\label{thm:poibieq}
 Let $(A, [\ ,\ ]_A, \cdot)$ be a Poisson algebra.
 Suppose that there is a Poisson algebra structure $(A^*, [\ ,\ ]_{A^*}, \circ)$ on $A^*$.
 Let $\delta, \Delta\colon A \to A \otimes A$ denote the linear duals of the Lie bracket $[\ ,\ ]_{A^*}\colon A^* \otimes A^* \to A^*$ and the multiplication $\circ\colon A^* \otimes A^* \to A^*$ respectively.
 Then the following conditions are equivalent:
 \begin{enumerate}\itemsep=0pt
 \item[$(1)$] $(A, [\ ,\ ]_A, \cdot, \delta, \Delta)$ is a Poisson bialgebra.
 \item[$(2)$] $((A, [\ ,\ ]_A, \cdot), (A^*, [\ ,\ ]_{A^*}, \circ), -\ad_{A}^*, L_{A}^*, -\ad_{A^*}^*, L_{A^*}^*)$ is a matched pair of Poisson algebras.
 \item[$(3)$] There is a Manin triple of Poisson algebras $((A \oplus A^*, [\ ,\ ], \star, \mathfrak{B}_d), A, A^*)$ associated to $(A, [\ ,\ ]_A, \cdot)$ and $(A^*, [\ ,\ ]_{A^*}, \circ)$, where $\mathfrak{B}_d$ is defined by equation~\eqref{eq:bilform}.
 \end{enumerate}
\end{Theorem}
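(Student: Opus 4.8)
The plan is to establish the two equivalences $(1)\Leftrightarrow(2)$ and $(2)\Leftrightarrow(3)$ separately, exploiting the fact that a Poisson bialgebra is simultaneously a Lie bialgebra $(A,[\ ,\ ]_A,\delta)$ and a commutative and cocommutative ASI bialgebra $(A,\cdot,\Delta)$, tied together by the Poisson-coalgebra identity and the mixed compatibilities \eqref{eq:poibi1}--\eqref{eq:poibi2}. The organizing principle is dualization: a comultiplication on $A$ is the transpose of a multiplication on $A^*$, so prescribing $\delta$ and $\Delta$ is the same as prescribing the Poisson structure $(A^*,[\ ,\ ]_{A^*},\circ)$, and every bialgebra axiom becomes a relation among the coadjoint and coregular actions $-\ad_A^*,\,L_A^*$ of $A$ on $A^*$ and $-\ad_{A^*}^*,\,L_{A^*}^*$ of $A^*$ on $A$.

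For $(1)\Leftrightarrow(2)$ I would verify, axiom by axiom, that the five Poisson-bialgebra conditions are equivalent to the six matched-pair conditions for Poisson algebras (the equations \eqref{eq:matlie1}--\eqref{eq:matpoi4} appearing in the proof of Proposition~\ref{pro:gmp}). The Lie-bialgebra cocycle identity \eqref{eq:lie_bi} is the transpose of the two Lie matched-pair relations \eqref{eq:matlie1}--\eqref{eq:matlie2}, by the classical Lie-bialgebra/Manin-triple dictionary; the ASI-bialgebra equations \eqref{eq:asiifs}--\eqref{eq:asisym} dualize into the two associative matched-pair relations exactly as in \cite[Theorem~2.2.3]{BaiDouble}, specialized to the commutative and cocommutative setting. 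The remaining four mixed axioms---the Poisson-coalgebra identity together with \eqref{eq:poibi1} and \eqref{eq:poibi2}---must be shown equivalent to the cross relations \eqref{eq:matpoi1}--\eqref{eq:matpoi4}, which I would do by pairing each identity against arbitrary $a,b\in A$ and $a^*,b^*\in A^*$ and using $\langle-\ad_A^*(a)a^*,b\rangle=-\langle a^*,[a,b]_A\rangle$ and $\langle L_A^*(a)a^*,b\rangle=\langle a^*,a\cdot b\rangle$, the same bookkeeping already displayed in the proof of Proposition~\ref{pro:gmp}.

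For $(2)\Leftrightarrow(3)$ I would run the double-construction argument. Given the matched pair in (2), the formulas of Definition~\ref{def:matpoi} make $A\oplus A^*$ a Poisson algebra containing $A$ and $A^*$ as subalgebras; the pairing $\mathfrak{B}_d$ of \eqref{eq:bilform} is symmetric, nondegenerate, and has $A,A^*$ totally isotropic, so one only checks that choosing the actions to be the coadjoint and coregular representations is precisely what renders $\mathfrak{B}_d$ invariant under both $[\ ,\ ]$ and $\cdot$---yielding a Manin triple. Conversely, a Manin triple $((A\oplus A^*,[\ ,\ ],\star,\mathfrak{B}_d),A,A^*)$ uses the nondegenerate pairing to identify the complementary isotropic subalgebra with $A^*$, under which the induced actions are forced to be $-\ad_A^*,\,L_A^*$ and their transposes, recovering (2).

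The step I expect to be the main obstacle is the equivalence of the four mixed conditions in $(1)\Leftrightarrow(2)$. Unlike the purely Lie or purely associative parts, these entangle the adjoint action of one operation with the regular action of the other, so the transpose of each Poisson-bialgebra identity produces several terms that cancel only after repeated use of the Poisson module axioms (the Leibniz relation coupling $[\ ,\ ]$ and $\cdot$) together with cocommutativity of $\Delta$. Keeping the Lie, associative, and cross contributions from becoming intertwined---so that each Poisson-bialgebra axiom matches a single matched-pair relation rather than a combination---is the delicate part, and I would organize the computation exactly along the term-by-term reduction carried out in Proposition~\ref{pro:gmp}.
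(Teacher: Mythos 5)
The paper states Theorem~\ref{thm:poibieq} as an imported result of \cite{ni2013poisson} without reproving it, and your outline is essentially the argument of that reference: for $(1)\Leftrightarrow(2)$ one dualizes axiom by axiom (the Lie part via the classical Lie-bialgebra/matched-pair dictionary, the associative part via \cite[Theorem~2.2.3]{BaiDouble} in the commutative--cocommutative case, and the mixed conditions \eqref{eq:poibi1}--\eqref{eq:poibi2} against the cross relations \eqref{eq:matpoi1}--\eqref{eq:matpoi4} displayed in the proof of Proposition~\ref{pro:gmp}), while $(2)\Leftrightarrow(3)$ is the double-construction argument in which invariance of $\mathfrak{B}_d$ together with isotropy of $A$ and $A^*$ forces the actions to be the coadjoint and coregular ones. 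Your proposal is correct and takes the same route; the only cosmetic remark is that the Poisson-coalgebra identity you list among the axioms to check is automatic here, since it is exactly the hypothesis that $(A^*, [\ ,\ ]_{A^*}, \circ)$ is a Poisson algebra.
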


By the equivalences given in Theorems~\ref{thm:asieqv} and \ref{thm:poibieq}, Proposition~\ref{pro:gmp} and Corollary~\ref{cor:adjassmat} (in the induced case) are the ``matched pair version" of Theorem~\ref{thm:asspoibi} and Remark~\ref{rmk:iff},
whereas Propositions~\ref{pro:gmtf} and~\ref{pro:assFro} (in the induced case) are the ``Manin triple (double construction) version" of the latter.
We illustrate these relationships in the ``induced case'' through a commutative diagram.
Explicitly, let $(A, \cdot, \Phi = \{\partial_1, \partial_2\})$ be a commutative differential algebra.
Suppose that there is a commutative differential algebra structure $(A^*, \circ, \Psi^*= \{\eth_1^*,\eth_2^*\})$ on $A^*$.
Let $(A, [\ ,\ ]_A, \cdot)$ and $(A^*, [\ ,\ ]_{A^*}, \circ)$ be the induced Poisson algebras of $(A, \cdot, \Phi)$ and $(A^*, \circ, \Psi^*)$ respectively,
that is, both $[\ ,\ ]_A$ and $[\ ,\ ]_{A^*}$ are defined by equation~\eqref{eq:lie}.
Let $\delta, \Delta\colon A \to A \otimes A$ be the linear duals of $[\ ,\ ]_{A^*}$ and $\circ$ respectively.
So $\delta$ satisfies equation~\eqref{eq:coP}.
Suppose that equations~\eqref{eq:vip1}--\eqref{eq:vip2} hold.
Then we have the following commutative diagram:
\begin{equation*}
 \xymatrix@C=1.12cm{
 \txt{$((A, \cdot, \Phi)$, $(A^*, \circ, \Psi^*)$,$L_{A}^*,L_{A^*}^*)$ \\ a matched pair of \\ commutative differential \\ algebras} \ar[d]_-{{\rm Cor.}~\ref{cor:adjassmat}} \ar@{<->}[r]^-{{\rm Thm.}~\ref{thm:asieqv}} &
 \txt{$(A, \cdot, \Delta, \Phi, \Psi)$ \\ a commutative and \\ cocommutative \\ differential ASI \\ bialgebra} \ar@{<->}[r]^-{{\rm Thm.}~\ref{thm:asieqv}} \ar[d]_-{{\rm Thm.}~\ref{thm:asspoibi}} &
 \txt{$(A \bowtie A^*, \star , \mathfrak{B}_d)$ \\ a double construction of \\ commutative differential \\ Frobenius algebra} \ar[d]_-{{\rm Prop.}~\ref{pro:assFro}} \\
 \txt{$(A, A^*, -\ad_{A}^*, L_{A}^*, -\ad_{A^*}^*, L_{A^*}^*)$ \\ the induced matched \\ pair of Poisson algebras} \ar@{<->}[r]^-{{\rm Thm.}~\ref{thm:poibieq}} &
 \txt{$(A, [\ ,\ ]_A, \cdot, \delta, \Delta)$ \\ the induced \\ Poisson bialgebra} \ar@{<->}[r]^-{{\rm Thm.}~\ref{thm:poibieq}} &
 \txt{$((A \oplus A^*,[\ ,\ ], \star, \mathfrak{B}_d), A, A^*)$ \\ the induced Manin triple \\ of Poisson algebras}
 }
\end{equation*}

\subsection[Coboundary Poisson bialgebras via coboundary commutative and cocommutative differential ASI bialgebras]{Coboundary Poisson bialgebras via coboundary commutative\\ and cocommutative differential ASI bialgebras}

\begin{Definition}[\cite{ni2013poisson}]
 A Poisson bialgebra $(A, [\ ,\ ], \cdot, \delta, \Delta)$ is called \textit{coboundary}
 if $\delta$ and $\Delta$ are respectively defined by
	\begin{gather}
 \delta(a) := (\id \otimes \ad_A(a) + \ad_A(a) \otimes \id )(r), \label{eq:pcbd1}\\
 \Delta(a) := (\id \otimes L_A(a) - L_A(a) \otimes \id)(r), \qquad \forall a \in A,\label{eq:pcbd2}
 \end{gather}
for some $r \in A \otimes A$.
\end{Definition}

\begin{Definition}[\cite{ni2013poisson}]
 Let $(A, [\ ,\ ], \cdot)$ be a Poisson algebra.
 $r \in A \otimes A$ is called a solution of \textit{Poisson Yang--Baxter equation (PYBE) in $(A, [\ ,\ ], \cdot)$} if $r$ is a solution of both equation~\eqref{eq:aybe} and the following equation:
 \begin{equation}
 [r_{12}, r_{13}] + [r_{13}, r_{23}] + [r_{12}, r_{23}] = 0, \label{eq:CYBE}
 \end{equation}
 where for $r = \sum_i a_i \otimes b_i$, we denote
 \begin{gather*}
 [r_{12}, r_{13}] = \sum_{i,j} [a_i, a_j] \otimes b_i \otimes b_j, \qquad
 [r_{13}, r_{23}] = \sum_{i,j} a_i \otimes a_j \otimes [b_i, b_j], \\
 [r_{12},r_{23}] = \sum_{i,j} a_i \otimes [b_i, a_j] \otimes b_j.
 \end{gather*}
\end{Definition}

\begin{Proposition}[{\cite[Theorem~2]{ni2013poisson}}]\label{pro:pybebi}
Let $(A, [\ ,\ ], \cdot)$ be a Poisson algebra and $r \in A \otimes A$.
 Let $\delta\colon A \to A \otimes A$ and $\Delta\colon A \to A \otimes A$ be two linear maps defined by equations~\eqref{eq:pcbd1} and \eqref{eq:pcbd2} respectively.
 If $r$ is an antisymmetric solution of PYBE in $(A, [\ ,\ ], \cdot)$,
 then $(A, [\ ,\ ], \cdot, \delta, \Delta)$ is a~Poisson bialgebra.
\end{Proposition}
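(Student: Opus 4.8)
The plan is to verify, one by one, the four ingredients that make $(A,[\ ,\ ],\cdot,\delta,\Delta)$ a Poisson bialgebra: that $(A,\cdot,\Delta)$ is a commutative and cocommutative ASI bialgebra, that $(A,[\ ,\ ],\delta)$ is a Lie bialgebra, that $(A,\delta,\Delta)$ is a Poisson coalgebra, and finally that the two cross-compatibility conditions \eqref{eq:poibi1} and \eqref{eq:poibi2} hold. Throughout I would substitute the coboundary formulas \eqref{eq:pcbd1}--\eqref{eq:pcbd2} and reduce each axiom to a tensor identity in $r$, using the four operator identities that encode the Poisson structure: $L_A(a)L_A(b)=L_A(a\cdot b)$, $[\ad_A(a),\ad_A(b)]=\ad_A([a,b])$, the Leibniz relation $[\ad_A(a),L_A(b)]=L_A([a,b])$, and $\ad_A(a\cdot b)=L_A(a)\ad_A(b)+L_A(b)\ad_A(a)$ (the latter two being restatements of $[a,b\cdot c]=[a,b]\cdot c+b\cdot[a,c]$).

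The ASI-bialgebra and Lie-bialgebra parts are essentially known coboundary results. For the former, since $(A,\cdot)$ is commutative we have $L_A=R_A$, so \eqref{eq:pcbd2} coincides with the ASI coboundary comultiplication \eqref{eq:cbd}; then Proposition~\ref{pro:cobasi} applies, antisymmetry of $r$ forcing \eqref{eq:cobanti} and the AYBE \eqref{eq:aybe} forcing \eqref{eq:cobcoa}, while $\sigma\Delta=\Delta$ is immediate from $r=-\sigma(r)$ and commutativity of $\cdot$. For the latter, $\delta$ is co-antisymmetric because $r$ is antisymmetric, the cocycle condition \eqref{eq:lie_bi} holds automatically for a coboundary $\delta$, and the co-Jacobi identity reduces to the CYBE \eqref{eq:CYBE}; this is the classical coboundary Lie bialgebra theorem of Drinfeld.

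Next I would dispatch the cross-compatibilities \eqref{eq:poibi1} and \eqref{eq:poibi2}, which are \emph{linear} in $r$. After expanding, the argument is purely operator-theoretic. In \eqref{eq:poibi1} the terms $L_A(a)\otimes\ad_A(b)$ and $L_A(b)\otimes\ad_A(a)$ produced by the $\delta$-terms cancel against the matching negatives produced by the $\Delta$-terms; what survives on the first leg assembles through $\ad_A(a\cdot b)=L_A(a)\ad_A(b)+L_A(b)\ad_A(a)$ into $\ad_A(a\cdot b)\otimes\id$, and on the second leg the Leibniz relation together with $[a,b]+[b,a]=0$ gives $\id\otimes\ad_A(a\cdot b)$, so the right-hand side equals $\delta(a\cdot b)$. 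Condition \eqref{eq:poibi2} collapses analogously, the surviving terms being exactly $-[\ad_A(a),L_A(b)]\otimes\id$ and $\id\otimes[\ad_A(a),L_A(b)]$, which by the Leibniz relation are $-L_A([a,b])\otimes\id$ and $\id\otimes L_A([a,b])$, reproducing $\Delta([a,b])$. Notably neither condition uses the Yang--Baxter equations; they hold for every $r$.

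The main obstacle is the Poisson coalgebra compatibility $(\id\otimes\Delta)\delta=(\delta\otimes\id)\Delta+(\sigma\otimes\id)(\id\otimes\delta)\Delta$. Unlike the cross-compatibilities this is \emph{quadratic} in $r$, since $\Delta$, already carrying one copy of $r$, is applied to $\delta(a)$, which carries a second copy. The plan is to expand both sides as three-tensor expressions in $r$ and to exhibit their difference as a combination obtained by applying operators built from $\ad_A$ and $L_A$, in the various tensor slots, to the AYBE defect $r_{12}r_{13}+r_{13}r_{23}-r_{23}r_{12}$ and the CYBE defect $[r_{12},r_{13}]+[r_{13},r_{23}]+[r_{12},r_{23}]$, so that the PYBE hypothesis makes it vanish. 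Organizing this cancellation is the delicate step: the two Yang--Baxter expressions must be made to interact through the Leibniz relation $[\ad_A(a),L_A(b)]=L_A([a,b])$, which is precisely what converts the pure defects into the mixed bracket-times-product terms $[a,b_i]\cdot b_j$ that appear in the expansion, and keeping track of the flip $\sigma$ and of the signs generated by antisymmetry of $r$ is where the bookkeeping is heaviest.
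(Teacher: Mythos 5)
The paper itself contains no proof of this proposition: it is imported wholesale from \cite[Theorem~2]{ni2013poisson}, so your direct verification is by necessity a different route, and it is a correct one --- the citation buys the paper brevity, while your check buys self-containedness and, usefully, isolates exactly which hypotheses each axiom consumes. The two claims on which your argument genuinely rests both hold up. First, the mixed compatibilities \eqref{eq:poibi1}--\eqref{eq:poibi2} are indeed identities for \emph{every} $r$ (antisymmetry of $r$ is not even needed): expanding with $r=\sum_i a_i\otimes b_i$, the cancelling pairs are precisely the ones you name (e.g.\ in \eqref{eq:poibi1}, $a\cdot a_i\otimes[b,b_i]$ against its negative and $b\cdot a_i\otimes[a,b_i]$ against its negative), after which the Leibniz rule and $[a,b]+[b,a]=0$ collapse the residue to $\delta(a\cdot b)$, and analogously for \eqref{eq:poibi2}. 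Second --- and this was the step you left as a plan rather than executed --- the Poisson-coalgebra compatibility closes in exactly the structured way you predicted, and in fact more cleanly than you feared: writing $D=r_{12}r_{13}+r_{13}r_{23}-r_{23}r_{12}$ for the AYBE defect and $C=[r_{12},r_{13}]+[r_{13},r_{23}]+[r_{12},r_{23}]$ for the CYBE defect, a direct expansion using only the Leibniz rule, commutativity of $\cdot$ and $r=-\sigma(r)$ yields the exact operator identity
\begin{gather*}
(\id\otimes\Delta)\delta(a)-(\delta\otimes\id)\Delta(a)-(\sigma\otimes\id)(\id\otimes\delta)\Delta(a)\\
\qquad{}=(\ad_A(a)\otimes\id\otimes\id)(D)+(\id\otimes\id\otimes L_A(a))(C)-(\id\otimes L_A(a)\otimes\id)(C),
\end{gather*}
so no unresolvable mixed bracket-times-product defects survive and the PYBE hypothesis annihilates the right-hand side term by term. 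Combined with Proposition~\ref{pro:cobasi} for the ASI part (antisymmetry of $r$ disposes of \eqref{eq:cobanti}, AYBE of \eqref{eq:cobcoa}, and cocommutativity of $\Delta$ and co-antisymmetry of $\delta$ follow from $r=-\sigma(r)$ as you say) and Drinfeld's coboundary criterion for the Lie bialgebra part (the cocycle condition \eqref{eq:lie_bi} holding identically, co-Jacobi reducing to \eqref{eq:CYBE}), this completes your outline; the only distance between your proposal and a full proof is writing out the displayed expansion, which you correctly identified as the sole delicate point.
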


\begin{Lemma}\label{lem:tch}
 Let $(A, \cdot, \Phi = \{\partial_1, \partial_2\})$ be a $\Psi = \{\eth_1, \eth_2\}$-admissible commutative differential algebra and $(A, [\ ,\ ], \cdot)$ be the induced Poisson algebra of $(A, \cdot, \Phi)$.
 Let $r \in A \otimes A$ be an antisymmetric solution of $\Psi$-admissible AYBE in $(A, \cdot, \Phi)$.
 Suppose that $\Delta$ is defined by equation~\eqref{eq:pcbd2}.
 Then we have the following conclusions.
 \begin{enumerate}\itemsep=0pt
 \item[$(1)$] $(A, \Delta, \Psi)$ is a cocommutative differential coalgebra.
 \item[$(2)$] If equation~\eqref{eq:vip1} holds, then both equation~\eqref{eq:vip2} and the following equation hold:
 \begin{equation}
 [a, b] = \eth_2(a \cdot \partial_1(b)) - \eth_1(a \cdot \partial_2(b)), \qquad \forall a, b \in A. \label{eq:liepq}
 \end{equation}
 \end{enumerate}
\end{Lemma}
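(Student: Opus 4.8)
The plan is to handle the two parts separately, writing $r=\sum_i a_i\otimes b_i$ and abbreviating $S:=\eth_2\partial_1-\eth_1\partial_2$, so that the hypothesis \eqref{eq:vip1} becomes $S(a)\cdot b=0$ for all $a,b\in A$; equivalently, every element of the image of $S$ annihilates $A$. For part~(1), I first note that commutativity of $(A,\cdot)$ gives $L_A=R_A$, so the map $\Delta$ of \eqref{eq:pcbd2} is precisely the coboundary \eqref{eq:cbd}. Since $(A,\cdot,\Phi)$ is $\Psi$-admissible and $r$ is an antisymmetric solution of $\Psi$-admissible AYBE, Corollary~\ref{cor:admAYBEdasi} yields that $(A,\cdot,\Delta,\Phi,\Psi)$ is a differential ASI bialgebra; by Definition~\ref{def:dasi}(3) this makes $(A,\Delta,\Psi)$ a differential coalgebra, so only cocommutativity remains. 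For that I would push $\sigma$ through $\Delta(a)=(\id\otimes L_A(a)-L_A(a)\otimes\id)(r)$ using $\sigma(\id\otimes L_A(a))=(L_A(a)\otimes\id)\sigma$ and then substitute $\sigma(r)=-r$; the two sign changes cancel and return $\Delta(a)$, giving $\Delta=\sigma\Delta$.

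For the bracket identity \eqref{eq:liepq}, I would apply the admissibility relation \eqref{eq:qadm2}, rewritten as $\eth_k(a\cdot c)=a\cdot\eth_k(c)-\partial_k(a)\cdot c$, to $c=\partial_1(b)$ and $c=\partial_2(b)$. Subtracting the two resulting expressions produces $\eth_2(a\cdot\partial_1(b))-\eth_1(a\cdot\partial_2(b))=a\cdot S(b)+[a,b]$, where $[a,b]$ is the induced bracket \eqref{eq:lie}. Because $a\cdot S(b)=0$ by \eqref{eq:vip1}, equation \eqref{eq:liepq} follows immediately.

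The core of the proof is \eqref{eq:vip2}, which by Remark~\ref{rmk:cc} is the statement $(S\otimes\id)\Delta=0$. The key preliminary observation I would establish is that $S$ is a \emph{derivation} of the algebra $(A,\cdot)$: expanding $S(a\cdot b)$ with the Leibniz rule for $\partial_1,\partial_2$, the relation \eqref{eq:qadm1} for $\eth_1,\eth_2$, and the commutativity $\partial_1\partial_2=\partial_2\partial_1$, all mixed terms collapse to $S(a)\cdot b+a\cdot S(b)$ --- and this uses only the admissibility of $\Psi$, not \eqref{eq:vip1}. Granting this, \eqref{eq:vip1} forces $S(a\cdot x)=S(a)\cdot x+a\cdot S(x)=0$ for all $a,x$, since each summand is a product involving an element of the image of $S$. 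Now I split $(S\otimes\id)\Delta(a)=(S\otimes L_A(a))(r)-(SL_A(a)\otimes\id)(r)$. The second term is $\sum_i S(a\cdot a_i)\otimes b_i=0$ by the vanishing just noted. In the first term $S(a_i)$ occupies the left slot alone and does not annihilate directly, so I would transfer $S$ to the right slot: the conditions \eqref{eq:pqadm1}--\eqref{eq:pqadm2} give $(\partial_k\otimes\id)(r)=(\id\otimes\eth_k)(r)$ and $(\eth_k\otimes\id)(r)=(\id\otimes\partial_k)(r)$, from which $(S\otimes\id)(r)=-(\id\otimes S)(r)$; hence $(S\otimes L_A(a))(r)=(\id\otimes L_A(a))(S\otimes\id)(r)=-\sum_i a_i\otimes a\cdot S(b_i)=0$, again by \eqref{eq:vip1}. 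Therefore $(S\otimes\id)\Delta(a)=0$.

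The one genuinely non-obvious step is the derivation property of the second-order operator $S=\eth_2\partial_1-\eth_1\partial_2$; it is what tames the two mixed second-derivative contributions and lets \eqref{eq:vip1} act. Once that is secured, everything else is bookkeeping: \eqref{eq:vip1} says the image of $S$ annihilates $A$, and the transfer identity $(S\otimes\id)(r)=-(\id\otimes S)(r)$ coming from the admissible-AYBE conditions disposes of the slot where $S$ cannot act term-by-term. I do not expect the quadratic equation \eqref{eq:aybe} to be needed in part~(2); it enters only through Corollary~\ref{cor:admAYBEdasi} in part~(1).
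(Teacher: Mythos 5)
Your proof is correct, and its computational core is the same as the paper's: both arguments hinge on the identity $S(a\cdot b)=S(a)\cdot b+a\cdot S(b)$ for $S=\eth_2\partial_1-\eth_1\partial_2$ (the paper performs exactly this expansion inline via \eqref{eq:qadm1}--\eqref{eq:qadm2} when it shows $\eth_1\partial_2 L_A(a)=\eth_2\partial_1 L_A(a)$, without isolating it as a derivation property) together with the slot-transfer identity $(S\otimes \id)(r)=-(\id\otimes S)(r)$ coming from \eqref{eq:pqadm1}--\eqref{eq:pqadm2}, after which \eqref{eq:vip1} kills every remaining term; your derivation of \eqref{eq:liepq} from \eqref{eq:qadm2} is the paper's computation rearranged. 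Three organizational differences are worth recording, each neutral or mildly in your favor. In part~(1) you obtain the coderivation/coassociativity package by citing Corollary~\ref{cor:admAYBEdasi} (legitimate, since the lemma's hypotheses are exactly the corollary's) and only verify cocommutativity by hand from $\sigma(r)=-r$ and $L_A=R_A$, whereas the paper re-derives $\Delta\eth_k=(\eth_k\otimes\id+\id\otimes\eth_k)\Delta$ directly and calls cocommutativity ``straightforward''. For \eqref{eq:vip2} you prove $(S\otimes\id)\Delta=0$ in the first tensor slot, which is literally the statement of \eqref{eq:vip2}; the paper instead establishes $(\id\otimes S)\Delta(a)=0$ in the second slot and tacitly needs the cocommutativity of $\Delta$ to flip back, an implicit step your version avoids. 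Finally, one harmless inaccuracy: the derivation property of $S$ does not actually use $\partial_1\partial_2=\partial_2\partial_1$ --- the mixed second-derivative terms already cancel between the \eqref{eq:qadm1}- and \eqref{eq:qadm2}-expansions --- though invoking it costs nothing since it is a standing hypothesis, and your expectation that \eqref{eq:aybe} is not needed in part~(2) is confirmed by the paper's proof.
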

\begin{proof} (1)~It is straightforward to show that $(A, \Delta)$ is a cocommutative coalgebra.
 For all $a \in A$ and $k=1, 2$, we have
 \begin{gather*}
 (\eth_k \otimes \id + \id \otimes \eth_k)\Delta(a) = (\eth_k \otimes \id + \id \otimes \eth_k)(\id \otimes L_A(a) - L_A(a) \otimes \id )(r) \\
 \qquad{} \overset{\eqref{eq:pqadm1}}{=} (\id \otimes L_A(a) \partial_k + \id \otimes \eth_k L_A(a) - \eth_k L_A(a) \otimes \id - L_A(a) \partial_k \otimes \id ) (r) \\
\qquad{} \;\overset{\eqref{eq:qadm1}}{=} (\id \otimes L_A(\eth_k(a)) - L_A(\eth_k(a)) \otimes \id ) (r) = \Delta(\eth_k(a)).
	\end{gather*}
 Hence $\eth_k$ is a coderivation on $(A, \Delta)$.
 Therefore, $(A, \Delta, \Psi)$ is a cocommutative differential coalgebra.

 (2)
 Let $a, b \in A$. Then we have
 \begin{align*}
 \eth_1(\partial_2(a \cdot b)) &\overset{\hphantom{(0.0)}}{=} \eth_1(\partial_2(a) \cdot b + a \cdot \partial_2(b)) \\
 &\overset{\eqref{eq:qadm1}}{=} \eth_1(\partial_2(a)) \cdot b - \partial_2(a) \cdot \partial_1(b) + a \cdot \eth_1(\partial_2(b)) - \partial_1(a) \cdot \partial_2(b), \\
 \eth_2(\partial_1(a \cdot b)) &\overset{\hphantom{(0.0)}}{=} \eth_2(\partial_1(a) \cdot b + a \cdot \partial_1(b)) \\
 &\overset{\eqref{eq:qadm1}}{=} \eth_2(\partial_1(a)) \cdot b - \partial_1(a) \cdot \partial_2(b) + a \cdot \eth_2(\partial_1(b)) - \partial_2(a) \cdot \partial_1(b).
 \end{align*}
 Hence by assumption, we have
 $\eth_1(\partial_2(a \cdot b)) = \eth_2(\partial_1(a \cdot b))$, i.e.,
 $\eth_1 \partial_2 L_A(a) = \eth_2 \partial_1 L_A(a)$.
 Therefore
 \begin{gather*}
 (\id \otimes \eth_1 \partial_2)\Delta(a) - (\id \otimes \eth_2 \partial_1)\Delta(a) = (\id \otimes \eth_1 \partial_2 - \id \otimes \eth_2 \partial_1)(\id \otimes L_A(a) - L_A(a) \otimes \id )(r) \\
 = - (L_A(a) \otimes (\eth_1 \partial_2 - \eth_2 \partial_1) )(r) \overset{\eqref{eq:pqadm1}}{=} - (L_A(a) \eth_2 \partial_1 \otimes \id )(r) + (L_A(a) \eth_1 \partial_2 \otimes \id)(r) = 0.
	\end{gather*}
 That is, equation~\eqref{eq:vip2} holds. Moreover, we have
 \begin{gather*}
 \partial_1(a) \cdot \partial_2(b) + \eth_1(a \cdot \partial_2(b)) - \partial_2(a) \cdot \partial_1(b) - \eth_2(a \cdot \partial_1(b))\\
 \qquad{} \overset{\eqref{eq:qadm2}}{=} a \cdot \eth_1(\partial_2(b)) - a \cdot \eth_2(\partial_1(b)) \overset{\eqref{eq:vip1}}{=} 0.
 \end{gather*}
 Therefore equation~\eqref{eq:liepq} holds.
\end{proof}

\begin{Proposition}\label{pro:aybe_to_qybe}
 Let $(A, \cdot, \Phi = \{\partial_1, \partial_2\})$ be a $\Psi = \{\eth_1, \eth_2\}$-admissible commutative differential algebra and $(A, [\ ,\ ], \cdot)$ be the induced Poisson algebra of $(A, \cdot, \Phi)$.
 Suppose that equation~\eqref{eq:vip1} holds.
 Then every solution of $\Psi$-admissible AYBE in $(A, \cdot, \Phi)$ is a solution of PYBE in the Poisson algebra $(A, [\ ,\ ], \cdot)$.
\end{Proposition}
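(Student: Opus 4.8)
Since the PYBE requires both the associative relation \eqref{eq:aybe} and the Lie relation \eqref{eq:CYBE}, and every solution of $\Psi$-admissible AYBE satisfies \eqref{eq:aybe} by definition, the plan is to verify only \eqref{eq:CYBE}. Write $r=\sum_i a_i\otimes b_i$ and set $X_1:=r_{12}r_{13}$, $X_2:=r_{13}r_{23}$, $X_3:=r_{23}r_{12}$, so that \eqref{eq:aybe} reads $X_1+X_2-X_3=0$. The idea is to rewrite each of the three terms $[r_{12},r_{13}]$, $[r_{13},r_{23}]$, $[r_{12},r_{23}]$ of \eqref{eq:CYBE} as a fixed operator assembled from the maps $\eth_1,\eth_2$ applied to $X_1$, $X_2$, $X_3$ respectively, and then to collapse the sum using \eqref{eq:aybe}.

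The two tools I would use throughout are the \emph{moving identities} coming from \eqref{eq:pqadm1}--\eqref{eq:pqadm2}, namely $(\partial_k\otimes\id)(r)=(\id\otimes\eth_k)(r)$ and $(\eth_k\otimes\id)(r)=(\id\otimes\partial_k)(r)$, which transfer a $\partial_k$ or an $\eth_k$ from one leg of a single copy of $r$ to the other leg; together with the alternative form of the induced bracket $[a,b]=\eth_2(a\cdot\partial_1(b))-\eth_1(a\cdot\partial_2(b))$ furnished by \eqref{eq:liepq} (available here because \eqref{eq:vip1} holds, by Lemma~\ref{lem:tch}). Substituting \eqref{eq:liepq} into each bracket, and then pushing the single surviving derivation off the second factor of one copy of $r$ onto the first factor of the other copy via the moving identities, I expect each term to collapse onto the $X$'s: explicitly $[r_{12},r_{13}]=P(X_1)$, $[r_{13},r_{23}]=Q(X_2)$ and $[r_{12},r_{23}]=-Q(X_3)$, where $P=\eth_2\otimes\id\otimes\eth_1-\eth_1\otimes\id\otimes\eth_2$ and $Q=\id\otimes\eth_1\otimes\eth_2-\id\otimes\eth_2\otimes\eth_1$.

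With these three identities, \eqref{eq:aybe} gives $[r_{12},r_{13}]+[r_{13},r_{23}]+[r_{12},r_{23}]=P(X_1)+Q(X_2-X_3)=(P-Q)(X_1)$. A short computation—expanding $\eth_2(a_ia_j)$ and $\eth_1(a_ia_j)$ by the admissibility relations \eqref{eq:qadm1}--\eqref{eq:qadm2} and relocating the resulting derivations with the moving identities—reduces this to $(P-Q)(X_1)=-\sum_{i,j}a_ia_j\otimes b_i\otimes S(b_j)$, where $S:=\eth_2\partial_1-\eth_1\partial_2$. Finally, applying the moving identities twice yields $(\id\otimes\eth_2\partial_1)(r)=(\eth_1\partial_2\otimes\id)(r)$ and $(\id\otimes\eth_1\partial_2)(r)=(\eth_2\partial_1\otimes\id)(r)$, hence $\sum_j a_j\otimes S(b_j)=-\sum_j S(a_j)\otimes b_j$; substituting this relocates $S$ onto the first tensor leg and gives $(P-Q)(X_1)=\sum_{i,j}a_i\,S(a_j)\otimes b_i\otimes b_j$, which vanishes termwise because $a_i\cdot S(a_j)=0$ by \eqref{eq:vip1}. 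Thus \eqref{eq:CYBE} holds and $r$ solves the PYBE.

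The main obstacle is bookkeeping rather than depth: in $X_1$ the two factors $a_i,a_j$ are already fused by the product in the first tensor slot, so one cannot apply a derivation to a single factor directly, and the entire reduction hinges on using \eqref{eq:pqadm1}--\eqref{eq:pqadm2} to transfer each $\partial_k$ or $\eth_k$ between the two legs of one copy of $r$ at the right moment. The only genuinely nontrivial point—and the sole place \eqref{eq:vip1} enters—is the last step: the leftover term still carries $S$ on a tensor leg that is \emph{not} multiplied by anything, and one must recognize that the moving identities push it onto the fused first leg, where \eqref{eq:vip1} forces the annihilation $S(a_j)\cdot a_i=0$.
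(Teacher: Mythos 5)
Your proof is correct: with the moving identities $(\partial_k\otimes\id)(r)=(\id\otimes\eth_k)(r)$ and $(\eth_k\otimes\id)(r)=(\id\otimes\partial_k)(r)$ from \eqref{eq:pqadm1}--\eqref{eq:pqadm2}, substituting \eqref{eq:liepq} into each bracket does give $[r_{12},r_{13}]=P(X_1)$, $[r_{13},r_{23}]=Q(X_2)$ and $[r_{12},r_{23}]=-Q(X_3)$ (the last after relabelling $i\leftrightarrow j$ and using commutativity), and your residual $(P-Q)(X_1)=-\sum_{i,j}a_i\cdot a_j\otimes b_i\otimes S(b_j)$ does vanish after transporting $S$ to the first leg, since \eqref{eq:vip1} says precisely $S(a)\cdot b=0$. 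The paper uses the same toolkit but a leaner organization: it runs a single chain that expands $[r_{12},r_{13}]$ via the \emph{definition} \eqref{eq:lie} of the bracket rather than via \eqref{eq:liepq}, which produces $Q(X_1)$ instead of $P(X_1)$; then \eqref{eq:aybe} gives $Q(X_1)=Q(X_3-X_2)$, and moving the derivations back and recognizing \eqref{eq:liepq} identifies this with $-[r_{12},r_{23}]-[r_{13},r_{23}]$. In other words, had you expanded the first bracket by \eqref{eq:lie} instead of \eqref{eq:liepq}, your sum would have collapsed to $Q(X_1+X_2-X_3)=0$ outright; using \eqref{eq:liepq} uniformly for all three brackets buys symmetry but costs the extra verification $(P-Q)(X_1)=0$, which in effect re-proves, at the tensor level, that the two expressions \eqref{eq:lie} and \eqref{eq:liepq} for the bracket agree. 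One small correction to your commentary: \eqref{eq:vip1} does not enter solely at the last step---it already underwrites \eqref{eq:liepq} via Lemma~\ref{lem:tch}\,(2), as you yourself note---so your arrangement uses it twice, whereas the paper's uses it only through \eqref{eq:liepq}. Note also that, consistently with the statement (``every solution''), neither argument requires $r$ to be antisymmetric: part\,(2) of Lemma~\ref{lem:tch} only needs the admissibility relations \eqref{eq:qadm1}--\eqref{eq:qadm2} together with \eqref{eq:vip1}.
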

\begin{proof}
 Suppose that $r = \sum_i a_i \otimes b_i$ is a solution of $\Psi$-admissible AYBE in $(A, \cdot, \Phi)$.
 Then we have
 \begin{gather*}
 \sum_{i,j}[a_i,a_j] \otimes b_i \otimes b_j = \sum_{i,j}(\partial_1(a_i) \cdot \partial_2(a_j) - \partial_2(a_i) \cdot \partial_1(a_j)) \otimes b_i \otimes b_j\\
 \qquad{} \overset{\eqref{eq:pqadm1}}{=} \sum_{i,j}( a_i \cdot a_j \otimes \eth_1(b_i) \otimes \eth_2(b_j) - a_i \cdot a_j \otimes \eth_2(b_i) \otimes \eth_1(b_j) )\\
 \qquad{} \overset{\eqref{eq:aybe}}{=} \sum_{i,j}( a_i \otimes \eth_1(b_i \cdot a_j) \otimes \eth_2(b_j) - a_i \otimes \eth_1(a_j) \otimes \eth_2(b_i \cdot b_j)) \\
 \qquad\qquad {}- \sum_{i,j}( a_i \otimes \eth_2(b_i \cdot a_j) \otimes \eth_1(b_j) - a_i \otimes \eth_2(a_j) \otimes \eth_1(b_i \cdot b_j) )\\
 \qquad{} \overset{\eqref{eq:pqadm1}}{=} \sum_{i,j}( a_i \otimes \eth_1(b_i \cdot \partial_2(a_j)) \otimes b_j - a_i \otimes a_j \otimes \eth_2(b_i \cdot \partial_1(b_j))) \\
 \qquad\qquad {}- \sum_{i,j}(a_i \otimes \eth_2(b_i \cdot \partial_1(a_j)) \otimes b_j - a_i \otimes a_j \otimes \eth_1(b_i \cdot \partial_2(b_j)) )\\
 \qquad{} \overset{\eqref{eq:liepq}}{=} \sum_{i,j}( -a_i \otimes [b_i, a_j] \otimes b_j - a_i \otimes a_j \otimes [b_i,b_j]).
 \end{gather*}
 Hence $r$ satisfies equation~\eqref{eq:CYBE}.
 Therefore $r$ is a solution of PYBE in $(A, [\ ,\ ], \cdot)$.
\end{proof}

Let $(A, \cdot, \Phi = \{\partial_1, \partial_2\})$ be a $\Psi = \{\eth_1, \eth_2\}$-admissible commutative differential algebra and $(A, [\ ,\ ], \cdot)$ be the induced Poisson algebra of $(A, \cdot, \Phi)$.
Suppose that equation~\eqref{eq:vip1} holds and $r \in A \otimes A$ is an antisymmetric solution of $\Psi$-admissible AYBE in $(A, \cdot, \Phi)$.
On the one hand, by Corollary~\ref{cor:admAYBEdasi}, there is a commutative and cocommutative differential ASI bialgebra $(A, \cdot, \Delta, \Phi, \Psi)$, where $\Delta$ is defined by equation~\eqref{eq:pcbd2}.
Furthermore, by Lemma~\ref{lem:tch}, $(A, \Delta, \Psi)$ is a cocommutative differential coalgebra and equation~\eqref{eq:vip2} holds, and thus by Theorem~\ref{thm:asspoibi},
there is the induced Poisson bialgebra $(A,[\ ,\ ], \cdot, \delta, \Delta)$ of $(A, \cdot, \Delta, \Phi, \Psi)$,
where $\delta$ is defined by equation~\eqref{eq:coP}.
On the other hand, by Proposition~\ref{pro:aybe_to_qybe}, $r$ is an antisymmetric solution of PYBE in the Poisson algebra $(A, [\ ,\ ], \cdot)$
and hence there is a Poisson bialgebra $(A,[\ ,\ ], \cdot, \delta', \Delta)$ by Proposition~\ref{pro:pybebi},
where $\delta'$ is defined by equation~\eqref{eq:pcbd1}.

\begin{Corollary}\label{cor:asbi}
 With the conditions as above.
 Then the two Poisson bialgebras $(A,[\ ,\ ], \cdot, \delta, \Delta)$ and $(A,[\ ,\ ], \cdot, \delta', \Delta)$ coincide.
 Hence we have the following commutative diagram:
 \begin{equation*}
 \xymatrix@C=4cm{
 \txt{$r$ \\ an antisymmetric solution of \\ $\Psi$-admissible AYBE in $(A, \cdot, \Phi)$} \ar[d]_-{{\rm Prop.}~\ref{pro:aybe_to_qybe}} \ar[r]^{{\rm Cor.}~\ref{cor:admAYBEdasi}} &
 \txt{$(A, \cdot, \Delta, \Phi, \Psi)$ \\ a commutative and cocommutative \\ differential ASI bialgebra} \ar[d]^-{{\rm Thm.}~\ref{thm:asspoibi}} \\
 \txt{$r$ \\ an antisymmetric solution of \\ PYBE in $(A, [\ ,\ ], \cdot)$} \ar[r]^{{\rm Prop.}~\ref{pro:pybebi}} &
 \txt{$(A, [\ ,\ ], \cdot, \delta, \Delta)$ \\ the induced Poisson bialgebra}
 }
 \end{equation*}
\end{Corollary}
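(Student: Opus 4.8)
The plan is to observe that both Poisson bialgebras are quintuples sharing the same underlying space $A$, the same Lie bracket $[\ ,\ ]$ given by \eqref{eq:lie}, the same commutative multiplication $\cdot$, and the same cocommutative comultiplication $\Delta$ given by \eqref{eq:pcbd2}; hence ``coincide'' amounts to the single identity $\delta = \delta'$. Here $\delta = (\eth_1 \otimes \eth_2 - \eth_2 \otimes \eth_1)\Delta$ is the cobracket produced along the differential ASI route (via Corollary~\ref{cor:admAYBEdasi} and Theorem~\ref{thm:asspoibi}, i.e.~\eqref{eq:coP}), while $\delta'(a) = (\id \otimes \ad_A(a) + \ad_A(a) \otimes \id)(r)$ is the one produced along the PYBE route (via Proposition~\ref{pro:aybe_to_qybe} and Proposition~\ref{pro:pybebi}, i.e.~\eqref{eq:pcbd1}). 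Writing $r = \sum_i a_i \otimes b_i$ and $S := \eth_2\partial_1 - \eth_1\partial_2$, I would freely use that for antisymmetric $r$ the $\Psi$-admissible AYBE relations \eqref{eq:pqadm1}--\eqref{eq:pqadm2} read $(\partial_k \otimes \id)(r) = (\id \otimes \eth_k)(r)$ and $(\eth_k \otimes \id)(r) = (\id \otimes \partial_k)(r)$, together with the operator identity $L_A(a)\eth_k = L_A(\partial_k(a)) + \eth_k L_A(a)$ from \eqref{eq:pf2} (a consequence of \eqref{eq:qadm2}).

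Next I would expand $\delta'(a) = \sum_i\big([a,a_i] \otimes b_i + a_i \otimes [a,b_i]\big)$. For the first summand I substitute the bracket \eqref{eq:lie}, obtaining $\sum_i [a,a_i] \otimes b_i = (L_A(\partial_1(a))\partial_2 \otimes \id - L_A(\partial_2(a))\partial_1 \otimes \id)(r)$, and then push the derivations onto the second tensor slot through \eqref{eq:pqadm1} to reach $(L_A(\partial_1(a)) \otimes \eth_2 - L_A(\partial_2(a)) \otimes \eth_1)(r)$. For the second summand the crucial device is the \emph{alternative} bracket formula \eqref{eq:liepq} from Lemma~\ref{lem:tch}\,(2), valid because \eqref{eq:vip1} is assumed, namely $[a,b] = \eth_2 L_A(a)\partial_1(b) - \eth_1 L_A(a)\partial_2(b)$; feeding the derivations into the first slot via \eqref{eq:pqadm2} then yields $\sum_i a_i \otimes [a,b_i] = (\eth_1 \otimes \eth_2 L_A(a) - \eth_2 \otimes \eth_1 L_A(a))(r)$.

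Finally I would expand $\delta(a) = (\eth_1 \otimes \eth_2 - \eth_2 \otimes \eth_1)(\id \otimes L_A(a) - L_A(a) \otimes \id)(r)$ into four terms and compare. The ``mixed'' part $(\eth_1 \otimes \eth_2 L_A(a) - \eth_2 \otimes \eth_1 L_A(a))(r)$ matches immediately, leaving $\delta'(a) - \delta(a) = \big((L_A(\partial_1(a)) + \eth_1 L_A(a)) \otimes \eth_2 - (L_A(\partial_2(a)) + \eth_2 L_A(a)) \otimes \eth_1\big)(r)$; applying \eqref{eq:pf2} to each coefficient rewrites this as $(L_A(a)\otimes\id)(\eth_1 \otimes \eth_2 - \eth_2 \otimes \eth_1)(r) = (L_A(a) \otimes S)(r)$, using $(\eth_1 \otimes \eth_2 - \eth_2 \otimes \eth_1)(r) = (\id \otimes S)(r)$ (again \eqref{eq:pqadm2}). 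It thus remains to prove $(L_A(a) \otimes S)(r) = 0$, for which I invoke Lemma~\ref{lem:tch}\,(2) once more: \eqref{eq:vip2} gives $(S \otimes \id)\Delta = 0$, and since $SL_A(b) = 0$ this forces $(S \otimes L_A(b))(r) = 0$ for all $b$; the antisymmetry $\sigma(r) = -r$ then transfers the vanishing across the flip, since $\sigma\big((L_A(a)\otimes S)(r)\big) = -(S \otimes L_A(a))(r) = 0$ and $\sigma$ is injective. I expect the only delicate points to be the choice of the dual bracket formula \eqref{eq:liepq} for the second summand (so that its structure matches that of $\delta$) and this final antisymmetry transfer moving $S$ between tensor slots; everything else is direct substitution and slot-by-slot use of \eqref{eq:pqadm1}--\eqref{eq:pqadm2}.
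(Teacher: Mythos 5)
Your argument is correct, and it runs on the same two ingredients as the paper's proof, namely the slot-transfer identities \eqref{eq:pqadm1}--\eqref{eq:pqadm2} and the alternative bracket formula \eqref{eq:liepq}, but it is organized differently. The paper starts from $\delta(a)=(\eth_1\otimes\eth_2-\eth_2\otimes\eth_1)\Delta(a)$, transfers the derivations across slots to obtain $\big(\id\otimes(\eth_2L_A(a)\partial_1-\eth_1L_A(a)\partial_2)\big)(r)+\big((\eth_2L_A(a)\partial_1-\eth_1L_A(a)\partial_2)\otimes\id\big)(r)$, and then reads \emph{both} slots as $\ad_A(a)$ via \eqref{eq:liepq}, so $\delta=\delta'$ falls out in three lines with no leftover term. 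You instead expand $\delta'$, using \eqref{eq:lie} in the first slot and \eqref{eq:liepq} only in the second; the price is the residual $(L_A(a)\otimes S)(r)$, which you then kill by invoking \eqref{eq:vip2}, the identity $SL_A(b)=0$, and the antisymmetry flip. All of these steps check out, with one citation caveat: $SL_A(b)=0$ is established only inside the proof of Lemma~\ref{lem:tch}\,(2) (from \eqref{eq:vip1}, \eqref{eq:qadm1} and commutativity), not in its statement, so you should point to that computation explicitly rather than assert it. Your closing argument can also be streamlined: the transfer identities give $(\id\otimes S)(r)=-(S\otimes\id)(r)$, and $L_A(a)S=0$ follows directly from \eqref{eq:vip1} and commutativity, whence $(L_A(a)\otimes S)(r)=-(L_A(a)S\otimes\id)(r)=0$ without \eqref{eq:vip2} or the flip; and the residual never appears at all if you apply \eqref{eq:liepq} uniformly to both slots, which collapses your computation to the paper's. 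What your longer route buys is an explicit identification of the discrepancy between the two presentations of the bracket as the single term $(L_A(a)\otimes S)(r)$, making visible exactly where hypothesis \eqref{eq:vip1} enters; the paper's version buys brevity.
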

\begin{proof}
 Let $a \in A$.
 Then we have
 \begin{align*}
 \delta(a)&\overset{\hphantom{(0.00)}}{=} (\eth_1 \otimes \eth_2 - \eth_2 \otimes \eth_1) \Delta(a) =(\eth_1 \otimes \eth_2 - \eth_2 \otimes \eth_1) (\id \otimes L_A(a) - L_A(a) \otimes \id )(r) \\
 &\overset{\eqref{eq:pqadm1}}{=} (\id \otimes \eth_2 L_A(a) \partial_1 - \id \otimes \eth_1 L_A(a) \partial_2 )(r) - (\eth_1 L_A(a) \partial_2 \otimes \id - \eth_2 L_A(a) \partial_1 \otimes \id)(r) \\
 &\overset{\eqref{eq:liepq}}{=} (\id \otimes \ad_A (a))(r) + (\ad_A(a) \otimes \id)(r) = \delta'(a).
	\end{align*}
 Hence the two Poisson bialgebras $(A,[\ ,\ ], \cdot, \delta, \Delta)$ and $(A,[\ ,\ ], \cdot, \delta', \Delta)$ coincide.
\end{proof}

\begin{Definition}[\cite{liu2020noncommutative}]
 Let $(A, [\ ,\ ], \cdot)$ be a Poisson algebra and $(V, \rho, \mu)$ be a module of $(A, [\ ,\ ], \cdot)$.
 A linear map $T\colon V \to A$ is called an \textit{$\mathcal{O}$-operator of $(A, [\ ,\ ], \cdot)$ associated to $(V, \rho, \mu)$} if the following equations hold:
 \begin{gather*}
 [T(u), T(v)] = T(\rho(T(u))v - \rho(T(v))u),\\ 
 T(u) \cdot T(v) = T(\mu(T(u))v+ \mu(T(v))u), \qquad
 \forall u, v \in V. 
 \end{gather*}
	In particular, when $(V, \rho, \mu)$ is taken to be $(A, \ad_A, L_A)$, an $\mathcal O$-operator $R\colon A \to A$ is called a~\textit{Rota--Baxter operator (of weight zero)} on $(A, [\ ,\ ], \cdot)$, that is, $R$ satisfies
	\begin{gather*}
 	 R(a) \cdot R(b) = R(R(a) \cdot b + a \cdot R(b)),\\
 [R(a), R(b)] = R([R(a), b] + [a, R(b)]), \qquad \forall a, b \in A.
	\end{gather*}
\end{Definition}

It is straightforward to get the following conclusion which is similarly to Lemma~\ref{lem:o2r}.

\begin{Lemma}\label{lem:o2rp}
 Let $(A, [\ ,\ ], \cdot)$ be a Poisson algebra and $(V, \rho, \mu)$ be a module of $(A, [\ ,\ ], \cdot)$.
 A linear map $T\colon V \to A$ is an $\mathcal{O}$-operator of $(A, [\ ,\ ], \cdot)$ associated to $(V, \rho, \mu)$ if and only if $\hat T\colon A \oplus V \to A \oplus V$ is a Rota--Baxter operator on the semi-direct product Poisson algebra $(A \ltimes_{\rho, \mu} V, [\ ,\ ], \cdot)$, where $\hat T$ is defined by equation~{\rm (\ref{eq:hatT})}.
\end{Lemma}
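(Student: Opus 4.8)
The plan is to unwind the two defining identities of a Rota--Baxter operator on the semi-direct product Poisson algebra $(A\ltimes_{\rho,\mu}V,[\ ,\ ],\cdot)$ and to observe that, because of $\hat T(a+u)=T(u)$, they collapse precisely onto the two defining identities of an $\mathcal{O}$-operator. A Rota--Baxter operator on a Poisson algebra is by definition a map that is simultaneously a Rota--Baxter operator on the associated commutative algebra and on the associated Lie algebra, so I would verify the multiplicative identity and the Lie identity separately.

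For the associative half, the underlying commutative algebra of $A\ltimes_{\rho,\mu}V$ is exactly the semi-direct product algebra $(A\ltimes_{\mu,\mu}V,\cdot)$, so Lemma~\ref{lem:o2r} applied with $l=r=\mu$ already gives that $\hat T$ is a Rota--Baxter operator on it if and only if $T(u)\cdot T(v)=T(\mu(T(u))v+\mu(T(v))u)$ for all $u,v\in V$, which is the multiplicative $\mathcal{O}$-operator identity. Alternatively this follows from the direct computation
\begin{gather*}
\hat T(a+u)\cdot(b+v)+(a+u)\cdot\hat T(b+v)
=\big(T(u)\cdot b+a\cdot T(v)\big)+\big(\mu(T(u))v+\mu(T(v))u\big),
\end{gather*}
to which applying $\hat T$ annihilates the $A$-component and leaves $T(\mu(T(u))v+\mu(T(v))u)$, to be compared with $\hat T(a+u)\cdot\hat T(b+v)=T(u)\cdot T(v)$.

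For the Lie half I would run the parallel computation with the semi-direct product Lie bracket:
\begin{gather*}
[\hat T(a+u),b+v]+[a+u,\hat T(b+v)]
=\big([T(u),b]+[a,T(v)]\big)+\big(\rho(T(u))v-\rho(T(v))u\big),
\end{gather*}
so that applying $\hat T$ yields $T(\rho(T(u))v-\rho(T(v))u)$, which equals $[\hat T(a+u),\hat T(b+v)]=[T(u),T(v)]$ exactly when the Lie $\mathcal{O}$-operator identity holds. Combining the two halves gives the asserted equivalence.

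The computation carries no real difficulty; the only point to keep in mind is that $\hat T$ kills the $A$-component, so every term that lands in $A$ (namely $T(u)\cdot b$, $a\cdot T(v)$, $[T(u),b]$ and $[a,T(v)]$) is erased and the dependence on $a,b$ disappears, while both sides of each reduced identity already lie in $A$, so no spurious $V$-component condition is imposed. This is exactly why the equivalence reduces cleanly to the two $\mathcal{O}$-operator identities, in complete analogy with Lemma~\ref{lem:o2r}.
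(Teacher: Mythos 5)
Your proof is correct and takes exactly the route the paper intends: the paper omits the argument as a straightforward analogue of Lemma~\ref{lem:o2r}, and your verification---splitting the Rota--Baxter condition on $(A \ltimes_{\rho,\mu} V, [\ ,\ ], \cdot)$ into its commutative half (which is Lemma~\ref{lem:o2r} with $l = r = \mu$) and its Lie half (the parallel computation with the bracket $[a+u,b+v] = [a,b] + \rho(a)v - \rho(b)u$)---is precisely that intended check. Your sign bookkeeping in the Lie computation and your observation that $\hat T$ annihilates the $A$-component, so that the dependence on $a$, $b$ vanishes and both sides of each reduced identity lie in $A$, are both handled correctly.
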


\begin{Lemma}\label{lem:rod}
 Let $(A, \cdot, \Phi = \{\partial_1, \partial_2\})$ be a commutative differential algebra and
 $(A, [\ ,\ ], \cdot)$ be the induced Poisson algebra of $(A, \cdot, \Phi)$.
 If $R$ is a Rota--Baxter operator on $(A, \cdot)$ commuting with $\partial_k$ for all $k=1,2$, then $R$ is a Rota--Baxter operator on the Poisson algebra $(A, [\ ,\ ], \cdot)$.
\end{Lemma}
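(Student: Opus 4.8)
The plan is to observe that the definition of a Rota--Baxter operator on the Poisson algebra $(A, [\ ,\ ], \cdot)$ comprises two identities: the associative one, $R(a)\cdot R(b) = R(R(a)\cdot b + a\cdot R(b))$, and the Lie-theoretic one, $[R(a), R(b)] = R([R(a), b] + [a, R(b)])$. The first identity holds by hypothesis, since $R$ is already a Rota--Baxter operator on $(A, \cdot)$. Hence the entire content of the lemma reduces to verifying the second identity for the induced bracket defined by equation~\eqref{eq:lie}.

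First I would expand the left-hand side. Using equation~\eqref{eq:lie} together with the hypothesis $R\partial_k = \partial_k R$ to rewrite $\partial_k(R(a)) = R(\partial_k(a))$, one obtains $[R(a), R(b)] = R(\partial_1(a))\cdot R(\partial_2(b)) - R(\partial_2(a))\cdot R(\partial_1(b))$. I would then apply the associative Rota--Baxter relation to each of the two products $R(\partial_1(a))\cdot R(\partial_2(b))$ and $R(\partial_2(a))\cdot R(\partial_1(b))$, pulling everything inside a single outer $R$. This expresses $[R(a), R(b)]$ as $R$ applied to the four-term sum $R(\partial_1(a))\cdot \partial_2(b) + \partial_1(a)\cdot R(\partial_2(b)) - R(\partial_2(a))\cdot \partial_1(b) - \partial_2(a)\cdot R(\partial_1(b))$.

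Next I would expand the right-hand side $R([R(a), b] + [a, R(b)])$ directly from equation~\eqref{eq:lie}, once more invoking $R\partial_k = \partial_k R$ to move $R$ past the derivations. One finds $[R(a), b] = R(\partial_1(a))\cdot\partial_2(b) - R(\partial_2(a))\cdot\partial_1(b)$ and $[a, R(b)] = \partial_1(a)\cdot R(\partial_2(b)) - \partial_2(a)\cdot R(\partial_1(b))$, so the argument of the outer $R$ on the right-hand side is exactly the four-term expression produced on the left. A term-by-term comparison then shows the two sides coincide, which completes the verification.

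I do not expect any genuine obstacle: the argument is a short direct computation, and the only point requiring care is the bookkeeping of the four signed terms together with the consistent use of the commutation $R\partial_k = \partial_k R$ in both expansions. No further structural input---such as the Poisson compatibility condition, the matched-pair formalism, or the ASI bialgebra machinery---is needed, which is why the statement is recorded as a lemma.
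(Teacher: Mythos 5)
Your proposal is correct and is essentially the paper's own proof: both verify the Lie Rota--Baxter identity by expanding via equation~\eqref{eq:lie}, using $R\partial_k=\partial_k R$ to move $R$ past the derivations, and applying the Rota--Baxter identity on $(A,\cdot)$ to the products $R(\partial_1(a))\cdot R(\partial_2(b))$ and $R(\partial_2(a))\cdot R(\partial_1(b))$; the only difference is that the paper runs the identical four-term computation from the right-hand side to the left, while you run it from left to right. Your explicit remark that the associative identity is inherited by hypothesis is a point the paper leaves implicit, but it is the same argument.
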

\begin{proof}
 For all $a, b \in A$, we have
 \begin{gather*}
 R([R(a), b] + [a, R(b)]) \\
\qquad{} \overset{\eqref{eq:lie}}{=} R\big(\partial_1(R(a)) \cdot \partial_2(b) - \partial_2(R(a)) \cdot \partial_1(b) + \partial_1(a) \cdot \partial_2(R(b)) - \partial_2(a) \cdot \partial_1(R(b))\big) \\
\qquad{} \overset{\hphantom{(5.1)}}{=} R\big(R(\partial_1(a)) \cdot \partial_2(b) - R(\partial_2(a)) \cdot \partial_1(b) + \partial_1(a) \cdot R(\partial_2(b)) - \partial_2(a) \cdot R(\partial_1(b))\big) \\
 \qquad{}\overset{\hphantom{(5.1)}}{=} R(\partial_1(a)) \cdot R(\partial_2(b)) - R(\partial_2(a)) \cdot R(\partial_1(b)) \\
 \qquad{}\overset{\hphantom{(5.1)}}{=} \partial_1(R(a)) \cdot \partial_2(R(b)) - \partial_2(R(a)) \cdot \partial_1(R(b)) = [R(a), R(b)].
	\end{gather*}
 We finish the proof.
\end{proof}

\begin{Proposition}[{\cite[Proposition~A.6, Theorem~2.4.7]{BaiDouble}}]\label{pro:pybe_o}
 Let $(A, [\ ,\ ], \cdot)$ be a Poisson algebra and $r \in A \otimes A$ be antisymmetric.
 Then $r$ is an antisymmetric solution of PYBE in $(A, [\ ,\ ], \cdot)$ if and only if $r^\sharp$ is an $\mathcal{O}$-operator of $(A, [\ ,\ ], \cdot)$ associated to the module $(A^*, -\ad_A^*, L_A^*)$.
\end{Proposition}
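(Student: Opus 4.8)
The strategy is to split the Poisson Yang--Baxter equation into its two constituent identities and to split the defining conditions of a Poisson $\mathcal{O}$-operator into their associative and Lie halves, then to match the two pairs by means of the separate associative and Lie correspondences. Concretely, a solution $r$ of PYBE in $(A,[\ ,\ ],\cdot)$ is by definition one satisfying both the AYBE \eqref{eq:aybe} and the identity \eqref{eq:CYBE}; and $r^\sharp$ is an $\mathcal{O}$-operator of $(A,[\ ,\ ],\cdot)$ associated to $(A^*,-\ad_A^*,L_A^*)$ precisely when it satisfies the commutative relation $r^\sharp(a^*)\cdot r^\sharp(b^*)=r^\sharp\big(L_A^*(r^\sharp(a^*))b^*+L_A^*(r^\sharp(b^*))a^*\big)$ and the Lie relation $[r^\sharp(a^*),r^\sharp(b^*)]=r^\sharp\big(-\ad_A^*(r^\sharp(a^*))b^*+\ad_A^*(r^\sharp(b^*))a^*\big)$. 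Thus it suffices to prove that, for antisymmetric $r$, equation~\eqref{eq:aybe} is equivalent to the first relation and equation~\eqref{eq:CYBE} is equivalent to the second.

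For the associative half I would appeal directly to results already established for algebras. Since $(A,\cdot)$ is commutative we have $R_A=L_A$, so the bimodule $(A^*,R_A^*,L_A^*)$ coincides with $(A^*,L_A^*,L_A^*)$, and equation~\eqref{eq:AYBEO1} of Theorem~\ref{thm:AYBEO} (taken with the empty set of derivations, or equivalently Corollary~\ref{cor:aybe_o}) reads exactly as the commutative relation above. Hence, for antisymmetric $r$, \eqref{eq:aybe} holds if and only if $r^\sharp$ satisfies the commutative $\mathcal{O}$-operator identity, and this half needs no fresh computation.

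For the Lie half I would invoke the classical correspondence between antisymmetric solutions of the classical Yang--Baxter-type equation~\eqref{eq:CYBE} in the Lie algebra $(A,[\ ,\ ])$ and $\mathcal{O}$-operators associated to the coadjoint module $(A^*,-\ad_A^*)$, namely \cite[Proposition~A.6]{BaiDouble}. If a self-contained argument were wanted, I would write $r=\sum_i a_i\otimes b_i$, use antisymmetry $r=-\sigma(r)$, and evaluate both sides of~\eqref{eq:CYBE} on a pair of covectors through $r^\sharp$: the three tensor legs collapse, via the pairing and the identity $\langle \ad_A^*(x)\xi,y\rangle=\langle \xi,[x,y]\rangle$, into the two coadjoint terms on the right-hand side of the Lie relation. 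One also checks beforehand that $(A^*,-\ad_A^*,L_A^*)$ is genuinely a module of the Poisson algebra, which follows from the adjoint module $(A,\ad_A,L_A)$ together with the dualization lemma for Poisson modules recalled earlier in this section.

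Finally I would assemble the two biconditionals: since PYBE is the conjunction of~\eqref{eq:aybe} and~\eqref{eq:CYBE}, and the Poisson $\mathcal{O}$-operator condition is the conjunction of the commutative and Lie relations, the two established equivalences combine to yield that $r$ is an antisymmetric solution of PYBE if and only if $r^\sharp$ is an $\mathcal{O}$-operator associated to $(A^*,-\ad_A^*,L_A^*)$. The genuinely delicate point is the Lie half: keeping the sign conventions of $\ad_A^*$ consistent and using the antisymmetry of $r$ correctly to fold the three legs of~\eqref{eq:CYBE} into the coadjoint action, since a single misplaced sign there would propagate through the whole identification. The associative half and the bookkeeping of the conjunction are routine.
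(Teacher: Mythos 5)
Your proposal is correct and matches the paper's treatment: the paper gives no proof of Proposition~\ref{pro:pybe_o} but justifies it by citing exactly the two halves you identify --- the associative correspondence (Theorem/Proposition~2.4.7 of \cite{BaiDouble}, which in the commutative case $R_A=L_A$ is the content of Theorem~\ref{thm:AYBEO} with an empty derivation set) and the Lie correspondence for the coadjoint module (Proposition~A.6 of \cite{BaiDouble}) --- assembled by the observation that PYBE and the Poisson $\mathcal{O}$-operator condition are the respective conjunctions. Your sign bookkeeping for $-\ad_A^*$ is also consistent with the paper's convention $\langle \mu^*(a)v^*,v\rangle=\langle v^*,\mu(a)v\rangle$, so nothing further is needed.
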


\begin{Proposition}\label{pro:assopp}
 Let $(A, \cdot, \Phi = \{\partial_1, \partial_2\})$ be a commutative differential algebra and
 $(A, [\ ,\ ], \cdot)$ be the induced Poisson algebra of $(A, \cdot, \Phi)$.
 Let $T\colon V \to A$ be an $\mathcal{O}$-operator of $(A, \cdot, \Phi)$
 associated to a module $(V, \mu, \Omega = \{\alpha_1, \alpha_2\})$.
 Then $T$ is an $\mathcal{O}$-operator of $(A, [\ ,\ ], \cdot)$ associated to $(V, \rho_\mu, \mu)$,
 where $(V, \rho_\mu,\mu)$ is the induced module of $(A, [\ ,\ ], \cdot)$ with respect to $(V, \mu, \Omega)$.
 In particular, when taking $(V, \mu,\Omega) = (A^*, L_A^*, \Psi^*=\{\eth_1^*, \eth_2^*\})$,
 if in addition equation~\eqref{eq:vip1} holds, then $T$ is an $\mathcal{O}$-operator of $(A, [\ ,\ ], \cdot)$ associated to $(A^*, -\ad_A^* , L_A^*)$.
\end{Proposition}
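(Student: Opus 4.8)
The plan is to check separately the two defining relations of an $\mathcal{O}$-operator of the Poisson algebra $(A, [\ ,\ ], \cdot)$ associated to $(V, \rho_\mu, \mu)$. The multiplicative relation $T(u) \cdot T(v) = T(\mu(T(u))v + \mu(T(v))u)$ is immediate: since $T$ is an $\mathcal{O}$-operator of $(A, \cdot, \Phi)$ it is in particular an $\mathcal{O}$-operator of the associative algebra $(A, \cdot)$, and for the commutative case equation~\eqref{eq:oop1} reads exactly this way with $l = r = \mu$. Thus the only real work lies in the Lie relation $[T(u), T(v)] = T(\rho_\mu(T(u))v - \rho_\mu(T(v))u)$.

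First I would expand the left-hand side using the definition~\eqref{eq:lie} of the induced bracket together with the intertwining relation $\partial_k T = T\alpha_k$ of~\eqref{eq:oop2}, giving
\begin{equation*}
 [T(u), T(v)] = T(\alpha_1(u)) \cdot T(\alpha_2(v)) - T(\alpha_2(u)) \cdot T(\alpha_1(v)).
\end{equation*}
Next I would apply the associative $\mathcal{O}$-operator equation~\eqref{eq:oop1} to each of the two products and then rewrite every factor $T\alpha_k$ back as $\partial_k T$ by~\eqref{eq:oop2}, obtaining
\begin{equation*}
 [T(u), T(v)] = T\big( \mu(\partial_1 T(u))\alpha_2(v) - \mu(\partial_2 T(u))\alpha_1(v) + \mu(\partial_2 T(v))\alpha_1(u) - \mu(\partial_1 T(v))\alpha_2(u) \big).
\end{equation*}
Comparing with the definition~\eqref{eq:def_rep_lie} of $\rho_\mu$, the first two terms inside $T$ assemble into $\rho_\mu(T(u))v$ and the last two into $-\rho_\mu(T(v))u$, which is exactly the desired Lie relation.

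The whole computation is routine bookkeeping; the only point demanding care is to keep track of which of $\alpha_1, \alpha_2$ (and correspondingly $\partial_1, \partial_2$) attaches to $u$ and which to $v$ after the two products are expanded, so that the four surviving terms regroup with the correct signs into $\rho_\mu(T(u))v - \rho_\mu(T(v))u$. No genuine obstacle arises beyond avoiding a sign slip here.

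Finally, for the particular case I would specialize $(V, \mu, \Omega)$ to $(A^*, L_A^*, \Psi^* = \{\eth_1^*, \eth_2^*\})$, which is a module of $(A, \cdot, \Phi)$ precisely because $\Psi$ is admissible to $(A, \cdot, \Phi)$; the induced module is then $(A^*, \rho_{L_A^*}, L_A^*)$. Invoking the particular case of Proposition~\ref{pro:assdualrep} with $\beta_k = \eth_k$---for which the condition~\eqref{eq:coadjoint} becomes exactly~\eqref{eq:vip1}---the hypothesis~\eqref{eq:vip1} forces $\rho_{L_A^*} = -\ad_A^*$, so that $(A^*, \rho_{L_A^*}, L_A^*) = (A^*, -\ad_A^*, L_A^*)$. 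The first part of the proposition then immediately yields that $T$ is an $\mathcal{O}$-operator of $(A, [\ ,\ ], \cdot)$ associated to $(A^*, -\ad_A^*, L_A^*)$, completing the argument.
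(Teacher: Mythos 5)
Your proof is correct, but it takes a genuinely different route from the paper's for the main assertion. You verify the two defining identities of an $\mathcal{O}$-operator directly: the multiplicative identity is inherited verbatim from equation~\eqref{eq:oop1} with $l=r=\mu$, and for the Lie identity you expand $[T(u),T(v)]$ via equation~\eqref{eq:lie}, shuttle each $\partial_k$ across $T$ using $\partial_k T = T\alpha_k$ from equation~\eqref{eq:oop2}, apply equation~\eqref{eq:oop1} to the two resulting products, and regroup the four terms according to equation~\eqref{eq:def_rep_lie}; this computation checks out, signs included. The paper instead argues structurally: by Lemma~\ref{lem:o2r} the lift $\hat T$ of equation~\eqref{eq:hatT} is a Rota--Baxter operator on the semi-direct product algebra $(A \ltimes_\mu V, \cdot)$; since $\hat T$ commutes with each $\partial_k + \alpha_k$ (again by $\partial_k T = T\alpha_k$), Lemma~\ref{lem:rod} makes $\hat T$ a Rota--Baxter operator on the induced Poisson algebra, which Corollary~\ref{cor:asssemi} identifies with the semi-direct product Poisson algebra by $(A, [\ ,\ ], \cdot)$ and $(V,\rho_\mu,\mu)$, and Lemma~\ref{lem:o2rp} descends back to $T$. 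Your computation is shorter in prerequisites and in effect re-proves the content of Lemma~\ref{lem:rod} in the $\mathcal{O}$-operator setting, whereas the paper's argument buys uniformity: it reuses the established dictionary between $\mathcal{O}$-operators and Rota--Baxter operators and simultaneously exhibits the compatibility of the semi-direct product and induction constructions, in line with the commutative diagrams of Section~\ref{sec:poisson}. For the particular case your argument coincides with the paper's: both invoke the particular case of Proposition~\ref{pro:assdualrep} with $\beta_k = \eth_k$, under which condition~\eqref{eq:coadjoint} is exactly equation~\eqref{eq:vip1}, forcing $\rho_{L_A^*} = -\ad_A^*$.
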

\begin{proof}
 Since $T\colon V \to A$ is an $\mathcal{O}$-operator of $(A, \cdot, \Phi)$
 associated to $(V, \mu, \Omega)$, by Lemma~\ref{lem:o2r}, the linear map $\hat T$ defined by equation~(\ref{eq:hatT}) is a Rota--Baxter operator on the semi-direct algebra $(A \ltimes_\mu V, \cdot)$. Moreover, since $\partial_k T = T\alpha_k$ for all $k=1,2$, we have $\hat T$ commutes with $\partial_k + \alpha_k$ for all $k=1,2$.
 By Lemma~\ref{lem:rod}, $\hat T$ is a Rota--Baxter operator on the induced Poisson algebra of $(A \ltimes_\mu V, \cdot, \Phi + \Omega)$.
 By Corollary~\ref{cor:asssemi}, the induced Poisson algebra is exactly the semi-direct product Poisson algebra by $(A, [\ ,\ ], \cdot)$ and $(V, \rho_\mu, \mu)$. Hence $\hat T$ is a Rota--Baxter operator on the semi-direct product Poisson algebra by $(A, [\ ,\ ], \cdot)$ and $(V, \rho_\mu, \mu)$.
 Therefore $T$ is an $\mathcal{O}$-operator of $(A, [\ ,\ ], \cdot)$ associated to $(V, \rho_\mu, \mu)$ by Lemma~\ref{lem:o2rp}.
 The particular case follows from Proposition~\ref{pro:assdualrep}.
\end{proof}

Combining Corollary~\ref{cor:aybe_o}, Propositions~\ref{pro:aybe_to_qybe}, \ref{pro:pybe_o} and \ref{pro:assopp} together, we have the following conclusion.
\begin{Corollary}
 Let $(A, \cdot, \Phi = \{\partial_1, \partial_2\})$ be a $\Psi = \{\eth_1, \eth_2\}$-admissible commutative differential algebra and
 $(A, [\ ,\ ], \cdot)$ be the induced Poisson algebra of $(A, \cdot, \Phi)$.
 Suppose that equation~\eqref{eq:vip1} holds and $r$ is an antisymmetric solution of $\Psi$-admissible AYBE in $(A,\cdot,\Phi)$.
 On the one hand, $r$ is a solution of PYBE in the Poisson algebra $(A, [\ ,\ ], \cdot)$ by Proposition~{\rm \ref{pro:aybe_to_qybe}} and
 hence $r^\sharp$ is an $\mathcal{O}$-operator of $(A, [\ ,\ ], \cdot)$ associated to $(A^*, -\ad_A^*, L_A^*)$ by Proposition~{\rm \ref{pro:pybe_o}}.
 On the other hand, $r^\sharp$ is an $\mathcal{O}$-operator of $(A, \cdot, \Phi)$ associated to $(A^*, L_A^*, \Psi^*)$ by Corollary~{\rm \ref{cor:aybe_o}} and
 hence $r^\sharp$ is an $\mathcal{O}$-operator of $(A, [\ ,\ ], \cdot)$ associated to $(A^*, -\ad_A^*, L_A^*)$ by Proposition~{\rm \ref{pro:assopp}}.
 That is, the two approaches to get $r^\sharp$ as an $\mathcal{O}$-operator of $(A, [\ ,\ ], \cdot)$ coincide, and thus we have the following commutative diagram:
 \begin{equation*}
 \xymatrix@C=3.5cm{
 \txt{$r$ \\ an antisymmetric solution of \\ $\Psi$-admissible AYBE in $(A, \cdot, \Phi)$} \ar[d]^-{{\rm Prop.}~\ref{pro:aybe_to_qybe}} \ar[r]^-{{\rm Cor.}~\ref{cor:aybe_o}} &
 \txt{$r^\sharp$\\ an $\mathcal{O}$-operator of $(A, \cdot, \Phi)$ \\ associated to $(A^*, L_A^*, \Psi^*)$} \ar[d]^-{{\rm Prop.}~\ref{pro:assopp} } \\
 \txt{$r$ \\ an antisymmetric solution of \\ PYBE in $(A, [\ ,\ ], \cdot)$ } \ar[r]^-{{\rm Prop.}~\ref{pro:pybe_o}}
 & \txt{$r^\sharp$ \\ an $\mathcal{O}$-operator of $(A, [\ ,\ ], \cdot)$ \\ associated to $(A^*, -\ad_A^*, L_A^*)$}
 }
 \end{equation*}
\end{Corollary}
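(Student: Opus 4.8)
The plan is to check that both ways around the square land on one and the same $\mathcal{O}$-operator, so that the ``coincidence'' amounts to matching the two target module structures. First I would follow the left-hand then bottom route. Since equation~\eqref{eq:vip1} holds, Proposition~\ref{pro:aybe_to_qybe} promotes the antisymmetric solution $r$ of $\Psi$-admissible AYBE in $(A,\cdot,\Phi)$ to an antisymmetric solution of PYBE in the induced Poisson algebra $(A,[\ ,\ ],\cdot)$; Proposition~\ref{pro:pybe_o} then reformulates this as the statement that $r^\sharp$ is an $\mathcal{O}$-operator of $(A,[\ ,\ ],\cdot)$ associated to the module $(A^*,-\ad_A^*,L_A^*)$.

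Next I would follow the top then right-hand route. By Corollary~\ref{cor:aybe_o}, together with the standing hypothesis that $(A,\cdot,\Phi)$ is $\Psi$-admissible, the same $r$ makes $r^\sharp$ an $\mathcal{O}$-operator of the differential algebra $(A,\cdot,\Phi)$ associated to the bimodule $(A^*,L_A^*,\Psi^*)$ (note $R_A^*=L_A^*$ by commutativity). Feeding this into the particular case of Proposition~\ref{pro:assopp} with $(V,\mu,\Omega)=(A^*,L_A^*,\Psi^*)$, and invoking equation~\eqref{eq:vip1} once more, I obtain that $r^\sharp$ is an $\mathcal{O}$-operator of the induced Poisson algebra $(A,[\ ,\ ],\cdot)$ associated to the same module $(A^*,-\ad_A^*,L_A^*)$.

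Both routes therefore output a single linear map $r^\sharp\colon A^*\to A$, realized as an $\mathcal{O}$-operator of the one Poisson algebra $(A,[\ ,\ ],\cdot)$ relative to the one module $(A^*,-\ad_A^*,L_A^*)$, so the two outputs coincide and the diagram commutes. The only step requiring genuine care — and the crux of the argument — is confirming that the target module really is the same on both sides: along the right-hand route the module produced by Proposition~\ref{pro:assopp} is a priori the induced module $(A^*,\rho_{L_A^*},L_A^*)$, and one must know that $\rho_{L_A^*}=-\ad_A^*$. The hard part is thus already packaged in the particular case of Proposition~\ref{pro:assdualrep}, which shows that this identity holds precisely under equation~\eqref{eq:coadjoint}; since hypothesis~\eqref{eq:vip1} is exactly the instance $\beta_k=\eth_k$ of~\eqref{eq:coadjoint}, the identification is valid and the proof is complete.
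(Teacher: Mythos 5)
Your proposal is correct and takes essentially the same route as the paper, which offers no separate proof for this corollary but states it precisely as the combination of Corollary~\ref{cor:aybe_o}, Propositions~\ref{pro:aybe_to_qybe}, \ref{pro:pybe_o} and~\ref{pro:assopp}, with the coincidence of the two routes being immediate once both land on $r^\sharp$ as an $\mathcal{O}$-operator of $(A,[\ ,\ ],\cdot)$ associated to $(A^*,-\ad_A^*,L_A^*)$. Your identification of the crux — that the induced module $(A^*,\rho_{L_A^*},L_A^*)$ equals $(A^*,-\ad_A^*,L_A^*)$ because hypothesis~\eqref{eq:vip1} is exactly the instance $\beta_k=\eth_k$ of~\eqref{eq:coadjoint} in Proposition~\ref{pro:assdualrep} — is precisely the content the paper packages into the particular case of Proposition~\ref{pro:assopp}.
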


\subsection[Poisson bialgebras via O-operators of commutative differential algebras and differential Zinbiel algebras]{Poisson bialgebras via $\boldsymbol{\mathcal{O}}$-operators of commutative differential algebras\\ and differential Zinbiel algebras}

\begin{Proposition}\label{pro:cond}
 Let $(A, \cdot, \Phi = \{\partial_1,\partial_2\})$ be a commutative differential algebra and
 $(A, [\ ,\ ], \cdot)$
 be the induced Poisson algebra of $(A, \cdot, \Phi)$.
 Suppose that $(V, \mu, \Omega = \{\alpha_1, \alpha_2\})$ is a module of $(A, \cdot, \Phi)$ and
 $(V, \rho_\mu, \mu)$ is the induced module of $(A, [\ ,\ ], \cdot)$ with respect to $(V, \mu, \Omega)$.
 Then $(A \ltimes_{\mu^*} V^*, \cdot, \Phi - \Omega^*)$ is a $(-\Phi + \Omega^*)$-admissible commutative differential algebra.
 Furthermore, $(A \ltimes_{-\rho_\mu^*, \mu^*} V^*, [\ ,\ ], \cdot)$ is the induced Poisson algebra of $(A \ltimes_{\mu^*} V^*, \cdot, \Phi - \Omega^*)$
 and equation~\eqref{eq:vip1} holds, where $\eth_k$ is replaced by $-\partial_k + \alpha_k^*$ and $\partial_k$ is replaced by $\partial_k - \alpha_k^*$ for all $k=1, 2$.
\end{Proposition}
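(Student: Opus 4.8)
The plan is to assemble the three assertions from machinery already in place, keeping careful track of the direct-sum notation: throughout, $\Phi - \Omega^* = \{\partial_k - \alpha_k^*\}_{k=1}^2$ and $-\Phi + \Omega^* = \{-\partial_k + \alpha_k^*\}_{k=1}^2$ denote the operators on $A \oplus V^*$ acting as $\partial_k$ (resp.\ $-\partial_k$) on $A$ and as $-\alpha_k^*$ (resp.\ $\alpha_k^*$) on $V^*$. In particular $-\Phi + \Omega^* = -(\Phi - \Omega^*)$, a fact I will use repeatedly.

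For the differential-algebra part of the first assertion I would invoke Example~\ref{ex:admissible}. Since $(V, \mu, \Omega)$ is a module of $(A, \cdot, \Phi)$, part~(2) of that example with all $\theta_k = 0$ shows that $(V^*, \mu^*, \mu^*, \{-\alpha_k^*\}_{k=1}^2)$ is a module of $(A, \cdot, \Phi)$; hence by Proposition~\ref{pro:semidirect} the semi-direct product $(A \ltimes_{\mu^*} V^*, \cdot, \Phi - \Omega^*)$ is a commutative differential algebra. For the admissibility part, I would apply the ``in particular'' statement of Example~\ref{ex:admissible}(2), again with $\theta_k = 0$, to this differential algebra itself: it yields that the negated derivation set $-(\Phi - \Omega^*) = -\Phi + \Omega^*$ is admissible to $(A \ltimes_{\mu^*} V^*, \cdot, \Phi - \Omega^*)$; unwinding equations~\eqref{eq:qadm1}--\eqref{eq:qadm2} this is nothing but the Leibniz rule for $\Phi - \Omega^*$.

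The third assertion I would dispatch next, as it is essentially immediate. Writing $D_k := \partial_k - \alpha_k^*$, the prescribed substitutions make $\eth_k = -D_k$ while the derivation used is $D_k$, so equation~\eqref{eq:vip1} becomes $D_2 D_1(x) \cdot y = D_1 D_2(x) \cdot y$ for all $x, y \in A \oplus V^*$. Since $\partial_1, \partial_2$ commute and $\alpha_1, \alpha_2$ commute, the operators $D_1$ and $D_2$ commute, and the identity holds trivially.

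The heart of the proof is the second assertion, which I would obtain from Corollary~\ref{cor:asssemi} applied to the module $(V^*, \mu^*, -\Omega^*)$: the induced Poisson algebra of $(A \ltimes_{\mu^*} V^*, \cdot, \Phi - \Omega^*)$ is the semi-direct product Poisson algebra of $(A, [\ ,\ ], \cdot)$ with the induced module $(V^*, \rho_{\mu^*}, \mu^*)$, where $\rho_{\mu^*}$ is given by~\eqref{eq:def_rep_lie} with structure maps $\mu^*$ and $\{-\alpha_k^*\}$. It then remains to identify $\rho_{\mu^*}$ with $-\rho_\mu^*$, and this operator-ordering identification is the one genuinely error-prone point. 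I would settle it by the rewriting used in the proof of Lemma~\ref{lem:rep_poisson}, where $\rho_\mu(a) = \alpha_1 \mu(a) \alpha_2 - \alpha_2 \mu(a) \alpha_1$. Running the same rewriting for $(V^*, \mu^*, -\Omega^*)$ gives $\rho_{\mu^*}(a) = (-\alpha_1^*)\mu^*(a)(-\alpha_2^*) - (-\alpha_2^*)\mu^*(a)(-\alpha_1^*) = \alpha_1^*\mu^*(a)\alpha_2^* - \alpha_2^*\mu^*(a)\alpha_1^*$, while transposing $\rho_\mu(a)$ (which reverses factor order) yields $\rho_\mu^*(a) = \alpha_2^*\mu^*(a)\alpha_1^* - \alpha_1^*\mu^*(a)\alpha_2^*$, so indeed $\rho_{\mu^*} = -\rho_\mu^*$. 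Equivalently, one checks that hypothesis~\eqref{eq:dra} of Proposition~\ref{pro:assdualrep} holds for $\beta_k = -\alpha_k$ — using~\eqref{eq:repd1} it collapses to $\mu((\partial_1\partial_2 - \partial_2\partial_1)(a))v = 0$, true by commutativity — and cites that proposition. Either route identifies the semi-direct product Poisson algebra with $(A \ltimes_{-\rho_\mu^*, \mu^*} V^*, [\ ,\ ], \cdot)$, completing the argument. The main obstacle is thus bookkeeping rather than conceptual: confirming that the admissibility hypotheses of the cited results genuinely hold with $\beta_k = -\alpha_k$, and handling the transpose-induced reversal in the identification $\rho_{\mu^*} = -\rho_\mu^*$.
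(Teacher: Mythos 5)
Your proposal is correct and follows essentially the same route as the paper: Example~\ref{ex:admissible} (with $\theta_k=0$) plus the semi-direct product construction for the $(-\Phi+\Omega^*)$-admissibility claim, Corollary~\ref{cor:asssemi} combined with Proposition~\ref{pro:assdualrep} for the identification $\rho_{\mu^*}=-\rho_\mu^*$, and commutativity of $\partial_k-\alpha_k^*$ for equation~\eqref{eq:vip1}. In particular, your check that hypothesis~\eqref{eq:dra} with $\beta_k=-\alpha_k$ collapses via~\eqref{eq:repd1} to $\mu((\partial_1\partial_2-\partial_2\partial_1)(a))v=0$ is precisely the computation in the paper's proof, and your alternative direct operator computation $\rho_{\mu^*}(a)=\alpha_1^*\mu^*(a)\alpha_2^*-\alpha_2^*\mu^*(a)\alpha_1^*=-\rho_\mu^*(a)$ is a valid shortcut to the same identity.
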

\begin{proof}
By Example~\ref{ex:admissible} and the proof of Corollary~\ref{cor:qminp},
$(A \ltimes_{\mu^*} V^*, \cdot, \Phi - \Omega^*)$ is a $(-\Phi + \Omega^*)$-admissible commutative differential algebra.
By Corollary~\ref{cor:asssemi}, $(A \ltimes_{\rho_{\mu^*}, \mu^*} V^*, [\ ,\ ], \cdot)$ is the induced Poisson algebra of $(A \ltimes_{\mu^*} V^*, \cdot,
\Phi - \Omega^*)$,
where $(V^*, \rho_{\mu^*}, \mu^*)$ is the induced module of $(A, [\ ,\ ], \cdot)$ with respect to the module $(V^*, \mu^{*}, -\Omega^*)$.
For all $a \in A, v \in V$, we have
\begin{gather*}
 (-\alpha_2)(\mu(\partial_1(a))v) - (-\alpha_1)(\mu(\partial_2(a))v) + \mu(\partial_1(a))\alpha_2(v) - \mu(\partial_2(a))\alpha_1(v) \\
\qquad{} = (\mu(\partial_1(a))\alpha_2(v) - \alpha_2(\mu(\partial_1(a))v) ) - (\mu(\partial_2(a))\alpha_1(v) - \alpha_1(\mu(\partial_2(a))v) ) \\
 \qquad{} = - \mu(\partial_2(\partial_1(a)))v + \mu(\partial_1(\partial_2(a)))v = 0.
\end{gather*}
Therefore, by Proposition~\ref{pro:assdualrep}, we have
$\rho_{\mu^*} = - \rho_{\mu}^*$
and hence $(A \ltimes_{-\rho_\mu^*, \mu^*} V^*, [\ ,\ ], \cdot)$ is exactly the
induced Poisson algebra of $(A \ltimes_{\mu^*} V^*, \cdot, \Phi - \Omega)$.
Moreover, we have
\begin{equation*}
 (-\partial_2+\alpha_2^*)((\partial_1-\alpha_1^*)(a+v^*)) \cdot (b+w^*) =
 (-\partial_1+\alpha_1^*)((\partial_2-\alpha_2^*)(a+v^*)) \cdot (b+w^*),
\end{equation*}
for all $a, b \in A$, $v^*, w^* \in V^*$. Hence equation~\eqref{eq:vip1} holds.
\end{proof}

\begin{Remark}One may consider extending the above study to the case that
$\big(A \ltimes_{\mu^*} V^*, \cdot, \Phi + \{-\alpha_k^* + \theta_k\id_{V^*}\}_{k=1}^2\big)$ is a $\big(\{-\partial_k + \theta_k \id_A\}_{k=1}^2 + \Omega^*\big)$-admissible commutative differential algebra for any $\theta_1,\theta_2\in \mathbb F$ as in Corollary~\ref{cor:qminp}.
Then by a similar argument,
$(A \ltimes_{-\rho_\mu^*, \mu^*} V^*, [\ ,\ ], \cdot)$ is the induced Poisson algebra of $\big(A \ltimes_{\mu^*} V^*, \cdot, \Phi+\{-\alpha_k^* + \theta_k\id_{V^*}\}_{k=1}^2\big)$,
where $(V, \rho_\mu, \mu)$ is the induced module of $(A, [\ ,\ ], \cdot)$ with respect to $(V, \mu, \{\alpha_1 - \theta_1 \id_V, \alpha_2 - \theta_2 \id_V \})$.
Furthermore, it is straightforward to show that equation~\eqref{eq:vip1} holds
where $\eth_k$ is replaced by $-\partial_k + \theta_k \id_{A} + \alpha_k^*$ and $\partial_k$ is replaced by $\partial_k - \alpha_k^* + \theta_k \id_{V^*}$ for all $k = 1, 2$
if and only if
\begin{gather}
 \theta_2 \partial_1(a)b = \theta_1 \partial_2(a)b, \qquad\!
 \theta_2 \mu(\partial_1(a))v = \theta_1\mu(\partial_2(a))v, \qquad\!
 \theta_2 \alpha_1(\mu(a)v) = \theta_1 \alpha_2(\mu(a)v),\!\!\!\! \label{eq:gsp}
\end{gather}
for all $a \in A, v \in V$.
Suppose that equation~\eqref{eq:gsp} holds for some $\theta_k \neq 0$.
We assume that $\theta_1 \neq 0$ without loss of generality.
Let $a, b \in A$.
Then $\partial_2(a) \cdot b = \theta_1^{-1} \theta_2 \partial_1(a) \cdot b$ and thus we have
\begin{equation*}
 [a, b] = \partial_1(a) \cdot \partial_2(b) - \partial_2(a) \cdot
 \partial_1(b)
 = \theta_1^{-1} \theta_2 \partial_1(a) \cdot \partial_1(b) - \theta_1^{-1} \theta_2 \partial_1(a) \cdot \partial_1(b) = 0.
\end{equation*}
Similarly, the Lie bracket in $(A \ltimes_{-\rho_\mu^*, \mu^*} V^*, [\ ,\ ], \cdot)$ is also trivial.
So in this sense, we only need to consider the case that $\theta_1 = \theta_2 = 0$ as we have done in Proposition~\ref{pro:cond}.
\end{Remark}

\begin{Theorem}[{\cite[Theorem~5.24]{liu2020noncommutative}}]\label{thm:oopsemi}
Let $(V, \rho, \mu)$ be a module of a Poisson algebra $(A, [\ ,\ ], \cdot)$.
 Let $T\colon V \to A$ be a linear map. Then $r = T - \sigma(T)$ is an antisymmetric solution of PYBE in $(A \ltimes_{-\rho^*, \mu^*} V^*, [\ ,\ ], \cdot)$ if and only if
 $T$ is an $\mathcal{O}$-operator of $(A, [\ ,\ ], \cdot)$ associated to $(V, \rho, \mu)$.
\end{Theorem}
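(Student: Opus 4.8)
The plan is to exploit the fact that both the Poisson Yang--Baxter equation and the defining equations of an $\mathcal{O}$-operator of a Poisson algebra split cleanly into an ``associative part'' governed by the commutative multiplication $\cdot$ and a ``Lie part'' governed by the bracket $[\ ,\ ]$, so that the claimed equivalence is the conjunction of two independent equivalences. Recall that by definition $r$ is a solution of PYBE in $(A \ltimes_{-\rho^*, \mu^*} V^*, [\ ,\ ], \cdot)$ precisely when it satisfies both equation~\eqref{eq:aybe} (the AYBE, involving only the multiplication) and equation~\eqref{eq:CYBE} (the CYBE, involving only the bracket). Likewise, $T$ is an $\mathcal{O}$-operator of $(A, [\ ,\ ], \cdot)$ associated to $(V, \rho, \mu)$ precisely when the associative identity $T(u) \cdot T(v) = T(\mu(T(u))v + \mu(T(v))u)$ and the Lie identity $[T(u), T(v)] = T(\rho(T(u))v - \rho(T(v))u)$ both hold.

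First I would treat the associative part. Since $(A, \cdot)$ is commutative and $(V, \mu)$ is an $A$-module, the triple $(V, \mu, \mu)$ is an $A$-bimodule, and the associative algebra underlying $A \ltimes_{-\rho^*, \mu^*} V^*$ is exactly the semi-direct product $(A \ltimes_{\mu^*, \mu^*} V^*, \cdot)$. Applying Proposition~\ref{pro:O-cons} with $l = r = \mu$ then yields that $r = T - \sigma(T)$ satisfies equation~\eqref{eq:aybe} in this algebra if and only if $T$ satisfies the associative $\mathcal{O}$-operator identity. One subtle point to verify here is that the identification of $T$ with an element of $(A \ltimes_{\mu^*, \mu^*} V^*)^{\otimes 2}$ used in Proposition~\ref{pro:O-cons} agrees with the identification of $r$ as an element of $(A \oplus V^*)^{\otimes 2}$ entering PYBE; this is immediate from $\Hom(V, A) \cong A \otimes V^*$ together with the antisymmetrization $r = T - \sigma(T)$.

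Next I would treat the Lie part. The Lie algebra underlying $A \ltimes_{-\rho^*, \mu^*} V^*$ is the semi-direct product Lie algebra $A \ltimes_{-\rho^*} V^*$ built from the dual representation $(V^*, -\rho^*)$ of the Lie algebra $(A, [\ ,\ ])$ on the module $(V, \rho)$. The required statement is the Lie-algebraic analogue of Proposition~\ref{pro:O-cons}: for $r = T - \sigma(T)$, the CYBE~\eqref{eq:CYBE} holds in $A \ltimes_{-\rho^*} V^*$ if and only if $T$ satisfies the Lie $\mathcal{O}$-operator identity. This is a classical fact relating solutions of the classical Yang--Baxter equation to $\mathcal{O}$-operators (relative Rota--Baxter operators) of Lie algebras; I would either invoke it directly or reprove it by expanding $[r_{12}, r_{13}] + [r_{13}, r_{23}] + [r_{12}, r_{23}]$ in a dual basis, collecting the terms lying in $A \otimes A \otimes V^*$ and its permutations, and reading off that their vanishing is exactly the Lie $\mathcal{O}$-operator equation.

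Combining the two equivalences then gives the theorem: $r = T - \sigma(T)$ is an antisymmetric solution of PYBE, i.e.\ satisfies both \eqref{eq:aybe} and \eqref{eq:CYBE}, if and only if $T$ satisfies both $\mathcal{O}$-operator identities, i.e.\ is an $\mathcal{O}$-operator of the Poisson algebra; the antisymmetry of $r$ is automatic from its shape $T - \sigma(T)$. The main obstacle I anticipate is not any single computation but verifying that the decoupling is genuinely clean --- that neither the PYBE nor the $\mathcal{O}$-operator condition produces ``mixed'' terms coupling the bracket and the multiplication. This rests on the Poisson compatibility and on the specific choice of dual module $(-\rho^*, \mu^*)$, which is exactly what makes $A \ltimes_{-\rho^*, \mu^*} V^*$ a Poisson algebra; confirming that the cross-compatibility of this semi-direct product imposes no constraint on $r$ beyond \eqref{eq:aybe} and \eqref{eq:CYBE} is the one place where care is needed.
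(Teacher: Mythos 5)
Your proposal is correct, but there is no in-paper proof to compare it against: the paper imports this statement verbatim from \cite[Theorem~5.24]{liu2020noncommutative} and never proves it, so your blind argument should be measured against the canonical proof, which is exactly the decoupling you describe. Two remarks sharpen your sketch. First, the ``main obstacle'' you flag at the end is actually vacuous by fiat: the paper's definition of PYBE demands only that $r$ satisfy \eqref{eq:aybe} and \eqref{eq:CYBE} \emph{separately}, and the definition of an $\mathcal{O}$-operator of a Poisson algebra is likewise the bare conjunction of the associative identity and the Lie identity, so no mixed term can arise anywhere in the equivalence; the Poisson (Leibniz) compatibility of the semidirect product is needed only to know that $(V^*,-\rho^*,\mu^*)$ is a module of the Poisson algebra (the paper's quoted \cite[Proposition~2.5]{liu2020noncommutative}), hence that $A\ltimes_{-\rho^*,\mu^*}V^*$ is a Poisson algebra and the statement is well-posed. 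Second, your two halves are correctly matched to the available tools: since $(A,\cdot)$ is commutative and $l=r=\mu$, the dual bimodule is $(V^*,\mu^*,\mu^*)$ and the underlying associative algebra is precisely $A\ltimes_{\mu^*}V^*$, so Proposition~\ref{pro:O-cons} gives the AYBE half verbatim; for the CYBE half, your sign bookkeeping is right --- in this paper $\rho^*$ denotes the plain transpose, so the dual Lie representation is $-\rho^*$, which is exactly the representation appearing in the semidirect product of the statement, and the classical Lie-algebraic analogue of Proposition~\ref{pro:O-cons} (due to Bai, and reprovable by the same dual-basis expansion used in the proof of Theorem~\ref{thm:aybesemi}\,(1)) then applies directly. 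With those two points made explicit, your outline fills in to a complete proof.
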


By Proposition~\ref{pro:cond}, combining Corollary~\ref{cor:qminp}, Propositions~\ref{pro:aybe_to_qybe} and~\ref{pro:assopp} and Theorem~\ref{thm:oopsemi} together,
we have the following conclusion.

\begin{Corollary}\label{cor:oybe}With the conditions in Proposition~{\rm \ref{pro:cond}}.
Suppose that $T\colon V \to A$ is an $\mathcal{O}$-operator of $(A, \cdot, \Phi)$ associated to $(V, \mu, \Omega)$.
On the one hand, $T$ is an $\mathcal{O}$-operator of the induced Poisson algebra $(A, [\ ,\ ], \cdot)$ associated to $(V, \rho_\mu, \mu)$ by
Proposition~{\rm \ref{pro:assopp}} and hence $r = T - \sigma(T)$ is an antisymmetric solution of PYBE in
$(A \ltimes_{-\rho_\mu^*,\mu^*} V^*,[\ ,\ ], \cdot)$ by Theorem~{\rm \ref{thm:oopsemi}}.
On the other hand, $r = T - \sigma(T)$ is an antisymmetric solution of
$(-\Phi + \Omega^*)$-admissible AYBE in $(A \ltimes_{\mu^*} V^*, \cdot, \Phi - \Omega^*)$
by Corollary~{\rm \ref{cor:qminp}}.
Due to Proposition~{\rm \ref{pro:cond}}, $r$ is an antisymmetric solution of
PYBE in the induced Poisson algebra $(A \ltimes_{-\rho_\mu^*, \mu^*} V^*, [\ ,\ ], \cdot)$
by Proposition~{\rm \ref{pro:aybe_to_qybe}}.
That is, the two approaches to get $r = T - \sigma(T)$ as an antisymmetric solution of PYBE
in $(A \ltimes_{-\rho_\mu^*,\mu^*} V^*,[\ ,\ ], \cdot)$ coincide,
and thus we have the following commutative diagram:
\begin{equation*}
 \xymatrix@C=3cm{
 \txt{$T$ \\ an $\mathcal{O}$-operator of $(A, \cdot, \Phi)$ \\ associated to $(V, \mu, \Omega)$ } \ar[d]^-{{\rm Prop.}~\ref{pro:assopp}} \ar[r]^-{{\rm Cor.}~\ref{cor:qminp}} &
 \txt{$r = T - \sigma(T)$ \\ an antisymmetric solution of \\ $(-\Phi + \Omega^*)$-admissible AYBE \\ in $(A \ltimes_{\mu^*} V^*, \cdot, \Phi - \Omega^*)$} \ar[d]^-{{\rm Prop.}~\ref{pro:aybe_to_qybe}} \\
 \txt{$T$ \\ an $\mathcal{O}$-operator of $(A, [\ ,\ ], \cdot)$ \\ associated to $(V, \rho_\mu, \mu)$ } \ar[r]^-{{\rm Thm.}~\ref{thm:oopsemi}} &
 \txt{$r = T - \sigma(T)$ \\ an antisymmetric solution of \\ PYBE in $(A \ltimes_{-\rho_\mu^*, \mu^*} V^*, [\ ,\ ], \cdot)$}
 }
\end{equation*}
\end{Corollary}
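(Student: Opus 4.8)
The plan is to treat Corollary~\ref{cor:oybe} as a pure diagram chase: both routes around the square produce \emph{literally} the same tensor $r = T - \sigma(T)$, sitting in \emph{literally} the same Poisson algebra, so that commutativity reduces to identifying the two target structures. First I would observe that the underlying vector space of $A \ltimes_{\mu^*} V^*$ and of $A \ltimes_{-\rho_\mu^*, \mu^*} V^*$ is in both cases $A \oplus V^*$, and that the identification $\Hom(V, A) \cong A \otimes V^* \subset (A \oplus V^*) \otimes (A \oplus V^*)$ used to regard $T$ as a two-tensor depends only on this underlying vector space, not on the multiplication or bracket placed on it. Consequently the element $r = T - \sigma(T)$ attached to $T$ along the top-right route (via Corollary~\ref{cor:qminp}) and along the left-bottom route (via Proposition~\ref{pro:assopp} and Theorem~\ref{thm:oopsemi}) is one and the same element of $(A \oplus V^*) \otimes (A \oplus V^*)$.

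Next I would assemble the right-hand downward arrow. By Corollary~\ref{cor:qminp} with $\theta_1 = \theta_2 = 0$, $r = T - \sigma(T)$ is an antisymmetric solution of $(-\Phi + \Omega^*)$-admissible AYBE in the $(-\Phi + \Omega^*)$-admissible commutative differential algebra $(A \ltimes_{\mu^*} V^*, \cdot, \Phi - \Omega^*)$. To push this down to PYBE I would invoke Proposition~\ref{pro:aybe_to_qybe}, whose two hypotheses — the $(-\Phi + \Omega^*)$-admissibility of $(A \ltimes_{\mu^*} V^*, \cdot, \Phi - \Omega^*)$ and the relevant instance of equation~\eqref{eq:vip1} (with $\eth_k$ replaced by $-\partial_k + \alpha_k^*$ and $\partial_k$ replaced by $\partial_k - \alpha_k^*$) — are exactly what Proposition~\ref{pro:cond} supplies. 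Hence $r$ is an antisymmetric solution of PYBE in the \emph{induced} Poisson algebra of $(A \ltimes_{\mu^*} V^*, \cdot, \Phi - \Omega^*)$; and Proposition~\ref{pro:cond} further identifies that induced Poisson algebra with $(A \ltimes_{-\rho_\mu^*, \mu^*} V^*, [\ ,\ ], \cdot)$.

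The left-hand route is handled directly: Proposition~\ref{pro:assopp} promotes $T$ from an $\mathcal{O}$-operator of $(A, \cdot, \Phi)$ associated to $(V, \mu, \Omega)$ to an $\mathcal{O}$-operator of the induced Poisson algebra $(A, [\ ,\ ], \cdot)$ associated to $(V, \rho_\mu, \mu)$, and Theorem~\ref{thm:oopsemi} then yields that $r = T - \sigma(T)$ is an antisymmetric solution of PYBE in $(A \ltimes_{-\rho_\mu^*, \mu^*} V^*, [\ ,\ ], \cdot)$. Since both routes deliver the same element $r$ as a solution of PYBE in the same Poisson algebra $(A \ltimes_{-\rho_\mu^*, \mu^*} V^*, [\ ,\ ], \cdot)$, the square commutes.

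I do not expect a genuine obstacle here: the conceptual work has already been done in Proposition~\ref{pro:cond}, which is the bridge guaranteeing that ``take the semi-direct product differential algebra, then pass to the induced Poisson algebra'' and ``pass to the induced Poisson algebra, then take the semi-direct product Poisson algebra'' yield the identical Poisson structure on $A \oplus V^*$. The only care required is bookkeeping: verifying that the module and admissibility hypotheses of the four cited results (Corollary~\ref{cor:qminp}, Propositions~\ref{pro:aybe_to_qybe} and~\ref{pro:assopp}, and Theorem~\ref{thm:oopsemi}) are simultaneously met under the standing assumptions, and that the relabelling of derivations used when applying Proposition~\ref{pro:aybe_to_qybe} matches the one recorded in Proposition~\ref{pro:cond}.
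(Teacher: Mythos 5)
Your proposal is correct and takes essentially the same route as the paper, which proves Corollary~\ref{cor:oybe} precisely by combining Corollary~\ref{cor:qminp} (in the commutative case with $\theta_1=\theta_2=0$), Propositions~\ref{pro:aybe_to_qybe} and~\ref{pro:assopp}, and Theorem~\ref{thm:oopsemi}, with Proposition~\ref{pro:cond} serving as the bridge that supplies the $(-\Phi+\Omega^*)$-admissibility, the relabelled instance of equation~\eqref{eq:vip1}, and the identification of the induced Poisson algebra of $(A \ltimes_{\mu^*} V^*, \cdot, \Phi - \Omega^*)$ with $(A \ltimes_{-\rho_\mu^*, \mu^*} V^*, [\ ,\ ], \cdot)$. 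Your explicit observation that $r = T - \sigma(T)$ is literally the same tensor in $(A \oplus V^*) \otimes (A \oplus V^*)$ along both routes is exactly the (implicit) point on which the paper's diagram chase rests.
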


\begin{Definition}[\cite{gerstenhaber1963cohomology}]
 A \textit{$($left$)$ pre-Lie algebra $(A,\diamond)$} is a vector space $A$ together with a~bilinear multiplication $\diamond\colon A \otimes A \to A$ satisfying the following equation:
 \begin{equation*}
 a \diamond (b \diamond c) - (a \diamond b) \diamond c = b \diamond (a \diamond c) - (b \diamond a) \diamond c, \qquad \forall a, b, c \in A.
 \end{equation*}
\end{Definition}

\begin{Definition}[\cite{loday1995cup}]
 A \textit{$($left$)$ Zinbiel algebra $(A,\ast)$} is a vector space $A$ together with a bilinear multiplication $\ast\colon A \otimes A \to A$ satisfying the following equation:
 \begin{equation*}
 a \ast (b \ast c) = (a \ast b) \ast c + (b \ast a) \ast c, \qquad \forall a, b, c \in A.
 \end{equation*}
\end{Definition}

A Zinbiel algebra $(A, \ast)$ is equivalently defined as a dendriform algebra $(A, \succ, \prec)$ in which
\begin{equation*}
 a \succ b = b \prec a = a \ast b, \qquad \forall a, b \in A.
\end{equation*}
Hence for a Zinbiel algebra $(A, \ast)$, the associated algebra $(A, \cdot)$ is commutative,
where
\begin{equation}
 a \cdot b = a \ast b + b \ast a, \qquad \forall a, b \in A. \label{eq:Zin}
\end{equation}
Furthermore, we give the notion of a differential Zinbiel algebra:
$(A, \ast, \Phi)$ is called a \textit{differential Zinbiel algebra}
if $(A, \ast, \ast^{\rm op}, \Phi)$ is a differential dendriform algebra,
where $a \ast^{\rm op} b = b \ast a$ for all $a, b \in A$.
It is obvious that the associated differential algebra of a differential Zinbiel algebra is commutative.

\begin{Definition}[\cite{aguiar2000pre}]
 A \textit{$($left$)$ pre-Poisson algebra} is a triple $(A, \diamond, \ast)$,
 where $(A, \diamond)$ is a pre-Lie algebra and
 $(A, \ast)$ is a Zinbiel algebra
 such that the following conditions hold:
 \begin{gather*}
 (a \diamond b - b \diamond a) \ast c = a \diamond (b \ast c) - b \ast (a \diamond c), \qquad
 (a \ast b + b \ast a) \diamond c = a \ast (b \diamond c) + b \ast (a \diamond c),
 \end{gather*}
	for all $a, b, c \in A$.
\end{Definition}

\begin{Proposition}[{\cite[Proposition~2.2]{aguiar2000pre}}]\label{pro:pptp}
 Let $(A, \diamond, \ast)$ be a pre-Poisson algebra.
 Define two bilinear multiplications $\cdot, [\ ,\ ]\colon A\otimes A\rightarrow A$ respectively by
 \begin{equation}
 a \cdot b := a \ast b + b \ast a \qquad {\rm and} \qquad
 [a, b] := a \diamond b - b \diamond a, \qquad
 \forall a, b \in A. \label{eq:pptp}
 \end{equation}
 Then $(A, [\ ,\ ], \cdot)$ is a Poisson algebra,
 called the \textit{associated Poisson algebra of $(A, \diamond, \ast)$}.
\end{Proposition}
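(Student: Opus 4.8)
The plan is to verify the three defining ingredients of a Poisson algebra in turn: that $(A,\cdot)$ is a commutative associative algebra, that $(A,[\ ,\ ])$ is a Lie algebra, and that the Leibniz-type compatibility $[a,b\cdot c]=[a,b]\cdot c+b\cdot[a,c]$ holds. The first two ingredients are essentially already available. Since a Zinbiel algebra $(A,\ast)$ is precisely a dendriform algebra with $a\succ b=b\prec a=a\ast b$, the associated multiplication $a\cdot b=a\ast b+b\ast a$ is associative by the Proposition recalled above stating that the associated multiplication of a dendriform algebra is an algebra, and it is commutative by the observation following equation~\eqref{eq:Zin}. That $(A,[\ ,\ ])$ is a Lie algebra is the classical fact that the commutator $[a,b]=a\diamond b-b\diamond a$ of a (left) pre-Lie algebra satisfies the Jacobi identity, the left-symmetry of the pre-Lie associator being exactly what is needed; antisymmetry is immediate from the definition.

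So the real content is the compatibility condition, and I would prove it by a direct expansion that plays the two pre-Poisson axioms against each other. First I would expand the left-hand side using $b\cdot c=b\ast c+c\ast b$, obtaining $[a,b\cdot c]=a\diamond(b\cdot c)-(b\cdot c)\diamond a$, and rewrite the second term by the second pre-Poisson identity in the form $(b\ast c+c\ast b)\diamond a=b\ast(c\diamond a)+c\ast(b\diamond a)$. On the right-hand side I would expand $[a,b]\cdot c$ and $b\cdot[a,c]$ into their $\ast$-components and apply the first pre-Poisson identity $(x\diamond y-y\diamond x)\ast z=x\diamond(y\ast z)-y\ast(x\diamond z)$ twice, once to $(a\diamond b-b\diamond a)\ast c$ and once to $(a\diamond c-c\diamond a)\ast b$.

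Collecting terms, I expect the mixed $\ast$-contributions to cancel in pairs: the terms $b\ast(a\diamond c)$ and $c\ast(a\diamond b)$ each appear with opposite signs and drop out, leaving precisely $a\diamond(b\ast c+c\ast b)-\big(b\ast(c\diamond a)+c\ast(b\diamond a)\big)=a\diamond(b\cdot c)-(b\cdot c)\diamond a=[a,b\cdot c]$, which matches the left-hand side computed above. The main obstacle here is organizational rather than conceptual: one must lay out all eight summands of the right-hand side with their correct signs and match them against the two-term left-hand side, so the one genuine risk is a sign slip when applying the antisymmetric combination $x\diamond y-y\diamond x$ inside the first pre-Poisson axiom. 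No additional structure is required, so once this bookkeeping is carried out the identity closes and $(A,[\ ,\ ],\cdot)$ is a Poisson algebra.
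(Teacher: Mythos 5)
Your proof is correct. The paper itself gives no argument for this statement --- it is quoted from Aguiar's Proposition~2.2 in \cite{aguiar2000pre}, whose proof is precisely the direct verification you outline --- so your proposal reconstructs the standard route rather than diverging from it. The two background facts you invoke are indeed available: the associated algebra of the dendriform algebra $(A,\ast,\ast^{\rm op})$ is associative (and the symmetrized product is visibly commutative), and the commutator of a left pre-Lie algebra is a Lie bracket. Your sign bookkeeping on the Leibniz identity closes as predicted: expanding $[a,b]\cdot c+b\cdot[a,c]$ into $\ast$-components and applying the first pre-Poisson axiom to $(a\diamond b-b\diamond a)\ast c$ and to $(a\diamond c-c\diamond a)\ast b$ yields eight terms, of which $b\ast(a\diamond c)$ and $c\ast(a\diamond b)$ cancel in pairs, leaving $a\diamond(b\ast c)+a\diamond(c\ast b)-b\ast(c\diamond a)-c\ast(b\diamond a)$; by the second pre-Poisson axiom applied to $(b\ast c+c\ast b)\diamond a$, this is exactly $a\diamond(b\cdot c)-(b\cdot c)\diamond a=[a,b\cdot c]$. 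It is worth noting that the paper's own toolkit offers a genuinely different proof you did not use: Lemma~\ref{lem:perm} (Gubarev--Kolesnikov) characterizes pre-Poisson algebras by the condition that $P\otimes A$ is Poisson for every perm-algebra $P$, and taking $P=\mathbb{F}$ (the one-dimensional perm-algebra, as in the theorem following Proposition~\ref{pro:dzpp}) recovers Proposition~\ref{pro:pptp} with no term-by-term computation; your direct expansion buys self-containedness, while the perm-algebra route buys uniformity with the differential refinement the paper actually needs.
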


Recall that a \textit{perm-algebra} is a vector space $P$ with a bilinear multiplication $(a,b) \to ab$,
such that $P$ is an algebra which is left-commutative in the following sense:
\begin{equation*}
	(ab)c = (ba)c, \qquad \forall a, b, c \in P,
\end{equation*}
that is, a perm-algebra is associative and left-commutative.

\begin{Lemma}[\cite{gubarev2014operads}]\label{lem:perm}
Let $A$ be a vector space with two bilinear multiplications $\diamond$ and $*$.
Then $(A, \diamond , *)$ is a~pre-Poisson algebra if and only if for every perm-algebra $P$,
$(P \otimes A, [\ ,\ ], \cdot)$ is a~Poisson algebra, where $[\ ,\ ]$ and $\cdot$ are respectively defined by
	\begin{gather*}
		[p \otimes a, q \otimes b] = pq \otimes a \diamond b - qp \otimes b \diamond a,\\
		(p \otimes a) \cdot (q \otimes b) = pq \otimes a*b + qp \otimes b*a, \qquad \forall p, q \in P , a, b \in A.
	\end{gather*}
\end{Lemma}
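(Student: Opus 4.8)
The plan is to prove the two implications separately, in both cases reducing every identity to the defining relations of a pre-Poisson algebra by exploiting the two structural relations of a perm-algebra, namely associativity $(pq)s = p(qs)$ and left-commutativity $(pq)s = (qp)s$. Together these show that in a perm-algebra any trilinear monomial in $p,q,s$ depends only on its last factor and on the unordered pair of the remaining two, so that the only trilinear normal forms are $pqs\,(=qps)$, $qsp\,(=sqp)$ and $psq\,(=spq)$; I will use this normalization throughout.

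For the forward direction, I assume $(A,\diamond,*)$ is pre-Poisson and verify the three Poisson axioms on $(P\otimes A,[\ ,\ ],\cdot)$. Commutativity of $\cdot$ and antisymmetry of $[\ ,\ ]$ are immediate from the defining formulas and the flip in the second tensor slot. For associativity of $\cdot$ I expand $((p\otimes a)\cdot(q\otimes b))\cdot(s\otimes c)$ and $(p\otimes a)\cdot((q\otimes b)\cdot(s\otimes c))$, reduce every perm-monomial to the normal forms above, and collect: the coefficient of $pqs$ becomes exactly the Zinbiel relation $(a*b)*c+(b*a)*c=a*(b*c)$, while the coefficients of $qsp$ and $psq$ reduce to consequences of that same relation (such as $c*(a*b)=a*(c*b)$). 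The Jacobi identity is handled the same way: after normalizing monomials, the pre-Lie identity for $\diamond$ yields left-symmetry of the associator on the $A$-side, which combined with the antisymmetrization produces vanishing. Finally the Leibniz rule $[x,y\cdot z]=[x,y]\cdot z+y\cdot[x,z]$ is the step that genuinely uses both compatibility axioms: expanding all three terms and sorting by the normal-form monomials, the $A$-coefficients are precisely $(a\diamond b-b\diamond a)*c=a\diamond(b*c)-b*(a\diamond c)$ and $(a*b+b*a)\diamond c=a*(b\diamond c)+b*(a\diamond c)$.

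For the converse, I test the Poisson axioms against a single sufficiently generic perm-algebra. Since all the defining relations of a pre-Poisson algebra are multilinear of arity at most three, it suffices to take $P$ to be the free perm-algebra on three generators $p,q,s$; there the three normal-form monomials $pqs,\,qsp,\,psq$ are linearly independent (they span the arity-three multilinear component). Evaluating the assumed Poisson identities at $p\otimes a,\,q\otimes b,\,s\otimes c$ and matching the coefficients of these three independent monomials returns one $A$-identity per monomial: associativity of $\cdot$ returns the Zinbiel relation (from the $pqs$-coefficient), the Jacobi identity returns the pre-Lie relation, and the Leibniz rule returns the two compatibility relations. Hence $(A,\diamond,*)$ is pre-Poisson.

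The main obstacle is the bookkeeping in the Jacobi and Leibniz computations: one must track, for each of the three trilinear normal-form monomials in $P$, exactly which pre-Poisson expression it carries, and in the converse direction one must ensure that the chosen perm-algebra separates these monomials so that no defining relation is lost or artificially weakened. Checking that the extracted coefficients are exactly the pre-Poisson axioms, rather than a strictly weaker linear combination, is where care is needed; using the free perm-algebra on three generators guarantees the required linear independence and makes this separation automatic.
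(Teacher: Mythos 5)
Your proposal is correct, and the computations it outlines check out: with the three normal forms $pqs$, $qsp$, $psq$ (each trilinear perm monomial is determined by its last factor, since associativity plus left-commutativity let the remaining factors commute), commutativity of $\cdot$ and antisymmetry of $[\ ,\ ]$ are automatic from the defining formulas; associativity of $\cdot$ produces the Zinbiel identity at $pqs$ and Zinbiel consequences such as $c\ast(a\ast b)=a\ast(c\ast b)$ at the other two monomials; the Jacobi identity produces the pre-Lie identity (in its three cyclic instances) at all three monomials; and the Leibniz rule produces $(a\diamond b-b\diamond a)\ast c=a\diamond(b\ast c)-b\ast(a\diamond c)$ at $pqs$ and $(a\ast b+b\ast a)\diamond c=a\ast(b\diamond c)+b\ast(a\diamond c)$ at $qsp$, with $psq$ repeating the first. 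Your converse via the free perm-algebra on three generators is also sound, since $\dim\mathrm{Perm}(3)=3$ with exactly these normal forms as a basis, so the coefficient matching loses nothing. Note, however, that your route is genuinely different from the paper's: the paper does not prove Lemma~\ref{lem:perm} at all, but imports it from \cite{gubarev2014operads}, where it is an instance of a general operadic statement relating a binary operad, its splitting (pre-structures), and the perm operad; the closest in-paper analogue of your argument is the proof of Proposition~\ref{pro:dzpp}, which carries out precisely your forward-direction normal-form bookkeeping, but only for the Zinbiel/commutative-associative fragment. Your verification buys a self-contained elementary proof that makes explicit which perm monomial carries which pre-Poisson axiom, and it shows the ``for every perm-algebra'' hypothesis can be weakened to a single separating perm-algebra; the operadic citation buys generality, giving the same equivalence with Poisson replaced by any variety governed by a binary operad and pre-Poisson by the corresponding splitting, without any case-by-case computation.
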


\begin{Proposition}\label{pro:dzpp} Let $(A, \ast, \Phi = \{\partial_1, \partial_2\})$ be a differential Zinbiel algebra.
 Define a bilinear multiplication $\diamond\colon A \otimes A \to A$ by
 \begin{equation}
 a \diamond b := \partial_1(a) \ast \partial_2 (b) - \partial_2(a) \ast \partial_1(b), \qquad \forall a, b \in A. \label{eq:dzpp}
 \end{equation}
 Then $(A, \diamond, \ast)$ is a pre-Poisson algebra, called the \textit{induced pre-Poisson algebra of $(A, \ast, \Phi)$}.
\end{Proposition}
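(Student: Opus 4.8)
The plan is to deduce the pre-Poisson axioms from the characterization in Lemma~\ref{lem:perm} together with the construction of an induced Poisson algebra from a commutative differential algebra with two commuting derivations (the Lie bracket \eqref{eq:lie}). By Lemma~\ref{lem:perm} it suffices to show that for every perm-algebra $P$, the space $P \otimes A$ with the operations $[p\otimes a, q\otimes b] = pq\otimes a\diamond b - qp\otimes b\diamond a$ and $(p\otimes a)\cdot(q\otimes b) = pq\otimes a\ast b + qp\otimes b\ast a$ is a Poisson algebra. I would realize this very structure as the induced Poisson algebra of a commutative differential algebra built on $P\otimes A$.

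First I would equip $P\otimes A$ with the bilinear product $(p\otimes a)\bullet(q\otimes b) := pq\otimes (a\ast b)$ and check that $(P\otimes A,\bullet)$ is a Zinbiel algebra. Expanding the Zinbiel identity, the left side produces the factor $p(qr)\otimes a\ast(b\ast c)$; rewriting $a\ast(b\ast c)=(a\ast b)\ast c+(b\ast a)\ast c$ via the Zinbiel identity of $A$ and using that $P$ is associative and left-commutative (so $p(qr)=(pq)r=(qp)r$) matches it term by term with the right side. Consequently $(P\otimes A,\bullet,\bullet^{\rm op})$ is a dendriform (indeed Zinbiel) algebra whose associated commutative algebra is precisely $(P\otimes A,\cdot)$ of Lemma~\ref{lem:perm}, since $(p\otimes a)\bullet(q\otimes b)+(q\otimes b)\bullet(p\otimes a)=pq\otimes a\ast b+qp\otimes b\ast a$. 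Next, because each $\partial_k$ is a derivation of $(A,\ast)$, the maps $\tilde\partial_k:=\id\otimes\partial_k$ satisfy the Leibniz rule for $\bullet$ and commute; hence $(P\otimes A,\bullet,\{\tilde\partial_1,\tilde\partial_2\})$ is a differential Zinbiel algebra and, by Proposition~\ref{pro:assda}, $(P\otimes A,\cdot,\{\tilde\partial_1,\tilde\partial_2\})$ is a commutative differential algebra with two commuting derivations.

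It then remains to compute the induced Lie bracket \eqref{eq:lie} of this commutative differential algebra and identify it with the bracket of Lemma~\ref{lem:perm}. A direct evaluation of $\tilde\partial_1(p\otimes a)\cdot\tilde\partial_2(q\otimes b)-\tilde\partial_2(p\otimes a)\cdot\tilde\partial_1(q\otimes b)$ gives $pq\otimes(\partial_1 a\ast\partial_2 b-\partial_2 a\ast\partial_1 b)+qp\otimes(\partial_2 b\ast\partial_1 a-\partial_1 b\ast\partial_2 a)$, whose two coefficients are exactly $a\diamond b$ and $-(b\diamond a)$ by the definition \eqref{eq:dzpp} of $\diamond$. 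Thus the induced Poisson algebra of $(P\otimes A,\cdot,\{\tilde\partial_1,\tilde\partial_2\})$ coincides with $(P\otimes A,[\ ,\ ],\cdot)$ of Lemma~\ref{lem:perm}, which is therefore a Poisson algebra for every perm-algebra $P$. Applying the ``if'' direction of Lemma~\ref{lem:perm} yields that $(A,\diamond,\ast)$ is a pre-Poisson algebra.

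I expect the only genuinely delicate point to be the verification that $\bullet$ is Zinbiel: it is precisely here that both perm-algebra axioms (associativity and left-commutativity) are needed in tandem with the Zinbiel identity of $A$, and one must track which tensor factor carries $(pq)r$ versus $(qp)r$. Everything after that is bookkeeping --- matching the derivation's Leibniz rule and reconciling the signs in the bracket comparison --- so the main conceptual work is choosing the product $\bullet$ so that the two constructions (perm-tensor and induced Poisson) land on the same operations.
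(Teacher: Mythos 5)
Your proposal is correct and takes essentially the same route as the paper: both reduce the statement to Lemma~\ref{lem:perm} by realizing the perm-tensor operations on $P \otimes A$ as the induced Poisson algebra of the commutative differential algebra $\big(P \otimes A, \cdot, \{\id_P \otimes \partial_1, \id_P \otimes \partial_2\}\big)$, and then matching the induced bracket~\eqref{eq:lie} with the bracket of the lemma. The only variation is internal bookkeeping: you obtain the commutative differential algebra by first verifying the Zinbiel identity for $(p\otimes a)\bullet(q\otimes b)=pq\otimes(a\ast b)$ and invoking Proposition~\ref{pro:assda}, whereas the paper checks associativity of the symmetrized product on $P\otimes A$ directly --- an equivalent computation packaged slightly differently.
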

\begin{proof}The conclusion can be verified by a direct proof or the following approach in terms of perm-algebras. Let $P$ be a perm-algebra. Define a bilinear multiplication $\cdot$ on $P \otimes A$ by
 \begin{equation*}
 (p \otimes a) \cdot (q \otimes b) := pq \otimes a*b + qp \otimes b*a, \qquad \forall p, q \in P, a, b \in A.
 \end{equation*}
 For all $p, q, r \in P$ and $a, b, c \in A$, we have
 \begin{gather*}
 (p \otimes a) \cdot ((q \otimes b) \cdot (r \otimes c)) \\
 \qquad{}= pqr \otimes a \ast (b \ast c) + qrp \otimes (b \ast c) \ast a + prq \otimes a \ast (c \ast b) + rqp \otimes (c \ast b) \ast a \\
 \qquad{}=pqr \otimes a \ast (b \ast c) + rqp \otimes (b \ast c) \ast a + prq \otimes a \ast (c \ast b) + rqp \otimes (c \ast b) \ast a \\
\qquad{} = pqr \otimes a \ast (b \ast c) + prq \otimes a \ast (c \ast b) + rqp \otimes c \ast (b \ast a), \\
 ((p \otimes a) \cdot (q \otimes b)) \cdot (r \otimes c) \\
\qquad{} = pqr \otimes (a \ast b) \ast c + rpq \otimes c \ast (a \ast b) + qpr \otimes (b \ast a) \ast c + rqp \otimes c \ast (b \ast a) \\
\qquad{} = pqr \otimes (a \ast b) \ast c + rpq \otimes c \ast (a \ast b) + pqr \otimes (b \ast a) \ast c + rqp \otimes c \ast (b \ast a) \\
\qquad{} = pqr \otimes a \ast (b \ast c) + rpq \otimes c \ast (a \ast b) + rqp \otimes c \ast (b \ast a).
	\end{gather*}
 Hence $(P \otimes A, \cdot)$ is an algebra. Obviously $(P \otimes A, \cdot)$ is commutative.
 Define linear maps $d_1,d_2\colon P \otimes A \to P \otimes A$ respectively by
 \begin{equation*}
 	d_i: P \otimes A \to P \otimes A, \qquad p \otimes a \mapsto p \otimes \partial_i(a), \qquad \forall a \in A, \quad p \in P, \quad i=1,2.
 \end{equation*}
 It is straightforward to verify that $(P \otimes A, \cdot, \{d_1, d_2\})$ is a commutative differential algebra.
 Let $(P \otimes A, [\ ,\ ], \cdot)$ be the induced Poisson algebra of
 $(P \otimes A, \cdot, \{d_1, d_2\})$, where $[\ ,\ ]$ is defined by equation~\eqref{eq:lie}, i.e.,
 \begin{gather*}
 	[p \otimes a, q \otimes b] = d_1(p \otimes a) \cdot d_2(q \otimes b) - d_2(p \otimes a) \cdot d_1(q \otimes b) \\
 \qquad{}= (p \otimes \partial_1(a)) \cdot (q \otimes \partial_2(b)) - (p \otimes \partial_2(a) ) \cdot (q \otimes \partial_1(b)) \cdot\\
 \qquad{}= (pq \otimes \partial_1(a) \ast \partial_2(b) + qp \otimes \partial_2(b) \ast \partial_1(a)) - ( pq \otimes \partial_2(a) \ast \partial_1(b) + qp \otimes \partial_1(b) \ast \partial_2(a))\\
\qquad{} = pq \otimes a \diamond b-qp \otimes b \diamond a, \qquad \forall p,q \in P, \quad a, b \in A.
	\end{gather*}
	Hence the conclusion follows from Lemma~\ref{lem:perm}.
\end{proof}

\begin{Theorem}
 Let $(A, \ast, \Phi = \{\partial_1, \partial_2\})$ be a differential Zinbiel algebra.
 Let $(A, \cdot, \Phi)$ be the associated commutative differential algebra of $(A, \ast, \Phi)$, where $\cdot$ is defined by equation~\eqref{eq:Zin}.
 Let $(A, \diamond, \ast)$ be the induced pre-Poisson algebra of $(A, \ast, \Phi)$, where $\diamond$ is defined by equation~\eqref{eq:dzpp}.
 Then the associated Poisson algebra $(A, [\ ,\ ], \cdot)$ of the pre-Poisson algebra $(A, \diamond, \ast)$, where $[\ ,\ ]$ and $\cdot$ are respectively defined by equation~\eqref{eq:pptp}, is exactly the induced Poisson algebra $(A, [\ ,\ ], \cdot)$ of the differential algebra $(A, \cdot, \Phi)$, where $[\ ,\ ]$ is defined by equation~\eqref{eq:lie}.
 Hence we have the following commutative diagram:
 \begin{equation*}
 \xymatrix@C=4cm{
 \txt{differential Zinbiel algebra\\ $(A, \ast, \Phi)$} \ar[d]^-{equation~\eqref{eq:dzpp}} \ar[r]^-{equation~\eqref{eq:Zin}} &
 \txt{commutative differential algebra\\ $(A, \cdot, \Phi)$} \ar[d]^-{equation~\eqref{eq:lie}} \\
 \txt{pre-Poisson algebra\\ $(A, \diamond, \ast)$} \ar[r]^-{equation~\eqref{eq:pptp}} &
 \txt{Poisson algebra \\ $(A, [\ ,\ ], \cdot)$}
 }
 \end{equation*}
\end{Theorem}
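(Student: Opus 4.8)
The plan is to verify directly that the two Poisson structures produced on $A$ are literally the same, by comparing their commutative multiplications and their Lie brackets separately. First I would note that both structures are genuinely Poisson algebras by results already established: $(A, \cdot, \Phi)$ is a commutative differential algebra as the associated differential algebra of the differential Zinbiel (hence dendriform) algebra $(A, \ast, \Phi)$ via Proposition~\ref{pro:assda}, so its induced Poisson algebra is defined by equation~\eqref{eq:lie}; meanwhile $(A, \diamond, \ast)$ is a pre-Poisson algebra by Proposition~\ref{pro:dzpp}, so its associated Poisson algebra is defined by equation~\eqref{eq:pptp} via Proposition~\ref{pro:pptp}. The commutative parts coincide immediately, since the multiplication in equation~\eqref{eq:pptp} is exactly $a \cdot b = a \ast b + b \ast a$, which is equation~\eqref{eq:Zin} defining the multiplication of $(A, \cdot, \Phi)$. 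Hence only the two brackets remain to be compared.

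For the brackets I would expand both definitions in terms of $\ast$ and $\Phi = \{\partial_1, \partial_2\}$. Substituting equation~\eqref{eq:dzpp} into the pre-Poisson bracket $[a,b] = a \diamond b - b \diamond a$ yields
\[
 [a,b] = \partial_1(a) \ast \partial_2(b) - \partial_2(a) \ast \partial_1(b) - \partial_1(b) \ast \partial_2(a) + \partial_2(b) \ast \partial_1(a).
\]
Substituting equation~\eqref{eq:Zin} into the induced bracket $[a,b] = \partial_1(a) \cdot \partial_2(b) - \partial_2(a) \cdot \partial_1(b)$ yields
\[
 [a,b] = \partial_1(a) \ast \partial_2(b) + \partial_2(b) \ast \partial_1(a) - \partial_2(a) \ast \partial_1(b) - \partial_1(b) \ast \partial_2(a).
\]
After a harmless reordering of the four summands these expressions agree term by term, so the two brackets coincide for all $a, b \in A$. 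Combined with the agreement of the multiplications, this shows the associated Poisson algebra of $(A, \diamond, \ast)$ equals the induced Poisson algebra of $(A, \cdot, \Phi)$, which is the assertion of the theorem, and the commutative diagram then follows at once.

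There is essentially no genuine obstacle here: the argument is a direct term-by-term matching rather than a structural calculation, precisely because the Zinbiel product $\ast$ is the common building block of both the multiplication $\cdot$ (via symmetrization in equation~\eqref{eq:Zin}) and the pre-Lie product $\diamond$ (via equation~\eqref{eq:dzpp}). The only points requiring care are the bookkeeping of signs and the order of the $\ast$-products in the expansion, and confirming at the outset that both candidate structures are well-defined Poisson algebras so that the phrase ``is exactly'' is meaningful; both of these are handled by invoking the cited propositions. Notably, the verification of the equality uses only the defining formulas, not the Zinbiel identity or the pre-Poisson compatibility conditions, so it reduces to one elementary line of algebra for each structure.
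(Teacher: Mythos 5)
Your proposal is correct: the multiplications agree by definition, and your four-term expansion of the two brackets is accurate, so the two Poisson structures coincide. However, your route is genuinely different from the paper's. The paper does not redo any computation at this point: it observes that the one-dimensional algebra $P$ with basis $\{e\}$ and product $ee=e$ is a perm-algebra isomorphic to $\mathbb{F}$, substitutes this $P$ into the proof of Proposition~\ref{pro:dzpp} (where the induced Lie bracket on $P\otimes A$ was already shown to be $[p\otimes a, q\otimes b]=pq\otimes a\diamond b - qp\otimes b\diamond a$), and uses the isomorphism $P\otimes A\cong A$ to read off the conclusion immediately. In effect, the four-term matching you carry out by hand is already buried inside the perm-algebra calculation of Proposition~\ref{pro:dzpp}, and the paper's proof simply recycles it. What your approach buys is self-containedness and transparency: it needs only the defining formulas \eqref{eq:Zin}, \eqref{eq:dzpp}, \eqref{eq:pptp} and \eqref{eq:lie}, and in particular is independent of Lemma~\ref{lem:perm} and the operadic perm-algebra viewpoint. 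What the paper's approach buys is economy and conceptual coherence: it exhibits the theorem as the $P=\mathbb{F}$ specialization of the structural statement behind Proposition~\ref{pro:dzpp}, so no new verification is needed. You were also right to insist, as the paper implicitly does, that Propositions~\ref{pro:assda}, \ref{pro:dzpp} and \ref{pro:pptp} are invoked first so that both sides are well-defined Poisson algebras and the equality ``is exactly'' is meaningful.
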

\begin{proof}Let $P$ be a one-dimensional vector space with a basis $\{e\}$ with a bilinear multiplication whose product is given by $ee=e$.
Obviously $P$ is a perm-algebra which is isomorphic to the field $\mathbb F$. Substituting such $P$ into the proof of Proposition~\ref{pro:dzpp} and noting that in this case, for any vector space $A$, $P\otimes A$ and $A$ are isomorphic as vector spaces, we get the conclusion immediately.
\end{proof}

\begin{Lemma}[{\cite[Theorem~5.18]{liu2020noncommutative}}]\label{lem:otpp}
 Let $(A, [\ ,\ ], \cdot)$ be a Poisson algebra.
 Suppose that $T$ is an $\mathcal{O}$-operator of $(A, [\ ,\ ], \cdot)$ associated to $(V, \rho, \mu)$.
 Define two bilinear multiplications $\ast, \circ\colon V \otimes V \to V$ respectively by
 \begin{equation}
 u \diamond v := \rho(T(u)) v, \qquad
 u \ast v := \mu(T(u))v, \qquad
 \forall u, v \in V. \label{eq:otpp}
 \end{equation}
 Then $(V, \diamond, \ast)$ is a pre-Poisson algebra.
\end{Lemma}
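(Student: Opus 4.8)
The plan is to verify directly the four defining axioms of a pre-Poisson algebra for the pair $(V, \diamond, \ast)$, using the two $\mathcal{O}$-operator identities to pull $T$ across the two multiplications of $A$. Throughout I abbreviate $x = T(u)$, $y = T(v)$, $z = T(w)$, so that by definition $u \diamond v = \rho(x)v$ and $u \ast v = \mu(x)v$; the $\mathcal{O}$-operator identities then read $[x, y] = T(u \diamond v - v \diamond u)$ and $x \cdot y = T(u \ast v + v \ast u)$. I will repeatedly use the four module relations $\rho([a,b]) = \rho(a)\rho(b) - \rho(b)\rho(a)$, $\mu(a)\mu(b) = \mu(a \cdot b)$, $\mu([a,b]) = \rho(a)\mu(b) - \mu(b)\rho(a)$ and $\rho(a \cdot b) = \mu(b)\rho(a) + \mu(a)\rho(b)$ coming from the definition of a module of a Poisson algebra.

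First I would show that $(V, \diamond)$ is a pre-Lie algebra. The left--right pre-Lie associator antisymmetrized in $u, v$ equals $\rho(T(\rho(x)v - \rho(y)u))w - (\rho(x)\rho(y) - \rho(y)\rho(x))w$; the first term is $\rho([x,y])w$ by the Lie part of the $\mathcal{O}$-operator identity, and the second is $[\rho(x),\rho(y)]w = \rho([x,y])w$ by the Lie-module relation, so the associator is symmetric in $u, v$ and the pre-Lie law holds. Dually, $(V, \ast)$ is a Zinbiel algebra: by the commutative part of the $\mathcal{O}$-operator identity, $(u \ast v) \ast w + (v \ast u) \ast w = \mu(T(u \ast v + v \ast u))w = \mu(x \cdot y)w$, while $u \ast (v \ast w) = \mu(x)\mu(y)w = \mu(x \cdot y)w$ by associativity of the $A$-module, which is exactly the Zinbiel identity.

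For the two compatibility conditions I would again clear $T$ and reduce each to a single module relation. The condition $(u \diamond v - v \diamond u) \ast w = u \diamond (v \ast w) - v \ast (u \diamond w)$ has left side $\mu(T(\rho(x)v - \rho(y)u))w = \mu([x,y])w$ and right side $\rho(x)\mu(y)w - \mu(y)\rho(x)w$, so it is precisely $\mu([x,y]) = \rho(x)\mu(y) - \mu(y)\rho(x)$. Symmetrically, $(u \ast v + v \ast u) \diamond w = u \ast (v \diamond w) + v \ast (u \diamond w)$ has left side $\rho(T(\mu(x)v + \mu(y)u))w = \rho(x \cdot y)w$ and right side $\mu(x)\rho(y)w + \mu(y)\rho(x)w$, matching $\rho(x \cdot y) = \mu(y)\rho(x) + \mu(x)\rho(y)$.

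No serious obstacle arises: the statement is a direct computation, and the only care needed is the bookkeeping of which of the two $\mathcal{O}$-operator identities and which of the four module axioms produces each term, together with the signs in the Lie part. Conceptually this is the Poisson analogue of the classical facts that an $\mathcal{O}$-operator of a Lie algebra yields a pre-Lie algebra and an $\mathcal{O}$-operator of a commutative associative algebra yields a Zinbiel algebra; the structures $\diamond$ and $\ast$ are exactly those two constructions applied to the Lie and commutative parts of the Poisson structure, and the two mixed module axioms furnish the two compatibility conditions with no extra work.
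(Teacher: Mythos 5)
Your proposal is correct and complete. Note that the paper itself gives no proof of this lemma: it is quoted verbatim from \cite[Theorem~5.18]{liu2020noncommutative}, so there is nothing internal to compare against; your direct verification is precisely the standard argument one finds for that cited result. Each of the four checks is sound: after writing $x=T(u)$, $y=T(v)$ and clearing $T$ via the two $\mathcal{O}$-operator identities, the antisymmetrized pre-Lie associator collapses to $\rho([x,y])w-\bigl(\rho(x)\rho(y)-\rho(y)\rho(x)\bigr)w=0$ by the Lie-module axiom, the Zinbiel identity reduces to $\mu(x)\mu(y)=\mu(x\cdot y)$, and the two mixed compatibilities reduce exactly to $\mu([x,y])=\rho(x)\mu(y)-\mu(y)\rho(x)$ and $\rho(x\cdot y)=\mu(x)\rho(y)+\mu(y)\rho(x)$, which are the two coupling axioms in the paper's definition of a module of a Poisson algebra. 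Your closing remark is also the right conceptual framing: $\diamond$ and $\ast$ are the pre-Lie and Zinbiel constructions from $\mathcal{O}$-operators applied to the Lie and commutative parts, with the mixed module axioms supplying the pre-Poisson compatibilities. (The appearance of ``$\ast,\circ$'' in the statement while $\diamond,\ast$ are defined is a typo inherited from the paper, not a defect of your argument.)
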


Let $(A, \cdot, \Phi = \{\partial_1, \partial_2\})$ be a commutative differential algebra.
Suppose that $T$ is an $\mathcal{O}$-operator of $(A, \cdot, \Phi)$ associated to a module $(V, \mu, \Omega = \{\alpha_1, \alpha_2\})$.
Then $(V, \ast, \Omega)$ is a differential Zinbiel algebra by Proposition~\ref{pro:opd}, where $\ast$ is defined by equation~\eqref{eq:otd} in the commutative case, that is,
\begin{equation}
 u \ast v := \mu(T(u)) v, \qquad \forall u,v \in V. \label{eq:Zin11}
\end{equation}
Furthermore, there is the induced pre-Poisson algebra $(V,
\diamond, \ast)$ of $(V, \ast, \Omega)$ by
Proposition~\ref{pro:dzpp}, where $\diamond$ is defined by
equation~\eqref{eq:dzpp}. On the other hand, let $(A, [\ ,\ ], \cdot)$
be the induced Poisson algebra of $(A, \cdot, \Phi)$, where $[\ ,\
]$ is defined by equation~\eqref{eq:lie}. Then by
Proposition~\ref{pro:assopp}, $T$ is an $\mathcal O$-operator on
$(A, [\ ,\ ], \cdot)$ associated to $(V, \rho_\mu, \mu)$ which is
the induced module of $(A, [\ ,\ ], \cdot)$ with respect
to $(V, \mu, \Omega)$. Hence there is a pre-Poisson algebra $(V,
\diamond', \ast')$ by Lemma~\ref{lem:otpp}, where $\diamond'$ and
$\ast'$ are respectively defined by equation~\eqref{eq:otpp}.

\begin{Proposition}With the conditions above.
 Then the two pre-Poisson algebras $(V, \diamond, \ast)$ and $(V, \diamond', \ast')$ coincide.
 Hence we have the following commutative diagram:
 \begin{equation*}
 \xymatrix@C=4cm{
 \txt{$T$ \\ an $\mathcal{O}$-operator of $(A, \cdot, \Phi)$ \\ associated to $(V, \mu, \Omega)$ } \ar[r]^-{{\rm Prop.}~\ref{pro:opd}} \ar[d]^-{{\rm Prop.}~\ref{pro:assopp}} &
 \txt{$(V, \ast, \Omega)$ \\ a differential \\ Zinbiel algebra} \ar[d]^-{{\rm Prop.}~\ref{pro:dzpp}} \\
 \txt{$T$ \\ an $\mathcal{O}$-operator of $(A, [\ ,\ ], \cdot)$\\ associated to $(V, \rho_\mu, \mu)$ } \ar[r]^-{{\rm Lem.}~\ref{lem:otpp}} &
 \txt{$(V, \diamond, \ast)$ \\ a pre-Poisson\\ algebra}
 }
 \end{equation*}
\end{Proposition}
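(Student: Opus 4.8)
The plan is to verify directly that the two multiplications defining each pre-Poisson structure agree, by writing everything out in terms of the single data $T$, $\mu$, and the two families $\Phi=\{\partial_1,\partial_2\}$ and $\Omega=\{\alpha_1,\alpha_2\}$, and then exploiting the compatibility $\partial_k T = T\alpha_k$ coming from equation~\eqref{eq:oop2} (the defining condition for $T$ to be an $\mathcal{O}$-operator of the \emph{differential} algebra). Since a pre-Poisson algebra is determined by its Zinbiel product $\ast$ and its pre-Lie product $\diamond$, it suffices to show $\ast=\ast'$ and $\diamond=\diamond'$.

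First I would dispose of the Zinbiel products. On the one side, $\ast$ is defined by equation~\eqref{eq:Zin11}, namely $u\ast v=\mu(T(u))v$. On the other side, $\ast'$ comes from equation~\eqref{eq:otpp} applied to the module $(V,\rho_\mu,\mu)$, which gives $u\ast' v=\mu(T(u))v$ as well. Hence $\ast=\ast'$ with no computation needed, and it only remains to compare the two pre-Lie products.

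Next I would expand both pre-Lie products. The product $\diamond$ is obtained by feeding the differential Zinbiel algebra $(V,\ast,\Omega)$ into equation~\eqref{eq:dzpp}, whose derivations are now $\alpha_1,\alpha_2$; together with equation~\eqref{eq:Zin11} this yields
\begin{equation*}
u\diamond v=\alpha_1(u)\ast\alpha_2(v)-\alpha_2(u)\ast\alpha_1(v)
=\mu\big(T(\alpha_1(u))\big)\alpha_2(v)-\mu\big(T(\alpha_2(u))\big)\alpha_1(v).
\end{equation*}
Applying $\partial_kT=T\alpha_k$ from equation~\eqref{eq:oop2} rewrites this as $\mu(\partial_1 T(u))\alpha_2(v)-\mu(\partial_2 T(u))\alpha_1(v)$. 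The product $\diamond'$ is obtained from equation~\eqref{eq:otpp} as $u\diamond' v=\rho_\mu(T(u))v$, and substituting the explicit form of $\rho_\mu$ from equation~\eqref{eq:def_rep_lie} gives exactly $\mu(\partial_1 T(u))\alpha_2(v)-\mu(\partial_2 T(u))\alpha_1(v)$. Thus $\diamond=\diamond'$, and the two pre-Poisson algebras coincide.

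I do not anticipate a genuine obstacle: the statement is essentially a bookkeeping identity. The one point that requires care — and the place where the hypothesis that $T$ is an $\mathcal{O}$-operator of the \emph{differential} algebra (rather than merely of the underlying algebra) is used — is the step $T\alpha_k=\partial_k T$, which is precisely what bridges the module's family $\Omega$ and the algebra's family $\Phi$ so that the two expressions for the pre-Lie product become literally the same. Everything else is just unwinding the definitions in equations~\eqref{eq:Zin11}, \eqref{eq:dzpp}, \eqref{eq:otpp}, and \eqref{eq:def_rep_lie}.
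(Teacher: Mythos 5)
Your proof is correct and matches the paper's own argument essentially verbatim: both unwind the definitions in equations~\eqref{eq:Zin11}, \eqref{eq:dzpp}, \eqref{eq:otpp} and \eqref{eq:def_rep_lie}, and both hinge on the single identity $\partial_k T = T\alpha_k$ from equation~\eqref{eq:oop2} to match the two pre-Lie products (you apply it to rewrite $\diamond$, the paper applies it to rewrite $\diamond'$ --- the same step in the opposite direction). Nothing further is needed.
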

\begin{proof}Let $u, v \in V$.
 Then by equations~\eqref{eq:dzpp} and~\eqref{eq:Zin11}, we have
 \begin{gather*}
 u \ast v = \mu(T(u)) v,\\
 u\diamond v=\alpha_1(u) \ast \alpha_2(v) - \alpha_2(u) \ast \alpha_1(v)=\mu(T(\alpha_1(u)))\alpha_2(v)-\mu(T(\alpha_2(u)))\alpha_1(v).
 \end{gather*}
 On the other hand, by equations~\eqref{eq:otpp} and \eqref{eq:def_rep_lie}, we have
 \begin{align*}
 u \ast' v &\overset{\hphantom{(0.00)}}{=} \mu(T(u)) v, \\
 u \diamond' v &\overset{\hphantom{(0.00)}}{=} \rho_\mu(T(u))v = \mu(\partial_1(T(u))) \alpha_2(v) - \mu(\partial_2(T(u))) \alpha_1(v) \\
 &\overset{\eqref{eq:oop2}}{=} \mu(T(\alpha_1(u))) \alpha_2(v) - \mu(T(\alpha_2(u))) \alpha_1(v).
 \end{align*}
 Hence $(V, \diamond, \ast)$ and $(V, \diamond', \ast')$ coincide.
\end{proof}

\begin{Proposition}[{\cite[Example~5.16]{liu2020noncommutative}}]\label{pro:ppto}
Let $(A, \diamond, \ast)$ be a pre-Poisson algebra and $(A, [\ ,\ ], \cdot)$ be the associated Poisson algebra of $(A, \diamond,
 \ast)$,
 where $[\ ,\ ]$ and $\cdot$ are respectively defined by equation~\eqref{eq:pptp}.
 Then $(A, L_\diamond, L_\ast)$ is a module of $(A, [\ ,\ ], \cdot)$,
 where $L_\diamond, L_\ast\colon A \to \End(A)$ are respectively defined by
 \begin{equation*}
 L_\diamond(a)(b) := a \diamond b, \qquad L_\ast(a)(b) := a \ast b, \qquad \forall a, b \in A.
 \end{equation*}
 Furthermore, the identity map $\id\colon A \to A$ is an $\mathcal{O}$-operator of $(A, [\ ,\ ], \cdot)$ associated to $(A, L_\diamond, L_\ast)$.
\end{Proposition}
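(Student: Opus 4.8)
The plan is to verify both assertions by unfolding all the definitions involved and matching each required axiom term-by-term with one of the defining identities of the pre-Poisson algebra $(A, \diamond, \ast)$. Throughout, I would write $[\ ,\ ]$ and $\cdot$ as in equation~\eqref{eq:pptp}, so that $a \cdot b = a \ast b + b \ast a$ and $[a,b] = a \diamond b - b \diamond a$, and I would evaluate every operator identity on a third element $c \in A$.

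First I would establish that $(A, L_\diamond, L_\ast)$ is a module of the Poisson algebra $(A, [\ ,\ ], \cdot)$. For the Lie part, the representation identity $L_\diamond([a,b]) = L_\diamond(a)L_\diamond(b) - L_\diamond(b)L_\diamond(a)$, evaluated on $c$, reads $(a \diamond b - b \diamond a)\diamond c = a \diamond (b \diamond c) - b \diamond (a \diamond c)$, which is precisely the left pre-Lie identity rewritten as the symmetry of the associator in its first two arguments. For the commutative-associative part, the $A$-module condition $L_\ast(a)L_\ast(b) = L_\ast(a \cdot b)$ becomes $a \ast (b \ast c) = (a \ast b + b \ast a)\ast c$, which is exactly the Zinbiel identity; the same identity also gives $a \ast (b \ast c) = b \ast (a \ast c)$, so the operators $L_\ast(a)$ and $L_\ast(b)$ commute as required for an $A$-module. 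Finally, the two Poisson-module compatibility conditions $\rho(a \cdot b) = \mu(b)\rho(a) + \mu(a)\rho(b)$ and $\mu([a,b]) = \rho(a)\mu(b) - \mu(b)\rho(a)$, with $\rho = L_\diamond$ and $\mu = L_\ast$, unfold respectively to $(a \ast b + b \ast a)\diamond c = a \ast (b \diamond c) + b \ast (a \diamond c)$ and $(a \diamond b - b \diamond a)\ast c = a \diamond (b \ast c) - b \ast (a \diamond c)$, which are exactly the two compatibility conditions in the definition of a pre-Poisson algebra.

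Next I would check that $\id\colon A \to A$ is an $\mathcal{O}$-operator of $(A, [\ ,\ ], \cdot)$ associated to $(A, L_\diamond, L_\ast)$. The two defining equations of such an $\mathcal{O}$-operator, specialized to $T = \id$, become $[a,b] = L_\diamond(a)b - L_\diamond(b)a = a \diamond b - b \diamond a$ and $a \cdot b = L_\ast(a)b + L_\ast(b)a = a \ast b + b \ast a$; both hold tautologically, being nothing but the defining equation~\eqref{eq:pptp} of the associated Poisson structure. This completes the verification.

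Since every step is a direct translation, there is no substantial obstacle here: the only care needed is bookkeeping with the left/right conventions for $\diamond$ and $\ast$ and the signs in $[\ ,\ ]$, so that the associator terms of the pre-Lie identity line up correctly with the Lie-representation condition. As an alternative route, the module and $\mathcal{O}$-operator structures could be deduced from the perm-algebra description in Lemma~\ref{lem:perm} by tensoring with a suitable one-dimensional perm-algebra, or one may simply invoke \cite[Example~5.16]{liu2020noncommutative}, of which this statement is a restatement.
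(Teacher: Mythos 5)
Your verification is correct and complete: each Poisson-module axiom for $(A, L_\diamond, L_\ast)$ unfolds term-by-term exactly as you say --- the Lie-module condition $L_\diamond([a,b])=L_\diamond(a)L_\diamond(b)-L_\diamond(b)L_\diamond(a)$ is the pre-Lie identity, $L_\ast(a)L_\ast(b)=L_\ast(a\cdot b)$ together with the commutativity of the $L_\ast$'s is the Zinbiel identity, the two mixed conditions are the two pre-Poisson compatibilities, and the $\mathcal{O}$-operator equations for $T=\id$ reduce tautologically to equation~\eqref{eq:pptp} --- which is precisely the intended argument, the paper itself offering no proof beyond the citation to \cite[Example~5.16]{liu2020noncommutative}. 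One caveat on your closing aside: Lemma~\ref{lem:perm} applied to a one-dimensional perm-algebra only recovers that $(A,[\ ,\ ],\cdot)$ is a Poisson algebra (i.e., Proposition~\ref{pro:pptp}), not the module or $\mathcal{O}$-operator assertions, so that alternative route would require additional work; since it is not load-bearing in your proof, nothing is affected.
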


\begin{Proposition}\label{pro:idop} Let $(A, \ast, \Phi = \{\partial_1, \partial_2\})$ be a differential Zinbiel algebra.
 Let $(A, \cdot, \Phi)$ be the associated commutative differential algebra of $(A, \ast, \Phi)$, where $\cdot$ is defined by equation~\eqref{eq:Zin}, and
 $(A, [\ ,\ ], \cdot)$ be the induced Poisson algebra of $(A, \cdot, \Phi)$, where $[\ ,\ ]$ is defined by equation~\eqref{eq:lie}.
 Let $(A, \diamond, \ast)$ be the induced pre-Poisson algebra of $(A, \ast, \Phi)$ given in Proposition~{\rm \ref{pro:dzpp}}, where $\diamond$ is defined by equation~\eqref{eq:dzpp}.
 Then the module $(A, L_\diamond, L_\ast)$ of the Poisson algebra $(A, [\ ,\ ], \cdot)$ is exactly the induced module $(A, \rho_{L_\ast}, L_\ast)$ of $(A, [\ ,\ ], \cdot)$ with respect to $(A, L_\ast, \Phi)$, that is, ${L_\diamond = \rho_{L_\ast}}$.
 Furthermore, on the one hand, the identity map $\id$ is an $\mathcal{O}$-operator of the
 commutative differential algebra $(A, \cdot, \Phi)$ associated to $(A, L_\ast, \Phi)$ by Proposition~{\rm \ref{pro:assda}} and
 hence $\id$ is an $\mathcal{O}$-operator of the Poisson algebra~$(A, [\ ,\ ], \cdot)$ associated to $(A, \rho_{L_{\ast}}, L_{\ast})$ by Proposition~{\rm \ref{pro:assopp}}.
 On the other hand, $\id$ is an $\mathcal{O}$-operator of $(A, [\ ,\ ], \cdot)$ associated to $(A, L_\diamond, L_\ast)$ by Proposition~{\rm \ref{pro:ppto}}.
 Therefore the two approaches for $\id$ as an $\mathcal O$-operator of the Poisson algebra $(A, [\ ,\ ], \cdot)$ coincide and hence we have the following commutative diagram:
 \begin{equation*}
 \xymatrix@C=4cm{
 \txt{$(A, \ast, \Phi)$ \\ a differential \\ Zinbiel algebra} \ar[d]^-{{\rm Prop.}~\ref{pro:dzpp}} \ar[r]^-{{\rm Prop.}~\ref{pro:assda}} &
 \txt{$\id$ \\ an $\mathcal{O}$-operator of $(A, \cdot, \Phi)$\\ associated to $(A, L_{\ast}, \Phi)$ } \ar[d]^-{{\rm Prop.}~\ref{pro:assopp}} \\
 \txt{$(A, \diamond, \ast)$ \\ a pre-Poisson \\ algebra} \ar[r]^-{{\rm Prop.}~\ref{pro:ppto}} &
 \txt{$\id$ \\ an $\mathcal{O}$-operator of $(A, [\ ,\ ], \cdot)$ \\ associated to $(A, L_{\diamond}, L_{\ast})$}
 }
 \end{equation*}
\end{Proposition}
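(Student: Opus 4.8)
The plan is to reduce the entire statement to the single identity $L_\diamond = \rho_{L_\ast}$, after which the asserted coincidence of the two constructions of $\id$ as an $\mathcal{O}$-operator of $(A, [\ ,\ ], \cdot)$ is immediate from the cited propositions. First I would record that $(A, L_\ast, \Phi)$ really is a module of $(A, \cdot, \Phi)$: under the identification of the Zinbiel algebra with a dendriform algebra via $\succ = \ast$, $\prec = \ast^{\rm op}$, one has $L_\succ = R_\prec = L_\ast$, so that in the commutative convention $\mu = L_\ast$, and Proposition~\ref{pro:assda} supplies both the module and the fact that $\id$ is an $\mathcal{O}$-operator of $(A, \cdot, \Phi)$ associated to it.

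Next I would compute both sides of the claimed equality directly. The induced module $(A, \rho_{L_\ast}, L_\ast)$ of $(A, [\ ,\ ], \cdot)$ with respect to $(A, L_\ast, \Phi)$ is governed by equation~\eqref{eq:def_rep_lie} with $\mu = L_\ast$ and $\Omega = \Phi = \{\partial_1, \partial_2\}$, so that $\rho_{L_\ast}(a) = L_\ast(\partial_1(a))\partial_2 - L_\ast(\partial_2(a))\partial_1$. Evaluating on $b \in A$ gives $\rho_{L_\ast}(a)(b) = \partial_1(a) \ast \partial_2(b) - \partial_2(a) \ast \partial_1(b)$, which is exactly $a \diamond b = L_\diamond(a)(b)$ by the defining equation~\eqref{eq:dzpp} of the induced pre-Poisson multiplication. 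Hence $L_\diamond = \rho_{L_\ast}$, so the two modules $(A, L_\diamond, L_\ast)$ and $(A, \rho_{L_\ast}, L_\ast)$ of $(A, [\ ,\ ], \cdot)$ literally coincide.

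With this identity in hand, both composite paths of the square land on the same $\mathcal{O}$-operator. Following Proposition~\ref{pro:assda} and then Proposition~\ref{pro:assopp}, $\id$ is first an $\mathcal{O}$-operator of $(A, \cdot, \Phi)$ associated to $(A, L_\ast, \Phi)$ and then an $\mathcal{O}$-operator of the induced Poisson algebra $(A, [\ ,\ ], \cdot)$ associated to the induced module $(A, \rho_{L_\ast}, L_\ast)$. Following Proposition~\ref{pro:dzpp} and then Proposition~\ref{pro:ppto}, one passes to the induced pre-Poisson algebra $(A, \diamond, \ast)$ and obtains $\id$ as an $\mathcal{O}$-operator of the associated Poisson algebra $(A, [\ ,\ ], \cdot)$ (Proposition~\ref{pro:pptp}) associated to $(A, L_\diamond, L_\ast)$. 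Since $\rho_{L_\ast} = L_\diamond$ and the two Poisson structures agree by the preceding theorem, these outputs are identical, so the diagram commutes.

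There is no serious obstacle here: the whole content is the one-line unwinding $\rho_{L_\ast} = L_\diamond$, and the only point requiring care is the bookkeeping that the set $\Omega$ of commuting operators attached to the module $(A, L_\ast)$ is precisely $\Phi = \{\partial_1, \partial_2\}$ (not some rescaling), so that the $\alpha_k$ of equation~\eqref{eq:def_rep_lie} are exactly the $\partial_k$ of equation~\eqref{eq:dzpp}, together with the verification that \emph{``the Poisson algebra $(A, [\ ,\ ], \cdot)$''} denotes one and the same structure along both paths, which is exactly what the preceding theorem records.
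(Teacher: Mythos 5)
Your proposal is correct and takes essentially the same route as the paper: the entire content is the one-line computation $\rho_{L_\ast}(a)(b)= L_\ast(\partial_1(a))\partial_2(b) - L_\ast(\partial_2(a))\partial_1(b) = \partial_1(a)\ast\partial_2(b)-\partial_2(a)\ast\partial_1(b) = a \diamond b = L_\diamond(a)(b)$, after which the commutativity of the square follows by citing Propositions~\ref{pro:assda}, \ref{pro:assopp}, \ref{pro:dzpp} and \ref{pro:ppto}, exactly as in the paper's proof. Your extra bookkeeping (that $L_\succ = R_\prec = L_\ast$ under the dendriform identification and that $\Omega = \Phi$ in equation~\eqref{eq:def_rep_lie}) merely makes explicit what the paper leaves implicit.
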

\begin{proof}
 In fact, we have
 \begin{equation*}
 \rho_{L_{\ast}}(a)(b)= L_{\ast}(\partial_1(a)) \partial_2(b) - L_{\ast}(\partial_2(a))\partial_1(b) \overset{\eqref{eq:dzpp}}{=} a \diamond b = L_{\diamond}(a)(b), \qquad \forall a, b \in A.
 \end{equation*}
 Hence the conclusion follows from Propositions~\ref{pro:assda},~\ref{pro:assopp}, \ref{pro:dzpp} and \ref{pro:ppto}.
\end{proof}

At the end of this paper, we give the following construction of Poisson bialgebras from differential Zinbiel algebras.

\begin{Proposition}Let $(A, \ast, \Phi = \{\partial_1, \partial_2\})$ be a differential Zinbiel algebra.
 Let $(A, \cdot, \Phi)$ be the associated commutative differential algebra of $(A, \ast, \Phi)$,
 where $\cdot$ is defined by equation~\eqref{eq:Zin}, and $(A, [\ ,\ ], \cdot)$ be the induced Poisson algebra of $(A, \cdot, \Phi)$, where $[\ ,\ ]$ is defined by equation~\eqref{eq:lie}.
 Let $(A, \diamond, \ast)$ be the induced pre-Poisson algebra of $(A, \ast, \Phi)$, where $\diamond$ is defined by equation~\eqref{eq:dzpp}.
 Let $\{e_1, e_2,\dots, e_n\}$ be a basis of $A$ and $\{e_1^*, e_2^*, \dots, e_n^*\}$ be the dual basis.
 Set
 \begin{equation*}
 r = \sum_{i=1}^n (e_i \otimes e_i^* - e_i^* \otimes e_i).
 \end{equation*}
 Then there is a Poisson bialgebra $(A \ltimes_{-L^*_{\diamond},L^*_{\ast}} A^*, [\ ,\ ], \cdot, \delta, \Delta)$,
 where $\delta$ and $\Delta$ are defined by equations~\eqref{eq:pcbd1} and~\eqref{eq:pcbd2} with the above $r$ respectively.
 Moreover, $(A \ltimes_{L^*_{\ast}}A^*, \cdot, \Delta, \Phi - \Phi^*, -\Phi + \Phi^*)$
 is a commutative and cocommutative differential ASI bialgebra
 whose induced Poisson bialgebra is exactly $(A \ltimes_{-L^*_{\diamond},L^*_{\ast}} A^*, [\ ,\ ], \cdot, \delta, \Delta)$.
\end{Proposition}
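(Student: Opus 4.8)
The plan is to realize the desired structures as a single instance of the general machinery of Section~\ref{sec:poisson}, applied to the $\mathcal{O}$-operator $\id$ on the associated commutative differential algebra. First I would invoke Proposition~\ref{pro:assda}: since $(A, \ast, \Phi)$ is a differential Zinbiel algebra, i.e., $(A, \ast, \ast^{\rm op}, \Phi)$ is a differential dendriform algebra, the associated differential algebra $(A, \cdot, \Phi)$ is commutative and the bimodule $(A, L_\succ, R_\prec, \Phi)$ reduces to the module $(A, L_\ast, \Phi)$, because $L_\succ(a)b = a \ast b = R_\prec(a)b$. Thus the identity map $\id\colon A \to A$ is an $\mathcal{O}$-operator of $(A, \cdot, \Phi)$ associated to $(A, L_\ast, \Phi)$.

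Next I would set up the ambient commutative differential algebra. Applying Proposition~\ref{pro:cond} with $(V, \mu, \Omega) = (A, L_\ast, \Phi)$ shows that $(A \ltimes_{L_\ast^*} A^*, \cdot, \Phi - \Phi^*)$ is a $(-\Phi + \Phi^*)$-admissible commutative differential algebra whose induced Poisson algebra is $(A \ltimes_{-\rho_{L_\ast}^*, L_\ast^*} A^*, [\ ,\ ], \cdot)$, and that equation~\eqref{eq:vip1} holds for the replaced operators $\eth_k = -\partial_k + \partial_k^*$, $\partial_k \mapsto \partial_k - \partial_k^*$. By Proposition~\ref{pro:idop} one has $\rho_{L_\ast} = L_\diamond$, so this induced Poisson algebra is exactly $(A \ltimes_{-L_\diamond^*, L_\ast^*} A^*, [\ ,\ ], \cdot)$, the underlying Poisson algebra of the claimed Poisson bialgebra.

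I would then produce the AYBE solution and the differential ASI bialgebra via Corollary~\ref{cor:qminp} with $T = \id$ and $(\theta_1, \theta_2) = (0, 0)$. Under the identification $\Hom(A, A) \cong A \otimes A^*$ one has $\id = \sum_i e_i \otimes e_i^*$, so $r = \id - \sigma(\id) = \sum_i (e_i \otimes e_i^* - e_i^* \otimes e_i)$ is precisely the given element; Corollary~\ref{cor:qminp} then yields that $r$ is an antisymmetric solution of $(-\Phi + \Phi^*)$-admissible AYBE in $(A \ltimes_{L_\ast^*} A^*, \cdot, \Phi - \Phi^*)$ and that $(A \ltimes_{L_\ast^*} A^*, \cdot, \Delta, \Phi - \Phi^*, -\Phi + \Phi^*)$ is a differential ASI bialgebra, where $\Delta$ is given by equation~\eqref{eq:cbd}, which in the commutative case (where $R_A = L_A$) coincides with equation~\eqref{eq:pcbd2}.

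Finally I would assemble the Poisson bialgebra exactly as in the chain culminating in Corollary~\ref{cor:asbi}, now taking the ambient $\Psi$-admissible commutative differential algebra to be $(A \ltimes_{L_\ast^*} A^*, \cdot, \Phi - \Phi^*)$ with $\Psi = -\Phi + \Phi^*$ and the antisymmetric AYBE solution $r$ just produced. Lemma~\ref{lem:tch}(1) makes $(A \ltimes_{L_\ast^*} A^*, \Delta, -\Phi + \Phi^*)$ a cocommutative differential coalgebra, so the ASI bialgebra is commutative and cocommutative; together with equation~\eqref{eq:vip1}, Lemma~\ref{lem:tch}(2) supplies equation~\eqref{eq:vip2}, whence Theorem~\ref{thm:asspoibi} furnishes the induced Poisson bialgebra $(A \ltimes_{-L_\diamond^*, L_\ast^*} A^*, [\ ,\ ], \cdot, \delta, \Delta)$ with $\delta$ defined by equation~\eqref{eq:coP}, while Proposition~\ref{pro:aybe_to_qybe} and Proposition~\ref{pro:pybebi} give the Poisson bialgebra with $\delta'$ defined by equation~\eqref{eq:pcbd1}; Corollary~\ref{cor:asbi} identifies $\delta = \delta'$, completing the claim. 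The only genuine obstacle is bookkeeping: one must match the module $(A, L_\ast)$, its dual action $L_\ast^*$, and the sign-and-transpose conventions so that all previously verified hypotheses (admissibility, equation~\eqref{eq:vip1}, antisymmetry of $r$) align; once these identifications are in place, each step is a direct citation rather than a fresh computation.
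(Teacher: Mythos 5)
Your proposal is correct and takes essentially the same approach as the paper: the paper's own proof simply cites Proposition~\ref{pro:idop}, Theorem~\ref{thm:oopsemi} and Proposition~\ref{pro:pybebi} for the first conclusion, and Proposition~\ref{pro:idop}, Corollaries~\ref{cor:oybe} and~\ref{cor:asbi} for the second, noting that equation~\eqref{eq:vip1} holds with $\eth_k$ replaced by $-\partial_k+\partial_k^*$, and your unfolding via Propositions~\ref{pro:assda} and~\ref{pro:cond}, Corollary~\ref{cor:qminp}, Lemma~\ref{lem:tch} and Theorem~\ref{thm:asspoibi} is exactly the chain those corollaries encapsulate. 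The only cosmetic difference is that you traverse the commutative square of Corollary~\ref{cor:oybe} along the AYBE side (Corollary~\ref{cor:qminp} followed by Proposition~\ref{pro:aybe_to_qybe}) instead of citing Theorem~\ref{thm:oopsemi} directly, which that corollary shows yields the same solution of the PYBE and hence the same Poisson bialgebra.
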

\begin{proof}
 Note that $\id = \sum^n_{i=1} e_i\otimes e_i^*$ and $r = \id - \sigma(\id)$.
 The first conclusion follows from Proposition~\ref{pro:idop}, Theorem~\ref{thm:oopsemi} and Proposition~\ref{pro:pybebi}.
 The second conclusion follows from Proposition~\ref{pro:idop}, Corollaries~\ref{cor:oybe} and \ref{cor:asbi}.
 Note that here equation~\eqref{eq:vip1} holds, where $\eth_k$ is replaced by $-\partial_k + \partial_k^*$ and $\partial_k$ is replaced by $\partial_k - \partial_k^*$ for all $k=1, 2$.
\end{proof}

We give the following example to illustrate the above construction explicitly.
Note that the classification of complex differential Zinbiel algebras in dimension $\leq 4$ is given in \cite{almutari2018derivations}.
\begin{Example}
 Let $(A,\ast)$ be a Zinbiel algebra in dimension 3 with a basis $\{e_1, e_2, e_3\}$ whose non-zero product is given by
 \begin{equation*}
 e_1 \ast e_1 = e_3, \qquad e_2 \ast e_1 = e_3, \qquad e_2 \ast e_2 = e_3.
 \end{equation*}
 Let $\partial_1, \partial_2\colon A \to A$ be two liner maps respectively given by
 \begin{gather*}
 \partial_1(e_1) = e_1, \qquad \partial_1(e_2) = e_2, \qquad \partial_1(e_3) = 2 e_3; \\
 \partial_2(e_1) = e_2, \qquad \partial_2(e_2) = -e_1 + e_2, \qquad \partial_2(e_3) = e_3.
 \end{gather*}
 Then $(A, \ast, \Phi = \{\partial_1, \partial_2\})$ is a differential Zinbiel algebra.
 Let $\{e_1^*, e_2^*, e_3^*\}$ be the dual basis of $\{e_1, e_2, e_3\}$.
 Then we have
 \begin{gather*}
 \partial_1^*(e_1^*) = e_1^*, \qquad \partial_1^*(e_2^*) = e_2^*, \qquad \partial_1^*(e_3^*) = 2 e_3^*; \\
 \partial_2^*(e_1^*) = -e_2^*, \qquad \partial_2^*(e_2^*) = e_1^* + e_2^*, \qquad \partial_2^*(e_3^*) = e_3^*.
 \end{gather*}
 Let $(A, \cdot, \Phi)$ be the associated commutative differential algebra of $(A, \ast, \Phi)$, where $\cdot$ is defined by equation~\eqref{eq:Zin} whose non-zero product is given by
 \begin{equation*}
 e_1 \cdot e_1 = e_2 \cdot e_2 = 2 e_3, \qquad
 e_1 \cdot e_2 = e_2 \cdot e_1 = e_3.
 \end{equation*}
 Let $(A, [\ ,\ ], \cdot)$ be the induced Poisson algebra of $(A, \cdot, \Phi)$, where $[\ ,\ ]$ is defined by equation~\eqref{eq:lie} whose non-zero product is given by
 \begin{equation*}
 [e_1, e_2] = -[e_2, e_1] = -3 e_3.
 \end{equation*}
 Let $(A, \diamond, \ast)$ be the induced pre-Poisson algebra of $(A, \ast, \Phi)$, where $\diamond$ is defined by equation~\eqref{eq:dzpp} whose non-zero product is given by
 \begin{equation*}
 e_1 \diamond e_1 = - e_3, \qquad
 e_1 \diamond e_2 = -2 e_3, \qquad
 e_2 \diamond e_1 = e_3, \qquad
 e_2 \diamond e_2 = - e_3.
 \end{equation*}
 Hence the non-zero product of $[\ ,\ ]$ and $\cdot$ on the Poisson algebra $(A \ltimes_{-L^*_{\diamond},L^*_{\ast}} A^*, [\ ,\ ], \cdot)$ is given respectively by
 \begin{gather*}
 [e_1, e_2] = -[e_2, e_1] = -3e_3, \qquad
 [e_1, e_3^*] = -[e_3^*, e_1] = e_1^* + 2 e_2^*, \\
 [e_2, e_3^*] = -[e_3^*, e_2] = -e_1^* + e_2^*;\\
 e_1 \cdot e_1 = e_2 \cdot e_2 = 2 e_3, \qquad
 e_1 \cdot e_2 = e_2 \cdot e_1 = e_3, \qquad
 e_1 \cdot e_3^* = e_3^* \cdot e_1 = e_1^*, \\
 e_2 \cdot e_3^* = e_3^* \cdot e_2 = e_1^* + e_2^*.
 \end{gather*}
 Set $r = \sum_{i=1}^3 (e_i \otimes e_i^* - e_i^* \otimes e_i)$.
 Define $\delta$ and $\Delta$ by equations~\eqref{eq:pcbd1} and \eqref{eq:pcbd2} respectively.
 Thus we have
 \begin{gather*}
 \delta(e_1) = e_3 \otimes e_1^* - e_1^* \otimes e_3 - e_3 \otimes e_2^* + e_2^* \otimes e_3, \\
 \delta(e_2) = 2 e_3 \otimes e_1^* - 2 e_1^* \otimes e_3^* + e_3 \otimes e_2^* - e_2^* \otimes e_3,\\
 \delta(e_3) = \delta(e_1^*) = \delta(e_2^*) = 0, \qquad \delta(e_3^*) = 3 e_1^* \otimes e_2^* - 3 e_2^* \otimes e_1^*;\\
 \Delta(e_1) = - e_3 \otimes e_1^* - e_1^* \otimes e_3 - e_2^* \otimes e_3 - e_3 \otimes e_2^*, \\
 \Delta(e_2) = - e_3 \otimes e_2^* - e_2^* \otimes e_3, \\
 \Delta(e_3) = \Delta(e_1^*) = \Delta(e_2^*) = 0, \qquad \Delta(e_3^*) = -2 e_1^* \otimes e_1^* -2 e_2^* \otimes e_2^* - e_2^* \otimes e_1^* - e_1^* \otimes e_2^*.
 \end{gather*}
 Therefore $(A \ltimes_{-L^*_{\diamond},L^*_{\ast}} A^*, [\ ,\ ], \cdot, \delta, \Delta)$ is a Poisson bialgebra.
At the same time, we obtain a~commutative and cocommutative differential ASI bialgebra $(A \ltimes_{L^*_{\ast}} A^*, \cdot, \Delta, \Phi - \Phi^*, -\Phi + \Phi^*)$
 whose induced Poisson bialgebra is exactly $(A \ltimes_{-L^*_{\diamond},L^*_{\ast}} A^*, [\ ,\ ], \cdot, \delta,\Delta)$.
\end{Example}

\subsection*{Acknowledgements}
This work is supported by NSFC (11931009, 12271265, 12261131498), the Fundamental Research Funds for the Central Universities and Nankai Zhide Foundation.
The authors thank the referees for valuable suggestions to improve the paper.

\pdfbookmark[1]{References}{ref}
\LastPageEnding

\end{document}